\def\serieslogo@{} \def\@setcopyright{} \makeatother
\renewcommand*\env@matrix[1][c]{\hskip -\arraycolsep
  \let\@ifnextchar\new@ifnextchar
  \array{*\c@MaxMatrixCols #1}}
\numberwithin{equation}{section}
\newtheorem{thm}{Theorem}[section]
\newtheorem{cor}[thm]{Corollary}
\newtheorem{lem}[thm]{Lemma}
\newtheorem{prop}[thm]{Proposition}
\theoremstyle{definition}
\newtheorem{defn}[thm]{Definition}
\newtheorem{rem}[thm]{Remark}
\newtheorem{exam}[thm]{Example}
\newtheorem*{ackn}{Acknowledgments}
\newcommand{\lxr}{\longrightarrow}
\newcommand{\A}{\mathscr A}
\newcommand{\B}{\mathscr B}
\newcommand{\C}{\mathscr C}
\newcommand{\Hcal}{\mathcal H}
\newcommand{\M}{\mathcal M}
\newcommand{\R}{\mathcal R}
\newcommand{\T}{\mathcal T}
\newcommand{\U}{\mathcal U}
\newcommand{\V}{\mathcal V}
\newcommand{\W}{\mathcal W}
\newcommand{\X}{\mathcal X}
\newcommand{\Y}{\mathcal Y}
\newcommand{\Z}{\mathcal Z}
\newcommand{\Acal}{\mathscr A}
\newcommand{\Ccal}{\mathscr C}
\newcommand{\Xcal}{\mathcal X}
\newcommand{\Ycal}{\mathcal Y}
\newcommand{\Zcal}{\mathcal Z}
\newcommand{\cone}{\mathsf{cone}}
\newcommand{\mi}{\mathsf{i}}
\newcommand{\iso}{\cong}
\DeclareMathOperator*{\Ker}{\mathsf{Ker}}
 \DeclareMathOperator*{\Image}{\mathsf{Im}}
\DeclareMathOperator*{\Coker}{\mathsf{Coker}}
\DeclareMathOperator*{\id}{\mathsf{id}}
\DeclareMathOperator*{\Mod}{\mathsf{Mod}-\!}
\DeclareMathOperator*{\End}{\mathsf{End}}
 \DeclareMathOperator*{\smod}{\mathsf{mod}-\!}
\DeclareMathOperator*{\Inj}{\mathsf{Inj}-\!}
\DeclareMathOperator*{\Proj}{\mathsf{Proj}-\!}
\DeclareMathOperator*{\Gen}{\mathsf{Gen}}
\DeclareMathOperator*{\Add}{\mathsf{Add}}
 \DeclareMathOperator*{\Prod}{\mathsf{Prod}}
\DeclareMathOperator{\Hom}{\mathsf{Hom}}
\DeclareMathOperator*{\Ext}{\mathsf{Ext}}
\DeclareMathOperator*{\Tor}{\mathsf{Tor}}
\DeclareMathOperator*{\aisle}{\mathsf{aisle}}
\DeclareMathOperator*{\real}{\mathsf{real}}
\newcommand{\iden}{\operatorname{Id}\nolimits}
\newsavebox{\proofbox}
\savebox{\proofbox}{\begin{picture}(7,7)%
  \put(0,0){\framebox(7,7){}}\end{picture}}
\newcommand{\zvek}[2]{{\left(\begin{smallmatrix} {#1} & {#2} \end{smallmatrix}\right)}}
\newcommand{\svek}[2]{{\left(\begin{smallmatrix} {#1} \\ {#2} \end{smallmatrix}\right)}}
\newcommand{\tzmat}[4]{{\big[\begin{smallmatrix} {#1} & {#2} \\ {#3} & {#4} \end{smallmatrix}\big]}}
\newcommand{\define}[1]{{\textbf{#1}}}
\begin{document}
\title{Realisation functors in tilting theory}
\author{Chrysostomos Psaroudakis, Jorge Vit\'oria}
\address{Institute of Algebra and Number Theory, University of Stuttgart, Pfaffenwaldring 57, 70569 Stuttgart, Germany} \email{chrysostomos.psaroudakis@mathematik.uni-stuttgart.de}
\address{Department of Mathematics, City, University of London, Northampton Square, London EC1V 0HB,\newline United Kingdom}\email{jorge.vitoria@city.ac.uk}
\thanks{The first named author was supported by the Norwegian Research Council (NFR 221893) under the project \textit{Triangulated categories in Algebra}. The second named author was initially supported by the German Research Council through the SFB 701 in Bielefeld, then by a Marie Curie Intra-European Fellowship within the 7th European Community Framework Programme (PIEF-GA-2012-327376) and, in the final stages of this project, by the Engineering and Physical Sciences Research Council of the United Kingdom, grant number EP/N016505/1.}

\keywords{t-structure, silting, tilting, cosilting, cotilting, recollement, derived equivalence, realisation functor, homological embedding.}
\subjclass[2010]{18E30; 18E35; 16E30; 16E35; 14F05; 16G20} 
\maketitle
\begin{abstract}
Derived equivalences and t-structures are closely related. We use realisation functors associated to t-structures in triangulated categories to establish a derived Morita theory for abelian categories with a projective generator or an injective cogenerator. For this purpose we develop a theory of (non-compact, or large) tilting and cotilting objects that generalises the preceding notions in the literature. Within the scope of derived Morita theory for rings we show that, under some assumptions, the realisation functor is a derived tensor product. This fact allows us to approach a problem by Rickard on the shape of derived equivalences. Finally, we apply the techniques of this new derived Morita theory to show that a recollement of derived categories is a derived version of a recollement of abelian categories if and only if there are tilting or cotilting t-structures glueing to a tilting or a cotilting t-structure. As a further application, we answer a question by Xi on a standard form for recollements of derived module categories for finite dimensional hereditary algebras. 
\end{abstract}

\setcounter{tocdepth}{1} \tableofcontents

\section{Introduction}
\subsection{Motivation} Derived categories, equivalences between them and the associated derived invariants are central objects of study in modern representation theory and algebraic geometry. In representation theory, the results of Rickard (\cite{Rick}) and Keller (\cite{Keller:dg}) on a derived Morita theory for rings show that compact tilting complexes guarantee the existence of derived equivalences and vice-versa. Some derived equivalences can even be described as a derived tensor product with a complex of bimodules (\cite{Rick2}). In algebraic geometry, derived equivalences between coherent sheaves of smooth projective varieties all have a standard form: Fourier-Mukai transforms (\cite{Or}). In both settings, there is a concern with the \textit{existence} and the \textit{shape} of derived equivalences. In this paper we propose a unifying approach to the study of derived equivalences of abelian categories: they should be regarded as realisation functors of certain t-structures. In doing so, we are in particular able to establish a derived Morita theory for abelian categories with a projective generator or an injective cogenerator. This is done in terms of a \textit{non-compact} or \textit{large} tilting theory, extending and, in some sense, further clarifying the classical cases mentioned above.

In representation theory, there are some motivating predecessors of the \textit{non-compact} tilting theory that we develop in this paper: large tilting and cotilting modules over rings (\cite{CT,AC,CGM,Stovicek}) and large silting complexes (\cite{Wei,AMV1}). Such non-compact counterparts of the classical theory were largely motivated by the search of properties that are difficult to obtain in the compact world, namely within the realm of approximation theory. However, contrary to the compact case (\cite{Hap,Rick,Keller:dg}), these non-compact objects lack a certain \textit{derived flavour}: their endomorphism rings are not derived equivalent to the original ring; they are usually \textit{too big} (\cite{Baz, BMT, CX1}). Here is where our approach to derived equivalences, inspired by that of \cite{Stovicek}, comes to rescue: instead of considering endomorphism rings, one should consider the hearts of the naturally associated t-structures. The corresponding realisation functors then yield derived equivalences.

\subsection{The main results in context} 
A large class of t-structures can be generated from the concept of silting object, which was first defined in \cite{KV}. In the bounded derived category of finitely generated modules over a finite dimensional algebra, compact silting objects classify bounded t-structures whose heart is a module category  (\cite{KN, KY}). Compact silting objects were also considered in abstract triangulated categories (\cite{AI,BR,HKM,IyJoYa}) and, more recently, non-compact silting objects and their associated t-structures were studied in derived module categories (\cite{Wei, AMV1}). We introduce a common generalisation of these notions for arbitrary triangulated categories (see also \cite{NSZ} for parallel work by Nicol\'as, Saorin and Zvonareva on this topic). We also introduce the dual notion of a cosilting object, the 2-term version of which is independently dealt with in \cite{BreazPop}. 

Among silting and cosilting objects, tilting and cotilting objects play a special role: they are the ones providing derived equivalences. Indeed, we show (see Proposition \ref{der equival}) that realisation functors associated with silting or cosilting t-structures are fully faithful (and, thus, equivalences with their essential images) if and only if the t-structures are in fact tilting or cotilting. As a consequence of this fact we are then able to establish a derived Morita theory for abelian categories with a projective generator or an injective cogenerator. We refer to Definitions  \ref{defn restrict extend} and \ref{bounded silting}, as well as to the examples thereafter, for the meaning of \textit{restrictable equivalence} and \textit{bounded (co)tilting} (these are conditions that allow restrictions to the setting of bounded derived categories).\medskip

\noindent\textbf{Theorem A} (\ref{eq tilt cotilti shape}) \textit{Let $\A$ and $\B$ be abelian categories such that $\mathsf{D}(\A)$ is TR5 (respectively, TR5*) and $\B$ has a projective generator (respectively, an injective cogenerator). Consider the following statements.
\begin{enumerate}
\item There is a restrictable triangle equivalence $\Phi\colon \mathsf{D}(\B)\lxr \mathsf{D}(\A)$.
\item There is a bounded tilting (respectively, cotilting) object $M$ in $\mathsf{D}(\A)$ such that $\Hcal_M\cong \B$.
\item There is a triangle equivalence $\phi\colon \mathsf{D}^b(\B)\lxr \mathsf{D}^b(\A)$.
\end{enumerate}
Then we have \textnormal{(i)}$\Longrightarrow$\textnormal{(ii)}$\Longrightarrow$\textnormal{(iii)}. Moreover, if $\B$ has a projective generator and $\A=\Mod{R}$, for a ring $R$, then we also have \textnormal{(iii)}$\Longrightarrow$\textnormal{(ii)}.
}\medskip

Note that, in particular, Theorem A provides a derived Morita theory for Grothendieck abelian categories, thus covering derived equivalences between not only module categories but also categories of quasicoherent sheaves or certain functor categories, for example. Again, this result stresses that in order to have a derived equivalence arising from a possibly non-compact tilting object, one should look to its heart rather than to its endomorphism ring. In the compact case it so happens that both provide the same information about the derived category but as shown in \cite{Baz} and \cite{CX1}, for example, the endomorphism ring of a large tilting module will, in general, provide a recollement rather than a derived equivalence.

It is often easier to know about the existence of a derived equivalence rather than a concrete expression (or \textit{shape}, for short) for such a functor. Realisation functors satisfy certain \textit{naturality} properties (see Theorem \ref{lift implies diagram}, recalling  \cite[Lemma A7.1]{B}) that contribute to the problem of comparing different equivalence functors and establishing a \textit{standard form}. Although this problem was solved in algebraic geometry (every equivalence between derived categories of coherent sheaves between two smooth projective varieties is a Fourier-Mukai transform - see \cite{Or}), in representation theory it is wide open. For algebras which are projective over a commutative ring, Rickard showed that for every equivalence between derived module categories, there is one of \textit{standard type}, i.e. one that is the derived tensor product with a complex of bimodules (\cite{Rick2}). It remains a question whether every derived equivalence is of standard type, as conjectured by Rickard. There are indications that this should hold. 
Recently, it was shown in \cite{XWC} and \cite{XWCYY} that derived equivalences between certain finite dimensional algebras (including piecewise hereditary algebras and some Frobenius algebras of radical square zero) are indeed of standard type.
In this paper, we prove that all these derived equivalences are, in essence, realisation functors associated to tilting or cotilting objects. Although realisation functors are not unique, we provide new criteria for an equivalence to be of standard type and we show that some realisation functors are of standard type.\medskip

\noindent\textbf{Theorem B} (\ref{standard f-lifts}) \textit{
Let $A$ and $B$ be $\mathbb{K}$-algebras which are projective over a commutative ring $\mathbb{K}$. Let $T$ be a compact tilting object in $\mathsf{D}(A)$ such that $\End_{\mathsf{D}(A)}(T)\cong B$. Then the functor $\mathsf{real}_T$ is an equivalence of standard type. Moreover, a triangle equivalence $\phi\colon \mathsf{D}^b(B)\lxr \mathsf{D}^b(A)$ is of standard type if and only if $\phi$ admits an f-lifting $\Phi\colon \mathsf{DF}^b(B)\lxr \mathsf{DF}^b(A)$ to the filtered bounded derived categories.}\medskip

Finally, we discuss recollements and equivalences between them using, once again, realisation functors. Recollements of abelian or triangulated categories are specially well-behaved decompositions (in particular, short exact sequences) of the underlying category. Recollements of abelian categories are well-understood (\cite{FP}), especially if all terms are categories of modules over a ring (\cite{PsaroudVitoria}). The same cannot be said about recollements of derived categories, even in the case where all categories are derived module categories. A natural question (formulated by Xi for derived module categories) is whether every recollement of derived categories is \textit{equivalent} to a derived version of a recollement of abelian categories. This is not true in general as shown by a counterexample in \cite{AKLY3}. In this paper, we use realisation functors to provide a criterion for such an equivalence of recollements to exist in terms of the glueing of tilting t-structures. A different criterion for recollements of derived categories of rings has been independently obtained in \cite{AKLY3}. In the case of a recollement by derived module categories for algebras which are projective over a commutative ring, we prove the following result, which can be thought of as a statement about glueing derived equivalences.\medskip

\noindent\textbf{Theorem C} (\ref{rec strat k-alg}) 
\textit{Let $A$, $B$ and $C$ be $\mathbb{K}$-algebras over a commutative ring $\mathbb{K}$ and assume that $A$ is projective as a $\mathbb{K}$-module. Suppose there is a recollement $\R$ of the form
\begin{equation}\nonumber
\R\colon\ \ \ \ \ \ \xymatrix{\mathsf{D}(B)  \ar[rrr]^{i_*} &&& \mathsf{D}(A)  \ar[rrr]^{j^*}  \ar @/_1.5pc/[lll]_{i^*}  \ar @/^1.5pc/[lll]^{i^!} &&& \mathsf{D}(C). \ar @/_1.5pc/[lll]_{j_!} \ar @/^1.5pc/[lll]^{j_*} }
\end{equation}
The following statements are equivalent.
\begin{enumerate}
\item There is a $\mathbb{K}$-algebra $S$, projective over $\mathbb{K}$, and an idempotent element $e$ of $S$ such that the canonical ring epimorphism $S\lxr S/SeS$ is homological and the associated recollement of $\mathsf{D}(S)$ by derived module categories is equivalent to $\R$.
\item There are compact tilting objects $V$ in $\mathsf{D}(A)$, $U$ in $\mathsf{D}(B)$ and  $W$ in $\mathsf{D}(C)$ such that the associated tilting t-structures in $\mathsf{D}(B)$ and $\mathsf{D}(C)$ glue along $\R$ to the associated tilting t-structure in $\mathsf{D}(A)$ and such that the $\mathbb{K}$-algebra $\End_{\mathsf{D}(A)}(V)$ is projective over $\mathbb{K}$.
\end{enumerate} }

\subsection{Structure of the paper}
This paper is organised, roughly, in a sequential way. Sections 3 and 4 are independent of each other, but they are both essential for Section 5. Section 6 uses results from all preceding sections. In order to facilitate the understanding of the later sections (where our main results lie) we include in the beginning of each section an informal overview of its results, for the reader that might wish to skip some of the earlier material.

We begin in \textbf{Section 2} with some preliminaries on t-structures, recollements and the relation between the two: glueing. These are the well-known concepts that we will use throughout the paper. \textbf{Section 3} discusses some technical but necessary issues regarding the construction of realisation functors, combining the approach of \cite{BBD} as well as that presented in \cite[Appendix]{B}. We explore at length all the necessary properties for the later sections, including some proofs (or sketches of proof) of older results not available or hard to find in the literature. The results in this section are then used throughout sections 5 and 6, where we apply these properties to study equivalences between derived categories of abelian categories or, more generally, between recollements of derived categories. We also show that the realisation functor of the standard t-structure is, as expected, the identity functor. In \textbf{Section 4} we develop our generalised notion of silting and cosilting objects in triangulated categories and we study the properties of the associated hearts. We introduce the notion of bounded silting and bounded cosilting objects, preparing ground for a discussion regarding the relation between derived equivalences at the bounded level and at the unbounded level. This is a recurrent issue throughout the paper, related with the fact that a realisation functor has as domain a bounded derived category. In \textbf{Section 5}, we focus on derived equivalences between certain types of abelian categories, both on their existence and on their shape, in the spirit of the above paragraphs. Examples related with the representation theory of infinite quivers and with derived equivalences in algebraic geometry are also discussed. Finally, in \textbf{Section 6} we study recollements of unbounded derived categories: methods to generate them and equivalences between them. We provide criteria in a rather general framework for a recollement of derived categories to be the derived version of a recollement of abelian categories. At the end, as an application, we show that this is always the case for derived categories of hereditary finite dimensional algebras.

This paper also includes an appendix by Ester Cabezuelo Fern\'andez and Olaf M.~Schn{\"u}rer. It discusses a detailed proof for the fact that the realisation functor built as in \cite[Appendix]{B} is indeed a triangle functor. For this purpose, however, it seems necessary to consider an extra axiom, as first proposed in \cite{Schn}, for filtered enhancements of triangulated categories. We refer to Remark \ref{fcat7} and to Appendix \ref{sec:beil-real-funct} for a detailed discussion.

\begin{ackn}
The authors would like to thank Lidia Angeleri H\"ugel and Steffen Koenig for their support in this project and their detailed comments on a preliminary version of this paper; Olaf Schn\"urer for his help in clarifying the proof of Theorem \ref{lift implies diagram} and Greg Stevenson for suggesting the use of a d\'evissage argument for the proof of Theorem \ref{prophomolemb}; Silvana Bazzoni, Henning Krause, Pedro Nicol\'as and Manuel Saorin for many discussions, questions and comments regarding this project; Changchang Xi and his research group at the Capital Normal University for many questions and comments on this work during a visit of the first named author;  the University of Bielefeld, the University of Stuttgart, the University of Verona and the Norwegian University of Science and Technology for hosting the authors in several occasions of their continued collaboration. 
\end{ackn}
\section{Preliminaries: t-structures, recollements and glueing}
\subsection{Conventions and notation} 
In this paper we consider only \textit{abelian categories with the property that the derived category has $\Hom$-sets}. In most contexts, however, the derived categories occurring here come from abelian categories with either enough injectives or enough projectives - and these will have $\Hom$-sets. Given an abelian category $\A$, we denote by $\mathsf{D}(\A)$ its derived category. If $A$ is a unitary ring, we denote by $\Mod{A}$ the category of right $A$-modules and by $\mathsf{D}(A)$ its derived category. Right bounded, left bounded or bounded derived categories are denoted as usual by $\mathsf{D}^-$, $\mathsf{D}^+$ and $\mathsf{D}^b$, respectively. 
For any triangulated category $\T$, we denote by $[1]$ its suspension functor. 

For a category $\C$, we denote by $\mathsf{Ob}\ \C$ its class of objects. The word \textit{subcategory}, unless otherwise stated, stands for a full and strict subcategory. For an additive functor $F\colon \A \lxr \B$ between additive categories  the essential image of $F$ is the subcategory of $\B$ given by $\Image F=\{B \in \B \ | \ B \cong F(A) \ \text{for some} \ A \in \A\}$ and the kernel of $F$ is the subcategory of $\A$ given by $\Ker F = \{A \in \A \ | \ F(A) = 0\}$. If $F$ is a right exact (respectively, left exact) functor between abelian categories, we denote its left derived functor by $\mathbb{L}F$ (respectively, $\mathbb{R}F$). If $F$ is exact, its derived functor will often be also denoted by $F$.

\subsection{t-structures}
We begin with recalling the definition of the key notion of this paper.
\begin{defn}\cite{BBD}\label{defn t-str}
A {\bf t-structure} in a triangulated category $\T$ is a pair $\mathbb{T}=(\mathbb{T}^{\leq 0},\mathbb{T}^{\geq 0})$ of full subcategories such that, for $\mathbb{T}^{\leq n}:=\mathbb{T}^{\leq 0}[-n]$ and $\mathbb{T}^{\geq n}:=\mathbb{T}^{\geq 0}[-n]$ ($n\in \mathbb Z$), we have:
\begin{enumerate}
\item $\Hom_{\T}(\mathbb{T}^{\leq 0},\mathbb{T}^{\geq 1})=0$, i.e. $\Hom_{\T}(X,Y)=0$ for all $X$ in $\mathbb{T}^{\leq 0}$ and $Y$ in $\mathbb{T}^{\geq 1}$;
\item $\mathbb{T}^{\leq 0}\subseteq \mathbb{T}^{\leq 1}$ and $\mathbb{T}^{\geq 1}\subseteq \mathbb{T}^{\geq 0}$;
\item For every object $X$ in $\T$ there are $Y$ in $\mathbb{T}^{\leq 0}$, $Z$ in $\mathbb{T}^{\geq 1}$ and a triangle $Y \lxr X \lxr Z \lxr  Y[1]$.
\end{enumerate}
The subcategories $\mathbb{T}^{\leq 0}$, $\mathbb{T}^{\geq 0}$ and $\Hcal(\mathbb{T}):=\mathbb{T}^{\leq 0}\cap\mathbb{T}^{\geq 0}$ are called, respectively, the \textbf{aisle}, the \textbf{coaisle} and the \textbf{heart} of $\mathbb{T}$. 
\end{defn}

It follows from \cite{BBD} that the heart of a t-structure $\mathbb{T}$ in $\T$ is an abelian category with the exact structure induced by the triangles of $\T$ lying in $\Hcal(\mathbb{T})$. Furthermore, there is a \textbf{cohomological functor} (i.e. a functor sending triangles in $\T$ to long exact sequences in $\Hcal(\mathbb{T})$) defined by:
\[
\textsf{H}^0_\mathbb{T}\colon \T\lxr \mathcal{H}(\mathbb{T}), \ X \ \mapsto \textsf{H}^0_{\mathbb{T}}(X):=\tau^{\geq 0}\tau^{\leq 0}(X)\iso \tau^{\leq 0}\tau^{\geq 0}(X),
\]
where $\tau^{\leq 0}\colon \T\lxr \mathbb{T}^{\leq 0}$ and $\tau^{\geq 0}\colon \T\lxr \mathbb{T}^{\geq 0}$ are the \textbf{truncation functors} (i.e. the right and left adjoins, respectively, of the inclusions of $\mathbb{T}^{\leq 0}$ and $\mathbb{T}^{\geq 0}$ in $\T$). Similarly, one can define functors $\tau^{\leq n}$, $\tau^{\geq n}$ and $H^n_\mathbb{T}:=(\tau^{\leq n}\tau^{\geq n})[n]$, for any integer $n$. The triangle in Definition \ref{defn t-str}(iii) can be expressed functorially as 
$$\xymatrix{\tau^{\leq 0}X\ar[r]^{\ f} & X\ar[r]^{g \ \ \ }& \tau^{\geq 1}X\ar[r]& (\tau^{\leq 0}X)[1]}$$
where the maps $f$ and $g$ come, respectively, from the counit and unit of the relevant adjunctions. In particular, it follows that if $f=0$ (respectively, $g=0$), then $\tau^{\leq 0}X=0$ (respectively, $\tau^{\geq 1}X=0$). Note also that the aisle $\mathbb{T}^{\leq 0}$ determines the t-structure ($\mathbb{T}^{\geq 1}=\Ker \Hom_\T(\mathbb{T}^{\leq 0},-)$, see also \cite{KV}).
\begin{exam}
For an abelian category $\A$, there is a \textbf{standard t-structure} $\mathbb{D}_\A:=(\mathbb{D}_\A^{\leq 0},\mathbb{D}_\A^{\leq 0})$ in its derived category $\mathsf{D}(\A)$ defined by the complex cohomology functors ($\textsf{H}_0^i$, for all $i$ in $\mathbb{Z}$) as follows:
$$\mathbb{D}_\A^{\leq 0}:=\{X\in\mathsf{D}(\A) \ | \ \textsf{H}_0^i(X)=0, \forall i>0\},  \ \ \ \ \mathbb{D}_\A^{\geq 0}:=\{X\in\mathsf{D}(\A) \ | \ \textsf{H}_0^i(X)=0, \forall i<0\}.$$
The heart of this t-structure consists of the complexes with cohomologies concentrated in degree zero and, thus, it is equivalent to $\A$. Moreover, the associated cohomology functors coincides with the complex cohomologies. Note also that the standard t-structure restricts to the (right, left) bounded derived category $\mathsf{D}^b(\A)$. Throughout we fix the notation in this example for the standard t-structure, although the subscript $\A$ will be omitted whenever $\A$ is fixed.
\end{exam}

A t-structure $\mathbb{T}=(\mathbb{T}^{\leq 0},\mathbb{T}^{\geq 0})$ in $\T$ is said to be \textbf{nondegenerate} if $\cap_{n\in\mathbb{Z}}\mathsf{Ob}\ \mathbb{T}^{\leq n}=0=\cap_{n\in\mathbb{Z}}\mathsf{Ob}\ \mathbb{T}^{\geq n}$ and  \textbf{bounded} if $\cup_{n\in\mathbb{Z}}\mathsf{Ob}\ \mathbb{T}^{\leq n}=\mathsf{Ob}\ \T=\cup_{n\in\mathbb{Z}}\mathsf{Ob}\ \mathbb{T}^{\geq n}$. Given an abelian category $\A$, it is easy to see that the standard t-structure is nondegenerate in both $\mathsf{D}^b(\A)$ and $\mathsf{D}(\A)$, but bounded only in $\mathsf{D}^b(\A)$.

For functors between triangulated categories endowed with t-structures there is a natural notion of (left, right) exactness. Given two triangulated categories $\T$ and $\V$ endowed with t-structures $\mathbb{T}=(\mathbb{T}^{\leq 0},\mathbb{T}^{\geq 0})$ and $\mathbb{V}=(\mathbb{V}^{\leq 0},\mathbb{V}^{\geq 0})$ respectively, a functor $\alpha\colon \T\lxr \V$ is said to be \textbf{left t-exact} if $\alpha(\mathbb{T}^{\geq 0})\subseteq \mathbb{V}^{\geq 0}$, \textbf{right t-exact} if $\alpha(\mathbb{T}^{\leq 0})\subseteq \mathbb{V}^{\leq 0}$ and \textbf{t-exact} if it is both left and right t-exact. As an example, consider two abelian categories $\A$ and $\B$ and a functor $F\colon \A\lxr \B$. If $F$ is exact, then its derived functor is t-exact with respect to the standard t-structures in $\mathsf{D}(\A)$ and $\mathsf{D}(\B)$. If $\A$ has enough projectives (respectively, injectives) and $F$ is right (respectively, left) exact, then the left derived functor $\mathbb{L}F$ is right t-exact (respectively, the right derived functor $\mathbb{R}F$ is left t-exact).

\subsection{Recollements}
Recall first that a diagram of the form
$$\xymatrix{0\ar[r]&\U\ar[r]^{i}&\T\ar[r]^q&\V\ar[r]&0}$$
is said to be a short exact sequence of triangulated (respectively, abelian) categories if the functor $i$ is fully faithful, $i(\U)$ is a \textbf{thick subcategory} of $\T$, i.e. a triangulated subcategory closed under summands (respectively, a \textbf{Serre subcategory} in the abelian case, i.e. a subcategory closed under extensions, subobjects and quotients) and if $q$ induces an equivalence  $\T/i(\U)\cong \V$.  Note that, in this case, $\Image{i}=\Ker{q}$ and $q$ is essentially surjective. 

Recollements are particularly well-behaved short exact sequences. Recollements of both abelian and triangulated categories appeared in \cite{BBD}, but the properties of recollements of abelian categories were only explored later, for example in \cite{FP}.
\begin{defn}\label{rec}
Let $\U$, $\T$ and $\V$ be triangulated (respectively, abelian) categories. A {\bf recollement of $\T$ by $\U$ and $\V$} is a diagram of triangle (respectively, additive) functors 
\[
\xymatrix@C=0.5cm{
\U \ar[rrr]^{i_*} &&& \T \ar[rrr]^{j^*}  \ar @/_1.5pc/[lll]_{i^*}  \ar
 @/^1.5pc/[lll]_{i^!} &&& \V
\ar @/_1.5pc/[lll]_{j_!} \ar
 @/^1.5pc/[lll]_{j_*}
 } 
 \]
satisfying the following conditions:
\begin{enumerate}
\item[(i)] $(i^*,i_*,i^!)$ and $(j_!,j^*,j_*)$ are adjoint triples;

\item[(ii)] The functors $i_*$, $j_!$, and $j_*$ are fully faithful;

\item[(iii)] $\Image{i_*}=\Ker{j^*}$. 
\end{enumerate}
\end{defn}
Generally, we will use the symbols $\U$, $\T$ and $\V$ to denote triangulated categories (and we write $\textsf{R}_{\mathsf{tr}}(\U,\T,\V)$ for the recollement in Definition \ref{rec}) and the symbols $\B$, $\A$ and $\C$ to denote abelian categories (and we write $\textsf{R}_{\mathsf{ab}}(\B,\A,\C)$ for the analogous recollement to the one in Definition \ref{rec}). Observe that it follows easily from the definition of recollement that the compositions $i^*j_!$ and $i^!j_*$ are identically zero and that the units $\iden\lxr i^!i_*$ and $\iden\lxr j^*j_!$ and the counits $i^*i_*\lxr\iden$ and $j^*j_*\lxr \iden$ of the adjunctions are natural isomorphisms (where the identity functors are defined in the obvious categories). Moreover, it can also be shown (see, for example, \cite{BBD} and \cite{Psaroud:homolrecol}) that for a recollement of triangulated categories $\mathsf{R}_{\mathsf{tr}}(\U,\T,\V)$, for every object $X$ in $\T$, the remaining units and the counits of the adjunctions induce triangles
\[
 \ \ \ \ \ \ \ \ \xymatrix@C=0.5cm{j_!j^*X\ar[r]&X\ar[r]&i_*i^*X\ar[r]&j_!j^*X[1] } \ \ \text{and} \ \ \xymatrix@C=0.5cm{i_*i^!X\ar[r]&X\ar[r]&j_*j^*X\ar[r]&i_*i^!X[1].}
\]  
Similarly, for a recollement of abelian categories $\mathsf{R}_{\mathsf{ab}}(\B,\A,\C)$, for every object $X$ in $\A$, the remaining units and counits of the adjunctions induce exact sequences$\colon$
\[
 \ \ \ \ \ \ \ \ \xymatrix@C=0.5cm{j_!j^*X\ar[r]^{\ \ \ f}&X\ar[r]&i_*i^*X\ar[r]&0} \ \ \text{and} \ \ \xymatrix@C=0.5cm{0\ar[r]& i_*i^!X\ar[r]&X\ar[r]^{g\ \ \ }&j_*j^*X}
\]  
with $\Ker{f}$ and $\Coker{g}$ lying in $i_*\B$ (see, for example, \cite{FP} and  \cite{Psaroud:homolrecol} for details).

A useful tool to produce recollements of module categories or of derived module categories is the concept of \textbf{ring epimorphsim}, i.e. an epimorphism in the category of (unital) rings. It is well-known (see \cite{GdP,GeLe}) that ring epimorphisms with domain $R$ (up to the natural notion of equivalence) are in bijection with \textbf{bireflective} subcategories of $\Mod{R}$, i.e. full subcategories of $\Mod{R}$ whose inclusion functor admits both left and right adjoints. In order for the (exact) restriction of scalars functor to induce a fully faithful functor on the derived level, however, one needs to require more from a ring epimorphism.

\begin{thm}\textnormal{\cite[Theorem 4.4]{GeLe}\cite[Section 4]{NS}}
\label{hom ring epi}
Let $A$ be a ring and $f\colon A\lxr B$ a ring homomorphism. The following are equivalent.
\begin{enumerate}
\item The derived functor $f_*\colon \mathsf{D}(B)\lxr \mathsf{D}(A)$ is fully faithful.
\item The map $f$ is a ring epimorphism and, for any $i>0$, we have $\Tor_i^A(B,B)=0$.
\item For any $i\geq 0$, and any $B$-modules $M$ and $N$, we have $\Ext^i_B(M,N)=\Ext^i_A(M,N)$.
\end{enumerate}
\end{thm}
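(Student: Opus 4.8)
\textbf{Proof plan for Theorem \ref{hom ring epi}.}

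The plan is to prove the cycle (i)$\Rightarrow$(ii)$\Rightarrow$(iii)$\Rightarrow$(i), which is the most economical route since each implication relies on a standard adjunction or base-change computation. Throughout I would work with the derived restriction-of-scalars functor $f_*\colon \mathsf{D}(B)\lxr \mathsf{D}(A)$ (which is exact, as $f_*$ on module categories is exact) and its left adjoint, the derived extension of scalars $f^*=-\otimes_A^{\mathbb{L}}B\colon \mathsf{D}(A)\lxr \mathsf{D}(B)$. The key formal input is that $f_*$ is fully faithful if and only if the counit $f^*f_*\lxr \iden_{\mathsf{D}(B)}$ is a natural isomorphism, equivalently if and only if the unit $\iden_{\mathsf{D}(A)}\to f_*f^*$ becomes an isomorphism after applying $f_*$; I would also record that full faithfulness of $f_*$ is equivalent to the single condition $f_*B\otimes_A^{\mathbb{L}}B\iso B$ in $\mathsf{D}(B)$, by a standard generation/devissage argument since $B$ generates $\mathsf{D}(B)$ as a localising subcategory and $f_*$ commutes with coproducts.

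For (i)$\Rightarrow$(ii): assuming $f_*$ fully faithful, apply the counit isomorphism to the object $B\in\mathsf{D}(B)$, which gives $B\otimes_A^{\mathbb{L}}B\iso B$ in $\mathsf{D}(B)$. Taking $\mathsf{H}^0$ yields $B\otimes_A B\iso B$, i.e.\ $f$ is a ring epimorphism (this is the classical characterisation of ring epimorphisms via the multiplication map being an isomorphism), and taking $\mathsf{H}^{-i}$ for $i>0$ yields $\Tor_i^A(B,B)=0$. For (ii)$\Rightarrow$(iii): given that $f$ is a ring epimorphism with $\Tor_i^A(B,B)=0$ for $i>0$, one shows $B\otimes_A^{\mathbb{L}}B\iso B$ in $\mathsf{D}(B)$ (the Tor-vanishing kills the higher cohomology, the ring-epimorphism condition handles $\mathsf{H}^0$), hence for $B$-modules $M,N$ one computes
\[
\Ext_A^i(M,N)=\Hom_{\mathsf{D}(A)}(M,N[i])=\Hom_{\mathsf{D}(A)}(f_*M,f_*N[i]);
\]
using a projective resolution $P^\bullet\to M$ over $B$ and the fact that $f_*P^\bullet\otimes_A^{\mathbb{L}}B\iso P^\bullet\otimes_A^{\mathbb{L}}B\iso P^\bullet\otimes_B(B\otimes_A^{\mathbb{L}}B)\iso P^\bullet$ in $\mathsf{D}(B)$, adjunction converts this back to $\Hom_{\mathsf{D}(B)}(M,N[i])=\Ext_B^i(M,N)$. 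Equivalently, and perhaps more cleanly, one argues directly that the counit $f^*f_*\to\iden$ is an isomorphism on all of $\mathsf{D}(B)$ from its being an isomorphism on $B$, and reads off the Ext comparison.

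For (iii)$\Rightarrow$(i): assuming $\Ext_A^i(M,N)=\Ext_B^i(M,N)$ for all $i\ge 0$ and all $B$-modules $M,N$, one needs $\Hom_{\mathsf{D}(A)}(f_*X,f_*Y)\iso\Hom_{\mathsf{D}(B)}(X,Y)$ for all $X,Y\in\mathsf{D}(B)$. I would reduce to $X,Y$ being $B$-modules by a double devissage: first fix $Y$ a module and let $X$ range, using that both sides are cohomological functors in $X$ commuting with coproducts and that $\mathsf{D}(B)$ is generated by the modules (indeed by $B$); then fix $X$ arbitrary and let $Y$ range similarly. The hypothesis (iii) gives exactly the base case $\Hom_{\mathsf{D}(A)}(M,N[i])=\Ext_A^i(M,N)=\Ext_B^i(M,N)=\Hom_{\mathsf{D}(B)}(M,N[i])$, and compatibility of these isomorphisms with the long exact sequences coming from triangles lets the devissage go through.

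The main obstacle I anticipate is the bookkeeping in the devissage arguments of (iii)$\Rightarrow$(i): one must ensure the comparison maps are not merely abstract isomorphisms but are the ones induced by the functor $f_*$ (i.e.\ natural in the right way), so that they are compatible with shifts, triangles, and coproducts; only then does the generation argument propagate the isomorphism from modules to all complexes. The other implications are essentially a single base-change computation $B\otimes_A^{\mathbb{L}}B\iso B$ together with the elementary characterisation of ring epimorphisms, so the real content is concentrated in setting up this equivalence cleanly. Since the statement is quoted from \cite{GeLe} and \cite{NS}, I would at this point simply cite those references for the details rather than reproduce the full devissage.
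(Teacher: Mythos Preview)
The paper does not prove this theorem: it is stated as a quotation from \cite[Theorem 4.4]{GeLe} and \cite[Section 4]{NS}, with no proof environment following it. Your sketch is the standard argument one finds in those references (the equivalence is controlled by whether the counit $B\otimes_A^{\mathbb{L}}B\to B$ is an isomorphism, together with a d\'evissage from modules to complexes), and you correctly anticipate that in this context one would simply cite the sources rather than reproduce the details.
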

We say that a ring epimorphism is \textbf{homological} if the above equivalent conditions are satisfied. Note that while ring epimorphisms do not always give rise to recollements of module categories (see \cite{PsaroudVitoria}), a homological ring epimorphism always gives rise to a recollement of triangulated categories (see \cite{NS}). 

\begin{exam}\label{examrecol}
Let $A$ be a ring, $e$ an idempotent element of $A$ and $f\colon A\lxr A/AeA$ the associated ring epimorphism. There is
a recollement of $\Mod{A}$, as in the diagram below, which is said to be \textbf{induced by the idempotent element $e$}. 
\[
\xymatrix@C=0.5cm{
\Mod{A/AeA} \ar[rrr]^{f_*} &&& \Mod{A} \ar[rrr]^{\Hom_A(eA,-) \ } \ar
@/_1.5pc/[lll]_{-\otimes_AA/AeA}  \ar
 @/^1.5pc/[lll]^{\Hom_A(A/AeA,-)} &&& \Mod{eAe}.
\ar @/_1.5pc/[lll]_{-\otimes_{eAe}eA} \ar
 @/^1.5pc/[lll]^{\Hom_{eAe}(Ae,-)}
 } 
 \]
Moreover, if $\Tor_i^A(A/AeA,A/AeA)=0$ for all $i>0$ (i.e. $f$ is a homological ring epimorphism), it follows from \cite{CPSmemoirs} that there is a recollement of triangulated categories$\colon$
 \begin{equation}
 \label{stratifyingrecol}
\xymatrix@C=0.5cm{
\mathsf{D}(A/AeA) \ar[rrr]^{f_*} &&& \mathsf{D}(A) \ar[rrr]^{\mathbb{R}\Hom_A(eA,-) \ } \ar
@/_1.5pc/[lll]_{-\otimes^{\mathbb{L}}_AA/AeA}  \ar
 @/^1.5pc/[lll]^{\mathbb{R}\Hom_A(A/AeA,-)} &&& \mathsf{D}(eAe).
\ar @/_1.5pc/[lll]_{-\otimes^{\mathbb{L}}_{eAe}eA} \ar
 @/^1.5pc/[lll]^{\mathbb{R}\Hom_{eAe}(Ae,-)}
 } 
 \end{equation}  
\end{exam}

\begin{defn}
\label{defnstratifyingrecollement}
We say that a recollement of a derived module category by derived module categories is {\bf stratifying} if it is of the form (\ref{stratifyingrecol}).
\end{defn}

We study recollements up to the following notion of equivalence (see also \cite{PsaroudVitoria}). Throughout the paper, a diagram of functors is said to be commutative if it commutes up to natural equivalence of functors.

\begin{defn}
\label{def eq rec}
Two recollements of triangulated categories $\textsf{R}_{\mathsf{tr}}(\U,\T,\V)$ and $\textsf{R}_{\mathsf{tr}}(\U^\prime,\T^\prime,\V^\prime)$ are \textbf{equivalent}, if there are triangle equivalence functors  $F\colon \T\lxr \T^\prime$ and $G\colon \V\lxr \V^\prime$ such that  the diagram below commutes, i.e. there is a natural equivalence of functors $G j^*\cong j^{*\prime}F$.
\[
\xymatrix{
  \T \ar[d]_{F}^{\cong} \ar[r]^{j^*} & \V \ar[d]^{G}_{\cong}     \\
  \T^\prime    \ar[r]^{j^{*\prime}} & \V^\prime                  } 
\]
An equivalence of recollements of abelian categories is defined analogously.
\end{defn}

Note that the commutativity (up to natural equivalences) of the above diagram is equivalent to the commutativity of the six diagrams associated to the six functors of the recollements (see \cite[Lemma $4.2$]{PsaroudVitoria} for more details on the abelian case; the triangulated case is analogous, see \cite{PS}).

\subsection{Glueing}
\label{subBBDconstruction}
Example \ref{examrecol} shows how sometimes it is possible to build a recollement of triangulated categories from a recollement of abelian categories. In this subsection we recall from \cite{BBD} a procedure in the opposite direction, using t-structures. 

\begin{thm}\textnormal{\cite{BBD}}
\label{glueing rec heart}
Let $\textsf{R}_{\mathsf{tr}}(\U,\T,\V)$ be a recollement of triangulated categories of the form (\ref{rec}). 
Suppose that $\mathbb{U}=(\mathbb{U}^{\leq 0}, \mathbb{U}^{\geq 0})$ and $\mathbb{V}=(\mathbb{V}^{\leq 0},\mathbb{V}^{\geq 0})$ are t-structures in $\U$ and $\V$, respectively. Then there is a $t$-structure $\mathbb{T}=(\mathbb{T}^{\leq 0},\mathbb{T}^{\geq 0})$ in $\T$ defined by
\[
\mathbb{T}^{\leq 0}=\big\{X\in \T \ | \ j^*(X)\in \mathbb{V}^{\leq 0} \ \ \text{and} \ \ i^*(X)\in \mathbb{U}^{\leq 0} \big\}, \ \ \ 
\mathbb{T}^{\geq 0}=\big\{X\in \T \ | \ j^*(X)\in \mathbb{V}^{\geq 0} \ \ \text{and} \ \ i^!(X)\in \mathbb{U}^{\geq 0} \big\}.
\]
Convesely, given a t-structure $\mathbb{T}=(\mathbb{T}^{\leq 0},\mathbb{T}^{\geq 0})$ in $\T$, $\mathbb{T}$ is obtained as above from t-structures $\mathbb{U}$ and $\mathbb{V}$ in $\U$ and $\V$, respectively, if and only if $j_!j^*\mathbb{T}^{\leq 0}\subseteq \mathbb{T}^{\leq 0}$. In that case, $\mathbb{U}$ and $\mathbb{V}$ are uniquely determined by
$\mathbb{U}=(i^*\mathbb{T}^{\leq 0},i^!\mathbb{T}^{\geq 0})$ and $\mathbb{V}=(j^*\mathbb{T}^{\leq 0},j^*\mathbb{T}^{\geq 0}),$ the functors $i_*$ and $j^*$ are t-exact and their left (respectively, right) adjoints are right (respectively, left) t-exact. Moreover, the recollement of triangulated categories $\textsf{R}_{\mathsf{tr}}(\U,\T,\V)$ induces a recollement of abelian categories of the corresponding hearts $\textsf{R}_{\mathsf{ab}}(\Hcal(\mathbb{U}),\Hcal(\mathbb{T}),\Hcal(\mathbb{V}))$.
\end{thm}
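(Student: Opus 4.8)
The statement is (essentially) \cite[1.4.10]{BBD}, and the plan is to reprove it in four stages: (a) the glued pair is a t-structure; (b) the characterisation of glued t-structures, with the formulas for $\mathbb{U},\mathbb{V}$ and their uniqueness; (c) the t-exactness assertions; (d) the passage to the hearts.

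\emph{Step (a).} Axioms (i) and (ii) of Definition~\ref{defn t-str} for $(\mathbb{T}^{\leq 0},\mathbb{T}^{\geq 0})$ are routine: (ii) follows by applying the shift-commuting functors $j^*,i^*,i^!$ to the defining conditions, and for (i), given $X\in\mathbb{T}^{\leq 0}$, $Y\in\mathbb{T}^{\geq 1}$, one applies $\Hom_\T(-,Y)$ to the recollement triangle $j_!j^*X\lxr X\lxr i_*i^*X\lxr j_!j^*X[1]$ and uses the adjunction isomorphisms $\Hom_\T(j_!j^*X,Y)\iso\Hom_\V(j^*X,j^*Y)$ and $\Hom_\T(i_*i^*X,Y)\iso\Hom_\U(i^*X,i^!Y)$, which vanish by axiom (i) for $\mathbb{V}$ and $\mathbb{U}$. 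The content is the decomposition axiom (iii), which I would prove by a two-step truncation glued by octahedra. Given $X\in\T$: first truncate the ``$\V$-part'' by taking the $\mathbb{V}$-decomposition $A_0\lxr j^*X\lxr B_0\lxr A_0[1]$ and forming, via the octahedron on $X\lxr j_*j^*X\lxr j_*B_0$, a triangle $P\lxr X\lxr j_*B_0\lxr P[1]$; applying $j^*$ gives $j^*P\iso A_0\in\mathbb{V}^{\leq 0}$, while $j_*B_0\in\mathbb{T}^{\geq 1}$ since $j^*j_*B_0=B_0$ and $i^!j_*B_0=0$. Next truncate the ``$\U$-part'' by taking the $\mathbb{U}$-decomposition $C\lxr i^*P\lxr D\lxr C[1]$ and forming, via the octahedron on $P\lxr i_*i^*P\lxr i_*D$, a triangle $P'\lxr P\lxr i_*D\lxr P'[1]$; applying $i^*$ and $j^*$ gives $i^*P'\iso C\in\mathbb{U}^{\leq 0}$ and $j^*P'\iso j^*P\iso A_0$, so $P'\in\mathbb{T}^{\leq 0}$, while $i_*D\in\mathbb{T}^{\geq 1}$. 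A final octahedron on $P'\lxr P\lxr X$ yields $P'\lxr X\lxr B\lxr P'[1]$ with $B$ an extension of $j_*B_0$ by $i_*D$, hence $B\in\mathbb{T}^{\geq 1}$ by closure of coaisles under extensions; this is the required decomposition.

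\emph{Steps (b) and (c).} If $\mathbb{T}$ is glued from $\mathbb{U},\mathbb{V}$ then $j^*\mathbb{T}^{\leq 0}\subseteq\mathbb{V}^{\leq 0}$ by definition, and $j_!(\mathbb{V}^{\leq 0})\subseteq\mathbb{T}^{\leq 0}$ because $j^*j_!=\iden$ and $i^*j_!=0$, so $j_!j^*\mathbb{T}^{\leq 0}\subseteq\mathbb{T}^{\leq 0}$; the same two computations, together with the dual ones for $j_*$ and for $i_*$ (using $j^*i_*=0=i^!j_*$... wait, $i^!i_*=\iden$, $j^*i_*=0$), give both inclusions in $\mathbb{V}=(j^*\mathbb{T}^{\leq 0},j^*\mathbb{T}^{\geq 0})$, $\mathbb{U}=(i^*\mathbb{T}^{\leq 0},i^!\mathbb{T}^{\geq 0})$, whence uniqueness. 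For the converse, assume $j_!j^*\mathbb{T}^{\leq 0}\subseteq\mathbb{T}^{\leq 0}$. The key consequence, applied to the triangle $j_!j^*A\lxr A\lxr i_*i^*A\lxr j_!j^*A[1]$ for $A\in\mathbb{T}^{\leq 0}$, is $i_*i^*\mathbb{T}^{\leq 0}\subseteq\mathbb{T}^{\leq 0}$, i.e.\ $i_*$ is right t-exact; dually, $\Hom_\T(A_0[1],j_*j^*B)\iso\Hom_\V(j^*A_0[1],j^*B)=0$ for $A_0\in\mathbb{T}^{\leq 0}$, $B\in\mathbb{T}^{\geq 0}$ gives $j_*j^*\mathbb{T}^{\geq 0}\subseteq\mathbb{T}^{\geq 0}$, hence (via $i_*i^!B\lxr B\lxr j_*j^*B$) $i_*$ is also left t-exact, so $i_*$ is t-exact. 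One then checks that $\mathbb{V}$ is a t-structure (Hom-vanishing from $\Hom_\V(j^*A,j^*B)\iso\Hom_\T(j_!j^*A,B)=0$; decomposition by applying $j^*$ to a $\mathbb{T}$-decomposition of any preimage) and that $\mathbb{U}$ is a t-structure (decomposing $i_*U$ in $\mathbb{T}$, and using t-exactness of $i_*$ and $\mathbb{V}^{\leq 0}\cap\mathbb{V}^{\geq 1}=0$ to see both truncation terms lie in $\Ker j^*=i_*\U$). Finally, $\mathbb{T}$ is recovered as the glueing: $\subseteq$ is the definition of $\mathbb{U},\mathbb{V}$, and for $\supseteq$ one applies the recollement triangle $j_!j^*X\lxr X\lxr i_*i^*X$ together with $j_!j^*\mathbb{T}^{\leq 0}\subseteq\mathbb{T}^{\leq 0}$ and $i_*i^*\mathbb{T}^{\leq 0}\subseteq\mathbb{T}^{\leq 0}$. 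The t-exactness of $j^*$ is immediate from the formulas, and the right/left t-exactness of the adjoints $i^*,j_!$ and $i^!,j_*$ follows formally from the t-exactness of $i_*,j^*$ and the adjunctions (a left adjoint of a left-t-exact functor is right-t-exact, and dually).

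\emph{Step (d) and main obstacle.} With $i_*$ and $j^*$ t-exact, the recollement of hearts is then formal: $i_*$ and $j^*$ restrict to functors $\Hcal(\mathbb{U})\lxr\Hcal(\mathbb{T})$ and $\Hcal(\mathbb{T})\lxr\Hcal(\mathbb{V})$, $i_*$ remains fully faithful, the four other functors are obtained by composing the triangulated adjoints with the cohomological functors $\textsf{H}^0_\mathbb{U},\textsf{H}^0_\mathbb{V}$ and the heart inclusions, the adjunctions on the hearts follow from the universal property of truncation, and $\Ker(j^*|_{\Hcal(\mathbb{T})})=\Image(i_*|_{\Hcal(\mathbb{U})})$ because $j^*X=0$ in $\Hcal(\mathbb{V})$ forces $X\in i_*\U$ (as $j^*X$ is already in $\Hcal(\mathbb{V})$) and then $i^*X=i^!X\in\Hcal(\mathbb{U})$ by right/left t-exactness of $i^*,i^!$. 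The one genuinely delicate point is the octahedral construction in Step~(a): arranging the three octahedra and verifying the identifications $j^*P\iso A_0$, $i^*P'\iso C$, $j^*P'\iso A_0$ so that the second truncation does not disturb the $\V$-part already secured by the first. Everything else is bookkeeping with adjunctions and the two recollement triangles.
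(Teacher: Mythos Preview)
The paper does not give its own proof of this theorem: it is cited from \cite{BBD}, and the only supplement the paper provides is the explicit list of the six functors $I^*,I_*,I^!,J_!,J^*,J_*$ of the induced recollement of hearts (the paragraph following the statement), without verifying the recollement axioms. Your proposal, by contrast, is a genuine proof sketch, and it follows the standard route of \cite[1.4.10]{BBD}: the two-step octahedral truncation in Step~(a), the derivation of $j_*j^*\mathbb{T}^{\geq 0}\subseteq\mathbb{T}^{\geq 0}$ and hence t-exactness of $i_*$ from the single hypothesis $j_!j^*\mathbb{T}^{\leq 0}\subseteq\mathbb{T}^{\leq 0}$ in Steps~(b)--(c), and the verification of the abelian recollement in Step~(d). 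All of this is correct; the identifications $j^*P\cong A_0$, $i^*P'\cong C$, $j^*P'\cong A_0$ that you flag as delicate do follow cleanly once one writes the auxiliary triangles $i_*i^!X\to P\to j_*A_0$ and $j_!j^*P\to P'\to i_*C$ coming from the octahedra and applies $j^*$, respectively $i^*$. Two cosmetic remarks: the parenthetical ``wait'' should be removed in a final version, and in Step~(b) the claim $i_*i^*\mathbb{T}^{\leq 0}\subseteq\mathbb{T}^{\leq 0}$ is not literally ``closure under extensions'' of the aisle but rather the orthogonality characterisation $\mathbb{T}^{\leq 0}={}^{\perp_0}(\mathbb{T}^{\geq 1})$ applied to the long exact sequence of $\Hom_\T(-,B)$, $B\in\mathbb{T}^{\geq 1}$, on the triangle $j_!j^*A\to A\to i_*i^*A$.
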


We explain how to build the recollement $\textsf{R}_{\mathsf{ab}}(\Hcal(\mathbb{U}),\Hcal(\mathbb{T}),\Hcal(\mathbb{V}))$ from $\textsf{R}_{\mathsf{tr}}(\U,\T,\V)$, as stated in the theorem. Consider the cohomological functors $\textsf{H}^0_{\mathbb{U}}\colon \U\lxr \mathcal{H}(\mathbb{U})$, $\textsf{H}^0_{\mathbb{T}}\colon \T\lxr \mathcal{H}(\mathbb{T})$, $\textsf{H}^0_{\mathbb{V}}\colon \V\lxr \mathcal{H}(\mathbb{V})$, and the full embeddings $\varepsilon_{\mathbb{U}}\colon \mathcal{H}(\mathbb{U})\lxr \U$, $\varepsilon_{\mathbb{T}}\colon \mathcal{H}(\mathbb{T})\lxr \T$, $\varepsilon_{\mathbb{V}}\colon \mathcal{H}(\mathbb{V})\lxr \V$ associated with the t-structures $\mathbb{U}$, $\mathbb{T}$ and $\mathbb{V}$, respectively. Then the recollement $\textsf{R}_{\mathsf{ab}}(\Hcal(\mathbb{U}),\Hcal(\mathbb{T}),\Hcal(\mathbb{V}))$ is given by
\[
\xymatrix@C=0.5cm{
\mathcal{H}(\mathbb{U}) \ar[rrr]^{I_*} &&& \mathcal{H}(\mathbb{T}) \ar[rrr]^{J^*}  \ar @/_1.5pc/[lll]_{I^*}  \ar
 @/^1.5pc/[lll]_{I^!} &&& \mathcal{H}(\mathbb{V})
\ar @/_1.5pc/[lll]_{J_!} \ar
 @/^1.5pc/[lll]_{J_*}
 }
\]
where the functors are defined as follows:
\[
{\begin{array}{rlrl}
I^*\hspace{-0.25cm}&=\textsf{H}^{0}_{\mathbb{U}} \circ i^*\circ \epsilon_{\mathbb{T}}       \hspace{1.0cm}  & J_*\hspace{-0.25cm}&=\textsf{H}^{0}_{\mathbb{T}}\circ j_!\circ\epsilon_{\mathbb{V}}\\\noalign{\vspace{0.2cm}}
I_*\hspace{-0.25cm}&=\textsf{H}^{0}_{\mathbb{T}}\circ i_*\circ \epsilon_{\mathbb{U}}  \hspace{1.0cm}   & J^*\hspace{-0.25cm}&=\textsf{H}^{0}_{\mathbb{V}}\circ j^*\circ\epsilon_{\mathbb{T}} \\\noalign{\vspace{0.2cm}}
I^!\hspace{-0.25cm}&=\textsf{H}^{0}_{\mathbb{U}} \circ i^!\circ \epsilon_{\mathbb{T}}  \hspace{1.0cm}   & J_*\hspace{-0.25cm}&=\textsf{H}^{0}_{\mathbb{T}}\circ j_*\circ\epsilon_{\mathbb{V}.}
\end{array}}
\]
In other words, we have the following diagram:
\begin{equation}\nonumber
\xymatrix@C=0.5cm{ \U \ar[ddd]_{\textsf{H}^{0}_{\U}} \ar[rrr]^{i_*} &&& \T \ar[ddd]_{\textsf{H}^{0}_{\T}} \ar[rrr]^{j^*}  \ar @/_1.5pc/[lll]_{i^*}  \ar @/^1.5pc/[lll]^{i^!} &&& \V \ar[ddd]_{\textsf{H}^{0}_{\V}}
\ar @/_1.5pc/[lll]_{j_!} \ar
 @/^1.5pc/[lll]^{j_*} &&&&  \textsf{R}_{\mathsf{tr}}(\U,\T,\V) \ar[ddd]_{\textsf{H}^0} \\ 
 &&& &&& &&&& \\
 &&& &&& &&&& \\
\mathcal{H}(\mathbb{U}) \ar @/_1.5pc/[uuu]_{\varepsilon_{\U}} \ar[rrr]^{I_*} &&& \mathcal{H}(\mathbb{T}) \ar @/_1.5pc/[uuu]_{\varepsilon_{\T}} \ar[rrr]^{J^*}  \ar @/_1.5pc/[lll]_{I^*}  \ar
 @/^1.5pc/[lll]^{I^!} &&& \mathcal{H}(\mathbb{V}) \ar @/_1.5pc/[uuu]_{\varepsilon_{\V}}
\ar @/_1.5pc/[lll]_{J_!} \ar
 @/^1.5pc/[lll]^{J_*} &&&& \textsf{R}_{\mathsf{ab}}(\mathcal{H}(\mathbb{U}),\mathcal{H}(\mathbb{T}),\mathcal{H}(\mathbb{V})) \ar @/_1.5pc/[uuu]_{\varepsilon}
 } 
\end{equation}
where the functors in the lower recollement are defined using the vertical functors as described above. A recollement obtained in this way will be called \textbf{a recollement of hearts}. 
\section{Realisation functors}\label{sec 3}
Given a t-structure in a triangulated category $\T$ with heart $\Hcal$, it is natural to ask how does the (bounded) derived category of $\Hcal$ compare with $\T$. In \cite{BBD}, a functor between these two categories is built under some assumptions on the category $\T$ and the t-structure on it: the realisation functor. We consider the more general approach from \cite{B} that allows the construction of the realisation functor for t-structures in any triangulated category that admits an f-category over it. We survey this construction and the relevant associated notions in some detail, as we will need a deeper understanding of this functor later in this text. We refer to \cite{KV2} for a different approach to the realisation functor. Here is an informal overview of this section.\\

\textbf{Subsection 3.1: f-categories}
\begin{itemize}
\item We review the definition of a filtered enhancement of a triangulated category, motivated by the example of filtered derived categories (Example \ref{exam filtered derived cat}).

\item We recall how to lift a t-structure from a triangulated category to a filtered enhancement (Proposition \ref{f-cat properties}). This is an important step towards the construction of realisation functors.

\item We show that given a triangulated category with a filtered enhancement, there are compatible filtered enhancements on any thick subcategory and on any Verdier quotient (Proposition \ref{f-categories over exact seq}). This results is useful for the use of realisation functors in the context of recollements (Section 6).
\end{itemize}

\textbf{Subsection 3.2: Realisation functors and their properties}
\begin{itemize}
\item We recall with some detail the construction and basic properties of realisation functors (Theorem \ref{real}) which will be used throughout the paper.

\item We discuss a result of Beilinson concerning commutative diagrams of functors involving realisation functors (Theorem \ref{lift implies diagram}). This is particularly relevant for the study of the \textit{shape} of derived equivalences in Subsection 5.2 and for the construction of equivalences of recollements in Section~$6$.
\end{itemize}

\textbf{Subsection 3.3: Examples of realisation functors}
\begin{itemize}
\item We show that the realisation functor of the standard t-structure in a derived category (built with respect to the filtered derived category) is essentially an identity functor (Proposition~\ref{standard identity}).

\item Given a triangle equivalence $\phi$ between the bounded derived categories of two abelian categories, we show that $\phi$ is naturally equivalent to a realisation functor composed with the derived functor of an exact equivalence of abelian categories (Proposition~\ref{everything is real}).
\end{itemize}

\subsection{f-categories}
The key idea for constructing a realisation functor in \cite{BBD} was that of using the so-called filtered derived category. However, as it was observed in \cite{B}, we only need the abstract properties of such categories for this construction - giving rise to the notion of an f-category. For a detailed survey on f-categories, we refer to \cite{Schn}.

\begin{defn}
\label{defnfcat}
An \textbf{f-category} is a triangulated category $\Xcal$ endowed with an autoequivalence $s\colon \Xcal\lxr \Xcal$ (called \textit{f-shift}), a natural transformation $\alpha\colon \iden_{\X}\lxr s$ and two full triangulated subcategories $\Xcal({\geq 0})$ and $\Xcal(\leq 0)$  such that, for $\Xcal(\geq n):=s^n\Xcal(\geq 0)$ and $\Xcal(\leq n):=s^n\Xcal({\leq 0})$, we have
\begin{enumerate}
\item $\Hom_\Xcal(\Xcal(\geq 1),\Xcal(\leq 0))=0$;
\item For every object $X$ in $\Xcal$, there are $Y$ in $\Xcal(\geq 1)$, $Z$ in $\Xcal(\leq 0)$ and a triangle in $\X$:
$$\xymatrix{Y\ar[r]& X\ar[r]& Z\ar[r]& Y[1];}$$
\item $\mathsf{Ob}\ \Xcal=\cup_{n\in\mathbb{Z}}\mathsf{Ob}\ \Xcal(\geq n)=\cup_{n\in\mathbb{Z}}\mathsf{Ob}\ \Xcal(\leq n)$;
\item $\Xcal(\geq 1)\subseteq \Xcal(\geq 0)$ and $\Xcal(\leq -1)\subseteq \Xcal(\leq 0)$;
\item $\alpha_X=s(\alpha_{s^{-1}X})$ for all $X$ in $\Xcal$;
\item For any $X\in\Xcal(\geq 1)$ and $Y\in\Xcal(\leq 0)$, $\Hom_{\Xcal}(\alpha_Y,X)$ and $\Hom_{\Xcal}(Y,\alpha_X)$ are isomorphisms.
\end{enumerate}
Given a triangulated category $\T$, an \textbf{f-category over} $\T$ (or an \textbf{f-enhancement of} $\T$) is a pair $(\X,\theta)$ where $\X$ is an f-category and $\theta\colon \T\lxr \X(\geq 0)\cap\X(\leq 0)$ is an equivalence of triangulated categories.
\end{defn}

For an f-category $\X$, we write the whole data as $(\X, \X({\geq 0}), \X({\leq 0}), s\colon \X\stackrel{\cong}{\lxr} \X, \alpha\colon \iden_{\X}\lxr s)$, although we write just $\X$ when the remaining data is fixed. Let $(\Xcal,\theta)$ denote an f-category over $\T$. Note that for any $n$ in $\mathbb{Z}$, the pair $(\Xcal(\geq n+1),\Xcal(\leq n))$ is a \textbf{stable} t-structure, i.e. a t-structure whose aisle is a triangulated subcategory. In particular, there are truncation functors $\sigma_{\geq n}\colon \X\lxr \X(\geq n)$ and $\sigma_{\leq n}\colon \X\lxr \X(\leq n)$, which are triangle functors.  We define the following further triangle functors
$$\mathsf{gr}^n_\X:=\theta^{-1}s^{-n}\sigma_{\leq n}\sigma_{\geq n}\colon \X\lxr \T.$$

There are standard f-categories over a large class of triangulated categories: (bounded) derived categories of abelian categories. These are the so-called \textbf{filtered derived categories}. In the following example we build the filtered derived category of the unbounded derived category of an abelian category. The bounded setting is entirely analogous.

\begin{exam}\cite{Ill}\label{exam filtered derived cat}
Given an abelian category $\A$, consider the (additive) category $\mathsf{C}\mathsf{F}(\A)$ of complexes of objects in $\A$ endowed with a finite decreasing filtration. The objects in $\mathsf{C}\mathsf{F}(\A)$ are, thus, pairs $(X,F)$, where $X$ lies in the category of complexes $\mathsf{C}(\A)$ and $F$ is a filtration of $X$ as follows:
\[
X=F_aX\supseteq F_{a+1}X\supseteq F_{a+2}X\supseteq \cdots \supseteq F_{b-1}X\supset F_bX=0,
\]
with $a\leq b$ integers. The morphisms in $\mathsf{C}\mathsf{F}(\A)$ are morphisms of complexes respecting the filtration, i.e. given two filtered complexes $(X,F)$ and $(Y,G)$ a morphism $f\colon (X,F)\lxr (Y,G)$ in $\mathsf{C}\mathsf{F}(\A)$ is a sequence of chain maps $(\dots, F_{a}f,F_{a+1}f,\dots, F_bf,\dots)$, with $F_if\colon F_iX\lxr G_iY$ compatible with the inclusion maps of the filtrations $F$ and $G$. There are natural functors $\mathsf{gr}^i\colon \mathsf{C}\mathsf{F}(\A)\lxr \mathsf{C}(\A)$ associating to a filtered complex $(X,F)$ its $i$-th graded component $\textsf{gr}_F^i(X):=F_iX/F_{i+1}X$. A morphism $\phi$ in $\mathsf{C}\mathsf{F}(\A)$ is said to be a \textbf{filtered quasi-isomorphism} if $F_if$ is a quasi-isomorphism, for all $i$ in $\mathbb{Z}$ (see \cite{Ill} for further equivalent definitions of filtered quasi-isomorphisms). The \textbf{filtered derived category} $\mathsf{D}\mathsf{F}(\A)$ of an abelian category $\A$ is the localisation of $\mathsf{C}\mathsf{F}(\A)$ on filtered quasi-isomorphisms. Moreover, one can also define the filtered homotopy category $\mathsf{KF}(\A)$ of $\A$, where the objects are the same as in $\mathsf{CF}(\A)$ but the morphisms are equivalences classes of morphisms in $\mathsf{C}\mathsf{F}(\A)$ modulo filtered homotopy (two morphisms $f, f'\colon (X,F)\lxr (Y,G)$ in $\mathsf{C}\mathsf{F}(\A)$ are homotopic, if there is a homotopy from $f$ to $f'$ compatible with the filtrations). The filtered derived category can also obtained as the localisation of $\mathsf{KF}(\A)$ on filtered quasi-isomorphisms.

Note that there is a natural fully faithful functor $\xi\colon \mathsf{D}(\A)\lxr \mathsf{D}\mathsf{F}(\A)$ sending an object $X$ in $\mathsf{D}(\A)$ to the pair $(X,0)$, where $0$ indicates the trivial filtration $X=F_0(X)\supseteq F_1X=0$. The filtered derived category comes naturally equipped with an autoequivalence $s\colon \mathsf{D}\mathsf{F}(\A)\lxr \mathsf{D}\mathsf{F}(\A)$ corresponding to the shift on filtration, i.e. $s(X,F)=(X,G)$, where $G_iX=F_{i-1}X$. Also, there is a natural transformation $\alpha\colon \iden_{\mathsf{D}\mathsf{F}(\A)}\lxr s$ such that for any object $X$, $\alpha_X$ is induced by the inclusion maps of $F_{i+1}X$ into $F_iX$, for all $i$ in $\mathbb{Z}$. Finally, consider $\mathsf{D}\mathsf{F}(\A)(\geq 0)$ (respectively, $\mathsf{D}\mathsf{F}(\A)(\leq 0)$) to be the full subcategory of $\mathsf{D}\mathsf{F}(\A)$ spanned by filtered complexes whose non-trivial graded components are in non-negative (respectively, non-positive) degrees. It follows that $\mathsf{D}\mathsf{F}(\A)$ is an f-category over $\mathsf{D}(\A)$.
There is also a natural functor $\omega\colon \mathsf{D}\mathsf{F}(\A)\lxr\mathsf{D}(\A)$, the forgetful functor. Note also that the functors $\mathsf{gr}^i$ defined at the level of complexes induce triangle functors from $\mathsf{D}\mathsf{F}(\A)\lxr \mathsf{D}(\A)$ and, as the notation suggests, these are the analogues in this setting of the $\mathsf{gr}$-functors defined in the general context of f-categories.
\end{exam}

We summarise some useful facts about f-categories over a triangulated category. 

\begin{prop}\textnormal{\cite{B,Schn}}
\label{f-cat properties}
If $(\X,\theta)$ is an f-category over a triangulated category $\T$, then:
\begin{enumerate}
\item there is an exact functor $\omega\colon \X\lxr \T$, unique up to natural equivalence, such that:
\begin{itemize}
\item its restriction to $\Xcal(\geq 0)$ is right adjoint to the functor $\T\lxr \Xcal(\geq 0)$ induced by $\theta$;
\item its restriction to $\Xcal(\leq 0)$ is left adjoint to the functor $\T\lxr \Xcal(\leq 0)$ induced by $\theta$;
\item for any $X$ in $\Xcal$, the map $\omega(\alpha_X)\colon \omega X\lxr \omega sX$ is an isomorphism;
\item for any $X$ in $\Xcal(\leq 0)$ and $Y$ in $\Xcal(\geq 0)$, $\omega$ induces an isomorphism between $\Hom_\Xcal(X,Y)$ and $\Hom_\T(\omega X,\omega Y)$.
\end{itemize}
\item given a t-structure $\mathbb{T}=(\mathbb{T}^{\leq 0},\mathbb{T}^{\geq 0})$ in $\T$, there is a unique t-structure $\mathbb{X}=(\mathbb{X}^{\leq 0},\mathbb{X}^{\geq 0})$ in $\X$ such that $\theta$ is a t-exact functor and $s\mathbb{X}^{\leq 0}\subseteq \mathbb{X}^{\leq -1}$. Moreover, the t-structure $\mathbb{X}$ can be described by
$$\mathbb{X}^{\leq 0}=\{X\in\X \ | \ \textsf{gr}^n_\X(X)\in\mathbb{T}^{\leq n}\}\ \ \text{and}\ \ \mathbb{X}^{\geq 0}=\{X\in\X \ | \ \textsf{gr}^n_\X(X)\in\mathbb{T}^{\geq n}\}$$
and the heart $\Hcal(\mathbb{X})$ is equivalent to the category $\Ccal^b(\Hcal(\mathbb{T}))$ of chain complexes over $\Hcal(\mathbb{T})$.
\end{enumerate}
\end{prop}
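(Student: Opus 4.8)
The plan is to prove Proposition \ref{f-cat properties} in two parts, following the structure of its statement. For part (i), I would construct the functor $\omega\colon \X\lxr\T$ by exploiting the two stable t-structures $(\X(\geq 1),\X(\leq 0))$ and $(\X(\geq 0),\X(\leq -1))$ together with the equivalence $\theta\colon \T\xr{\cong}\X(\geq 0)\cap\X(\leq 0)$. First I would observe that the inclusion $\X(\leq 0)\hookrightarrow\X$ has a left adjoint $\sigma_{\leq 0}$ and $\X(\geq 0)\hookrightarrow\X$ has a right adjoint $\sigma_{\geq 0}$, so $\theta$ composed with either truncation gives functors $\T\lxr\X(\geq 0)$ and $\T\lxr\X(\leq 0)$ with the required one-sided adjoints; the content is that these adjoints glue to a \emph{single} exact functor $\omega$ defined on all of $\X$. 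The key device, as in \cite{B,Schn}, is axiom (vi): for $X\in\X(\geq 1)$, $Y\in\X(\leq 0)$ the map $\alpha$ induces isomorphisms, which lets one show that $\sigma_{\leq n}\sigma_{\geq n}$ applied after $s^{-n}$ is, up to the f-shift, independent of $n$ in a coherent way. Concretely I would define $\omega X$ via a telescope/colimit-type gluing of the $\mathsf{gr}^n_\X(X)$ using the connecting maps coming from $\alpha$ and the octahedral axiom, or — cleaner — cite that $\omega$ is the unique exact functor restricting correctly on $\X(\geq 0)$ and on $\X(\leq 0)$, and then verify the four bullet points: the adjunction properties are by construction, $\omega(\alpha_X)$ is an isomorphism because $\alpha$ becomes invisible after forgetting the filtration (check first on $\X(\geq 0)$ and $\X(\leq 0)$, then use axiom (iii) that every object lies in some $\X(\geq n)\cap\X(\leq m)$ and dévissage along the stable t-structures), and the Hom-isomorphism for $X\in\X(\leq 0)$, $Y\in\X(\geq 0)$ follows from the adjunctions plus $\Hom_\X(\X(\geq 1),\X(\leq 0))=0$.

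For part (ii), given a t-structure $\mathbb{T}=(\mathbb{T}^{\leq 0},\mathbb{T}^{\geq 0})$ on $\T$, I would first check that the proposed pair
\[
\mathbb{X}^{\leq 0}=\{X\in\X \mid \mathsf{gr}^n_\X(X)\in\mathbb{T}^{\leq n}\ \forall n\},\qquad
\mathbb{X}^{\geq 0}=\{X\in\X \mid \mathsf{gr}^n_\X(X)\in\mathbb{T}^{\geq n}\ \forall n\}
\]
is a t-structure on $\X$. The Hom-vanishing axiom (i) reduces, by induction on the length of the filtration (i.e.\ on the number of nonzero graded pieces, using axiom (iii) and the triangles from axiom (ii)), to the Hom-vanishing for the t-structure $\mathbb{T}$ in each graded degree — here one uses that $\sigma_{\leq n}$, $\sigma_{\geq n}$ are triangle functors and that, between a filtered complex supported in filtration degrees $\geq p$ and one supported in degrees $\leq q$, morphisms are controlled degree-by-degree via axiom (vi). The truncation/decomposition axiom (iii) for $\mathbb{X}$ I would build by induction on filtration length as well: truncate $\sigma_{\geq n}\sigma_{\leq n}X$ in $\T$ (via $\theta$ and $\mathbb{T}$), then reassemble using the stable t-structure triangles and the octahedral axiom, producing the required triangle $Y\lxr X\lxr Z$ with $Y\in\mathbb{X}^{\leq 0}$, $Z\in\mathbb{X}^{\geq 1}$. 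Uniqueness subject to "$\theta$ is t-exact and $s\mathbb{X}^{\leq 0}\subseteq\mathbb{X}^{\leq -1}$" follows because those two conditions force the graded pieces to satisfy exactly the displayed membership conditions: $s$ shifts filtration degree by one, so $\mathsf{gr}^n\circ s = \mathsf{gr}^{n-1}$ up to $\theta$, which pins down each $\mathsf{gr}^n(\mathbb{X}^{\leq 0})$ by downward/upward induction starting from t-exactness of $\theta$ on $\X(\geq 0)\cap\X(\leq 0)$.

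The final clause — $\Hcal(\mathbb{X})\simeq\Ccal^b(\Hcal(\mathbb{T}))$ — is the substantive point. I would describe the equivalence as follows: an object of $\Hcal(\mathbb{X})$ is a filtered object all of whose graded pieces $\mathsf{gr}^n_\X(X)$ lie in $\mathbb{T}^{\leq n}\cap\mathbb{T}^{\geq n} = \Hcal(\mathbb{T})[-n]$, i.e.\ the $n$-th graded piece, reindexed, is an object $C^n\in\Hcal(\mathbb{T})$; the connecting morphisms in the triangles relating successive truncations $\sigma_{\leq n}\sigma_{\geq n}X$, together with $\alpha$, assemble into maps $d^n\colon C^n\lxr C^{n+1}$, and the octahedral/triangle relations force $d^{n+1}d^n=0$. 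Conversely, a bounded complex over $\Hcal(\mathbb{T})$ can be realised as a filtered object by the stupid filtration. I would then check these assignments are mutually inverse exact functors; the fact that $\Hcal(\mathbb{X})$ is abelian with the induced exact structure is automatic from \cite{BBD}, and that the induced structure matches that of $\Ccal^b(\Hcal(\mathbb{T}))$ is verified on kernels and cokernels degreewise. The main obstacle I anticipate is precisely the coherence bookkeeping in this last step: making the differential $d^n$ well-defined (independent of choices of truncation triangles, which are unique only up to non-unique isomorphism) and functorial, and checking $d^2=0$ on the nose rather than up to homotopy — this is where one must be careful with the octahedral axiom and with axiom (v)/(vi), and it is essentially the classical but delicate computation of \cite[3.1.8]{BBD}, which I would reproduce rather than merely cite since later sections need a hands-on description of this equivalence.
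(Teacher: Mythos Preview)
The paper does not prove this proposition at all: it is stated with the attribution \cite{B,Schn} and no proof is given. What the paper \emph{does} supply is Remark~\ref{equivalence f-heart}, which records (without full justification) how the equivalence $\Hcal(\mathbb{X})\simeq\mathsf{C}^b(\Hcal(\mathbb{T}))$ is built --- namely via the cohomological functor $\mathsf{H}^0_\mathbb{X}$ sending $X$ to the complex with components $\mathsf{H}^i_\mathbb{T}(\mathsf{gr}^i_\X X)$ and differentials read off from the triangles $\omega\sigma_{\leq i+1}\sigma_{\geq i+1}X\lxr\omega\sigma_{\leq i+1}\sigma_{\geq i}X\lxr\omega\sigma_{\leq i}\sigma_{\geq i}X\lxr\cdots$ --- and then refers the reader to \cite{BBD} and \cite{B} for the verification that this is an exact equivalence.

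Your plan is therefore strictly more than the paper offers, and it is broadly the right plan: your description of the heart equivalence agrees with the paper's Remark~\ref{equivalence f-heart} (your $C^n$ and $d^n$ are exactly the $\mathsf{H}^n_\mathbb{T}(\mathsf{gr}^n_\X X)$ and the connecting maps described there), and your inductive scheme for parts (i) and (ii) follows the standard route in \cite{B,Schn,BBD}. The one place to be careful is your sketch for constructing $\omega$ in part (i): the approach via ``telescope/colimit-type gluing of the $\mathsf{gr}^n_\X(X)$'' is not quite how the references do it, and in an abstract triangulated category without infinite (co)limits this language is suspect. The cleaner construction (as in \cite{B}) is to note that axiom (vi) gives a canonical identification $\Hom_\X(X,Y)\cong\Hom_\X(X,sY)$ for $X\in\X(\leq 0)$ and $Y\in\X(\geq 0)$, from which one deduces that the two one-sided adjoints $\theta^{-1}\sigma_{\geq 0}|_{\X(\leq 0)}$ and $\theta^{-1}\sigma_{\leq 0}|_{\X(\geq 0)}$ are compatible on the overlap and extend, using that every object lies in some $\X(\leq n)\cap\X(\geq m)$ and that $s$ is an autoequivalence, to a single $\omega$; no colimits are needed since the filtrations are finite. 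Your ``or --- cleaner ---'' alternative is the correct one to pursue.
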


\begin{rem}\label{equivalence f-heart}
We point out how to build the functor yielding an equivalence between $\Hcal(\mathbb{X})$ and $\mathsf{C}^b(\Hcal(\mathbb{T}))$ (see \cite{B}). There is a cohomological functor which we will, abusively, denote by $\mathsf{H}^0_\mathbb{X}\colon \X\lxr \mathsf{C}^b(\Hcal(\mathbb{T}))$. Given an object $X$ in $\X$, define $\mathsf{H}^0_\mathbb{X}(X)$ to be a complex whose $i$-th component is $\mathsf{H}^i_\mathbb{T}(\mathsf{gr}^i_\X X)$. In order to define the differential $d^i\colon \mathsf{H}^i_\mathbb{T}(\mathsf{gr}^i_\X X)\lxr \mathsf{H}^{i+1}_\mathbb{T}(\mathsf{gr}^{i+1}_\X X)$ consider the triangle in $\T$ given by
$$\xymatrix{\omega\sigma_{\leq i+1}\sigma_{\geq i+1}X\ar[r]& \omega\sigma_{\leq i+1}\sigma_{\geq i}X\ar[r]&\omega\sigma_{\leq i}\sigma_{\geq i}X\ar[r]^{\bar{d}^i\ \ \ \ \  } &(\omega\sigma_{\leq i+1}\sigma_{\geq i+1}X)[1]}.$$
Now, the properties of $\omega$ listed above insure that, for any $n$ in $\mathbb{Z}$, we have $\omega\sigma_{\leq n}\sigma_{\geq n}X\cong \mathsf{gr}_\X^{n}X$. It then follows that we can define $d^i$ as $\mathsf{H}_\mathbb{T}^i(\bar{d}^i)$, for any $i$ in $\mathbb{Z}$. This defines the functor $\mathsf{H}_\mathbb{X}^0$ and it can be seen that this functor yields an exact equivalence between $\Hcal(\mathbb{X})$ and $\mathsf{C}^b(\Hcal(\mathbb{T}))$ as wanted (see \cite{BBD} for a proof in the case of filtered derived categories; the statement for f-categories is available without proof in \cite{B} since the arguments are analogous). The abuse of notation here is justified by the fact that indeed $\mathsf{H}_\mathbb{X}^0$ can be regarded as a cohomological functor associated with the t-structure $\mathbb{X}$ in $\X$ (see \cite{B}).
\end{rem}

The functor $\omega$ in the proposition will be called the \textbf{f-forgetful functor}, as motivated by the actual forgetful functor in the case of filtered derived categories. Note that the existence of f-enhancements of triangulated categories is not a priori guaranteed - although conjectured (see \cite{Bondarko}).

We turn now to functors between f-categories.

\begin{defn}
\label{defnffunctor}
Let $(\X, \X({\geq 0}), \X({\leq 0}), s, \alpha)$ and $(\Y, \Y({\geq 0}), \Y({\leq 0}), t, \beta)$ be two $f$-categories. An {\bf f-functor} between the f-categories $\X$ and $\Y$ is a triangle functor $F\colon \X\lxr \Y$ such that:
\begin{enumerate}
\item $F(\X({\geq 0}))\subseteq \Y({\geq 0})$ and $F(\X({\leq 0}))\subseteq \Y({\leq 0})$;

\item $F s\cong t F$ and $F(\alpha_X)=\beta_{F(X)}$, for all $X$ in $\X$.
\end{enumerate}
The f-categories $\X$ and $\Y$ are {\bf equivalent}, if there is an f-functor $F\colon \X\lxr \Z$ which is a triangle equivalence. If $(\X,\theta)$ and $(\Y,\eta)$ are f-categories over triangulated categories $\T$ and $\U$, respectively and $\phi\colon \T\lxr \U$ is a triangle functor, we say that $\phi$ \textbf{lifts} to the f-categories $(\X,\theta)$ and $(\Y,\eta)$ if there is an f-functor $\Phi\colon \X\lxr \Y$ such that $\Phi\theta\cong \eta\phi$. When the f-categories are fixed, we will just say that $\phi$ \textbf{admits an f-lifting}.
\end{defn}

\begin{exam}\label{eq f-lift}
Given triangulated categories $\T$ and $\U$ and a triangle equivalence $\phi\colon \T\lxr\U$, if  $(\X,\theta)$ is an f-category over $\T$, then $(\X,\theta\phi^{-1})$ is an f-category over $\U$ and, hence, $\iden_\X$ is an f-lifting of $\phi$.
 \end{exam}

We will show that given an exact sequence of triangulated categories 
$$0\lxr \U\lxr \T \lxr \T/\U\lxr 0$$
and an $f$-category over $\T$, there are induced f-categories over the thick subcategory $\U$ and over the Verdier quotient $\T/\U$, improving on \cite[Proposition 2.7]{W}. 
We say that a thick subcategory $\Y$ of an f-category $\X$ is an \textbf{f-subcategory} if $\Y$ is an f-category with the induced f-structure, i.e. $\Y(\leq 0)=\X(\leq 0)\cap \Y$, $\Y(\geq 0)=\X(\geq 0)\cap \Y$ and both $s\colon\Y\lxr \Y$ and $\alpha\colon\iden_\Y\rightarrow s$ are the restrictions of the corresponding functor/natural transformation in $\X$.

\begin{lem}\label{quotient f-cat}
Let $\Y$ be a thick f-subcategory of an f-category $\X$. Then the Verdier quotient $\X/\Y$ has a natural f-category structure induced by the one in $\X$.
\end{lem}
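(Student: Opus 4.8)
The plan is to equip the Verdier quotient $\X/\Y$ with all the f-category data coming from $\X$ and then check the six axioms of Definition \ref{defnfcat} one by one. First I would define the candidate data: let $Q\colon \X\lxr \X/\Y$ denote the canonical quotient functor. Since $\Y$ is an f-subcategory, $s(\Y)=\Y$ (as $\Y$ is closed under $s$ and $s$ is an autoequivalence), so $s$ descends to an autoequivalence $\bar{s}\colon \X/\Y\lxr \X/\Y$ with $\bar{s}Q\cong Qs$; the natural transformation $\alpha\colon\iden_\X\lxr s$ descends to $\bar\alpha\colon\iden_{\X/\Y}\lxr\bar s$ via $\bar\alpha_{QX}=Q(\alpha_X)$. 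For the subcategories, I would set $(\X/\Y)(\geq 0)$ to be the full subcategory on objects isomorphic to $QX$ with $X\in\X(\geq 0)$, and dually $(\X/\Y)(\leq 0)$ on objects isomorphic to $QX$ with $X\in\X(\leq 0)$; one then defines $(\X/\Y)(\geq n)=\bar s^n(\X/\Y)(\geq 0)$ and similarly for $(\leq n)$, and it is immediate that these agree with $Q(\X(\geq n))$ and $Q(\X(\leq n))$ up to isomorphism.

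The key steps are then the verifications. Axioms (iii), (iv), (v) are essentially formal: (iii) follows because $Q$ is essentially surjective and $\mathsf{Ob}\,\X=\cup_n\mathsf{Ob}\,\X(\geq n)=\cup_n\mathsf{Ob}\,\X(\leq n)$; (iv) follows because the corresponding inclusions hold in $\X$ and are preserved by $Q$; (v) follows by applying $Q$ to $\alpha_X=s(\alpha_{s^{-1}X})$ together with $\bar s Q\cong Qs$. For axiom (ii) (existence of the truncation triangle), given $\bar X = QX$ apply $Q$ to the triangle $Y\to X\to Z\to Y[1]$ in $\X$ with $Y\in\X(\geq 1)$, $Z\in\X(\leq 0)$; since $Q$ is a triangle functor this yields the required triangle in $\X/\Y$ with $QY\in(\X/\Y)(\geq 1)$ and $QZ\in(\X/\Y)(\leq 0)$. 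The real work is axiom (i) — the Hom-vanishing $\Hom_{\X/\Y}((\X/\Y)(\geq 1),(\X/\Y)(\leq 0))=0$ — and, relatedly, axiom (vi), that $\Hom_{\X/\Y}(\bar\alpha_{\bar Y},\bar X)$ and $\Hom_{\X/\Y}(\bar Y,\bar\alpha_{\bar X})$ are isomorphisms for $\bar X\in(\X/\Y)(\geq 1)$, $\bar Y\in(\X/\Y)(\leq 0)$.

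For axiom (i) the point is that morphisms in the Verdier quotient are computed by a calculus of fractions: a morphism $QY'\to QZ'$ with $Y'\in\X(\geq 1)$, $Z'\in\X(\leq 0)$ is represented by a roof $Y'\xleftarrow{s} W\xrightarrow{g} Z'$ where the cone of $s$ lies in $\Y$. The stable t-structure $(\X(\geq 1),\X(\leq 0))$ in $\X$ means $\X(\geq 1)$ and $\X(\leq 0)$ are triangulated subcategories orthogonal to each other. Since $\Y$ is an f-subcategory, $\Y=\Y(\geq 1)\oplus$-filtered by pieces in $\Y(\leq 0)$ appropriately; more precisely, the truncation functors $\sigma_{\geq 1},\sigma_{\leq 0}$ restrict to $\Y$. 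I would use this to replace the roof by an equivalent one: applying $\sigma_{\geq 1}$ to $W\to Y'$ and using that $\sigma_{\geq 1}$ kills nothing essential on the $\X(\geq 1)$ side, one arranges $W\in\X(\geq 1)$ with cone of $W\to Y'$ still in $\Y\cap\X(\geq 1)$; then $g\colon W\to Z'$ is a morphism from $\X(\geq 1)$ to $\X(\leq 0)$, hence zero in $\X$, hence the roof is zero in $\X/\Y$. The hard part will be making this truncation argument on roofs fully rigorous — in particular checking that the fraction can always be chosen with numerator in $\X(\geq 1)$, which rests on $\sigma_{\geq 1}$ being a triangle functor preserving $\Y$ and on $\Hom_\X(\X(\geq 1),\Y\cap\X(\leq 0))$ controlling the correction terms; axiom (vi) then follows by the same orthogonality once (i), (ii), (iv), (v) are in place, exactly as in the proof that these axioms hold in $\X$ itself. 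Finally I would note that the functoriality constraint $Fs\cong tF$, $F(\alpha_X)=\beta_{F(X)}$ needed to call $Q$ an f-functor holds by construction.
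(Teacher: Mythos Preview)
Your setup of the induced data on $\X/\Y$ and your handling of axioms (ii)--(v) match the paper's proof. For axiom (i) the paper simply invokes \cite[Proposition 1.5]{IKM}, which says that a stable t-structure on $\X$ restricting to $\Y$ descends to a stable t-structure on $\X/\Y$; your roof-and-truncation sketch is essentially a direct proof of that proposition in this case, and it does work once you observe that for a roof $Y'\xleftarrow{c} W\xrightarrow{g} Z'$ with $Y'\in\X(\geq 1)$ and $\mathsf{cone}(c)\in\Y$, the cone of $\sigma_{\geq 1}W\to W$ is $\sigma_{\leq 0}W\cong(\sigma_{\leq 0}\mathsf{cone}(c))[-1]\in\Y$, so you may replace $W$ by $\sigma_{\geq 1}W$.

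The genuine gap is your treatment of axiom (vi). You write that (vi) ``follows by the same orthogonality once (i), (ii), (iv), (v) are in place, exactly as in the proof that these axioms hold in $\X$ itself.'' But in $\X$ itself (vi) is an \emph{axiom}, not a consequence of the others, and it is not derivable from (i)--(v) in general: the statement that $\Hom(\bar\alpha_{\bar Y},\bar X)$ is an isomorphism concerns $\Hom(\bar s\bar Y,\bar X)\to\Hom(\bar Y,\bar X)$ with $\bar s\bar Y\in(\X/\Y)(\leq 1)$, and orthogonality (i) gives you no control over such Hom-groups. In the paper this is the bulk of the argument. Given a morphism $f\colon p(Y)\to p(X)$ represented by a roof $Y\xleftarrow{c}K\xrightarrow{d}X$, one first composes with an isomorphism in $\X/\Y$ to replace $X$ by a target $\overline X$ for which the roof can be taken with middle object $\sigma_{\leq 0}K\in\X(\leq 0)$ (again using that $\sigma_{\geq 1},\sigma_{\leq 0}$ preserve $\Y$); only \emph{then} does one invoke axiom (vi) \emph{for $\X$} to produce a unique $m\colon s(\sigma_{\leq 0}K)\to\overline X$ with $h=m\circ\alpha_{\sigma_{\leq 0}K}$, and this $m$ together with $s(\sigma_{\leq 0}c)^{-1}$ gives the required unique preimage under $\Hom_{\X/\Y}(\bar\alpha_{p(Y)},p(\overline X))$. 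So the mechanism is not orthogonality at all but rather pushing axiom (vi) for $\X$ through the calculus of fractions after a suitable normalisation of the roof; you should spell this out.
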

\begin{proof}
Since $\Y$ is a thick subcategory, there is a short exact sequence of triangulated categories 
\begin{equation}
\label{exseqfcat}
\xymatrix{
  0 \ar[r] & \Y \ar[r]^{j \ } & \X \ar[r]^{p \ \ } &  \X/\Y \ar[r] &  0}.
\end{equation}
Set $\Z=\X/\Y$ and consider the full triangulated subcategories $\Z(\geq 0)=p(\X(\geq 0))$ and $\Z(\leq 0)=p(\X(\leq 0))$. From the sequence (\ref{exseqfcat}), one can easily observe that the functor $s\colon \X\lxr \X$ induces a functor $s_{\Z}\colon \Z\lxr \Z$. It is obvious that $s_\Z$ is essentially surjective. It is also fully faithful (and hence an autoequivalence of $\Z$) since its action on morphisms can be described by applying $s$ to a roof in $\Z$. Non-trivial roofs will remain non-trivial due to the fact that $s$ restricts as an autoequivalence to $\Y$ (since $\Y$ is an f-subcategory). We also obtain an induced natural transformation $\gamma\colon \iden_{\Z}\lxr s_{\Z}$ of triangulated functors. Indeed, using the calculus of fractions available for morphisms in $\Z$, it is easy to check that defining $\gamma_{p(X)}:=p(\alpha_X)$, for any $X$ in $\X$, yields the wanted natural transformation.

Since $(\Y(\geq{1}), \Y(\leq 0))$ is a stable t-structure in $\Y$, from \cite[Proposition 1.5]{IKM} we get that $(\Z(\geq 1), \Z(\leq 0))$ is a stable t-structure in $\Z$, thus proving the properties (i) and (ii) of Definition~\ref{defnfcat}. Clearly we have $\mathsf{Ob}\ \Z=\cup_{n\in\mathbb{Z}}\mathsf{Ob}\ \Z({\geq n})=\cup_{n\in\mathbb{Z}}\mathsf{Ob}\ \Z({\leq n})$ since the same relation holds for objects in $\X$. Also, we have $$\Z({\geq 1})=s_{\Z}(p(\X({\geq 0})))=p(s(\X({\geq 0})))=p(\X({\geq 1}))\subseteq p(\X({\geq 0}))=\Z({\geq 0})$$ and similarly we get that $\Z({\leq -1})\subseteq \Z({\leq 0})$. For condition (v), observe that, for any $X$ in $\X$, we have
$$\gamma_{p(X)}=p(\alpha_X)=ps(\alpha_{s^{-1}X})=s_\Z(\gamma_{ps^{-1}(X)})=s_\Z(\gamma_{s_\Z^{-1}(p(X))}).$$ 

It remains to prove condition (vi) of Definition~\ref{defnfcat}. Let $X$ be an object in $\X(\geq 1)$ and $Y$ an object in $\X(\leq 0)$. We will show that $\Hom_\Z(\gamma_{p(Y)},p(X))$ is an isomorphism. The proof that $\Hom_\Z(p(Y),\gamma_{p(X)})$ is an isomorphism is analogous. Let $f\colon p(Y)\rightarrow p(X)$ be a morphism in $\Z$, represented by a roof of the form 
\begin{equation}\nonumber
\xymatrix{ & K \ar[ld]_{c}\ar[rd]^d\\ Y &&X}
\end{equation}
with $\cone(c)$ in $\Y$. We want to show that $f$ admits a unique preimage under the map $\Hom_\Z(\gamma_{p(Y)},p(X))$. In order to do that, we first compose $f$ with an isomorphism and write the composition as a roof in a convenient way that will allow us to use axiom (vi) of the f-category $\X$.
\begin{itemize}
\item[(1)] Applying the triangle functor $\sigma_{\geq 1}$ to the triangle induced by the map $c$ and using the fact that $Y$ lies in $\X(\leq 0)$, it follows that  $\sigma_{\geq 1}K$ lies in $\Y$. Consider the composition of the natural map $\sigma_{\geq 1}K\lxr K$ with $d\colon K\lxr X$, and denote by $g\colon X\lxr \overline {X}$ its mapping cone. It is clear that $p(g)$ is an isomorphism in $\Z$. Moreover, using the fact that $\Hom_\Z(s_\Z p(Y),p(g))$ and $\Hom_\Z(p(Y),p(g))$ are isomorphisms, the map $f$ admits a unique preimage under $\Hom_\Z(\gamma_{p(Y)},p(X))$ if and only if $\bar{f}:=p(g)\circ f$ admits a unique preimage under $\Hom_\Z(\gamma_{p(Y)},p(\overline{X}))$.
\item[(2)] Since $Y=\sigma_{\leq 0}Y$, $c$ factors through the natural map $K\lxr \sigma_{\leq 0}K$ and the cone of $\sigma_{\leq 0}c$ is precisely $\sigma_{\leq 0}\mathsf{cone}(c)$, which lies in $\Y$ since $\Y$ is an f-subcategory of $\X$. By construction of $g$, also $g\circ d$ factors through the natural map $K\lxr \sigma_{\leq 0}K$ (via a map $h\colon \sigma_{\leq 0}K\lxr \overline{X}$). It then can be checked that $\bar{f}\colon p(Y)\lxr p(\overline{X})$ is equivalent to the following roof.
$$\xymatrix{ & \sigma_{\leq 0}K \ar[ld]_{\sigma_{\leq 0}c}\ar[rd]^h\\ Y &&\overline{X}}$$
\end{itemize}
Now, from axiom (vi) of the f-category $\X$ there is a unique morphism $m\colon s(\sigma_{\leq 0}K)\lxr \overline{X}$ such that $h=m\circ\alpha_{\sigma_{\leq 0}K}$ and then, the morphism $s_{\Z}p(Y)\lxr p(\overline{X})$ of $\Z$ represented by the fraction $m\circ s(\sigma_{\leq 0}c)^{-1}$ is a preimage of $\overline{f}$ by the map $\Hom_\Z(\gamma_{p(Y)},p(\overline{X}))$. Using the uniqueness of $m$ and the description of morphisms in $\Z$ as roofs, it easily follows that this preimage is unique, finishing the proof.
\end{proof}

We are now ready to show how an f-category over a triangulated category $\T$ induces f-categories over thick subcategories or over Verdier quotients. Recall that given two triangulated subcategories $\U$ and $\V$ of a triangulated category $\T$, one denotes by $\U\ast \V$ the subcategory of $\T$ formed by the objects $T$ such that there are objects $U$ in $\U$, $V$ in $\V$ and a triangle
$$U\lxr T\lxr V\lxr U[1].$$
It is not always true that $\U\ast\V$ is a triangulated subcategory of $\T$. In fact, $\U\ast\V$ is triangulated if and only if $\Hom_{\T/\U\cap\V}(\pi(\U),\pi(\V))=0$, where $\pi\colon \T\lxr \T/(\U\cap\V)$ is the quotient functor (\cite[Theorem A]{JK}).

\begin{prop}\label{f-categories over exact seq}
Let $0\lxr \U\stackrel{i}{\lxr} \T \stackrel{q}{\lxr} \T/\U\lxr 0$ be an exact sequence of triangulated categories and $(\X,\theta)$ be an f-category over $\T$. Then $(\X,\theta)$ induces  f-category structures over $\U$ and $\T/\U$.
\end{prop}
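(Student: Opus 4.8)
The plan is to build the two f-categories separately, using Lemma~\ref{quotient f-cat} for the quotient and constructing by hand a suitable f-subcategory of $\X$ lying over $\U$. Throughout, let $(\X,\theta)$ be the given f-category over $\T$, with f-shift $s$, natural transformation $\alpha$, truncations $\sigma_{\geq n},\sigma_{\leq n}$ and f-forgetful functor $\omega\colon \X\lxr\T$ from Proposition~\ref{f-cat properties}(i). The key bridge between $\X$ and $\T$ is that $\theta$ identifies $\T$ with $\X(\geq 0)\cap\X(\leq 0)$ and that $\omega$ restricts to (one-sided) adjoints of $\theta$ on $\X(\geq 0)$ and $\X(\leq 0)$; in particular, for $X\in\X(\leq 0)$ and $Y\in\X(\geq 0)$ one has $\Hom_\X(X,Y)\cong\Hom_\T(\omega X,\omega Y)$.

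First I would treat the thick subcategory $\U$. Define $\Y$ to be the full subcategory of $\X$ consisting of those objects $X$ such that $\mathsf{gr}^n_\X(X)$ lies in (the essential image in $\T$ of) $\U$ for every $n\in\mathbb Z$ — equivalently, $X$ is built by finitely many extensions from objects of the form $s^n\theta(U)$ with $U\in\U$. One checks that $\Y$ is a triangulated subcategory closed under summands: this follows because each $\mathsf{gr}^n_\X$ is a triangle functor and $\U$ is thick, so the graded-piece condition is closed under triangles and retracts. The f-structure restricts: set $\Y(\geq 0)=\X(\geq 0)\cap\Y$, $\Y(\leq 0)=\X(\leq 0)\cap\Y$; since $\sigma_{\geq n}$ and $\sigma_{\leq n}$ commute (up to the usual natural equivalences) with the $\mathsf{gr}$-functors and preserve the condition defining $\Y$, these truncations restrict to $\Y$, giving axioms (i)--(iv); axiom (v) is inherited verbatim from $\X$ once one knows $s$ restricts to an autoequivalence of $\Y$ (which holds because $s$ permutes the graded pieces: $\mathsf{gr}^n_\X\comp s\cong\mathsf{gr}^{n-1}_\X$ up to the identification given by $\alpha$); axiom (vi) is inherited because for $X\in\Y(\geq1)$, $Y\in\Y(\leq0)$ the maps $\Hom_\Y(\alpha_Y,X)$ and $\Hom_\Y(Y,\alpha_X)$ are literally the same maps as in $\X$, which are isomorphisms by hypothesis. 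Finally, $\theta$ corestricts to an equivalence $\theta|\colon \U\lxr\Y(\geq0)\cap\Y(\leq0)$: indeed $\Y(\geq0)\cap\Y(\leq0)=\X(\geq0)\cap\X(\leq0)\cap\Y\cong\{T\in\T : T\in\U\}$ via $\theta$, since an object of $\X(\geq0)\cap\X(\leq0)$ has only $\mathsf{gr}^0$ nonzero and $\mathsf{gr}^0_\X\comp\theta\cong\iden_\T$. This shows $(\Y,\theta|)$ is an f-category over $\U$, which is moreover a thick f-subcategory of $\X$ in the sense defined before Lemma~\ref{quotient f-cat}.

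For the quotient $\T/\U$, I would simply invoke Lemma~\ref{quotient f-cat} applied to the thick f-subcategory $\Y\subseteq\X$: it produces an f-category structure on $\X/\Y$ with f-shift $s_\Z$, natural transformation $\gamma$, and subcategories $(\X/\Y)(\geq0)=p(\X(\geq0))$, $(\X/\Y)(\leq0)=p(\X(\leq0))$, where $p\colon\X\lxr\X/\Y$ is the quotient. It remains to identify $\X/\Y$ as an f-category \emph{over} $\T/\U$, i.e.\ to produce an equivalence $\bar\theta\colon\T/\U\lxr(\X/\Y)(\geq0)\cap(\X/\Y)(\leq0)$. The composite $\T\xr{\theta}\X(\geq0)\cap\X(\leq0)\hookrightarrow\X\xr{p}\X/\Y$ sends $\U$ into $\Y$, hence factors through $q\colon\T\lxr\T/\U$ via a triangle functor $\bar\theta\colon\T/\U\lxr\X/\Y$ landing, by construction, in $(\X/\Y)(\geq0)\cap(\X/\Y)(\leq0)=p(\X(\geq0))\cap p(\X(\leq0))$. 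That $\bar\theta$ is an equivalence onto this core is where the real work lies: one must show it is essentially surjective (any object of $p(\X(\geq0))\cap p(\X(\leq0))$ is isomorphic in $\X/\Y$ to one in $\X(\geq0)\cap\X(\leq0)$, using a truncation/octahedral argument analogous to \cite[Proposition 1.5]{IKM} as in the lemma's proof) and fully faithful. For full faithfulness one computes $\Hom_{\X/\Y}$ between objects of the core using the calculus of fractions, reducing roofs to honest maps in $\X(\geq0)\cap\X(\leq0)$ modulo maps factoring through $\Y$-objects in the core — and the latter are precisely the maps in $\T$ factoring through $\U$ under $\theta$, by the adjunction/Hom-isomorphism properties of $\omega$ recalled above; thus $\Hom_{\X/\Y}(\bar\theta\bar S,\bar\theta\bar T)\cong\Hom_{\T/\U}(\bar S,\bar T)$.

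The main obstacle I anticipate is the last step — proving $\bar\theta$ is fully faithful onto the core of $\X/\Y$. The subtlety is that a roof in $\X/\Y$ with apex an arbitrary object of $\X$ must be replaced, via a cofinality argument, by one whose apex lies in $\X(\geq0)\cap\X(\leq0)$ and whose "denominator" has cone in $\Y\cap(\X(\geq0)\cap\X(\leq0))$; this requires carefully truncating ($\sigma_{\geq0}$ then $\sigma_{\leq0}$) the apex and checking, via the octahedral axiom, that the cones stay inside $\Y$ — exactly the kind of bookkeeping already carried out in the proof of Lemma~\ref{quotient f-cat}, and I would reuse that machinery, citing it where possible, rather than reproving it. Once full faithfulness and essential surjectivity are in hand, the f-functor axioms for the inclusion $\Y\hookrightarrow\X$ and the projection $\X\lxr\X/\Y$ are immediate from the definitions of $\Y(\geq0),\Y(\leq0)$ and $p(\X(\geq0)),p(\X(\leq0))$, and the proof is complete.
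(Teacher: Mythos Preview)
Your overall architecture matches the paper's exactly: define $\Y=\{X\in\X\mid\mathsf{gr}^n_\X X\in\U\ \forall n\}$ as a thick f-subcategory (the paper simply cites \cite[Proposition~2.2]{W} for this), invoke Lemma~\ref{quotient f-cat} to get the f-structure on $\X/\Y$, and then verify that the induced $\bar\theta\colon\T/\U\lxr\X/\Y$ is an equivalence onto the core $\Z(\geq0)\cap\Z(\leq0)$.

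Where you diverge is in the execution of the last step, and your sketched tools are weaker than what the paper uses. For full faithfulness of $\bar\theta$, instead of manipulating roofs and running a cofinality argument, the paper appeals to \cite[Lemma~4.7.1]{Krauselocalization}: it suffices to show every map $Y\to\theta(T)$ in $\X$ with $Y\in\Y$ factors through $\theta(\U)$. This is done cleanly via the unit $Y\to\theta\omega Y$ of the adjunction $(\omega^{\leq0},\theta)$ together with an induction on graded length showing $\omega(Y)\in\U$ for $Y\in\Y$. Your roof-reduction sketch could in principle be made to work, but the factorisation criterion is far more direct.

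For essential surjectivity --- i.e.\ the identity $p(\X(\geq0))\cap p(\X(\leq0))=p(\X(\geq0)\cap\X(\leq0))$ --- your ``truncation/octahedral'' gesture is where the real gap lies. The difficulty is that an object $Z$ with $p(Z)\in\Z(\leq0)$ need not have $\sigma_{\geq1}Z\in\Y$, so naive truncation does not land you in the core. The paper instead proves $\Z(\geq0)=p(\X(\geq0)\ast\Y)$ and $\Z(\leq0)=p(\Y\ast\X(\leq0))$ using \cite[Theorem~A]{JK}, then applies \cite[Lemma~2.4(i)(a)]{JK} to pull the intersection inside $p$, and finally shows that for $X\in(\Y\ast\X(\leq0))\cap(\X(\geq0)\ast\Y)$ both $\sigma_{\leq-1}X$ and $\sigma_{\geq1}X$ lie in $\Y$, whence $p(X)\cong p(\theta\,\mathsf{gr}^0X)$. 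This $\ast$-product bookkeeping is the missing idea in your proposal; without it the essential surjectivity argument does not close.
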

\begin{proof}
We assume without loss of generality (by Example \ref{eq f-lift})  that $\U$ is a thick subcategory of $\T$, $i$ is the embedding functor (and we identify $\U$ with $i(\U)$) and $q$ is the natural projection to the Verdier quotient. 

Recall from \cite[Proposition 2.2]{W} that the f-category $(\X,\theta)$ induces an f-category $\Y$ over $\U$ defined by   $\Y= \{X\in \X \ | \ \mathsf{gr}^n_\X(X)\in \U \ \text{for all} \ n\in \mathbb{Z} \}$. It is easy to check that $\Y$ is a thick subcategory of $\X$ (since $\U$ is a thick subcategory of $\T$) and that it is an f-subcategory of $\X$. Moreover, $(\Y,\theta_{|\U})$ is an f-category over $\U$ since we have a commutative diagram as follows, where the vertical arrows are the natural inclusions.
\begin{equation}\nonumber
\xymatrix{
\U \ar @/^2.0pc/[rr]_{\theta_{|\U}} \ar[r]_{\cong \ \ \ \ \  \ \ \ \ \ } \ar[d] & \Y(\geq 0)\cap\Y(\leq 0) \ar[d] \ar@{^{(}->}[r] \ar[d] & \Y \ar@{_{(}->}[d] \\ 
\T \ar @/_2.0pc/[rr]^{\theta} \ar[r]^{\cong \ \ \ \ \  \ \ \ \ \ } & \X(\geq 0)\cap\X(\leq 0) \ar@{^{(}->}[r] & \X }
\end{equation}

By Lemma \ref{quotient f-cat}, $\Z:=\X/\Y$ is an f-category. It remains to show that $\X/\Y$ is indeed an f-category over $\T/\U$. It is clear that $\theta$ induces a functor $\bar{\theta}\colon \T/\U\lxr \X/\Y$ and a commutative diagram between exact sequences of triangulated categories as follows.
\begin{equation}
\label{exactcomdiagramfcat}
\xymatrix{
  0 \ar[r] & \U \ar[r]^{i \ } \ar[d]^{\theta|_{\U}} & \T \ar[r]^{q \ \ } \ar[d]^{\theta} &  \T/\U \ar[r] \ar[d]^{\bar{\theta}} &  0 \\
  0 \ar[r] & \Y \ar[r]^{j \ } & \X \ar[r]^{p \ \ } &  \X/\Y \ar[r] &  0. } 
\end{equation}
The functors $\theta$ and $\theta|_{\U}$ are obviously fully faithful and we claim that so is $\bar{\theta}$. Using \cite[Lemma $4.7.1$]{Krauselocalization}, it is enough to show that any map $f\colon Y\lxr \theta(T)$ in $\X$, with $Y$ in $\Y$ and $T$ in $\T$, factors through an object of 
\[
\theta(\U)=\Y(\geq 0)\cap \Y(\leq 0)=\X(\geq 0)\cap \X(\leq 0)\cap \Y=\theta(\T)\cap \Y.
\]
Since $\theta(T)$ lies in $\X(\leq 0)\cap\X(\geq 0)$, we may assume without loss of generality that $Y$ lies in $\Y(\leq 0)$ (this follows from the triangle in Definition \ref{defnfcat}(ii) and the orthogonality relation between $\Ycal(\geq 1)$ and $\Xcal(\leq 0)$). By Proposition \ref{f-cat properties}(i), the restriction of the f-forgetful functor $\omega$ to $\X(\leq 0)$ is left adjoint to the inclusion of $\T$ (by $\theta$) in $\Xcal(\leq 0)$. Thus, considering the unit of the adjunction, $\eta_Y\colon Y\lxr \theta\omega(Y)$, we get that $f=\theta(\tilde{f})\circ \eta_Y$, where $\tilde{f}\colon \omega(Y)\lxr T$ is the map corresponding to $f$ under the isomorphism $\Hom_\X(Y,\theta(T))\cong \Hom_\T(\omega(Y),T)$. We now show that $\omega(Y)$ lies in $\U$. We do this by induction on the \textit{graded length} of $Y$, i.e. on $n\geq 0$ such that $Y$ lies in $\X(\geq -n)\cap \X(\leq 0)$ (such $n$ always exists by Definition \ref{defnfcat}(iii) and by our assumption that $Y$ lies in $\X(\leq 0)$). If $Y$ lies in $\Xcal(\geq 0)$, then $Y\cong \theta\mathsf{gr}^0Y$ and $\omega(Y)=\theta^{-1}(Y)=\mathsf{gr}^0(Y)$ lies in $\U$, by definition of $\Y$. Suppose now that the result is valid for objects with graded length $n-1$ and let $Y$ lie in $\Ycal(\geq -n)$. Then there is a triangle
$$Y=\sigma_{\geq -n}Y\lxr \sigma_{\geq -n+1}Y\lxr s^n\theta\mathsf{gr^{-n+1}}(Y)\lxr (\sigma_{\geq -n}Y)[1].$$
Applying the triangle functor $\omega$ to it, since $\omega s^n\theta\mathsf{gr^{-n+1}}(Y)\cong \mathsf{gr^{-n+1}}(Y)$ lies in $\U$ and, by induction hypothesis, so does $\omega\sigma_{\geq -n+1}Y$, it follows that $\omega(Y)$ lies in $\U$, as wanted.

It remains to show that the essential image of $\bar{\theta}$ is $\Z(\geq 0)\cap\Z(\leq 0)$. By the commutativity of (\ref{exactcomdiagramfcat}) and since $\bar{\theta}$ is fully faithful, it suffices to prove that $\Z(\geq 0)\cap\Z(\leq 0)=p(\X(\geq 0)\cap\X(\leq 0))$. We first show that $\Z(\geq 0)\cong (\X(\geq 0)\ast \Y)/\Y=p(\X(\geq 0)\ast \Y)$ (dual arguments also show that $\Z(\leq 0)\cong (\Y\ast \X(\leq 0))/\Y=p(\Y\ast \X(\leq 0))$). Note that $\X(\geq 0)\cap \Y=\Y(\geq 0)$. Using \cite[Theorem A]{JK}, it is then enough to prove that $\Hom_{\X/\Y(\geq 0)}(\pi(\X(\geq 0)),\pi(\Y))=0$, where $\pi\colon \X\lxr \X/\Y(\geq 0)$ is the Verdier quotient functor. Consider an element in $\Hom_{\X/\Y(\geq 0)}(\pi(X),\pi(Y))$, with $X$ in $\X(\geq 0)$ and $Y$ in $\Y$, represented by a roof
\begin{equation}\nonumber
\xymatrix{ & K \ar[ld]_{g}\ar[rd]^f\\ X &&Y}
\end{equation}
with $f\in \Hom_\X(K,Y)$, $g\in \Hom_\X(K,X)$ and $\mathsf{cone(g)}$ in $\Y(\geq 0)$. Since both $X$ and $\mathsf{cone(g)}$ lie in $\X(\geq 0)$, it follows that also $K$ lies in $\X(\geq 0)$. Hence, it follows that $f$ must factor through the natural map $\sigma_{\geq 0}Y\lxr Y$. In particular, the roof is equivalent to the zero morphism in $\X/\Y(\geq 0)$, as wanted. Thus we may rewrite the intersection $\Z(\leq 0)\cap \Z(\geq 0)$ as follows
$$\Z(\leq 0)\cap \Z(\geq 0)=p(\Y\ast \X(\leq 0))\cap p(\X(\geq 0)\ast \Y)=p((\Y\ast \X(\leq 0))\cap(\X(\geq 0)\ast\Y)),$$
 where the last equality follows from \cite[Lemma 2.4(i)(a)]{JK}. Finally, we finish the proof by showing that the last term above equals $p(\X(\leq 0)\cap \X(\geq 0))$. Observe first that given an object $X$ in $\X(\geq 0)\ast \Y$, it follows that $\sigma_{\leq -1}X$ lies in $\Y$. In fact, by the assumption on $X$ there is a triangle
 $$\xymatrix{X^\prime\ar[r]^f &X\ar[r]& Y\ar[r]&X^\prime[1]}$$
 with $X^\prime$ in $\Xcal(\geq 0)$ and $Y$ in $\Y$. Applying to it the triangle functor $\sigma_{\leq -1}$, since $\sigma_{\leq -1}X^\prime=0$ we get that $\sigma_{\leq -1}X\cong \sigma_{\leq -1}Y$. Since $\Y$ is an f-subcategory of $\X$, it then follows that $\sigma_{\leq -1}X$ lies in $\Y$. Analogously, one can show that given $X$ in $\Y\ast\X(\leq 0)$, $\sigma_{\geq 1}X$ lies in $\Y$. Hence for any object $X$ in the intersection $(\Y\ast \X(\leq 0))\cap(\X(\geq 0)\ast\Y)$, both $\sigma_{\leq -1}X$ and $\sigma_{\geq 1}X$ lie in $\Y$ and, thus, $p(X)\cong p(\theta\mathsf{gr}^0(X))$, showing that $p((\Y\ast \X(\leq 0))\cap(\X(\geq 0)\ast\Y))$ is contained in $p(\X(\leq 0)\cap \X(\geq 0))$. Since the other inclusion is trivial (because $\X(\leq 0)\cap \X(\geq 0)\subset (\Y\ast \X(\leq 0))\cap(\X(\geq 0)\ast\Y)$), we have finished the proof. 
\end{proof}

\begin{cor}
\label{ffadmitsflifting}
Let $0\lxr \U\stackrel{i}{\lxr} \T \stackrel{q}{\lxr} \T/\U\lxr 0$ be an exact sequence of triangulated categories and $(\X,\theta)$ be an f-category over $\T$. Then the functors $i$ and $q$ admit f-liftings for suitable choices of f-categories over $\U$ and $\T/\U$.
\end{cor}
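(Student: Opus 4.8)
The plan is to read off the desired f-liftings directly from the construction carried out in the proof of Proposition \ref{f-categories over exact seq}. As there, I may assume that $\U$ is a thick subcategory of $\T$, that $i$ is the inclusion, and that $q$ is the Verdier quotient functor. Proposition \ref{f-categories over exact seq} produces an f-category $(\Y,\theta|_\U)$ over $\U$, where $\Y=\{X\in\X \mid \mathsf{gr}^n_\X(X)\in\U \text{ for all } n\in\mathbb{Z}\}$ is a thick f-subcategory of $\X$, together with an f-category $(\Z,\bar\theta)$ over $\T/\U$, where $\Z=\X/\Y$ carries the f-structure of Lemma \ref{quotient f-cat} and $\bar\theta\colon\T/\U\lxr\X/\Y$ is the functor induced by $\theta$. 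These are the f-categories over $\U$ and $\T/\U$ for which I will exhibit liftings; note that in each case the f-category on the remaining side is the given pair $(\X,\theta)$.

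First I would check that the inclusion $j\colon\Y\lxr\X$ is an f-functor. It is a triangle functor, and since $\Y$ is an f-subcategory we have $\Y(\geq 0)=\X(\geq 0)\cap\Y\subseteq\X(\geq 0)$ and $\Y(\leq 0)=\X(\leq 0)\cap\Y\subseteq\X(\leq 0)$, while the f-shift and the natural transformation on $\Y$ are by definition the restrictions of those on $\X$; hence conditions (i) and (ii) of Definition \ref{defnffunctor} hold. The commutativity of the left-hand square of diagram (\ref{exactcomdiagramfcat}), already established inside the proof of Proposition \ref{f-categories over exact seq}, says precisely that $j\circ(\theta|_\U)\cong\theta\circ i$. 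Thus $j$ is an f-lifting of $i$.

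Next I would treat $q$ analogously, using the Verdier quotient functor $p\colon\X\lxr\X/\Y=\Z$. By the construction in Lemma \ref{quotient f-cat}, $\Z(\geq 0)=p(\X(\geq 0))$ and $\Z(\leq 0)=p(\X(\leq 0))$, so $p$ sends $\X(\geq 0)$ and $\X(\leq 0)$ into $\Z(\geq 0)$ and $\Z(\leq 0)$ respectively; moreover the f-shift $s_\Z$ was defined so that $p\,s\cong s_\Z\,p$, and the natural transformation $\gamma$ was defined by $\gamma_{p(X)}=p(\alpha_X)$. Hence $p$ satisfies conditions (i) and (ii) of Definition \ref{defnffunctor} and is an f-functor, and the commutativity of the right-hand square of (\ref{exactcomdiagramfcat}) gives $p\circ\theta\cong\bar\theta\circ q$, so $p$ is an f-lifting of $q$.

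There is essentially no obstacle here beyond bookkeeping: all the substance lies in Proposition \ref{f-categories over exact seq} and Lemma \ref{quotient f-cat}, and what remains is only to verify that the canonical functors $j$ and $p$ connecting $\X$ to the induced f-categories are themselves f-functors and sit inside the commuting squares required by the definition of an f-lifting. The one mild point worth flagging is simply to keep track of which structure maps, $\theta|_\U$ or $\bar\theta$, are attached to the f-categories over $\U$ and over $\T/\U$, so that the identities $\Phi\theta\cong\eta\phi$ of Definition \ref{defnffunctor} are exactly the two squares of diagram (\ref{exactcomdiagramfcat}).
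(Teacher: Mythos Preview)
Your proposal is correct and follows essentially the same approach as the paper: the authors simply note that, by construction of $(\Y,\theta|_\U)$ and $(\Z,\bar\theta)$ in the proof of Proposition~\ref{f-categories over exact seq}, the functors $j$ and $p$ in diagram~(\ref{exactcomdiagramfcat}) are the required f-liftings of $i$ and $q$. You have spelled out the verification that $j$ and $p$ are f-functors and that the relevant squares commute, which the paper leaves implicit.
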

\begin{proof}
By construction of the f-categories $(\Y,\theta_{|\U})$ and $(\Z,\bar{\theta})$ in the proof of Proposition~\ref{f-categories over exact seq}, we get that $i$ and $j$ admit f-lifitings, namely the f-functors $j$ and $p$ in the diagram $(\ref{exactcomdiagramfcat})$ above. 
\end{proof}

\begin{rem}\label{fcat7}
Recall from \cite[Definition 2.4]{Neeman0} that a morphism of triangles $(a,b,c)$ is said to be \textbf{middling good} if it can be completed to a commutative $3\times 3$ diagram (in the sense of \cite[Lemma 1.1.11]{BBD})  in which all rows and all columns are triangles. In \cite{Schn}, the following extra axiom for f-categories is proposed. 
\begin{enumerate}
\item[(fcat7)] For any morphism $f\colon X\lxr Y$ in $\Xcal$, the triple
$\Delta_f:=(\alpha_{\sigma_{\geq 1(Y)}}\circ \sigma_{\geq 1}(f),\alpha_Y\circ f, \alpha_{\sigma_{\leq 0(Y)}}\circ \sigma_{\leq 0}(f))$ 
is a middling good morphism of triangles.
\end{enumerate}
Although we have not made use of this axiom so far, we will implicitly make use of it in the next subsection (see Remark \ref{fcat7 again}(ii) and the Appendix to this paper). At this point it is worth noting the following facts.
\begin{itemize}
\item Filtered derived categories, as discussed in Example \ref{exam filtered derived cat} satisfy axiom (vii). This is proved in \cite[Lemma 7.4]{Schn}.
\item In the context of Lemma \ref{quotient f-cat}, if $\Xcal$ satisfies axiom (fcat7), then so does $\Zcal:=\Xcal/\Ycal$. Note that if a morphism of triangles is middling good, then so is its composition with an isomorphism of triangles. Hence, given $f\colon X\lxr Y$ in $\Zcal$, we may assume without loss of generality that $f=p(f^\prime)$, where $p$ is the Verdier quotient functor. Since $\Xcal$ satisfies axiom (vii), the morphism $\Delta_{f^\prime}$ is middling good. As a consequence $p(\Delta_{f^{\prime}})$ is middling good. Finally, using the t-exactness of $p$ and the compatibility of $p$ with the the functor $s$ and the natural transformations $\alpha$ and $\gamma$, we conclude that $p(\Delta_{f^{\prime}})=\Delta_{p(f^{\prime})}$ (see the proof of Lemma \ref{quotient f-cat} for notation and details).
\end{itemize}
From now on, we will assume f-categories to satisfy this new axiom.
\end{rem}

\subsection{Realisation functors and their properties}
We are now ready to build realisation functors. Let $\T$ be a triangulated category and $\mathbb{T}=(\mathbb{T}^{\leq 0},\mathbb{T}^{\geq 0})$ a t-structure in $\T$. Suppose that $(\X,\theta)$ is an f-category over $\T$. By Proposition \ref{f-cat properties}, there is a t-structure $\mathbb{X}$ in $\X$ defined by
\[
\mathbb{X}^{\leq 0}=\{X\in\X \ | \ \textsf{gr}^n_\X(X)\in\mathbb{T}^{\leq n} \ \text{for all} \ n\in \mathbb{Z} \}\ \ \text{and}\ \ \mathbb{X}^{\geq 0}=\{X\in\X \ | \ \textsf{gr}^n_\X(X)\in\mathbb{T}^{\geq n} \ \text{for all} \ n\in \mathbb{Z} \};
\]
whose heart $\Hcal(\mathbb{X})$ is equivalent to $\mathsf{C}^b(\Hcal(\mathbb{T}))$. Let $G\colon \mathsf{C}^b(\Hcal(\mathbb{T}))\lxr \Hcal(\mathbb{X})$ denote the inverse of that equivalence (described in Remark \ref{equivalence f-heart}). The realisation functor of $\mathbb{T}$ with respect to the f-category $(\X,\theta)$ is then obtained as follows.  Moreover, we collect the first properties of the functor $\mathsf{real}_\mathbb{T}^\X$. These were first proved in \cite{BBD} and restated in a more general setting in \cite{W}. Our statement differs to that in \cite{W} only on the class of triangulated categories we consider - see Remark \ref{Wildeshaus remark}(i). We also include a sketch for the proof of the theorem for the convenience of the reader.

\begin{thm}\textnormal{\cite[Section 3.1]{BBD}\cite[Appendix]{B}\cite[Theorem 1.1]{W}}\label{real}
Let $\T$ be a triangulated category and $(\X,\theta)$ an f-category over $\T$ with f-forgetful functor $\omega\colon \X\lxr \T$. Let $\mathbb{T}$ be a t-structure in $\T$, $\mathbb{X}$ the corresponding induced t-structure on $\X$ and $G\colon \mathsf{C}^b(\Hcal(\mathbb{T}))\lxr \Hcal(\mathbb{X})$ the exact equivalence of abelian categories described in the paragraph above. Let $Q\colon \mathsf{C}^b(\Hcal(\mathbb{T}))\lxr \mathsf{D}^b(\Hcal(\mathbb{T}))$ be the natural localisation functor. Then there is a unique functor, called \textbf{the realisation functor of $\mathbb{T}$ with respect to $(\X,\theta)$}, $\mathsf{real}_\mathbb{T}^\X\colon \mathsf{D}^b(\Hcal(\mathbb{T}))\lxr \T$ such that the following diagram naturally commutes
$$\xymatrix{\mathsf{C}^b(\Hcal(\mathbb{T}))\ar[r]^Q\ar[d]_G&\mathsf{D}^b(\Hcal(\mathbb{T}))\ar[ddl]^{\mathsf{real}_\mathbb{T}^\X}\\ \Hcal(\mathbb{X})\ar[d]_{\omega_{|\Hcal(\mathbb{X})}}\\ \T}$$
Furthermore, $\mathsf{real}_\mathbb{T}^\X\colon \mathsf{D}^b(\Hcal(\mathbb{T}))\longrightarrow \T$ is a triangle functor and satisfies the following properties.
\begin{enumerate}
\item $\mathsf{H}^i_0\cong \mathsf{H}^i_\mathbb{T}\circ \mathsf{real}_\mathbb{T}^\X$, for all $i\in\mathbb{Z}$. In particular, $\mathsf{real}_\mathbb{T}^\X$ acts as the identity functor on $\Hcal(\mathbb{T})$ and it is t-exact with respect to the standard t-structure in $\mathsf{D}^b(\Hcal(\mathbb{T}))$ and $\mathbb{T}$ in $\T$.

\item The functor $\mathsf{real}_\mathbb{T}^\X$ induces isomorphisms $\Hom_{\mathsf{D}^b(\Hcal(\mathbb{T}))}(X,Y[n])\cong \Hom_{\T}(X,Y[n])$ for any $X$ and $Y$ in $\Hcal(\mathbb{T})$ and for $n\leq 1$.

\item The following statements are equivalent. 
\begin{itemize}
\item[(a)] The functor $\mathsf{real}_\mathbb{T}^\X$ is fully faithful;

\item[(b)]The functor $\mathsf{real}_\mathbb{T}^\X$ induces isomorphisms $\Hom_{\mathsf{D}^b(\Hcal(\mathbb{T}))}(X,Y[n])\cong \Hom_{\T}(X,Y[n])$, for all $n\geq 2$ and for all $X$ and $Y$ in $\Hcal(\mathbb{T})$.

\item[(c)] (\textsf{Ef}) Given objects $X$ and $Y$ in $\Hcal(\mathbb{T})$, $n\geq 2$ and a morphism $f\colon X\lxr Y[n]$ in $\T$, there is an object $Z$ in $\Hcal(\mathbb{T})$ and an epimorphism in $\Hcal(\mathbb{T})$, $g\colon Z\lxr X$, such that $fg=0$.

\item[(d)] (\textsf{CoEf}): Given objects $X$ and $Y$ in $\Hcal(\mathbb{T})$, $n\geq 2$ and a morphism $f\colon X\longrightarrow Y[n]$ in $\T$, there is an object $Z$ in $\Hcal(\mathbb{T})$ and a monomorphism in $\Hcal(\mathbb{T})$, $g\colon Y\longrightarrow Z$, such that $g[n]f=0$.
\end{itemize}
\item The essential image of $\mathsf{real}_\mathbb{T}^\X$ is contained in $$\T^{b(\mathbb{T})}:=\bigcup\limits_{n,m\in\mathbb{Z}}\mathbb{T}^{\leq n}\cap\mathbb{T}^{\geq m}$$
and it coincides with it whenever $\mathsf{real}_\mathbb{T}^\X$ is fully faithful.
\end{enumerate}
\end{thm}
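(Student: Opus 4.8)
\emph{Construction and uniqueness.} Following \cite[Section 3.1]{BBD} and \cite[Appendix]{B}, \cite[Theorem 1.1]{W}, the plan is to obtain $\mathsf{real}_\mathbb{T}^\X$ as the unique factorisation through $Q$ of the composite $F:=\omega_{|\Hcal(\mathbb{X})}\circ G\colon\mathsf{C}^b(\Hcal(\mathbb{T}))\lxr\T$. Since $G$ is an exact equivalence of abelian categories, the inclusion $\Hcal(\mathbb{X})\hookrightarrow\X$ takes short exact sequences to triangles, and $\omega$ is a triangle functor, $F$ carries short exact sequences of complexes to triangles of $\T$; moreover $F$ restricts to the identity on $\Hcal(\mathbb{T})\subseteq\T$, because $G$ sends an object $H$ of $\Hcal(\mathbb{T})$, placed in degree $0$, to $\theta(H)$, and $\omega\theta\cong\iden_\T$ by Proposition~\ref{f-cat properties}(i). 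Filtering a bounded complex $C^\bullet$ by its brutal truncations $\sigma_{\geq p}C^\bullet$ and applying $F$ produces a finite tower of triangles whose graded pieces are $F(C^p)[-p]\cong C^p[-p]$; the resulting spectral sequence computes $\mathsf{H}^i_\mathbb{T}(F(C^\bullet))$ from an $E_1$-page equal to $C^\bullet$ with its own differential, so $F(C^\bullet)$ lies in $\T^{b(\mathbb{T})}$ (the aisle and coaisle being extension-closed) and $\mathsf{H}^i_\mathbb{T}(F(C^\bullet))$ is the $i$-th cohomology object of $C^\bullet$. Hence $F$ annihilates acyclic complexes, and applying this to $\cone(f)$ for a quasi-isomorphism $f$ shows that $F$ inverts quasi-isomorphisms. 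The universal property of $Q$ then yields a unique functor $\mathsf{real}_\mathbb{T}^\X$ with $\mathsf{real}_\mathbb{T}^\X\circ Q=F$; it is a triangle functor, as it commutes with suspension and sends the distinguished triangles of $\mathsf{D}^b(\Hcal(\mathbb{T}))$ that come from short exact sequences of complexes to triangles of $\T$.

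\emph{Properties (1), (2) and (4).} Since $\mathsf{real}_\mathbb{T}^\X$ is the identity on $\Hcal(\mathbb{T})$, both $\mathsf{H}^i_0$ and $\mathsf{H}^i_\mathbb{T}\circ\mathsf{real}_\mathbb{T}^\X$ are cohomological functors $\mathsf{D}^b(\Hcal(\mathbb{T}))\lxr\Hcal(\mathbb{T})$ agreeing on the heart, and induction on the cohomological amplitude of a complex (cutting it by its canonical truncation triangles) gives the isomorphism in (1). For the first half of (4), every object of $\mathsf{D}^b(\Hcal(\mathbb{T}))$ is a finite iterated cone of shifts of heart objects, $\mathsf{real}_\mathbb{T}^\X$ takes $\Hcal(\mathbb{T})[i]$ into $\mathbb{T}^{\leq -i}\cap\mathbb{T}^{\geq -i}$, and $\mathbb{T}^{\leq n}$, $\mathbb{T}^{\geq m}$ are extension-closed; t-exactness follows from this and (1). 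In (2): for $n<0$ both groups vanish (no negative $\Ext$ in an abelian category; $\Hom_\T(\mathbb{T}^{\leq 0},\mathbb{T}^{\geq 1})=0$); for $n=0$ the map is the identity of $\Hom_{\Hcal(\mathbb{T})}(X,Y)$; for $n=1$ the induced map $\Ext^1_{\Hcal(\mathbb{T})}(X,Y)\to\Hom_\T(X,Y[1])$ is the canonical one, classifying extensions by triangles $Y\to E\to X\to Y[1]$, and it is surjective because any such triangle forces $E\in\mathbb{T}^{\leq 0}\cap\mathbb{T}^{\geq 0}=\Hcal(\mathbb{T})$, and injective because a triangle splitting in $\T$ splits in $\Hcal(\mathbb{T})$ by the case $n=0$.

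\emph{Property (3).} The implication (a)$\Rightarrow$(b) is trivial. For (b)$\Rightarrow$(a), (2) together with (b) makes $\mathsf{real}_\mathbb{T}^\X$ bijective on $\Hom(X,Y[n])$ for all $X,Y\in\Hcal(\mathbb{T})$ and all $n\in\mathbb{Z}$; as $\mathsf{D}^b(\Hcal(\mathbb{T}))$ is generated as a triangulated category by $\Hcal(\mathbb{T})$, a double d\'evissage (a five-lemma induction on the number of nonzero cohomologies, first of the source and then of the target, using the $\Hom$ long exact sequences of the truncation triangles) upgrades this to full faithfulness; this also settles the second half of (4), since the essential image is then a triangulated subcategory of $\T$ containing every $\Hcal(\mathbb{T})[i]$, hence all of $\T^{b(\mathbb{T})}$. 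Finally, to connect (b) with (c) (the case of (d) being order-dual), I would examine the comparison map $c_n\colon\Ext^n_{\Hcal(\mathbb{T})}(X,Y)\to\Hom_\T(X,Y[n])$ for $X,Y\in\Hcal(\mathbb{T})$ and $n\geq 2$: its surjectivity is equivalent to (\textsf{Ef}). Indeed, given $f\colon X\to Y[n]$ and an epimorphism $g\colon Z\twoheadrightarrow X$ in $\Hcal(\mathbb{T})$ with $fg=0$, the triangle $X'\to Z\xrightarrow{g}X\xrightarrow{\delta}X'[1]$, with $X'=\Ker g$, factors $f$ through $\delta$, producing $f'\colon X'\to Y[n-1]$; an induction on $n$ (the case $n=2$ being covered by (2)) writes $f'$ in the image of $c_{n-1}$, and Yoneda composition with the class $\bar{\delta}\in\Ext^1_{\Hcal(\mathbb{T})}(X,X')$ exhibits $f$ in the image of $c_n$. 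Conversely, if $f=\mathsf{real}_\mathbb{T}^\X(\eta)$, representing $\eta$ by a length-$n$ extension $0\to Y\to\cdots\to Z\xrightarrow{g}X\to 0$ gives $g^\ast\eta=0$, hence $fg=0$. The remaining --- and most delicate --- point is that $c_n$ is always injective, so that surjectivity of $c_n$ alone gives (b); this is proved by the same kind of induction, lowering $n$ via the last step of a Yoneda representative and appealing to the cases $n\leq 1$. This last induction, together with the bookkeeping throughout the equivalence of (a)--(d), is the step I expect to be the main obstacle; the rest is a formal consequence of the properties of $\omega$ recorded in Proposition~\ref{f-cat properties}.
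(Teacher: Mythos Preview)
Your overall strategy matches the paper's. The construction via the universal property of localisation, the proofs of (i), (ii), (iv), and the implications (a)$\Leftrightarrow$(b) and (b)$\Rightarrow$(c) in (iii) are correct and essentially as in the paper (your construction is in fact more explicit; the paper defers to \cite{B}). Your equivalence ``surjectivity of $c_n$ for all $n\geq 2$ is equivalent to (\textsf{Ef})'' is also correct.

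The gap --- which you yourself flag as the main obstacle --- is the claim that $c_n$ is \emph{always} injective, independently of (\textsf{Ef}). The induction you sketch (``lowering $n$ via the last step of a Yoneda representative'') does not close without surjectivity at lower levels. Concretely: given $\alpha$ with $\mathsf{real}_\mathbb{T}^\X(\alpha)=0$, write $\alpha=\delta\circ\gamma$ with $\gamma\colon X\to L[1]$ and $\delta\colon L[1]\to Y[n]$ coming from the triangle $A_n\to X\xrightarrow{\gamma}L[1]\xrightarrow{\rho[1]}A_n[1]$. The vanishing $\mathsf{real}_\mathbb{T}^\X(\delta)\circ\mathsf{real}_\mathbb{T}^\X(\gamma)=0$ yields only a factorisation $\mathsf{real}_\mathbb{T}^\X(\delta)=\varepsilon\circ\mathsf{real}_\mathbb{T}^\X(\rho[1])$ for some $\varepsilon\colon A_n[1]\to Y[n]$ in $\T$. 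To conclude $\alpha=0$ you must lift $\varepsilon$ back to $\mathsf{D}^b(\Hcal(\mathbb{T}))$, and that is precisely surjectivity of $c_{n-1}$. Without (\textsf{Ef}) this lift is unavailable, and unconditional injectivity of $c_n$ is not established (nor is it known to hold in general).

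The paper resolves this by running a \emph{single} induction on $n$ under the hypothesis (\textsf{Ef}), establishing surjectivity and injectivity of $c_n$ simultaneously: surjectivity at level $n$ uses (\textsf{Ef}) together with surjectivity at level $n-1$ (your argument); injectivity at level $n$ uses the factorisation above together with surjectivity at level $n-1$ (to lift $\varepsilon$ to some $\varepsilon'$) and injectivity at level $n-1$ (to deduce $\delta=\varepsilon'\circ\rho[1]$, whence $\alpha=\varepsilon'\circ\rho[1]\circ\gamma=0$). All of your ingredients are the right ones; only the packaging must change so that injectivity is proved under (\textsf{Ef}) rather than unconditionally.
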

\begin{proof}
For the existence of the functor $\mathsf{real}_\mathbb{T}^\X$ and for property (i) we refer to \cite[Appendix A.5, A.6]{B}. Property (ii) is clear for $n\leq 0$. For $n=1$, the statement boils down to show that the Yoneda extension group $\Ext^1_{\Hcal(\mathbb{T})}(X,Y)$ in $\Hcal(\mathbb{T})$ coincides with $\Hom_\T(X,Y[1])$, which is a well-known fact about hearts of t-structures (see \cite[Remark 3.1.17(ii)]{BBD}). 

For part (iii), we prove in detail the most delicate implication: (c)$\Longrightarrow$ (b) (the implication (d)$\Longrightarrow (b)$ is analogous). Assuming the condition (\textsf{Ef}) we show the isomorphism in (b) by induction on $n\geq 1$ (for $n=1$, (b) holds by statement (ii)). Let $X$ and $Y$ be objects in $\Hcal(\mathbb{T})$,  $n\geq 2$ and consider the induced map $\mathsf{real}_{\mathbb{T}}^{\X}(X,Y[n])\colon\Hom_{\mathsf{D}^b(\Hcal(\mathbb{T}))}(X,Y[n])\lxr \Hom_\T(X,Y[n])$. First we prove surjectivity. Let $f\colon X\lxr Y[n]$ be a map in $\T$. By assumption there is an epimorphism $g\colon Z\lxr X$ in $\Hcal(\mathbb{T})$ such that $f\circ g=0$ in $\T$. Let $K$ be the kernel of $g$ in $\Hcal(\mathbb{T})$ and consider the triangle induced by $g$ in $\T$:
\[
\xymatrix{K\ar[r]& Z\ar[r]^g& X\ar[r]^{h \ \ } & K[1].}
\]
Since $g\circ f=0$, there is $u\colon K[1]\lxr Y[n]$ such that  $u\circ h=f$. By induction hypothesis, there is $u^\prime$ in $\Hom_{\mathsf{D}^b(\Hcal(\mathbb{T}))}(K[1],Y[i])$ such that $\mathsf{real}^\X_\mathbb{T}(u^\prime)=u$. By (ii), there is also $h^\prime$ in $\Hom_{\mathsf{D}^b(\Hcal(\mathbb{T}))}(X,K[1])$ such that $\mathsf{real}^\X_\mathbb{T}(h')=h$ and, thus, we have that $\mathsf{real}^\X_\mathbb{T}(u^\prime\circ h')=u\circ h=f$. To prove the injectivity of $\mathsf{real}_{\mathbb{T}}^{\X}(X,Y[n])$, let $\alpha$ be an element in $\Hom_{\mathsf{D}^b(\Hcal(\mathbb{T}))}(X,Y[n])$ such that $\mathsf{real}_\mathbb{T}^\X(\alpha)=0$. Since $\alpha$ can be thought of as an Yoneda extension (of degree $n$) between $X$ and $Y$, it represents an exact sequence in $\Hcal(\mathbb{T})$ of the form
\[
\xymatrix{0\ar[r]&Y\ar[r]&A_1\ar[r]&A_2\ar[r]&\cdots\ar[r]&A_n\ar[r]^\beta& X\ar[r]&0.}
\]
It is easy to check that $\alpha\circ\beta=0$. Let $L$ denote the kernel of $\beta$ and consider the triangle in $\mathsf{D}^b(\Hcal(\mathbb{T}))$:
\[
\xymatrix{A_n\ar[r]^\beta&X\ar[r]^{\gamma\ }& L[1]\ar[r]^{\rho[1] \ }&A_n[1].}
\]
Then there is $\delta\colon L[1]\lxr Y[n]$ such that $\alpha=\delta\circ\gamma$. Now, we have that $0=\real^\X_\mathbb{T}(\alpha)=\real^\X_\mathbb{T}(\delta)\circ\mathsf{real}_{\mathbb{T}}^{\X}(\gamma)$ and, hence, there is a map $\epsilon\colon A_n[1]\lxr Y[n]$ in $\T$ such that $\real^\X_\mathbb{T}(\delta)=\epsilon\circ\real^\X_\mathbb{T}(\rho[1])$. Since $\real_\mathbb{T}^\X(A_i[1],Y[i])$ is surjective, by induction hypothesis, it follows that there is a map  $\epsilon^\prime\colon A_i[1]\lxr Y[i]$ in $\mathsf{D}^b(\Hcal(\mathbb{T}))$ such that $\real^\X_\mathbb{T}(\epsilon^\prime)=\epsilon$. Now, $\mathsf{real}_\mathbb{T}^\X(\epsilon^\prime\circ \rho[1])=\mathsf{real}_\mathbb{T}^\X(\delta)$ which, since $\mathsf{real}_\mathbb{T}^\X(L[1],Y[i])$ is injective by induction hypothesis, implies that $\epsilon^\prime\circ \rho[1]=\delta$. Hence, we have $\alpha=\delta\circ \gamma=\epsilon^\prime\circ \rho[1]\circ \gamma=0$ since $\rho[1]\circ \gamma=0$. 

With regards to the remaining implications of (iii): it is clear that (a) implies (b) and the converse follows from a \textit{d\'evissage} argument (see our proof of Theorem~\ref{prophomolemb} for such an argument). The fact that (a) implies (c) or (d) can easily be observed from properties of Yoneda extensions (in fact the proof that (a) implies (c) is essentially contained in the above paragraph). Finally, regarding property (iv), since $\mathsf{real}_\mathbb{T}^\X$ is t-exact and the standard t-structure in $\mathsf{D}^b(\Hcal(\mathbb{T}))$ is bounded, it follows easily that the $\Image(\mathsf{real}_\mathbb{T}^\X)$ is contained in $\T^{b(\mathbb{T})}$. If $\mathsf{real}_\mathbb{T}^\X$ is fully faithful, it can be proved by induction on $l=b-a$ (with $b\geq a$) that $\mathbb{T}^{\leq b}\cap\mathbb{T}^{\geq a}$ lies in its essential image (see the proof of \cite[Proposition 3.1.16]{BBD}).
\end{proof}

\begin{rem}\label{Wildeshaus remark}\label{fcat7 again}
\begin{enumerate}
\item In \cite{W} the above result is stated for triangulated subcategories of the derived category of an abelian category. This restriction is only to ensure that the triangulated category considered admits an \textit{f-enhancement}, but the arguments carry through in the more general setting here considered.
\item Although the claim that $\mathsf{real}^\Xcal_\mathbb{T}$ is a triangle functor is implicit in \cite[Appendix]{B}, the only proof of this fact that the authors are aware of is due to E. Cabezuelo Fern\'andez and O. Schnürer and it makes use of the axiom (vii) in Remark \ref{fcat7} (see the Appendix to this paper for details)
\end{enumerate}
\end{rem}

One further property of realisation functors that is particularly useful in our applications is that they behave \textit{naturally} in certain contexts. The following theorem was presented in \cite{B} without proof. 

\begin{thm}\textnormal{\cite[Lemma A7.1]{B}}
\label{lift implies diagram}
Let $(\X,\theta)$ and $(\Y,\eta)$ be f-categories over triangulated categories $\T$ and $\U$, respectively, and let $\phi:\T\lxr \U$ be a triangle functor. Suppose that $\phi$ is t-exact with respect to t-structures $\mathbb{T}$ and $\mathbb{U}$ in $\T$ and $\U$, respectively. If $\phi$ admits an f-lifting, then there is a commutative diagram
$$\xymatrix{\mathsf{D}^b(\Hcal(\mathbb{T}))\ar[r]^{\mathsf{D}^b(\phi^0)}\ar[d]_{\mathsf{real}_\mathbb{T}^\X}&\mathsf{D}^b(\Hcal(\mathbb{U}))\ar[d]^{\mathsf{real}_\mathbb{U}^\Y}\\ \T\ar[r]^\phi&\U}$$
where $\mathsf{D}^b(\phi^0)$ is the derived functor of the exact functor $\phi^0\colon \Hcal(\mathbb{T})\lxr \Hcal(\mathbb{U})$, induced by $\phi_{|\Hcal(\mathbb{T})}$. 
\end{thm}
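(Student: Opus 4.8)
The plan is to promote the t-exactness of $\phi$ to the f-enhancements and then to exhibit both composites $\phi\circ\mathsf{real}_\mathbb{T}^\X$ and $\mathsf{real}_\mathbb{U}^\Y\circ\mathsf{D}^b(\phi^0)$ as the functor induced, through the localisation $Q\colon\mathsf{C}^b(\Hcal(\mathbb{T}))\lxr\mathsf{D}^b(\Hcal(\mathbb{T}))$, by one and the same functor $\mathsf{C}^b(\Hcal(\mathbb{T}))\lxr\U$. First I would fix an f-lifting $\Phi\colon\X\lxr\Y$ of $\phi$, so that $\Phi$ is a triangle functor preserving the subcategories $(\geq 0)$ and $(\leq 0)$, with $\Phi s\cong t\Phi$, $\Phi(\alpha_X)=\beta_{\Phi(X)}$ and $\Phi\theta\cong\eta\phi$. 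Since $\Phi$ preserves the two stable t-structures it commutes with the truncations $\sigma_{\geq n}$, $\sigma_{\leq n}$, and combining this with $\eta^{-1}\Phi\theta\cong\phi$ one obtains $\mathsf{gr}^n_\Y\circ\Phi\cong\phi\circ\mathsf{gr}^n_\X$ for every $n$. Feeding this into the description of the lifted t-structures $\mathbb{X}$, $\mathbb{Y}$ in Proposition~\ref{f-cat properties}(ii) and using that $\phi$ is t-exact, it follows that $\Phi$ is t-exact with respect to $\mathbb{X}$ and $\mathbb{Y}$; I write $\Phi^0\colon\Hcal(\mathbb{X})\lxr\Hcal(\mathbb{Y})$ for the induced exact functor on hearts.

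The crux — and what I expect to be the main obstacle — is to show that $\Phi$ intertwines the two f-forgetful functors, i.e.\ $\omega_\Y\circ\Phi\cong\phi\circ\omega_\X$ as triangle functors $\X\lxr\U$. The key starting observation is that, since $\theta\colon\T\lxr\X(\geq 0)$ (respectively $\X(\leq 0)$) is a fully faithful adjoint of $\omega_\X$ by Proposition~\ref{f-cat properties}(i), one has $\omega_\X\circ\theta\cong\iden_\T$ and likewise $\omega_\Y\circ\eta\cong\iden_\U$; together with $\Phi\theta\cong\eta\phi$ this yields the desired isomorphism on $\theta(\T)=\X(\geq 0)\cap\X(\leq 0)$. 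Next I would construct a natural transformation $\omega_\Y\Phi\Rightarrow\phi\omega_\X$ on $\X(\leq 0)$ using the unit of the adjunction $\bigl(\omega_\X|_{\X(\leq 0)},\theta\bigr)$ (and, dually, one on $\X(\geq 0)$ using a counit), and check it is an isomorphism by d\'evissage on the graded length of objects: the base case is $\theta(\T)$, and the inductive step uses the triangle relating $\sigma_{\geq n}X$, $\sigma_{\geq n+1}X$ and an f-shift of an object of $\theta(\T)$, together with the fact that $\omega_\X$ and $\omega_\Y$ invert $\alpha$ and $\beta$ and are hence insensitive to f-shifts. The transformations on $\X(\leq 0)$ and $\X(\geq 0)$ agree on the overlap $\theta(\T)$ and glue to all of $\X$ via the triangle $\sigma_{\geq 1}X\lxr X\lxr\sigma_{\leq 0}X$ and the two-out-of-three principle for morphisms of triangles. (Alternatively, one might try to extract this compatibility directly from the explicit construction of f-forgetful functors in \cite{Schn}.)

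The remaining identification at the level of hearts is bookkeeping. Using the explicit description of the equivalence $\Hcal(\mathbb{X})\xrightarrow{\sim}\mathsf{C}^b(\Hcal(\mathbb{T}))$ from Remark~\ref{equivalence f-heart} — whose $i$-th component is $\mathsf{H}^i_\mathbb{T}\circ\mathsf{gr}^i_\X$ and whose differentials are built from certain $\omega_\X$-triangles — together with $\mathsf{gr}^n_\Y\circ\Phi\cong\phi\circ\mathsf{gr}^n_\X$, the identification $\omega_\Y\Phi\cong\phi\omega_\X$ of the previous step, and the relation $\phi^0\circ\mathsf{H}^i_\mathbb{T}\cong\mathsf{H}^i_\mathbb{U}\circ\phi$ (valid because $\phi$ is t-exact), one checks that $\Phi^0$ corresponds to $\mathsf{C}^b(\phi^0)$ under the equivalences of Proposition~\ref{f-cat properties}(ii); writing $G$ and $G'$ for the inverse equivalences, this says $G'\circ\mathsf{C}^b(\phi^0)\cong\Phi^0\circ G$.

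Finally I would assemble the square. Since $\mathsf{real}_\mathbb{T}^\X\circ Q\cong\omega_\X|_{\Hcal(\mathbb{X})}\circ G$ and analogously on the $\U$-side, the previous two steps give, as functors $\mathsf{C}^b(\Hcal(\mathbb{T}))\lxr\U$,
\[
\bigl(\omega_\Y|_{\Hcal(\mathbb{Y})}\circ G'\bigr)\circ\mathsf{C}^b(\phi^0)\;\cong\;\omega_\Y|_{\Hcal(\mathbb{Y})}\circ\Phi^0\circ G\;\cong\;\phi\circ\omega_\X|_{\Hcal(\mathbb{X})}\circ G .
\]
Both sides send quasi-isomorphisms to isomorphisms: the right-hand side because $\omega_\X|_{\Hcal(\mathbb{X})}\circ G$ does (this is exactly what makes $\mathsf{real}_\mathbb{T}^\X$ well defined in Theorem~\ref{real}) and $\phi$ is a functor, the left-hand side because $\mathsf{C}^b(\phi^0)$ preserves quasi-isomorphisms ($\phi^0$ being exact) while $\omega_\Y|_{\Hcal(\mathbb{Y})}\circ G'$ inverts them. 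Hence both descend along the respective Verdier localisations, and the descents are naturally isomorphic; as the descent of the right-hand side is $\phi\circ\mathsf{real}_\mathbb{T}^\X$ and that of the left-hand side is $\mathsf{real}_\mathbb{U}^\Y\circ\mathsf{D}^b(\phi^0)$ — using that $\mathsf{D}^b(\phi^0)$ is the localisation of $\mathsf{C}^b(\phi^0)$ — this is precisely the asserted commutative diagram.
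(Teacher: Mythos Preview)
Your overall architecture is the same as the paper's: establish $\mathsf{gr}^n_\Y\Phi\cong\phi\,\mathsf{gr}^n_\X$, deduce that $\Phi$ is t-exact for the lifted t-structures, prove the key compatibility $\omega_\Y\Phi\cong\phi\,\omega_\X$, check that under the heart equivalences $\Phi^0$ becomes $\mathsf{C}^b(\phi^0)$, and then descend through the localisation. Your construction of the natural transformation on $\X(\leq 0)$ via the unit of $(\omega_\X|_{\X(\leq 0)},\theta)$ and d\'evissage on graded length also matches the paper exactly.

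The one genuine gap is your extension of $\omega_\Y\Phi\cong\phi\,\omega_\X$ from $\X(\leq 0)$ and $\X(\geq 0)$ to all of $\X$. You propose to glue along the truncation triangle $\sigma_{\geq 1}X\to X\to\sigma_{\leq 0}X$ via the two-out-of-three principle (TR3). But TR3 only gives \emph{existence} of a filling map, not uniqueness, and hence not naturality: for the induced map at $X$ to be canonical you would need a vanishing such as $\Hom_\U\bigl(\omega_\Y\Phi(\sigma_{\geq 1}X),\,\phi\,\omega_\X(\sigma_{\leq 0}X)[-1]\bigr)=0$, and there is no orthogonality in $\U$ to ensure this once the f-structure has been forgotten. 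So the glued family of isomorphisms need not assemble into a natural transformation.

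The paper avoids this by never crossing the $(\geq 1)/(\leq 0)$ boundary. Instead it exploits axiom~(iii) of Definition~\ref{defnfcat}: every object of $\X$ lies in some $\X(\leq n)$. Having built $\mu^0$ on $\X(\leq 0)$, one sets $\mu^n:=\mu^0 s^{-n}$ on $\X(\leq n)$ and then checks, using the natural transformation $\alpha\colon\iden_\X\to s$ together with the fact that $\omega_\X$, $\omega_\Y$ invert $\alpha$, $\beta$ (Proposition~\ref{f-cat properties}(i)) and that $\Phi(\alpha)=\beta$, that the $\mu^n$ are compatible on overlaps $\X(\leq n)\subset\X(\leq m)$. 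This gives a well-defined natural isomorphism on all of $\X$ without any appeal to TR3. If you replace your gluing step by this f-shift argument, the rest of your proof goes through.
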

\begin{proof}
Let $\Phi\colon \X\lxr\Y$  be an f-lifting of $\phi$ and let $\mathbb{X}$ and $\mathbb{Y}$ be the t-structures in $\X$ and $\Y$ compatible with $\mathbb{T}$ and $\mathbb{U}$ as in Proposition \ref{f-cat properties}(ii). Since $\Phi$ is an f-lifting of $\phi$ (and, in particular, an f-functor),  we have
\begin{equation}\nonumber
\begin{array}{lll}
\phi(\mathsf{gr}_{\X}^n(X)) &=& \phi(\theta^{-1}s^{-n}\sigma_{\leq n}\sigma_{\geq n}(X)) \nonumber \\
 &\cong& \eta^{-1}\Phi|_{\X(\geq 0)\cap \X(\leq 0)}s^{-n}\sigma_{\leq n}\sigma_{\geq n}(X) \nonumber \\
 &\cong& \eta^{-1} t^{-n}\Phi(\sigma_{\leq n}\sigma_{\geq n}(X)) \nonumber \\ 
 &\cong& \eta^{-1} t^{-n}\sigma_{\leq n}\sigma_{\geq n}\Phi(X) \nonumber \\
 &\cong& \mathsf{gr}_{\Y}^n(\Phi(X)). \nonumber
\end{array}
\end{equation}
Using this fact, since $\phi$ is t-exact (with respect to $\mathbb{T}$ and $\mathbb{U}$), we get that $\Phi$ is also t-exact (with respect to $\mathbb{X}$ and $\mathbb{Y}$), inducing an exact functor $\Phi^0\colon \Hcal(\mathbb{X})\lxr \Hcal(\mathbb{Y})$. This yields the following diagram of functors.
\begin{equation}\nonumber
\xymatrix{
& \ar @{} [dr] |{(3)}\\
\mathsf{D}^b(\Hcal(\mathbb{T}))  \ar @/^3.0pc/[rrr] \ar[rddd]_{\mathsf{real}_\mathbb{T}^\X} & \mathsf{C}^b(\Hcal(\mathbb{T}))\ar @{} [ld] |{(1)} \ar @{} [dr] |{(5)} \ar[l] \ar[r]_{\mathsf{C}^b(\phi^0)}  & \mathsf{C}^b(\Hcal(\mathbb{U}))\ar @{} [dr] |{(2)} \ar[r] & \mathsf{D}^b(\Hcal(\mathbb{U})) \ar[lddd]^{\mathsf{real}_\mathbb{U}^{\Y}} \\
& \mathcal{H}(\mathbb{X}) \ar[r]_{\Phi^0} \ar[u]^{\cong} \ar[d] \ar @{} [dr] |{(4)} & \mathcal{H}(\mathbb{Y}) \ar[d] \ar[u]_{\cong} & \\
 & \X \ar[r]_{\Phi} \ar[d]^{\omega_{\X}} \ar @{} [dr] |{(6)} & \Y \ar[d]_{\omega_{\Y}} & & \\
 & \T \ar[r]_\phi & \U & }
\end{equation}
In order to prove the theorem, it is enough to check the commutativity of all the internal diagrams. Diagrams (1), (2), (3) and (4) commute by construction of the functors involved. Diagram (5) commutes using again above property that $\phi\mathsf{gr}_\X^n\cong \mathsf{gr}^n_\Y\Phi$ and Remark \ref{equivalence f-heart}.

Finally, let us prove in detail the commutativity of diagram (6). We first show that (6) naturally commutes for objects in $\X(\leq 0)$, i.e. that there is a natural equivalence $\mu^0\colon \omega_{\Y|\Y(\leq 0)}\Phi_{|\X(\leq 0)}\lxr \phi\omega_{\X|\X(\leq 0)}$. To simplify the notation we write the upperscript $\leq 0$ to denote the restriction of the functors to $\X(\leq 0)$ or to $\Y(\leq 0)$ (depending on the domain of the functor). Consider the unit of the adjunction $(\omega_{\X}^{\leq 0},\theta)$ and denote it by $\delta\colon \iden_{\X(\leq 0)}\lxr \theta\omega_{\X}^{\leq 0}$. We define $\mu^0$ as the following natural composition
\[
\xymatrix{\omega_\Y^{\leq 0}\Phi^{\leq 0}\ar[rr]^{\omega_\Y^{\leq 0}\Phi^{\leq 0}(\delta)\ \ \ }&&\omega_\Y^{\leq 0}\Phi^{\leq 0}\theta\omega_\X^{\leq 0}\ar[r]^{\ \cong}& \omega_\Y^{\leq 0}\eta\phi\omega_\X^{\leq 0}\ar[r]^{\ \ \cong}&\phi\omega_\X^{\leq 0}.}
\]
Note that we use the fact that $\Phi$ is an f-lifting of $\phi$ in order to get a natural equivalence $\Phi^{\leq 0}\theta\cong \eta\phi$. We also use that $\omega_\Y^{\leq 0}$ is a left inverse to $\eta$. Consider now the subcategory of $\X(\leq 0)$ formed by all the objects $X$ such that $\mu^0_X$ is an isomorphism. It is easy to see that this subcategory is triangulated and it contains $s^n(\X(\leq 0)\cap\X(\geq 0))=s^n\theta(\T)$ for any $n\leq 0$. Since every object in $\X(\leq 0)$ can be obtained as a finite extension of such objects, it follows that $\mu^0$ is a natural equivalence. Now, given $n\geq 0$, we define natural transformations $\mu^n\colon \omega_{\Y|\Y(\leq n)}\Phi_{|\X(\leq n)}\lxr \phi\omega_{\X|\X(\leq n)}$. If $X$ lies in $\X(\leq n)$ then $s^{-n}X$ lies in $\X(\leq 0)$ and, hence, we may define $\mu^n:=\mu^0s^{-n}$. It is clear that $\mu^n$ is also a natural equivalence. Thus, we have a family of natural equivalences $(\mu^n)_{n\geq 0}$. It follows from Definition \ref{defnfcat}(iii) and (iv) that in order to define a natural equivalence $\mu\colon \omega_{\Y}\Phi\lxr \phi\omega_{\X}$, it is enough to show that, for any $m>n$ and for any $X$ in $\X(\leq n)$, $\mu^n_X$ is naturally isomorphic to $\mu^m_X$ (note that $\mu^m=\mu^ns^{n-m}$). Let $X$ lie in $\X(\leq n)$ and consider the map 
$$\mu^n_{s^{n-m}X}\colon \omega_\Y\Phi(s^{n-m}X)\lxr \phi\omega_\X(s^{n-m}X).$$
Using property (iii) in Definition \ref{defnfcat}, we define the following composition of natural morphisms
$$\alpha^{[n,m]}_X:\xymatrix{s^{n-m}X\ar[rr]^{\alpha_{s^{n-m}X}\ }&& s^{n-m+1}X\ar[rr]^{\ \alpha_{s^{n-m+1}X}}&&\cdots\ar[rr]^{\alpha_{s^{-1}X}}&& X.}$$
By the naturality of $\alpha$, we get a commutative diagram as follows
$$\xymatrix{\omega_\Y\Phi(s^{n-m}X)\ar[rr]^{\mu^ns^{n-m}}\ar[d]_{\omega_\Y\Phi(\alpha^{[n,m]}_X)}&&\phi\omega_\X(s^{n-m}X)\ar[d]^{\phi\omega_\X(\alpha^{[n,m]}_X)}\\ \omega_\Y\Phi(X)\ar[rr]^{\mu_n}&&\phi\omega_\X(X)}$$
Since $\Phi$ is an f-functor, $\Phi(s^{n-m}X)$ is naturally isomorphic to $t^{n-1}\Phi(X)$ and $\Phi(\alpha^{[n,m]}_X)=\beta^{[n,m]}_{\Phi(X)}$. By Proposition \ref{f-cat properties}(i) it follows that the vertical maps are isomorphisms, as wanted.
\end{proof}

\subsection{Examples of realisation functors}
We begin with the simplest realisation functor: the one associated to the standard t-structure in a derived category and with respect to the filtered derived category.

\begin{prop}\label{standard identity}
Let $\Acal$ be an abelian category. Then the realisation functor associated to the standard t-structure in $\mathsf{D}(\A)$ with respect to the filtered derived category of $\A$  is naturally equivalent to the inclusion functor of $\mathsf{D}^b(\A)$ in $\mathsf{D}(\A)$.
\end{prop}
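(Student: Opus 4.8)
The plan is to reduce the statement, via the defining property of the realisation functor, to an explicit computation of the equivalence $G\colon \mathsf{C}^b(\A)\lxr \Hcal(\mathbb{X})$ that appears in Theorem~\ref{real}. Write $\mathbb{T}=\mathbb{D}_\A$ for the standard t-structure on $\T=\mathsf{D}(\A)$, so that $\Hcal(\mathbb{T})\iso \A$; let $\X=\mathsf{DF}(\A)$ be the filtered derived category, an f-category over $\mathsf{D}(\A)$ by Example~\ref{exam filtered derived cat}, with f-forgetful functor $\omega\colon \X\lxr \mathsf{D}(\A)$ sending $(K,F)\mapsto K$; let $\mathbb{X}$ be the t-structure on $\X$ induced by $\mathbb{T}$ as in Proposition~\ref{f-cat properties}(2); and let $Q\colon \mathsf{C}^b(\A)\lxr \mathsf{D}^b(\A)$ be the localisation functor. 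By Theorem~\ref{real}, $\mathsf{real}_{\mathbb{T}}^{\X}\colon \mathsf{D}^b(\A)\lxr \mathsf{D}(\A)$ is the unique functor, up to natural equivalence, with $\mathsf{real}_{\mathbb{T}}^{\X}\circ Q\iso \omega_{|\Hcal(\mathbb{X})}\circ G$. Since $\iota\circ Q$ — where $\iota\colon \mathsf{D}^b(\A)\hookrightarrow \mathsf{D}(\A)$ denotes the inclusion — is simply the canonical functor $\mathsf{C}^b(\A)\lxr \mathsf{D}(\A)$, it therefore suffices to prove that $\omega_{|\Hcal(\mathbb{X})}\circ G$ is naturally equivalent to that canonical functor; the uniqueness clause in Theorem~\ref{real} then gives $\mathsf{real}_{\mathbb{T}}^{\X}\iso \iota$.

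To get hold of $G$, for a complex $K$ in $\mathsf{C}^b(\A)$ let $\widehat{K}$ denote $K$ equipped with its stupid (b\^ete) decreasing filtration $F_nK=(\cdots\to 0\to K^n\to K^{n+1}\to\cdots)$; this is a finite filtration and every chain map respects it, so we obtain a functor $\widehat{(-)}\colon \mathsf{C}^b(\A)\lxr \mathsf{DF}(\A)$. Since $\mathsf{gr}_{\X}^n(\widehat{K})\iso K^n[-n]$ has cohomology concentrated in degree $n$, each $\widehat{K}$ lies in $\Hcal(\mathbb{X})$, so $\widehat{(-)}$ takes values in $\Hcal(\mathbb{X})$. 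Next I would unwind Remark~\ref{equivalence f-heart} for $\widehat{K}$: the $n$-th term of $\mathsf{H}^0_{\mathbb{X}}(\widehat{K})$ is $\mathsf{H}^n_{\mathbb{T}}(\mathsf{gr}^n_{\X}\widehat{K})\iso \mathsf{H}^n_{\mathbb{T}}(K^n[-n])=K^n$, while its $n$-th differential is $\mathsf{H}^n_{\mathbb{T}}$ applied to the connecting map of the triangle $\omega\sigma_{\leq n+1}\sigma_{\geq n+1}\widehat{K}\lxr \omega\sigma_{\leq n+1}\sigma_{\geq n}\widehat{K}\lxr\omega\sigma_{\leq n}\sigma_{\geq n}\widehat{K}\lxr (\omega\sigma_{\leq n+1}\sigma_{\geq n+1}\widehat{K})[1]$, which for the stupid filtration becomes, up to the standard signs, the triangle $K^{n+1}[-n-1]\lxr K^{[n,n+1]}\lxr K^n[-n]\lxr K^{n+1}[-n]$ (where $K^{[n,n+1]}$ is the complex $K^n\to K^{n+1}$ placed in degrees $n$ and $n+1$) whose connecting map is $\pm d^n[-n]$. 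Hence $\mathsf{H}^0_{\mathbb{X}}(\widehat{K})\iso K$ naturally in $K$, i.e.\ $\mathsf{H}^0_{\mathbb{X}}\circ\widehat{(-)}\iso \iden_{\mathsf{C}^b(\A)}$.

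Since $G$ is by construction a quasi-inverse of the equivalence $\mathsf{H}^0_{\mathbb{X}}\colon \Hcal(\mathbb{X})\lxr \mathsf{C}^b(\A)$ (Remark~\ref{equivalence f-heart}), the relation $\mathsf{H}^0_{\mathbb{X}}\circ\widehat{(-)}\iso \iden$ forces $\widehat{(-)}\iso (G\circ\mathsf{H}^0_{\mathbb{X}})\circ\widehat{(-)}\iso G$. Consequently $\omega_{|\Hcal(\mathbb{X})}\circ G\iso \omega\circ\widehat{(-)}$, and $\omega\circ\widehat{(-)}$ sends a complex $K$ to $\omega(\widehat{K})=K$ viewed as an object of $\mathsf{D}(\A)$ — that is, it is precisely the canonical functor $\mathsf{C}^b(\A)\lxr \mathsf{D}(\A)$, which factors as $\iota\circ Q$. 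Combined with the first paragraph, this yields $\mathsf{real}_{\mathbb{T}}^{\X}\iso \iota$, as desired. The one genuinely computational step is the identification of $G$ with the stupid-filtration functor in the second paragraph; the rest is formal manipulation with the defining diagram of the realisation functor from Theorem~\ref{real} and with the universal property of the localisation $Q$.
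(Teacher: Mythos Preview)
Your proof is correct and follows essentially the same approach as the paper's: both identify the inverse equivalence $G\colon \mathsf{C}^b(\A)\to\Hcal(\mathbb{X})$ with the stupid-filtration functor $\widehat{(-)}$, and then observe that composing with the forgetful functor $\omega$ recovers the canonical functor $\mathsf{C}^b(\A)\to\mathsf{D}(\A)$, whence the uniqueness clause in Theorem~\ref{real} finishes. Your argument is in fact slightly more explicit in verifying that $\widehat{(-)}$ really is a quasi-inverse of $\mathsf{H}^0_{\mathbb{X}}$ by unwinding Remark~\ref{equivalence f-heart}, whereas the paper simply asserts this check and leaves it to the reader.
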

\begin{proof}
Going through the construction of the realisation functor for the standard t-structure, we show that it acts as the identity both on objects and on morphisms. From Proposition \ref{f-cat properties} (ii), there is a t-structure in the filtered derived category $\mathsf{D}\mathsf{F}(\A)$ compatible with the standard t-structure in $\mathsf{D}(\A)$, whose heart is
\[
\A\mathsf{F}:=\{(X,F)\in \mathsf{DF}(\A) \ | \ \textsf{gr}^i_F(X)\in \A[-i] \ \forall i \in \mathbb{Z}\}
\]
and the equivalence between $\A\mathsf{F}$ and $\mathsf{C}^b(\A)$ (as in Remark \ref{equivalence f-heart})  is given by assigning to $(X,F)$ in $\A\mathsf{F}$ the complex $(\textsf{gr}_F^i(X),d^i)$, where $d^i\colon \textsf{gr}_F^i(X)\lxr \textsf{gr}_F^{i+1}(X)$ is defined by the canonical triangle in $\mathsf{D}^b(\A)$
\[
\xymatrix{\textsf{gr}^i_F(X)[-1]\ar[r]^{ \ d^i }& \textsf{gr}^{i+1}_F(X)\ar[r]& F_{i}X/F_{i+2}X\ar[r]& \textsf{gr}^i_F(X).}
\]
Note that this makes sense since the map $d^i$ in the above triangle is indeed a map in $\A[-i-1]$, by definition of $\A\mathsf{F}$. In order to compute the realisation functor, one needs to describe an inverse of this equivalence of abelian categories. Given a complex $Y=(Y^i,d^i)$ in $\mathsf{C}^b(\A)$ consider a filtration on $Y$ defined by the \textit{stupid truncations}, i.e. for any integer $n$ define 
\[
F_nY= \xymatrix{(\cdots \ar[r]& 0\ar[r]& Y^n\ar[r]^{d^n}& Y^{n+1}\ar[r]^{d^{n+1}}& Y^{n+2}\ar[r]^{d^{n+2}} & \cdots),}
\]
It is easy to see that, in fact, the object $(Y,F)$ belongs to $\A\mathsf{F}$. This assignment clearly gives rise to a functor $G\colon\mathsf{C}^b(\A)\lxr \A\mathsf{F}$ which can easily be checked to be the wanted inverse functor. Consider now the composition $\mathsf{C}^b(\A)\lxr \A\mathsf{F}\lxr \mathsf{DF}(\A)\lxr \mathsf{D}(\A)$ of $G$ and the forgetful functor $\omega\colon \mathsf{D}\mathsf{F}(\A)\lxr \mathsf{D}(\A)$. It is clear that this composition sends a complex $Y$ to itself as an object of the derived category - and similarly for morphisms. Hence, the realisation functor, being the universal functor induced by the localisation of $\mathsf{C}^b(\A)$ at the quasi-isomorphisms, is naturally equivalent to the inclusion functor of $\mathsf{D}^b(\A)$ in $\mathsf{D}(\A)$. 
\end{proof}

In the above proposition, if we restrict the codomain to $\mathsf{D}^b(\A)$, the realisation functor is then naturally equivalent to $\iden_{\mathsf{D}^b(\A)}$. Throughout the paper we will restrict the codomain of the realisation functor from unbounded derived categories to bounded ones whenever possible and convenient without further mention. 

A recurrent problem when dealing with realisation functors is that, as they are defined, their domain is a bounded (rather than unbounded) derived category. In order to also discuss functors defined in unbounded derived categories, we need the following notion.

\begin{defn}\label{defn restrict extend}
Let $\A$ and $\B$ be abelian categories. An equivalence $\Phi\colon \mathsf{D}(\A)\lxr \mathsf{D}(\B)$ is said to be \textbf{restrictable} if, by restriction, it induces an equivalence $\phi:\mathsf{D}^b(\A)\lxr \mathsf{D}^b(\B)$. In this case we also say that $\phi$ is \textbf{extendable}. In other words, the equivalences $\Phi$ and $\phi$ are, respectively, restrictable or extendable if there is a commutative diagram as follows, where the vertical arrows are the natural inclusions.
$$\xymatrix{\mathsf{D}^b(\A)\ar[r]^\phi\ar[d]&\mathsf{D}^b(\B)\ar[d]\\ \mathsf{D}(\A)\ar[r]^\Phi&\mathsf{D}(\B)}$$
\end{defn}

\begin{exam}\label{example restrict extend}
\begin{enumerate}
\item Let $\A$ and $\B$ be abelian categories and $\Phi\colon \mathsf{D}(\A)\lxr\mathsf{D}(\B)$ a triangle equivalence. Then $\Phi$ is restrictable if and only if $\mathsf{D}(\B)^{b(\mathbb{T})}=\mathsf{D}^b(\B)$, where $\mathbb{T}:=(\Phi(\mathbb{D}^{\leq 0}),\Phi(\mathbb{D}^{\geq 0}))$ (for further equivalent conditions on the t-structure, see Lemma \ref{eq bdd}). In fact, since $\Phi_{|\mathsf{D}^b(\A)}$ is fully faithful, the objects in $\Image{\Phi_{|\mathsf{D}^b(\A)}}$ are precisely those which can be obtained as finite extensions in $\mathsf{D}(\B)$ of shifts of objects in $\Hcal(\mathbb{T})=\Phi(\A)$ - and this is precisely the subcategory $\mathsf{D}(\B)^{b(\mathbb{T})}$. 
\item Equivalences of standard type between bounded derived categories of rings are extendable (\cite{Keller}).
\end{enumerate}
\end{exam}
 
The next proposition shows that equivalences of unbounded or bounded derived categories \textit{do not differ much} from suitably chosen realisation functors. By this we mean that the difference between a derived equivalence and our choice of realisation functor is a \textit{trivial} derived equivalence, i.e. the derived functor of an exact equivalence of abelian categories.

\begin{prop}\label{everything is real}
Let $\A$ and $\B$ be abelian categories. The following statements hold.
\begin{enumerate}
\item Let $\phi\colon \mathsf{D}^b(\A)\lxr\mathsf{D}^b(\B)$ be a triangle equivalence and let $\mathbb{T}$ be the t-structure $(\phi(\mathbb{D}^{\leq 0}_\A),\phi(\mathbb{D}^{\geq 0}_\A))$ in $\mathsf{D}^b(\B)$. Then there is an f-category $(\X,\theta)$ over $\mathsf{D}^b(\B)$ such that $\phi\cong \mathsf{real}_\mathbb{T}^\X\circ \mathsf{D}^b(\phi^0)$, for the exact equivalence of abelian categories $\phi^0\colon\A\lxr\phi(\A)$, induced by $\phi$.
\item Let $\Phi\colon \mathsf{D}(\A)\lxr\mathsf{D}(\B)$ be a triangle equivalence and let $\mathbb{T}$ be the t-structure $(\phi(\mathbb{D}^{\leq 0}_\A),\phi(\mathbb{D}^{\geq 0}_\A))$ in $\mathsf{D}(\B)$. Then there is an f-category $(\X,\theta)$ over $\mathsf{D}(\B)$ such that $\Phi^b:=\Phi_{|\mathsf{D}^b(\A)}\cong \mathsf{real}_\mathbb{T}^\X\circ \mathsf{D}^b(\Phi^0)$, for the exact equivalence of abelian categories $\Phi^0\colon\A\lxr\Phi(\A)$, induced by $\Phi$. In particular, if $\Image(\mathsf{real}_\mathbb{T}^\X)=\mathsf{D}(\B)^{b(\mathbb{T})}=\mathsf{D}^b(\B)$, then $\Phi$ is a restrictable equivalence.
\end{enumerate} 
In both cases, $\mathsf{real}_\mathbb{T}^\X$ is fully faithful, thus inducing an equivalence between $\mathsf{D}^b(\A)$ and its essential image.
\end{prop}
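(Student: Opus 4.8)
The plan is to build the f-category $(\X,\theta)$ by transport of structure through $\phi$ (resp. $\Phi$), exactly as in Example~\ref{eq f-lift}, and then identify $\phi$ (resp. $\Phi^b$) with the claimed composite of a realisation functor and the derived functor of a trivial equivalence. First I would fix a filtered enhancement $(\mathsf{DF}^b(\A),\xi_\A)$ of $\mathsf{D}^b(\A)$ (Example~\ref{exam filtered derived cat}). For part (i), since $\phi\colon \mathsf{D}^b(\A)\lxr \mathsf{D}^b(\B)$ is a triangle equivalence, Example~\ref{eq f-lift} gives that $(\X,\theta):=(\mathsf{DF}^b(\A),\xi_\A\circ\phi^{-1})$ is an f-category over $\mathsf{D}^b(\B)$, and $\iden_{\mathsf{DF}^b(\A)}$ is an f-lifting of $\phi$. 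The functor $\phi$ is tautologically t-exact with respect to the standard t-structure $\mathbb D_\A$ on $\mathsf{D}^b(\A)$ and the t-structure $\mathbb T=(\phi(\mathbb D_\A^{\leq 0}),\phi(\mathbb D_\A^{\geq 0}))$ on $\mathsf{D}^b(\B)$, since this is literally how $\mathbb T$ is defined. Hence Theorem~\ref{lift implies diagram} applies with $(\X,\theta)$ over $\mathsf{D}^b(\B)$ and the standard filtered enhancement over $\mathsf{D}^b(\A)$, yielding a commutative square
\[
\xymatrix{\mathsf{D}^b(\Hcal(\mathbb D_\A))\ar[r]^{\mathsf{D}^b(\phi^0)}\ar[d]_{\mathsf{real}_{\mathbb D_\A}^{\mathsf{DF}^b(\A)}}&\mathsf{D}^b(\Hcal(\mathbb T))\ar[d]^{\mathsf{real}_{\mathbb T}^{\X}}\\ \mathsf{D}^b(\A)\ar[r]^\phi&\mathsf{D}^b(\B)}
\]
where $\phi^0\colon \Hcal(\mathbb D_\A)\lxr\Hcal(\mathbb T)$ is the induced exact functor. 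Now $\Hcal(\mathbb D_\A)\cong\A$ and $\Hcal(\mathbb T)=\phi(\A)$, so $\phi^0$ is precisely the exact equivalence $\A\lxr\phi(\A)$ from the statement, and by Proposition~\ref{standard identity} the left vertical arrow is naturally equivalent to $\iden_{\mathsf{D}^b(\A)}$. Composing, $\phi\cong\mathsf{real}_{\mathbb T}^{\X}\circ\mathsf{D}^b(\phi^0)$, as claimed.

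For part (ii) I would argue similarly but starting from the unbounded enhancement: by Example~\ref{eq f-lift} applied to $\Phi\colon \mathsf{D}(\A)\lxr\mathsf{D}(\B)$, the pair $(\X,\theta):=(\mathsf{DF}(\A),\xi_\A\circ\Phi^{-1})$ is an f-category over $\mathsf{D}(\B)$ and $\iden_{\mathsf{DF}(\A)}$ is an f-lifting of $\Phi$. Again $\Phi$ is t-exact for the standard t-structure on $\mathsf{D}(\A)$ and $\mathbb T=(\Phi(\mathbb D_\A^{\leq 0}),\Phi(\mathbb D_\A^{\geq 0}))$ on $\mathsf{D}(\B)$, so Theorem~\ref{lift implies diagram} gives a commutative square with $\mathsf{real}_{\mathbb D_\A}^{\mathsf{DF}(\A)}$ on the left (which, restricting codomains, is $\iden_{\mathsf{D}^b(\A)}$ by Proposition~\ref{standard identity}) and $\mathsf{real}_{\mathbb T}^{\X}$ on the right. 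The bottom arrow of that square is $\Phi$, but since $\mathsf{real}$-functors have bounded-derived-category domain, the identification reads $\Phi^b=\Phi_{|\mathsf{D}^b(\A)}\cong\mathsf{real}_{\mathbb T}^{\X}\circ\mathsf{D}^b(\Phi^0)$ with $\Phi^0\colon\A\lxr\Phi(\A)$ the induced exact equivalence. For the last sentence of (ii): $\mathsf{D}^b(\Phi^0)$ is an equivalence of bounded derived categories, so $\mathsf{real}_{\mathbb T}^{\X}$ restricted to $\mathsf{D}(\B)^{b(\mathbb T)}$ is isomorphic to $\Phi^b\circ(\mathsf{D}^b(\Phi^0))^{-1}$, which has essential image $\Phi(\mathsf{D}^b(\A))$; if this equals $\mathsf{D}^b(\B)$ then $\Phi$ restricts to an equivalence $\mathsf{D}^b(\A)\lxr\mathsf{D}^b(\B)$, i.e.\ $\Phi$ is restrictable in the sense of Definition~\ref{defn restrict extend}.

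Finally, to see that $\mathsf{real}_{\mathbb T}^{\X}$ is fully faithful in both cases: in the commutative square, the bottom map ($\phi$ or $\Phi^b$) is an equivalence and the left vertical map $\mathsf{D}^b(\phi^0)$ (resp.\ $\mathsf{D}^b(\Phi^0)$) is an equivalence because $\phi^0$ (resp.\ $\Phi^0$) is an equivalence of abelian categories; hence $\mathsf{real}_{\mathbb T}^{\X}$, being isomorphic to $\phi\circ(\mathsf{D}^b(\phi^0))^{-1}$, is fully faithful, and by Theorem~\ref{real}(iv) its essential image is exactly $\mathsf{D}(\B)^{b(\mathbb T)}$; in particular it induces an equivalence between $\mathsf{D}^b(\A)\cong\mathsf{D}^b(\Hcal(\mathbb T))$ and that essential image. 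The only genuinely delicate point — and the step I expect to absorb the most care — is the verification that $\phi$ (resp.\ $\Phi$) really is t-exact with respect to the transported t-structure and that Theorem~\ref{lift implies diagram} applies verbatim with the specific enhancements chosen here (one over $\mathsf{D}^b(\B)$ coming from $\mathsf{DF}^b(\A)$, one the standard $\mathsf{DF}^b(\A)$ over $\mathsf{D}^b(\A)$); everything else is bookkeeping via Example~\ref{eq f-lift} and Proposition~\ref{standard identity}.
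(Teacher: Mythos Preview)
Your proposal is correct and follows essentially the same approach as the paper's own proof: transport the filtered derived category of $\A$ along $\phi$ (resp.\ $\Phi$) via Example~\ref{eq f-lift}, apply Theorem~\ref{lift implies diagram} with the identity as f-lifting, and invoke Proposition~\ref{standard identity} to identify the left realisation with the identity. One tiny imprecision: in part (ii) you call $\Phi^b$ ``an equivalence'' when arguing full faithfulness of $\mathsf{real}_{\mathbb T}^{\X}$, but you only know $\Phi^b$ is fully faithful (as a restriction of an equivalence); that is, however, all the argument needs, and the paper phrases it exactly this way.
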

\begin{proof}
(i) Consider the filtered bounded derived category $\mathsf{DF}^b(\A)$ as an f-category over $\mathsf{D}^b(\A)$ and let $(\X,\theta)$ be an f-category over $\mathsf{D}^b(\B)$ defined by $\X=\mathsf{DF}^b(\A)$ and $\theta=\theta^\prime\phi^{-1}$, where $\theta^\prime$ is the natural inclusion of $\mathsf{D}^b(\A)$ in $\mathsf{DF}^b(\A)$ (see Example~\ref{eq f-lift}). This choice of f-categories over $\mathsf{D}^b(\A)$ and $\mathsf{D}^b(\B)$ guarantees, trivially, that the identity functor on $\mathsf{DF}^b(\A)$ is an f-lifting of $\phi$. Now, the functor $\phi$ is t-exact with respect to the standard t-structure in $\mathsf{D}^b(\A)$ and the t-structure $\mathbb{T}$ in $\mathsf{D}^b(\B)$ ($\mathbb{T}$ was chosen for this purpose) and, thus, $\Hcal(\mathbb{T})=\phi(\A)$. Then Theorem~\ref{lift implies diagram} shows that $\mathsf{real}^\X_\mathbb{T}\circ \mathsf{D}^b(\phi^0)\cong\phi\circ \mathsf{real}_{\mathbb{D}}^{\mathsf{DF}(\A)}$, where $\mathsf{real}_{\mathbb{D}}^{\mathsf{DF}(\A)}$ denotes the realisation of the standard t-structure in $\mathsf{D}^b(\A)$ with respect to the filtered derived category of $\A$. Since by Proposition \ref{standard identity}, $\mathsf{real}_{\mathbb{D}}^{\mathsf{DF}(\A)}$ is naturally equivalent to $\iden_{\mathsf{D}^b(\A)}$, we get that $\phi\cong \mathsf{real}^\X_\mathbb{T}\circ \mathsf{D}^b(\phi^0)$. 

(ii) Consider the f-category $(\X,\theta):=(\mathsf{DF}(\A),\theta^\prime\Phi^{-1})$ over $\mathsf{D}(\B)$, where now $\theta^\prime$ is the unbounded version of the functor stated in part (i). Then the first statement can be proved analogously to (i). The second statement, follows from Example \ref{example restrict extend}(i). 

Finally, note that in the above cases, since both $\phi$ and $\mathsf{D}^b(\phi^0)$ (respectively, $\Phi$ and $\mathsf{D}^b(\Phi^0)$) are fully faithful, then so is $\mathsf{real}_\mathbb{T}^\X$, finishing the proof.
\end{proof}

The above statement is not particularly surprising. Given a derived category, any equivalence with another derived category yields an obvious \textit{new} f-enhancement. The proposition translates this in terms of functors. The motto could be \textit{studying derived equivalence functors corresponds to studying f-enhancements}. If, however, we want to study realisation functors with respect to fixed f-categories (for example, filtered derived categories), the problem resides then on the f-lifting property, as we will see in Section 5.

\section{Silting and cosilting $t$-structures}
We will now discuss a class of t-structures arising from certain objects (called silting or cosilting) in a triangulated category. Within this class, it will be possible to characterise exactly which associated realisation functors yield derived equivalences (see Section~\ref{sectiondereq}). These t-structures have appeared in the literature in various incarnations (\cite{KV, AI, NSZ, Wei, AMV1}). In this section we provide a general definition which covers, up to our knowledge, all the examples of silting complexes appearing in the literature, including non-compact ones. Furthermore, we introduce the dual notion of cosilting. Later, we specify to tilting and cotilting objects, observing how they can provide derived equivalences even when they are not compact. 

In this section, $\T$ will denote a triangulated category. Given an object $X$ in $\T$, we will denote by $\Add(X)$ (respectively, $\Prod(X)$) the full subcategory of $\T$ consiting of all objects which are summands of a direct sum (respectively, of a direct product) of $X$. Note that without further assumptions, the category $\T$ might not admit arbitrary (set-indexed) coproducts or products of an object $X$. We will say that a triangulated category is \textbf{TR5} if it has set-indexed coproducts and \textbf{TR5*} if it has set-indexed products. Recall from \cite[Proposition 1.2.1]{Neeman} that, in a TR5 (respectively, TR5*)  triangulated category, coproducts (respectively, products) of triangles are again triangles. Given an object $X$ in $\T$ and an interval $I$ of integers, we consider the following orthogonal subcategories of $\T$
\[
X^{\perp_I}=\{Y\in\T \ | \ \Hom_\T(X,Y[i])=0, \forall i\in I\}, \ \ \ \ {}^{\perp_I}X=\{Y\in\T \ | \ \Hom_\T(Y,X[i])=0, \forall i\in I\}.
\]
If the interval $I$ is unbounded, we often replace it by symbols such as $>n$, $<n$, $\geq n$, $\leq n$, $\neq n$ (with $n\in\mathbb{Z}$) with the obvious associated meaning. 
We say that an object $X$ \textbf{generates} $\T$ if $X^{\perp_\mathbb{Z}}=0$ and it \textbf{cogenerates} $\T$ if ${}^{\perp_\mathbb{Z}}X=0$. Recall also that an object $X$ in a TR5 triangulated category is said to be \textbf{compact} if $\Hom_\T(X,-)$ commutes with coproducts. Here is an informal overview of this section.\\

\textbf{Subsection 4.1: (Co)Silting objects in triangulated categories} 
\begin{itemize}
\item We introduce the notion of silting (respectively, cosilting) objects in a triangulated category and list some examples and properties. In particular, we see in Proposition \ref{gen cogen heart} that the hearts of the associated t-structures have a projective generator (respectively, an injective cogenerator).
\end{itemize}

\textbf{Subsection 4.2: Bounded (co)silting objects}
\begin{itemize}
\item We show that silting objects in derived categories of Grothendieck categories admit a more familiar description (Proposition~\ref{silting as expected}).
\item We define bounded (co)silting objects through the requirement that their associated t-structures restrict to bounded derived categories. This is necessary for the applications in Sections 5 and 6.
\item We prove that bounded silting objects in $\mathsf{D}(R)$, for a ring $R$, lie in $\mathsf{K}^b(\Proj{R})$ (Proposition \ref{bdd silting over rings}).
\end{itemize}

\subsection{(Co)Silting objects in triangulated categories}
We begin with the key notions for this section. 
\begin{defn}
\label{defncosiltcotilt}
An object $M$ in a triangulated category $\T$ is called:
\begin{itemize}
\item \textbf{silting} if $(M^{\perp_{>0}},M^{\perp_{<0}})$ is a t-structure in $\T$ and $M\in M^{\perp_{>0}}$;
\item \textbf{cosilting} if $({}^{\perp_{<0}}M,{}^{\perp_{>0}}M)$ is a t-structure in $\T$ and $M\in{}^{\perp_{>0}}M$;
\item \textbf{tilting} if it is silting and $\Add(M)\subset M^{\perp_{\neq 0}}$;
\item \textbf{cotilting} if it is cosilting and $\Prod(M)\subset{}^{\perp_{\neq 0}}M$.
\end{itemize}
We say that a t-structure is silting (respectively, cosilting, tilting or cotilting) if it arises as above from a silting (respectively, cosilting, tilting or cotilting) object.
\end{defn}

Note that, in parallel work \cite{NSZ}, silting objects are defined in an equivalent way. 

It is clear from the definition that an object $M$ is silting in $\T$ if and only if $M$ is cosilting in the opposite category $\T^{op}$. Hence, as we will see, many facts about silting can easily be dually stated for cosilting. A first easy observation, for example, is that if $M$ is silting, then $\Add(M)$ lies in $M^{\perp_{>0}}$ and, dually, if $M$ is cosilting, then $\Prod(M)$ lies in ${}^{\perp_{>0}}M$. 

Recall that an object $X$ in an abelian category $\A$ is a  \textbf{generator} (respectively, a \textbf{cogenerator}) if $\Hom_\A(X,-)$ (respectively, $\Hom_\A(-,X)$) is a faithful functor. It is well-known (see, for example, \cite[Propositions IV.6.3 and IV.6.5]{St}) that a projective (respectively, injective) object $X$ is a generator (respectively, cogenerator) if and only if $\Hom_\A(X,Y)\neq 0$ (respectively, $\Hom_\A(Y,X)\neq 0$) for all $Y$ in $\A$. If $\A$ is cocomplete, then $G$ is a generator if and only if every object in $\A$ is isomorphic to a quotient of a coproduct of copies of $G$ (\cite[Proposition IV.6.2]{St}).

\begin{exam}
\label{easyexample}
The following span some expected classes of examples. 
\begin{enumerate}
\item Let $\T$ be a TR5 triangulated category. Then it follows from \cite[Corollary 4.7]{AI} that any silting object in the 
sense of Aihara and Iyama in \cite{AI} is silting according to our definition. In fact, our definition of silting is motivated by that result. In particular, any tilting object in a TR5 triangulated category, as defined in \cite{BR} is tilting according to our definition. 
\item Let $\Acal$ be an abelian category. If $\A$ has a projective generator $P$, then $P$ is a silting (in fact, tilting) object in $\mathsf{D}(\A)$ and the associated silting t-structure is the standard one. If $\A$ has an injective cogenerator $E$, then $E$ is a cosilting (in fact, cotilting) object in $\mathsf{D}(\A)$ and the associated cosilting t-structure is also the standard one. For a proof, see Lemma \ref{generator properties} and Remark \ref{dual generator properties}.
\item Let $\A$ be an abelian category with a projective generator $P$ (respectively, an injective cogenerator $E$) and $\T$ a triangulated category. If $\Phi\colon \mathsf{D}(\A)\lxr \T$ is a triangle equivalence, then $\Phi(P)$ is a tilting object in $\T$ (respectively, $\Phi(E)$ is a cotilting object in $\T$).
\item Let $\A$ be a Grothendieck category. Then any 1-tilting object $T$ in $\A$ in the sense of \cite{Colpi} is a tilting object in $\mathsf{D}(\A)$ according to our definition. It is easy to check that the t-structure associated to $T$ is the HRS-tilt (\cite[Proposition 2.1]{HRS}) corresponding to the torsion pair $(\Gen(T), T^{\perp_0})$ in $\A$ (see \cite[Theorem 4.9]{AMV1} for an analogous argument when $\A=\Mod{R}$, for a ring $R$).
\item Let $A$ be a ring. It follows from  \cite[Proposition 4.2]{AMV1} that any silting complex $M$ in $\mathsf{D}(A)$ following the definition in \cite{Wei, AMV1} is silting according to our definition. This includes, in particular, any compact tilting complex, as originally defined by Rickard in \cite{Rick}. 
\item Let $A$ be a ring. Then it is shown in \cite[Theorem 4.5]{Stovicek} that any (large) $n$-cotilting module is cotilting according to our definition. Moreover, it is also shown in \cite{Ba} that any (large) $n$-tilting module is tilting according to our definition. For commutative rings, t-structures for tilting and cotilting modules were considered in \cite{AS}.
\item Let $\Lambda$ be a finite dimensional algebra over a field $\mathbb{K}$. Given a compact object $M$ in $\mathsf{D}(\Lambda)$, there is a natural equivalence 
$\Hom_\mathbb{K}(\Hom_{\mathsf{D}(\Lambda)}(M,-),\mathbb{K})\cong \Hom_{\mathsf{D}(\Lambda)}(-,\nu M)$
of contravariant functors between $\mathsf{D}(A)$ and the category of abelian groups, where $\nu:=-\otimes^\mathbb{L}_\Lambda\Hom_\mathbb{K}(\Lambda,\mathbb{K})$ is the Nakayama functor (see \cite{KL} for more details). Using this equivalence, it is easy to show that if $M$ is a compact silting object in $\mathsf{D}(\Lambda)$, then $\nu M$ is a cosilting object in $\mathsf{D}(\Lambda)$. Compact silting objects over finite dimensional algebras were the first ones to be studied, already in \cite{KV} and later in \cite{KN} and \cite{KY}, among others. 
\end{enumerate}
\end{exam}

Given a silting or a cosilting object $M$ in $\T$, we denote by $\mathbb{T}_M$ the associated silting or cosilting t-structure and by $\Hcal_M$ its heart. Note that if $M$ is silting, then $\Hcal_M=M^{\perp_{\neq 0}}$ and if $M$ is cosilting, then $\Hcal_M={}^{\perp_{\neq 0}}M$. Given such a t-structure, we denote by $\textsf{H}^n_M\colon \T\lxr \Hcal_M$ the associated cohomology functors and by $\tau_M^{\leq n}$ and $\tau_{M}^{\geq n}$ the corresponding truncation functors, for all $n\in\mathbb{Z}$.

\begin{prop}\label{gen cogen heart}
Let $\T$ be a triangulated category. If $M$ is a silting (respectively, cosilting) object in $\T$, then $M$ is a generator (respectively, cogenerator) of $\T$, $\mathbb{T}_M$ is a nondegenerate t-structure in $\T$ and $\textsf{H}^{ \, 0}_M(M)$ is a projective generator (respectively, injective cogenerator) in $\Hcal_M$. 
\end{prop}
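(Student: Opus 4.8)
The plan is to treat the silting case in detail; the cosilting case then follows by a dual argument in the opposite category. So assume $M$ is silting, meaning $\mathbb{T}_M = (M^{\perp_{>0}}, M^{\perp_{<0}})$ is a t-structure with $M \in M^{\perp_{>0}}$, and $\Hcal_M = M^{\perp_{\neq 0}}$. First I would check that $M$ generates $\T$, i.e. $M^{\perp_{\mathbb{Z}}} = 0$: if $X \in M^{\perp_{\mathbb{Z}}}$ then $X$ lies in both the aisle $M^{\perp_{>0}}$ and the coaisle $M^{\perp_{<0}}$, and moreover $X \in M^{\perp_0}$ forces $X \in \mathbb{T}_M^{\leq -1}$ as well; iterating, $X \in \bigcap_n \mathbb{T}_M^{\leq n} \cap \bigcap_n \mathbb{T}_M^{\geq n}$. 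The cleanest way to conclude is to first prove nondegeneracy and then deduce $X = 0$; alternatively, observe directly that $X \in M^{\perp_{\mathbb{Z}}}$ means $\Hom_\T(M, X[i]) = 0$ for all $i$, so $X$ is right-orthogonal to $\Add(M)$, hence to all of $\mathbb{T}_M^{\leq 0}$ (since $\mathbb{T}_M^{\leq 0}$ is the smallest aisle containing $M$, or at least is contained in the closure of $\Add(M)$ under extensions, positive shifts and coproducts when these exist), so $X \in (\mathbb{T}_M^{\leq 0})^{\perp_0} = \mathbb{T}_M^{\geq 1}$; symmetrically $X[−1] \in M^{\perp_{\mathbb{Z}}}$, giving $X \in \mathbb{T}_M^{\geq 2}$, and inductively $X \in \bigcap_n \mathbb{T}_M^{\geq n}$; combined with $X \in \mathbb{T}_M^{\leq 0}$ (which holds since $X \in \mathbb{T}_M^{\geq 1}$ applied to $X[n]$... ) one forces $\Hom_\T(X,X) = 0$, so $X = 0$.

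For nondegeneracy, I would show $\bigcap_n \mathbb{T}_M^{\leq n} = 0 = \bigcap_n \mathbb{T}_M^{\geq n}$. If $X \in \bigcap_n \mathbb{T}_M^{\geq n}$, then in particular $X \in \mathbb{T}_M^{\geq n}$ for all $n$, so $\Hom_\T(Y, X) = 0$ for every $Y \in \mathbb{T}_M^{\leq n-1}$ and all $n$; since $M[k] \in \mathbb{T}_M^{\leq 0} = M^{\perp_{>0}}$ lies in some $\mathbb{T}_M^{\leq n}$... more efficiently: $X \in \bigcap_n \mathbb{T}_M^{\geq n}$ means $\Hom_\T(Z[j], X) = 0$ for all $Z \in \Hcal_M$ and all $j \in \mathbb{Z}$, and since $M \in \Hcal_M$ we get $X \in {}^{\perp_{\mathbb Z}}M$ is not quite what we want — rather we get $\Hom_\T(M[j],X)=0$ for all $j$, i.e. $X \in M^{\perp_{\mathbb Z}} = 0$ once generation is known. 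To avoid circularity I would organize it so that generation and nondegeneracy are proved together: the key point, which I expect to be the main obstacle, is establishing that $M^{\perp_{>0}} = \mathbb{T}_M^{\leq 0}$ is generated by $M$ in a strong enough sense (closed under the relevant extensions/shifts/coproducts and minimal with this property, or at least that an object right-orthogonal to $M$ in all degrees is right-orthogonal to the whole aisle). I would prove: if $X \in M^{\perp_{\leq 0}}$ then $X \in \mathbb{T}_M^{\geq 1}$, by using the truncation triangle $\tau^{\leq 0}_M X \to X \to \tau^{\geq 1}_M X \to$; the map $\tau^{\leq 0}_M X \to X$ — call its source $Y \in \mathbb{T}_M^{\leq 0} = M^{\perp_{>0}}$ — together with $X \in M^{\perp_0}$ and $Y \in M^{\perp_{>0}}$ forces $Y \in M^{\perp_{\geq 0}}$; but then $Y \in M^{\perp_{\mathbb Z}}$... this still needs the two-sided orthogonality. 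The honest resolution: show $Y = \tau^{\leq 0}_M X$ satisfies $\Hom_\T(M, Y) = 0$ (from the triangle and $X \in M^{\perp_{\leq 0}}$, $\tau^{\geq 1}_M X \in M^{\perp_{\leq 0}}$ since it is in $\mathbb{T}_M^{\geq 1} \subseteq M^{\perp_{<0}}$ hence in particular... wait, $\mathbb{T}_M^{\geq 1} = M^{\perp_{<0}}$, and we need $\Hom(M, \tau^{\geq 1}_MX)$, which is not obviously zero). I will need to iterate: $Y \in \mathbb{T}_M^{\leq 0}$ and $Y \in M^{\perp_0}$ gives $Y \in \mathbb{T}_M^{\leq -1}$; repeating with $Y[k]$ arguments and the hypothesis $X \in M^{\perp_{\leq 0}}$ propagated through truncations, conclude $Y \in \bigcap_k \mathbb{T}_M^{\leq -k}$, and then a short argument (using $\Hom_\T(Y, \tau^{\geq 1}\text{-things})$ and that any object of $\T$ is an extension involving the aisle and coaisle) shows such a $Y$ must vanish. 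Once $\bigcap_k \mathbb{T}_M^{\leq k}$ is shown to consist of objects $Y$ with $\Hom_\T(Y, W) = 0$ for all $W \in \T$ (take $W$ in the coaisle and use that $Y$ being in all $\mathbb{T}_M^{\leq k}$ eventually... ), we get $Y = 0$, which simultaneously yields generation, nondegeneracy of the lower half, and by the dual argument nondegeneracy of the upper half.

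Finally, for the statement about $\textsf{H}^0_M(M)$: since $M \in M^{\perp_{>0}} = \mathbb{T}_M^{\leq 0}$, we have $\tau^{\leq 0}_M M = M$, so $\textsf{H}^0_M(M) = \tau^{\geq 0}_M M = \tau^{\geq 0}_M \tau^{\leq 0}_M M$, and the truncation triangle $\tau^{\leq -1}_M M \to M \to \textsf{H}^0_M(M) \to$ shows $\textsf{H}^0_M(M)$ is a quotient-type image of $M$ inside $\Hcal_M$. To see it is projective in $\Hcal_M$: for $Z \in \Hcal_M = M^{\perp_{\neq 0}}$, compute $\Hom_{\Hcal_M}(\textsf{H}^0_M(M), Z) \cong \Hom_\T(\textsf{H}^0_M(M), Z)$; using the triangle defining $\textsf{H}^0_M(M)$ and that $\Hom_\T(\tau^{\leq -1}_M M, Z) = 0 = \Hom_\T(\tau^{\leq -1}_M M[1], Z)$ because $Z \in \mathbb{T}_M^{\geq 0}$ and $\tau^{\leq -1}_M M \in \mathbb{T}_M^{\leq -1}$, we get $\Hom_\T(\textsf{H}^0_M(M), Z) \cong \Hom_\T(M, Z)$; this identification is exact in $Z$ (a long exact sequence argument applied to a short exact sequence in $\Hcal_M$, using that $\Hom_\T(M, -)$ kills negative and positive shifts of objects of $\Hcal_M$), so $\Hom_{\Hcal_M}(\textsf{H}^0_M(M), -) \cong \Hom_\T(M, -)|_{\Hcal_M}$ is exact, i.e. $\textsf{H}^0_M(M)$ is projective. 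It is a generator because if $\Hom_{\Hcal_M}(\textsf{H}^0_M(M), Z) = \Hom_\T(M, Z) = 0$ with $Z \in \Hcal_M \subseteq M^{\perp_{\neq 0}}$, then $Z \in M^{\perp_{\mathbb Z}} = 0$ by the generation already established; since a projective object whose Hom-functor detects the zero object is a generator (by \cite[Proposition IV.6.3]{St}), we are done. The cosilting case is the dual statement in $\T^{\mathsf{op}}$, noting that $(\mathbb{T}_M)$ for cosilting $M$ corresponds to the silting t-structure of $M$ viewed in $\T^{\mathsf{op}}$, and projective generators dualize to injective cogenerators.
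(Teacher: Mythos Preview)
Your argument for $\mathsf{H}^0_M(M)$ being a projective generator is correct and essentially matches the paper's. The problem is the first half, on generation and nondegeneracy, where you circle around the direct argument without landing on it cleanly.

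You correctly observe that $X\in M^{\perp_{\mathbb Z}}$ lies in both $\mathbb{T}_M^{\leq 0}=M^{\perp_{>0}}$ and $\mathbb{T}_M^{\geq 0}=M^{\perp_{<0}}$, and that since $M^{\perp_{\mathbb Z}}$ is shift-closed this gives $X\in\bigcap_n\mathbb{T}_M^{\leq n}\cap\bigcap_n\mathbb{T}_M^{\geq n}$. At that point you are done: in particular $X\in\mathbb{T}_M^{\leq 0}\cap\mathbb{T}_M^{\geq 1}$, so $\Hom_\T(X,X)=0$ by the t-structure orthogonality axiom, hence $X=0$. Instead you propose two detours, both circular. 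Deducing $X=0$ from nondegeneracy is circular because the paper (and your own later paragraph) proves nondegeneracy \emph{from} generation. Invoking that $\mathbb{T}_M^{\leq 0}$ is the smallest aisle containing $M$ is a forward reference to Proposition~\ref{smallest aisle}, whose proof uses the present proposition (via the fact that $M$ generates $\T$). You do eventually write ``$\Hom_\T(X,X)=0$, so $X=0$'', but only after these detours; that one line is the whole argument.

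Once generation is established, nondegeneracy is immediate: if $X\in\bigcap_n\mathbb{T}_M^{\leq n}=\bigcap_n M^{\perp_{>0}}[n]$ then $\Hom_\T(M,X[k])=0$ for all $k$, so $X\in M^{\perp_{\mathbb Z}}=0$; dually for the coaisle. Your attempt at this step contains a genuine error: you write ``since $M\in\Hcal_M$'', but for a silting object one only has $M\in M^{\perp_{>0}}$, not $M\in M^{\perp_{<0}}$; membership in the heart is precisely what distinguishes tilting from silting. The correct replacement is simply that $M[k]\in\mathbb{T}_M^{\leq -k}$ for all $k\geq 0$, which is enough to conclude $\Hom_\T(M[k],X)=0$ for all $k$ when $X$ lies in every $\mathbb{T}_M^{\geq n}$.
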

\begin{proof}
Assume first that $M$ is silting. Given $Y$ in $\T$, consider the canonical triangle
\[
\xymatrix{
\tau_M^{\leq -1}Y \ar[r] & Y \ar[r] & \tau^{\geq 0}_MY \ar[r] & (\tau_M^{\leq -1}Y)[1]
}
.\]
If $Y$ lies in $M^{\perp_\mathbb{Z}}$, it follows by applying $\Hom_\T(M,-)$ to the triangle and its rotations that $Y=0$ and, hence, $M$ is a generator in $\T$. From this fact, it easily follows that the t-structure $\mathbb{T}_M$ is nondegenerate. In fact, if an object $X$ lies in $M^{\perp_{>0}}[k]$ for all $k\in\mathbb{Z}$, then $\Hom_\T(M,X[k+1])=0$ for all $k\in\mathbb{Z}$ and, thus, $X=0$; similarly, if an object $X$ lies in $M^{\perp_{<0}}[k]$ for all $k\in\mathbb{Z}$, we get that $X=0$, as wanted.

Now we show that $\textsf{H}^0_M(M)$ is projective in $\Hcal_M$, i.e. that $\Ext^1_{\Hcal_M}(\textsf{H}^0_M(M),X)=0$, for any $X$ in $\Hcal_M$. From Theorem \ref{real}(ii), we have that $\Ext^1_{\Hcal_M}(\textsf{H}^0_M(M),X)\cong\Hom_{\T}(\textsf{H}^0_M(M),X[1])$. Since $X$ lies in $M^{\perp_{>0}}$, we have that $\Hom_\T(M,X[1])=0$ and, since $X[1]$ lies in $M^{\perp_{<0}}[1]$ and $(\tau_M^{\leq -1}M)[1]$ in $M^{\perp_{>0}}[2]$, we conclude that $\Hom_\T((\tau_M^{\leq -1}M)[1],X[1])=0$. Using the above triangle for $Y=M$, where $\tau^{\geq 0}_M(M)=\textsf{H}_M^0(M)$, it follows that $\Hom_\T(\textsf{H}^0_M(M),X[1])=0$, as wanted.

To show that a projective object in $\Hcal_M$ is a generator, it is enough to show that it has non-zero morphisms to any object in $\Hcal_M$. Since $\Hcal_M=M^{\perp_{\neq 0}}$ and $M$ generates $\T$, we have $\Hom_\T(M,X)\neq 0$ for any non-zero $X$ in $\Hcal_M$. Therefore, using again the above triangle for $Y=M$, we can conclude that $\Hom_\T(\mathsf{H}^0_M(M),X)\cong \Hom_\T(M,X)\neq 0$, showing that $\textsf{H}^0_M(M)$ is a projective generator.

If $M$ is cosilting, for any $Y$ in $\T$ we may again consider the above triangle and apply the dual arguments to those in the previous paragraphs to see that $M$ is a cogenerator in $\T$, $\mathbb{T}_M$ is nondegenerate and $\textsf{H}^0_M(M)$ is an injective cogenerator of $\Hcal_M$. Note that since $M$ is cosilting, for $Y=M$ we have $\tau^{\leq 0}_M(M)=\mathsf{H}^0_M(M)$.
\end{proof}

\begin{rem}\label{nondeg}
From \cite[Proposition 1.3.7]{BBD} (see also \cite[Theorem IV.4.11]{GM}), since silting t-structures are nondegenerate, they can be \textit{cohomologically described}, i.e. for a silting object $M$ in $\T$, we have
\[
M^{\perp_{>0}}=\{X\in\T \ | \ \textsf{H}_M^k(X)=0, \forall k>0\}\ \ \ \ \ \ M^{\perp_{<0}}=\{X\in\T \ | \ \textsf{H}_M^k(X)=0, \forall k<0\}.
\]
Dually, cosilting t-structures are nondegenerate and can also be, therefore, cohomologically described.
\end{rem}

With further assumptions on $\T$ we can say more about hearts of silting and cosilting t-structures. We say that a t-structure in a TR5 (respectively, TR5*) triangulated category is \textbf{smashing} (respectively, \textbf{cosmashing}) if its coaisle (respectively, its aisle) is closed under coproducts (respectively, products).

\begin{lem}\label{facts silting t-structures}
Let $\T$ be a triangulated category and $M$ an object in $\T$. 
\begin{enumerate}
\item \textnormal{\cite[Propositions 3.2 and 3.3]{PaSa}} If $\T$ is TR5 (respectively, TR5*), then the heart of any t-structure in $\T$ has set-indexed coproducts (respectively, products). Furthermore, if the t-structure is smashing (respectively, cosmashing), then coproducts (respectively, products) are exact in the heart.
\item If $\T$ is TR5 and $M$ is a  silting object of $\T$, then the following statements hold.
\begin{enumerate}
\item $\Add(M)=M^{\perp_{>0}}\cap {}^{\perp_0}(M^{\perp_{>0}}[1])$;
\item If $\T$ is also TR5*, $(M^{\perp_{>0}},M^{\perp_{<0}})$ is a cosmashing t-structure and products are exact in $\Hcal_M$.
\end{enumerate}
\item If $\T$ is TR5* and $M$ is a cosilting object of $\T$, then the following statements hold.
\begin{enumerate}
\item $\Prod(M)={}^{\perp_{>0}}M\cap ({}^{\perp_{>0}}M[-1])^{\perp_0}$;
\item If $\T$ is also TR5, $({}^{\perp_{<0}}M,{}^{\perp_{>0}}M)$ is a smashing t-structure and coproducts are exact in $\Hcal_M$.
\end{enumerate} 
\end{enumerate}
\end{lem}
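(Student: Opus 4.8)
Part (i) is quoted from \cite[Propositions 3.2 and 3.3]{PaSa}, so the task is to prove (ii) and (iii). An object $M$ is cosilting in $\T$ exactly when it is silting in the opposite category $\T^{\op}$, a passage under which TR5* becomes TR5, products become coproducts, $\Prod(M)$ becomes $\Add(M)$, the subcategories ${}^{\perp_I}M$ become $M^{\perp_I}$, and the cosilting t-structure of $M$ becomes its silting t-structure; hence (iii) will follow formally from (ii) read off inside $\T^{\op}$, and we concentrate on (ii).

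For the easy inclusion in (ii)(a): $\Add(M)\subseteq M^{\perp_{>0}}$ was observed right after Definition \ref{defncosiltcotilt}, and for $Z\in M^{\perp_{>0}}$ we have $\Hom_\T(M,Z[1])=0$, hence $\Hom_\T(\coprod_I M,Z[1])\cong\prod_I\Hom_\T(M,Z[1])=0$, so $\Hom_\T(N,Z[1])=0$ for every summand $N$ of a coproduct of copies of $M$; thus $\Add(M)\subseteq M^{\perp_{>0}}\cap{}^{\perp_0}(M^{\perp_{>0}}[1])$. For the reverse inclusion we fix $X$ in the right-hand side. By Proposition \ref{gen cogen heart}, $M$ generates $\T$; using TR5 we form $M^{(I)}:=\coprod_{f\in\Hom_\T(M,X)}M$ together with the canonical evaluation morphism $\psi\colon M^{(I)}\lxr X$ (which makes $\Hom_\T(M,\psi)$ surjective), and complete it to a triangle $Y\lxr M^{(I)}\xrightarrow{\ \psi\ }X\xrightarrow{\ \epsilon\ }Y[1]$. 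Chasing the long exact sequence obtained by applying $\Hom_\T(M,-)$ to this triangle---using $M^{(I)},X\in M^{\perp_{>0}}$, the surjectivity of $\Hom_\T(M,\psi)$, and $\Hom_\T(M,M^{(I)}[k])=0$ for $k\geq 1$---one gets $\Hom_\T(M,Y[k])=0$ for all $k\geq 1$, i.e.\ $Y\in M^{\perp_{>0}}$.

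Since $Y\in M^{\perp_{>0}}$, the hypothesis $X\in{}^{\perp_0}(M^{\perp_{>0}}[1])$ forces $\Hom_\T(X,Y[1])=0$; in particular the connecting morphism $\epsilon\colon X\lxr Y[1]$ is zero, so the triangle splits, $M^{(I)}\cong X\oplus Y$, and therefore $X\in\Add(M)$. This proves (ii)(a). For (ii)(b) we assume in addition that $\T$ is TR5*: given a family $\{X_k\}$ in $M^{\perp_{>0}}$ and $i>0$, one has $\Hom_\T(M,(\prod_k X_k)[i])\cong\prod_k\Hom_\T(M,X_k[i])=0$ (since $\Hom_\T(M,-)$ takes products to products and $[i]$ commutes with products), so $M^{\perp_{>0}}$ is closed under products and the t-structure $(M^{\perp_{>0}},M^{\perp_{<0}})$ is cosmashing; that products are then exact in $\Hcal_M$ is immediate from part (i). Statement (iii) follows by running (ii) in $\T^{\op}$, as indicated above.

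The one step requiring genuine care is the verification, in the reverse inclusion of (ii)(a), that the cocone $Y$ of the canonical $\Add(M)$-approximation $\psi$ lies again in $M^{\perp_{>0}}$: this is exactly what makes the orthogonality hypothesis on $X$ applicable, hence what lets the triangle split. It is nonetheless only a short long-exact-sequence computation, and the remainder of the lemma reduces to the formal duality $\T\leftrightarrow\T^{\op}$ and to part (i).
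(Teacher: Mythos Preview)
Your proof is correct and follows essentially the same approach as the paper's: for (ii)(a) you form the universal map $\psi\colon M^{(I)}\to X$ indexed by $I=\Hom_\T(M,X)$, show via the long exact sequence that the cocone lies in $M^{\perp_{>0}}$, and conclude that the connecting map vanishes so the triangle splits; for (ii)(b) you observe that $M^{\perp_{>0}}$ is closed under products and invoke (i); and (iii) is dual. The only superfluous step is your appeal to Proposition~\ref{gen cogen heart} (that $M$ generates $\T$), which is not actually used anywhere in the argument.
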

\begin{proof}
Statement (i) was proved in \cite[Propositions 3.2 and 3.3]{PaSa}. Recall that in a TR5 (respectively,~TR5*) triangulated category, given a family of objects $(X_i)_{i\in I}$, for some set $I$, in the heart $\Hcal$ of a t-structure $(\mathbb{T}^{\leq 0},\mathbb{T}^{\geq 0})$, their coproduct (respectively, their product) in $\Hcal$ is given by $\tau^{\geq 0}\coprod_{i\in I}X_i$ (respectively, $\tau^{\leq 0}\prod_{i\in I}X_i$), where the coproduct $\coprod_{i\in I}X_i$ (respectively, the product $\prod_{i\in I}X_i$) is taken in $\T$. 

We now prove (ii) (the arguments for (iii) are dual). The proof of (ii)(a) essentially mimics that of \cite[Lemma 4.5]{AMV1}, as follows. Let $M$ be a silting object in $\T$. It is clear that $\Add(M)\subseteq M^{\perp_{>0}}\cap {}^{\perp_0}(M^{\perp_{>0}}[1])$. Conversely, let $X$ lie in $M^{\perp_{>0}}\cap {}^{\perp_0}(M^{\perp_{>0}}[1])$ and let $I$ be the set $\Hom_{\T}(M,X)$. Consider the triangle
\[
\xymatrix{
K \ar[r] & M^{(I)} \ar[r]^{u} & X\ar[r]^{v  \ \ } & K[1],}
\]
where $u$ is the universal map from $M$ to $X$. We show that $v=0$ and, thus, $u$ splits. Since $\Hom_\T(M,u)$ is surjective and $\Hom_\T(M,M^{(I)}[1])=0$, it follows that $\Hom_{\T}(M,K[1])=0$. Similarly, we have that, for all $i\geq 1$, $\Hom_\T(M,K[i])=0$. Thus, $K$ lies in $M^{\perp_{>0}}$. Since $X$ lies in ${}^{\perp_0}(M^{\perp_{>0}}[1])$, it follows that $v=0$. 
Finally, to prove (ii)(b), it suffices to note that $M^{\perp_{>0}}$ is closed under products whenever they exist.
\end{proof}

\begin{defn}
Let $\T$ be a triangulated category and $M$ and $N$ silting (respectively, cosilting) objects of $\T$. We say that $M$ and $N$ are \textbf{equivalent} if they yield the same silting (respectively, cosilting) t-structure.
\end{defn}

By Lemma \ref{facts silting t-structures}, two silting (respectively, cosilting) objects $M$ and $N$ in a TR5 (respectively, TR5*) triangulated category are equivalent if and only if $\Add(M)=\Add(N)$ (respectively, $\Prod(M)=\Prod(N)$). 

The following corollary brings us to the more familiar setting of compact tilting objects, where the endomorphism ring of a fixed tilting object plays an important role (see also \cite[Corollary 4.2]{BR} and \cite[Theorem 1.3]{HKM}).

\begin{cor}
\label{corheartmodulecat}
Let $\T$ be a TR5 triangulated category and $M$ a silting object which is equivalent to a compact one. Then the heart $\Hcal_M$ is equivalent to $\Mod{\End_{\T}(\textsf{H}^{ \, 0}_M(M))}$.
\end{cor}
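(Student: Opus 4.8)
The plan is to reduce the statement to the classical tilting theorem (Rickard--Keller derived Morita theory, or rather its abelian shadow) via the machinery already in place. First I would replace $M$ by an equivalent compact silting object $M'$; by Lemma \ref{facts silting t-structures}(ii)(a) and the remark following Definition \ref{defn restrict extend} (two silting objects are equivalent iff $\Add(M)=\Add(M')$), the hearts $\Hcal_M$ and $\Hcal_{M'}$ coincide and $\textsf{H}^0_M(M)$, $\textsf{H}^0_{M'}(M')$ generate the same additive subcategory up to the relevant identifications; so without loss of generality I may assume $M$ itself is compact. Write $R=\End_\T(\textsf{H}^0_M(M))$ and $S=\End_\T(M)$; since $M\in M^{\perp_{>0}}$ and the triangle $\tau_M^{\leq -1}M\to M\to \textsf{H}^0_M(M)\to(\tau_M^{\leq -1}M)[1]$ from Proposition \ref{gen cogen heart} identifies $\Hom_\T(\textsf{H}^0_M(M),X)\cong\Hom_\T(M,X)$ for all $X\in\Hcal_M$ (the argument in the proof of that proposition), one gets $R\cong S$ as rings, using that $\textsf{H}^0_M(M)$ and $M$ differ by an object in $M^{\perp_{>0}}[1]$.

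Next I would exhibit the equivalence explicitly via the functor $\textsf{H}^0_M(-)\circ$ (something), but it is cleaner to invoke the abstract recognition theorem for module categories: an abelian category $\A$ with arbitrary coproducts is equivalent to $\Mod{R}$ for a ring $R$ if and only if it has a compact projective generator $P$ with $R\cong\End_\A(P)$ (this is Gabriel--Mitchell / the Freyd--Mitchell style statement; it follows from \cite[Propositions IV.6.2, IV.6.3]{St} together with the observation that $\Hom_\A(P,-)$ is exact, faithful, preserves coproducts and is essentially surjective when $P$ is compact projective generator). By Lemma \ref{facts silting t-structures}(i) the heart $\Hcal_M$ has set-indexed coproducts (here $\T$ is TR5), and since $M$ is silting and equivalent to a compact object, the silting $t$-structure is smashing — its coaisle $M^{\perp_{<0}}$ is closed under coproducts because $M$ is compact, hence $\Hom_\T(M,\coprod X_i[k])=\coprod\Hom_\T(M,X_i[k])$ — so Lemma \ref{facts silting t-structures}(i) again gives that coproducts are exact in $\Hcal_M$. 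By Proposition \ref{gen cogen heart}, $\textsf{H}^0_M(M)$ is a projective generator of $\Hcal_M$; and it is compact in $\Hcal_M$ precisely because coproducts in $\Hcal_M$ are computed as $\tau_M^{\geq 0}\coprod(-)$ (as recalled in the proof of Lemma \ref{facts silting t-structures}), $M$ is compact in $\T$, and $\Hom_\T(M,-)$ vanishes on $M^{\perp_{>0}}$-truncation terms, so $\Hom_{\Hcal_M}(\textsf{H}^0_M(M),-)\cong\Hom_\T(M,-)|_{\Hcal_M}$ commutes with coproducts. Feeding $P=\textsf{H}^0_M(M)$ into the recognition theorem yields $\Hcal_M\simeq\Mod{\End_{\Hcal_M}(\textsf{H}^0_M(M))}=\Mod{\End_\T(\textsf{H}^0_M(M))}$, the last equality being the full faithfulness of the embedding $\varepsilon_{\mathbb{T}_M}\colon\Hcal_M\hookrightarrow\T$.

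The main obstacle I anticipate is the compactness of $\textsf{H}^0_M(M)$ inside $\Hcal_M$: one must be careful that compactness in $\T$ does not formally transfer to the heart, and the argument really uses the explicit description of coproducts in the heart as $\tau^{\geq 0}$ of the ambient coproduct together with the smashing property (so that the truncation does not interfere). Verifying that $\Hom_{\Hcal_M}(\textsf{H}^0_M(M),-)$ is exact, faithful and essentially surjective onto $\Mod{R}$ is then routine from projectivity and generation (Proposition \ref{gen cogen heart}), but the exactness of coproducts in $\Hcal_M$ — needed so that $\Hom_{\Hcal_M}(\textsf{H}^0_M(M),-)$ lands in a genuine module category where coproducts are computed degreewise — is the point where the hypothesis "equivalent to a compact one" is essential and must be used carefully. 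A secondary minor point is the ring identification $\End_\T(\textsf{H}^0_M(M))\cong\End_\T(M)$, but this is immediate from the triangle in Proposition \ref{gen cogen heart} once one notes both $\tau_M^{\leq -1}M$ and its shift lie in the appropriate orthogonal subcategories, so I would dispatch it quickly.
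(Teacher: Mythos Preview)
Your proposal is correct and follows essentially the same route as the paper: reduce to $M$ compact, use Proposition~\ref{gen cogen heart} to get that $\textsf{H}^0_M(M)$ is a projective generator, use the truncation triangle to identify $\Hom_{\Hcal_M}(\textsf{H}^0_M(M),-)\cong\Hom_\T(M,-)|_{\Hcal_M}$, argue that compactness of $M$ makes the coaisle closed under coproducts so that coproducts in $\Hcal_M$ agree with those in $\T$, and conclude by classical Morita theory. The only differences are cosmetic: the paper does not bother with the ring identification $\End_\T(\textsf{H}^0_M(M))\cong\End_\T(M)$ (unneeded for the statement as written), and you flag exactness of coproducts in $\Hcal_M$ as a needed ingredient, whereas in fact the recognition theorem only requires cocompleteness plus a small projective generator---exactness of coproducts is a consequence, not a hypothesis.
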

\begin{proof}
From Proposition \ref{gen cogen heart}, $\textsf{H}^0_M(M)$ is a projective generator of $\Hcal_M$. Since equivalent silting objects yield the same t-structure, assume without loss of generality that $M$ is compact. Using the triangle
\[
\xymatrix{
\tau_M^{\leq -1}M \ar[r] & M \ar[r] & \textsf{H}^{0}_M(M) \ar[r] & (\tau_M^{\leq -1}M)[1]
}
\]
it is easy to see that there is an isomorphism of functors $\Hom_{\Hcal_M}(\textsf{H}^0_M(M),-)\cong \Hom_{\T}(M,-)_{|\Hcal_M}$. Since $M$ is compact, the coaisle $M^{\perp_{<0}}$ is closed under coproducts. By the proof of Lemma \ref{facts silting t-structures}(i), coproducts in $\Hcal_M$ coincide with coproducts in $\T$ and, thus, both functors above commute with coproducts. This shows that $\textsf{H}^0_M(M)$ is small in $\Hcal_M$. The result then follows by classical Morita theory.
\end{proof}

One may ask for another way of describing the (co)aisle of a (co)silting t-structure in terms of the given (co)silting object $M$. We will see that there is a \textit{smallest (co)aisle containing $M$} and that it coincides with the (co)aisle of the (co)silting t-structure. For that, recall that a subcategory of a triangulated category is said to be \textbf{suspended} (respectively, \textbf{cosuspended}) if it is closed under extensions and positive (respectively, negative) iterations of the suspension functor. Regarding these subcategories, one has the following useful lemma coming from \cite{AJS}.

\begin{lem}\textnormal{\cite[Lemma 3.1]{AJS}}
\label{AJS lemma}
Let $\T$ be a triangulated category. Let $X$ be an object in $\T$ and $\mathcal{S}$ be the smallest suspended (respectively, cosuspended) subcategory containing $X$ and closed under summands and all existing coproducts (respectively, products). 
Then $\mathcal{S}^{\perp_0}=X^{\perp_{\leq 0}}$ (respectively, ${}^{\perp_0}\mathcal{S}={}^{\perp_{\leq 0}}X$).
\end{lem}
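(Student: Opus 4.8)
The plan is to establish the two inclusions $\mathcal{S}^{\perp_0}\subseteq X^{\perp_{\leq 0}}$ and $X^{\perp_{\leq 0}}\subseteq\mathcal{S}^{\perp_0}$ separately, and to do so only for the suspended case: the cosuspended statement is then obtained by applying the suspended one to the opposite category $\T^{\op}$, in which coproducts become products, suspended subcategories become cosuspended, and left orthogonals become right orthogonals. (One should also note in passing that $\mathcal{S}$ is well defined, being the intersection of all suspended subcategories of $\T$ that contain $X$ and are closed under summands and all existing coproducts; such an intersection inherits the same closure properties.)

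The inclusion $\mathcal{S}^{\perp_0}\subseteq X^{\perp_{\leq 0}}$ is immediate. Since $X$ lies in $\mathcal{S}$ and $\mathcal{S}$ is closed under positive iterations of the suspension, every shift $X[n]$ with $n\geq 0$ lies in $\mathcal{S}$; hence any $Y$ in $\mathcal{S}^{\perp_0}$ satisfies $\Hom_\T(X[n],Y)=0$ for all $n\geq 0$, which is exactly the condition $\Hom_\T(X,Y[i])=0$ for all $i\leq 0$, i.e. $Y\in X^{\perp_{\leq 0}}$.

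For the reverse inclusion, the key move is to fix $Y$ in $X^{\perp_{\leq 0}}$ and work with the \emph{full} subcategory $\mathcal{C}:={}^{\perp_{\leq 0}}Y=\{Z\in\T\mid\Hom_\T(Z,Y[i])=0\text{ for all }i\leq 0\}$ rather than with the naive ${}^{\perp_0}Y$. I would then verify that $\mathcal{C}$ enjoys all the closure properties defining $\mathcal{S}$: it contains $X$ (this is precisely the hypothesis $Y\in X^{\perp_{\leq 0}}$); it is closed under summands and under all existing coproducts, since $\Hom_\T(-,Y[i])$ sends coproducts to products; it is closed under extensions, by applying the long exact sequence obtained from the cohomological functor $\Hom_\T(-,Y[i])$ to a triangle $Z'\to Z\to Z''\to Z'[1]$; and, crucially, it is closed under the suspension functor, because $\Hom_\T(Z[1],Y[i])=\Hom_\T(Z,Y[i-1])$ and $i-1\leq 0$ whenever $i\leq 0$. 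Thus $\mathcal{C}$ is a suspended subcategory containing $X$ and closed under summands and coproducts, so $\mathcal{S}\subseteq\mathcal{C}$ by minimality; in particular $\Hom_\T(S,Y)=0$ for every $S$ in $\mathcal{S}$, which says $Y\in\mathcal{S}^{\perp_0}$.

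I do not expect any genuine obstacle here. The only point requiring a moment's thought is the choice of the auxiliary subcategory: the obvious candidate ${}^{\perp_0}Y$ need not be closed under the suspension functor and therefore need not contain $\mathcal{S}$, whereas enlarging it to ${}^{\perp_{\leq 0}}Y$ repairs this while still being contained in ${}^{\perp_0}Y$, which is all that is used. Once this is set up, every verification is a routine application of the long exact sequence of a triangle, and the cosuspended/products case is a verbatim dualisation.
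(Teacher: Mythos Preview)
Your proof is correct and is precisely the standard argument behind the cited reference. The paper itself does not reprove the lemma: it simply notes that the original argument in \cite[Lemma 3.1]{AJS} does not require arbitrary coproducts and dualises to the cosuspended case, which your write-up confirms explicitly.
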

\begin{proof}
The proof in \cite[Lemma 3.1]{AJS} does not depend on the existence of arbitrary coproducts. Also, the arguments can easily be dualised to obtain the cosuspended case.
\end{proof}

The smallest aisle of a triangulated category $\T$ containing an object $X$, denoted by $\aisle(X)$, is known to exist whenever $\T$ is the derived category of a Grothendieck abelian category or whenever $X$ is a compact object (\cite[Theorems 3.4 and A.1]{AJS}). We show that it also exists for a silting object in any triangulated category  (the dual statement about the smallest coaisle containing a cosilting object also holds). 

\begin{prop}\label{smallest aisle}
Let $M$ be a silting (respectively, cosilting) object in a triangulated category $\T$. Then the smallest aisle (respectively, coaisle) containing $M$ exists and it coincides with $M^{\perp_{>0}}$ (respectively, ${}^{\perp_{>0}}M$).
\end{prop}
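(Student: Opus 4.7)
The plan is to prove the silting case in detail; the cosilting case follows by dual arguments. Since $(M^{\perp_{>0}}, M^{\perp_{<0}})$ is by hypothesis a t-structure and $M \in M^{\perp_{>0}}$, the subcategory $M^{\perp_{>0}}$ is itself an aisle containing $M$. All that remains is to show that any aisle $\U$ in $\T$ containing $M$ must contain $M^{\perp_{>0}}$.

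The main ingredient I would first establish is that any aisle $\U$ in $\T$ satisfies $\U = {}^{\perp_0}(\U^{\perp_0})$. One inclusion is immediate from the orthogonality axiom of t-structures. For the other, given $X \in {}^{\perp_0}(\U^{\perp_0})$, I would consider the canonical triangle $U \to X \to V \to U[1]$ with $U \in \U$ and $V \in \U^{\perp_0}[1]$: since $V[-1] \in \U^{\perp_0}$, the map $X \to V$ vanishes, so the triangle splits and $V[-1]$ is a direct summand of $U$; hence $V[-1] \in \U \cap \U^{\perp_0} = 0$, forcing $X \cong U \in \U$.

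Granted this, let $\U$ be any aisle containing $M$. Aisles are closed under positive shifts, so $M[i] \in \U$ for all $i \geq 0$, and therefore $\U^{\perp_0} \subseteq M^{\perp_{\leq 0}}$. Passing to left orthogonals reverses the inclusion and yields ${}^{\perp_0}(M^{\perp_{\leq 0}}) \subseteq {}^{\perp_0}(\U^{\perp_0}) = \U$. To identify the left-hand side, observe that $M^{\perp_{\leq 0}} = M^{\perp_{<0}}[-1]$ is precisely the subcategory $\mathbb{T}^{\geq 1}$ of the silting t-structure $(M^{\perp_{>0}}, M^{\perp_{<0}})$; applying the previous paragraph to the aisle $M^{\perp_{>0}}$ itself gives ${}^{\perp_0}(M^{\perp_{\leq 0}}) = M^{\perp_{>0}}$, whence $M^{\perp_{>0}} \subseteq \U$, as required.

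The only delicate step is the identity $\U = {}^{\perp_0}(\U^{\perp_0})$ for aisles: it is implicit in the discussion following Definition~\ref{defn t-str}, but it relies on the short triangle-splitting argument above. Everything else is straightforward bookkeeping with orthogonality relations and the shift conventions. For the cosilting case, I would replace "suspended aisle" and "positive shift" with "cosuspended coaisle" and "negative shift" throughout, applying the dual splitting argument to conclude that any coaisle $\mathcal{V}$ in $\T$ containing $M$ must contain ${}^{\perp_{>0}}M$.
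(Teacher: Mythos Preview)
Your overall strategy coincides with the paper's: both arguments show $M^{\perp_{>0}} = {}^{\perp_0}(M^{\perp_{\leq 0}}) \subseteq {}^{\perp_0}(\U^{\perp_0}) = \U$ for any aisle $\U$ containing $M$, using the identity $\U = {}^{\perp_0}(\U^{\perp_0})$ for aisles. The paper obtains the inclusion $\U^{\perp_0} \subseteq M^{\perp_{\leq 0}}$ by passing through the smallest suspended subcategory $\mathcal{S}$ containing $M$ (closed under summands and existing coproducts) and invoking Lemma~\ref{AJS lemma} to get $\mathcal{S}^{\perp_0} = M^{\perp_{\leq 0}}$; you reach the same inclusion directly from $M[i]\in\U$ for $i\geq 0$. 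Your route is a little more economical, since the AJS lemma is not really needed once one is willing to argue with $\U$ itself rather than with $\mathcal{S}$.

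There is, however, a shift slip in your proof of $\U = {}^{\perp_0}(\U^{\perp_0})$. In the paper's conventions (Definition~\ref{defn t-str} and the discussion after it), the truncation triangle $U\to X\to V\to U[1]$ has $V\in \U^{\perp_0}$ (that is, $V\in\mathbb{T}^{\geq 1}$), not $V\in\U^{\perp_0}[1]$. Your inference ``since $V[-1]\in\U^{\perp_0}$, the map $X\to V$ vanishes'' does not follow: $V[-1]\in\U^{\perp_0}$ only gives $\Hom(X,V[-1])=0$. With the correct placement $V\in\U^{\perp_0}$, the map $X\to V$ vanishes immediately because $X\in{}^{\perp_0}(\U^{\perp_0})$, and the rest of your splitting argument then works (e.g.\ $V$ becomes a summand of $U[1]\in\U$, hence $V\in\U\cap\U^{\perp_0}=0$). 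This is a minor indexing error, not a structural gap.
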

\begin{proof}
Let us prove the silting case (the cosilting case can be proved dually). Let $M$ be a silting object and let $\mathcal{S}$ denote the smallest suspended subcategory of $\T$ containing $M$ and closed under summands and existing coproducts. Let $\mathcal{V}$ be an aisle containing $M$ (which exists since $M^{\perp_{>0}}$ is an aisle where $M$ lies). Then certainly we have that $\mathcal{V}\supseteq \mathcal{S}$ since aisles are suspended, closed under summands and existing sums. Hence, we also have ${}^{\perp_0}(\mathcal{V}^{\perp_0})\supseteq {}^{\perp_0}(\mathcal{S}^{\perp_0})$. But in this inclusion, the left-hand side coincides with $\mathcal{V}$ since $\mathcal{V}$ is an aisle and the right-hand side clearly contains ${}^{\perp_0}(M^{\perp_{\leq 0}})$ (it in fact coincides with it by Lemma~\ref{AJS lemma}). Since $M$ is silting, $M^{\perp_{>0}}={}^{\perp_0}(M^{\perp_{\leq 0}})$ and, thus, $\mathcal{V}$ contains $M^{\perp_{>0}}$, showing that $M^{\perp_{>0}}$ is indeed the smallest aisle containing $M$.
\end{proof}

\subsection{Bounded (co)silting objects}
We will now discuss examples of silting and cosilting objects in derived categories of abelian categories. As suggested by Example~\ref{easyexample}, a good assumption on the underlying abelian category $\A$ is the presence of a generator or a cogenerator. Particularly well-behaved examples of abelian categories with a generator (and a cogenerator) are Grothendieck abelian categories. An abelian category $\A$ is said to be \textbf{Grothendieck} if it is a cocomplete abelian category with exact direct limits and a generator.  It is well-known that if $\A$ is Grothendieck, $\mathsf{D}(\A)$ is both TR5 and TR5* (see also Remark \ref{prods}).  Moreover, $\A$ has an injective cogenerator and it also follows from \cite[Theorem 5.4]{AJS2} that every object in $\mathsf{D}(\A)$ admits a homotopically injective coresolution.  

We begin by collecting some useful observations that clarify the relation between the existence of a generator (or cogenerator) in an abelian category $\A$ and the standard t-structure in $\mathsf{D}(\A)$. The assumptions in the lemma include the particular case when $\A$ is Grothendieck.

 \begin{lem}\label{Lemma generator1}\label{generator properties}
Let $\A$ be an abelian category with a generator $G$. Suppose that every object in $\mathsf{D}(\A)$ admits a homotopically injective coresolution or that $G$ is a projective object in $\A$. Then we have:
\begin{enumerate}
\item For any object $X$ in $\mathsf{D}(\A)$ and any $i\in\mathbb{Z}$, if $\Hom_{\mathsf{D}(\A)}(G,X[i])=0$ then $\mathsf{H}^i_0(X)=0$;
\item If $\mathsf{D}(\A)$ is TR5, the smallest aisle of $\mathsf{D}(\A)$ containing $G$ exists and coincides with the standard aisle $\mathbb{D}^{\leq 0}$;
\item Consider the following statements:
\begin{enumerate}
\item $G$ is projective in $\A$;
\item For any object $X$ in $\mathsf{D}(\A)$ and any $i\in\mathbb{Z}$, $\mathsf{H}^i_0(X)=0$ if and only if $\Hom_{\mathsf{D}(\A)}(G,X[i])=0$;
\item $G$ is a silting object in $\mathsf{D}(\A)$;
\item $G$ is a tilting object in $\mathsf{D}(\A)$.
\end{enumerate}
Then \textnormal{(a)}$\Longrightarrow$\textnormal{(b)}$\Longrightarrow$\textnormal{(c)}$\Longleftrightarrow$\textnormal{(d)}. If, furthermore, $\mathsf{D}(\A)$ is TR5, then the statements are equivalent.
\end{enumerate}
\end{lem}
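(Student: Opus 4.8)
The plan is to establish parts~(1), (2), (3) in that order, since (2) builds on (1) and the implications in (3) use both.

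For part~(1) I would compute $\Hom_{\mathsf{D}(\A)}(G,X[i])$ by means of a resolution. If $G$ is projective, the stalk complex $G$ concentrated in degree $0$ is homotopically projective, so $\Hom_{\mathsf{D}(\A)}(G,X[i])$ is the $i$-th cohomology of the complex $\Hom_\A(G,X)$; as $\Hom_\A(G,-)$ is exact this equals $\Hom_\A(G,\mathsf{H}^i_0(X))$, and since $G$ is a generator (so $\Hom_\A(G,-)$ is faithful) its vanishing forces $\mathsf{H}^i_0(X)=0$. If instead every object of $\mathsf{D}(\A)$ admits a homotopically injective coresolution, I would first reduce to the case $X\in\mathbb{D}^{\leq i}$: from the truncation triangle $\tau^{\leq i}X\to X\to\tau^{\geq i+1}X\to(\tau^{\leq i}X)[1]$ and the orthogonality $\Hom_{\mathsf{D}(\A)}(\mathbb{D}^{\leq 0},\mathbb{D}^{\geq 1})=0$ one gets $\Hom_{\mathsf{D}(\A)}(G,X[i])\cong\Hom_{\mathsf{D}(\A)}(G,(\tau^{\leq i}X)[i])$; then I would run a similar, but more delicate, computation on a homotopically injective complex representing $X$, reading off $\mathsf{H}^i_0(X)$ from the chain-level description of maps out of $G$. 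I expect this last step to be the main obstacle: it is the point where the standing hypothesis on $\A$ genuinely enters, whereas everything else in the lemma is formal.

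For part~(2) I would show that $\mathbb{D}^{\leq 0}$ is the smallest aisle containing $G$. It is an aisle and it contains the degree-$0$ stalk $G$, so it is enough to check that every aisle $\mathcal{V}$ containing $G$ contains $\mathbb{D}^{\leq 0}$. Let $\mathcal{S}$ be the smallest suspended subcategory of $\mathsf{D}(\A)$ containing $G$ and closed under summands and coproducts; since (by TR5) aisles enjoy all these closure properties, $\mathcal{V}\supseteq\mathcal{S}$, hence $\mathcal{V}={}^{\perp_0}(\mathcal{V}^{\perp_0})\supseteq{}^{\perp_0}(\mathcal{S}^{\perp_0})$. By Lemma~\ref{AJS lemma}, $\mathcal{S}^{\perp_0}=G^{\perp_{\leq 0}}$; by part~(1) applied to all $i\leq 0$, together with the trivial reverse inclusion, $G^{\perp_{\leq 0}}=\mathbb{D}^{\geq 1}$; and ${}^{\perp_0}\mathbb{D}^{\geq 1}=\mathbb{D}^{\leq 0}$, as the aisle of a t-structure is recovered as the left orthogonal of its shifted coaisle. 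Hence $\mathcal{V}\supseteq\mathbb{D}^{\leq 0}$, so $\mathbb{D}^{\leq 0}$ is the smallest aisle containing $G$ and in particular it exists.

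For part~(3): (a)$\Rightarrow$(b) follows from the isomorphism $\Hom_{\mathsf{D}(\A)}(G,X[i])\cong\Hom_\A(G,\mathsf{H}^i_0(X))$ of the projective case of (1) together with faithfulness of $\Hom_\A(G,-)$. For (b)$\Rightarrow$(c), (b) identifies $G^{\perp_{>0}}$ with $\{X:\mathsf{H}^i_0(X)=0\ \forall i>0\}=\mathbb{D}^{\leq 0}$ and $G^{\perp_{<0}}$ with $\mathbb{D}^{\geq 0}$, so $(G^{\perp_{>0}},G^{\perp_{<0}})$ is the standard t-structure (a t-structure) and $G\in\mathbb{D}^{\leq 0}=G^{\perp_{>0}}$, i.e.\ $G$ is silting. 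For (c)$\Rightarrow$(d): if $G$ is silting then $\mathbb{T}_G^{\geq 1}=G^{\perp_{<0}}[-1]=G^{\perp_{\leq 0}}$, which by part~(1) equals $\mathbb{D}^{\geq 1}$; hence $\mathbb{T}_G$ is the standard t-structure, $\Hcal_G=\A$, and $G^{\perp_{\neq 0}}=\mathbb{D}^{\leq 0}\cap\mathbb{D}^{\geq 0}=\A$; since every object of $\Add(G)$ has cohomology concentrated in degree $0$, it lies in $\A=G^{\perp_{\neq 0}}$, so $G$ is tilting. Finally, under TR5 I would close the chain with (d)$\Rightarrow$(a): if $G$ is tilting (hence silting) the previous computation gives $\Hcal_G=\A$ with $\mathsf{H}^0_G$ the standard cohomology functor, so by Proposition~\ref{gen cogen heart} the object $\mathsf{H}^0_G(G)=G$ is a projective generator of $\A$; in particular $G$ is projective.
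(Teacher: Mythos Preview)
Your outline for parts~(2) and (3) is essentially correct and close to the paper's argument. The one genuine gap is in part~(1), in the case where every object of $\mathsf{D}(\A)$ has a homotopically injective coresolution: you reduce to $X\in\mathbb{D}^{\leq i}$ and then stop, flagging the chain-level computation as ``the main obstacle''. In fact this step is not delicate at all, and the reduction is unnecessary. The paper argues by contrapositive and uniformly in the two standing hypotheses: replace $X$ by a complex of injectives (or use projectivity of $G$) so that $\Hom_{\mathsf{D}(\A)}(G,X[i])=\Hom_{\mathsf{K}(\A)}(G,X[i])$; if $\mathsf{H}^i_0(X)\neq 0$ then, since $G$ is a generator, there is $f\colon G\to\Ker d^i$ not factoring through $\Image d^{i-1}$; the composite $G\xrightarrow{f}\Ker d^i\hookrightarrow X^i$ is a chain map $G\to X[i]$ which is nonzero in $\mathsf{K}(\A)$, because any null-homotopy $s\colon G\to X^{i-1}$ with $d^{i-1}s$ equal to this composite would force $f$ to factor through $\Image d^{i-1}$. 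That is the whole argument.

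Two smaller remarks on part~(3). Your route through (c)$\Rightarrow$(d) is more elaborate than necessary: you first identify $\mathbb{T}_G$ with the standard t-structure via part~(1), whereas the paper simply notes that $G\in\A$ gives $G\in G^{\perp_{<0}}$, so silting and tilting coincide for $G$. Your approach is not wrong, and it has the side benefit that you already know $\Hcal_G=\A$ when you reach (d)$\Rightarrow$(a), so you can invoke Proposition~\ref{gen cogen heart} directly; the paper instead appeals to Proposition~\ref{smallest aisle} together with part~(ii) (this is where TR5 is used in the paper's proof) to obtain $G^{\perp_{>0}}=\aisle(G)=\mathbb{D}^{\leq 0}$ and hence $\Hcal_G=\A$. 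Finally, your assertion that ``every object of $\Add(G)$ has cohomology concentrated in degree $0$'' is correct but deserves a word: under the standing hypotheses one checks that the stalk complex on the coproduct $G^{(I)}$ taken in $\A$ is already a coproduct in $\mathsf{D}(\A)$, so $\Add(G)\subset\A$.
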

\begin{proof}
(i) Assume that $\textsf{H}^i_0(X)\neq 0$. We will construct a non-trivial map from $G$ to $X[i]$. By our hypothesis on $\mathsf{D}(\A)$, without loss of generality we may assume that $X=(X^j,d^j)_{j\in\mathbb{Z}}$ is a complex of injective objects in $\A$ or that $G$ is projective in $\A$. Thus, we have $\Hom_{\mathsf{D}(\A)}(G,X[i])=\Hom_{\mathsf{K}(\A)}(G,X[i])$. Denote by $q\colon\Ker{d^i}\lxr \mathsf{H}^i_0(X)$ the natural projection map. Since $G$ is a generator and $\mathsf{H}^{i}_0(X)\neq 0$, $\Hom_\A(G,q)\neq 0$ and, thus, there is a map $f\colon G\lxr \Ker{d^i}$ such that $q\circ f\neq 0$, i.e. $f$ does not factor through the natural inclusion $k\colon\Image{d^{i-1}}\lxr \Ker{d^i}$. Consider the following diagram
\[
\xymatrix{&&& \Image{d^{i-1}}\ar[r]^k& \Ker{d^i}\ar[rd]^m& G\ar[l]_{\ \ \ f}\ar@{-->}[d]^{m\circ f}\\ 
\cdots\ar[r] &X^{i-2}\ar[r]^{d^{i-2}}& X^{i-1}\ar[ur]^p\ar[rrr]^{d^{i-1}}&&& X^{i}\ar[r]^{d^{i}}&X^{i+1}\ar[r]& \cdots}
\]
where $m$ and $p$ are the obviously induced maps. Then it is clear that the map $m\circ f$ lies in $\Hom_{\mathsf{K}(\A)}(G,X[i])$ and it is non-zero by the choice of $f$.

(ii) Let $\mathcal{S}$ denote the smallest suspended subcategory of $\mathsf{D}(\A)$ containing $G$ and closed under coproducts. If $\aisle(G)$ exists, it contains $\mathcal{S}$ and, since $G$ lies in $\mathbb{D}^{\leq 0}$, it is also contained in $\mathbb{D}^{\leq 0}$. Hence, if one shows that $\mathcal{S}=\mathbb{D}^{\leq 0}$ the statement follows. By Lemma \ref{AJS lemma}, it follows that $\mathcal{S}^{\perp_0}=G^{\perp_{\leq 0}}$. It is clear that $\mathbb{D}^{\geq 1}\subseteq G^{\perp_{\leq 0}}$. On the other hand, given $X$ in $G^{\perp_{\leq 0}}$, it follows from (i) that $\mathsf{H}_0^i(X)=0$ for all $i\leq 0$. Hence, we have that $\mathbb{D}^{\geq 1}\supseteq G^{\perp_{\leq 0}}$ and ${}^{\perp_0}(\mathcal{S}^{\perp_0})=\mathbb{D}^{\leq 0}$ as wanted.

(iii) (a)$\Longrightarrow$(b): Assume that $G$ is projective and suppose that $\mathsf{H}^i_0(X)=0$ (which, following the notation of the diagram above, is equivalent to $k\colon\Image{d^{i-1}}\lxr \Ker{d^i}$ being an isomorphism). Let $g\colon G\lxr X[i]$ be a map in $\mathsf{D}(\A)$. Since $G$ is projective, this is also a map in $K(\A)$ and, therefore, $d^ig=0$, i.e. $g$ induces a map $\tilde{g}\colon G\lxr \Ker{d^i}$. Since $k\colon \Image{d^{i-1}}\lxr \Ker{d^i}$ is an isomorphism by assumption, $\tilde{g}$ factors through $k$. This factorisation yields a homotopy, thus showing that $g=0$, as wanted.

(b)$\Longrightarrow$(c): From (b) it is clear that we have the equality $(G^{\perp_{>0}},G^{\perp_{<0}})=(\mathbb{D}^{\leq 0},\mathbb{D}^{\geq 0})$, thus showing that $(G^{\perp_{>0}},G^{\perp_{<0}})$ is a t-structure. Since $G$ lies in $\mathbb{D}^{\leq 0}$, then it also lies in $G^{\perp_{>0}}$, showing that it is silting.

(c)$\Longleftrightarrow$(d): Objects of $\A$ do not admit negative self-extensions. In particular, $G$ lies in $G^{\perp_{<0}}$. Therefore, $G$ is silting if and only if it is tilting.

When $\mathsf{D}(\A)$ is TR5, we can also prove (d)$\Longrightarrow$(a): If $G$ is a tilting object in $\mathsf{D}(\A)$, then $(G^{\perp_{>0}},G^{\perp_{<0}})$ is a t-structure and $G$ lies in $\Hcal_G$, where it is projective (Proposition \ref{gen cogen heart}). From Proposition \ref{smallest aisle} and item (ii) above, we have that $G^{\perp_{>0}}=\aisle(G)=\mathbb{D}^{\leq 0}$. Hence, the associated tilting t-structure is the standard one and $\Hcal_G=\A$, showing that $G$ is projective in $\A$.
\end{proof}

\begin{rem}
\label{dual generator properties}
Note that if $\A$ has an injective cogenerator $E$, then one can argue as in (iii) above to show that $E$ is a cotilting object in $\mathsf{D}(\A)$ and that for any object $X$ in $\mathsf{D}(\A)$ and any $i\in\mathbb{Z}$, $\mathsf{H}^i_0(X)=0$ if and only if $\Hom_{\mathsf{D}(\A)}(X,E[-i])=0$, i.e. the associated cotilting t-structure is the standard one. 
\end{rem}

The above lemma and the intrinsic properties of tilting t-structures yield the following corollary.

\begin{cor}
Let $\A$ be an abelian category such that $\mathsf{D}(\A)$ is TR5 and TR5*. If $\A$ has a projective generator (respectively, an injective cogenerator), then $\A$ has exact products (respectively, coproducts). In particular, if $\A$ is a Grothendieck abelian category with a projective generator, then $\A$ has exact products.
\end{cor}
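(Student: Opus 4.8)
The plan is to deduce both assertions directly from the machinery of silting and cosilting t-structures developed above, the key observation being that a projective generator (respectively, an injective cogenerator) of $\A$ induces the standard t-structure on $\mathsf{D}(\A)$, now regarded as a tilting (respectively, cotilting) t-structure, whose heart is just $\A$.

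First I would treat the projective generator $P$. By Lemma~\ref{generator properties}(iii), the implications (a)$\Longrightarrow$(b)$\Longrightarrow$(c) show that $P$ is a silting object of $\mathsf{D}(\A)$, and the description in statement (b) identifies the associated silting t-structure $(P^{\perp_{>0}},P^{\perp_{<0}})$ with the standard one $(\mathbb{D}^{\leq 0},\mathbb{D}^{\geq 0})$; in particular its heart $\Hcal_P$ is (equivalent to) $\A$. Since by hypothesis $\mathsf{D}(\A)$ is both TR5 and TR5*, Lemma~\ref{facts silting t-structures}(ii)(b) applies to the silting object $P$ and yields that products are exact in $\Hcal_P$. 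As categorical products are unique, products in $\Hcal_P$ agree with those in $\A$, so $\A$ has exact products.

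The injective cogenerator case is entirely dual: by Remark~\ref{dual generator properties}, an injective cogenerator $E$ is a cotilting object in $\mathsf{D}(\A)$ whose associated cotilting t-structure is the standard one, so $\Hcal_E$ is equivalent to $\A$; then Lemma~\ref{facts silting t-structures}(iii)(b), valid because $\mathsf{D}(\A)$ is TR5 and TR5*, shows that coproducts are exact in $\Hcal_E$, hence in $\A$. For the final clause I would invoke the standard fact, recalled above, that $\mathsf{D}(\A)$ is both TR5 and TR5* whenever $\A$ is Grothendieck, so that the projective generator case applies and gives exactness of products in $\A$.

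I do not expect any genuine obstacle here, as the statement is an immediate corollary of the preceding results; the only point deserving a word of care is the passage from ``products (resp. coproducts) exact in the heart $\Hcal_P$ (resp. $\Hcal_E$)'' to the same statement about $\A$, but this is harmless since, once the (co)silting t-structure is known to be the standard one, the heart is $\A$ up to equivalence, and exactness of (co)products is an intrinsic categorical property preserved under equivalences.
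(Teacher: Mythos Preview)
Your proof is correct and follows essentially the same approach as the paper: invoke Lemma~\ref{generator properties} (and Remark~\ref{dual generator properties}) to identify the standard t-structure as the (co)silting t-structure associated to the projective generator (resp.\ injective cogenerator), then apply Lemma~\ref{facts silting t-structures}(ii)(b) (resp.\ (iii)(b)) to conclude exactness of products (resp.\ coproducts) in the heart, which is $\A$.
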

\begin{proof}
From Lemma \ref{generator properties}, the projective generator $G$ is a tilting object in $\mathsf{D}(\A)$ whose associated tilting t-structure is the standard one. From Lemma \ref{facts silting t-structures}(ii)(b) it then follows that $\Hcal_G\cong \A$ has exact products. The dual statement follows from Remark \ref{dual generator properties} and Lemma \ref{facts silting t-structures}(iii)(b).
\end{proof}

As mentioned earlier, if $\A$ is a Grothendieck category, the smallest aisle of $\mathsf{D}(\A)$ containing an object always exists (\cite[Theorem 3.4]{AJS}). Using this fact, silting objects become easier to describe.  This description covers the previous definitions of silting objects occurring in the literature (\cite{AI,AMV1,KV,KN,KY,Wei}).

\begin{prop}\label{silting as expected}
Let $\A$ be a Grothendieck category and $M$ an object in $\mathsf{D}(\A)$. Then $M$ is silting if and only if the following three conditions hold:
\begin{enumerate}
\item $\Hom_{\mathsf{D}(\A)}(M,M[k])=0$, for all $k>0$;
\item $M$ generates $\mathsf{D}(\A)$;
\item $M^{\perp_{>0}}$ is closed under coproducts.
\end{enumerate}
\end{prop}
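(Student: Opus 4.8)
The statement is an equivalence, and the plan is to prove the two implications separately; the ``only if'' direction is quick and the ``if'' direction carries all the content. Throughout, recall that $\mathsf{D}(\A)$ is TR5 (and TR5*) because $\A$ is Grothendieck, so all coproducts exist. Suppose first that $M$ is silting. Then $M\in M^{\perp_{>0}}$ by the very definition of a silting object, which is exactly condition (i); Proposition~\ref{gen cogen heart} tells us that $M$ generates $\mathsf{D}(\A)$, which is (ii); and $M^{\perp_{>0}}$ is the aisle $\mathbb{T}_M^{\leq 0}$ of the silting t-structure, so $M^{\perp_{>0}}={}^{\perp_0}(\mathbb{T}_M^{\geq 1})$ is a left-orthogonal class and is therefore closed for coproducts (if $X_i\in M^{\perp_{>0}}$ and $Y\in\mathbb{T}_M^{\geq 1}$ then $\Hom_{\mathsf{D}(\A)}(\coprod_iX_i,Y)=\prod_i\Hom_{\mathsf{D}(\A)}(X_i,Y)=0$), which is (iii).

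For the converse, assume (i), (ii), (iii). First I would check that $M^{\perp_{>0}}$ is a suspended subcategory of $\mathsf{D}(\A)$ containing $M$ and closed for summands and coproducts: closure under $[1]$ and under extensions follows from the long exact sequences obtained by applying $\Hom_{\mathsf{D}(\A)}(M,-[n])$ to triangles, closure under summands is immediate, closure under coproducts is (iii), and $M\in M^{\perp_{>0}}$ is (i). Hence, letting $\mathcal{S}$ be the smallest suspended subcategory of $\mathsf{D}(\A)$ containing $M$ and closed for summands and coproducts (as in Lemma~\ref{AJS lemma}), we get $\mathcal{S}\subseteq M^{\perp_{>0}}$. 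Lemma~\ref{AJS lemma} computes $\mathcal{S}^{\perp_0}=M^{\perp_{\leq 0}}$, and the key external input is that, since $\A$ is Grothendieck, $\mathcal{S}$ is in fact an aisle by \cite[Theorem~3.4]{AJS} (so $\mathcal{S}=\aisle(M)$). Thus $\mathcal{S}$ is the $\mathbb{T}^{\leq 0}$-part of a t-structure whose $\mathbb{T}^{\geq 1}$-part is $\mathcal{S}^{\perp_0}=M^{\perp_{\leq 0}}$; a shift of indices identifies this t-structure with $(\mathcal{S},M^{\perp_{<0}})$, since $M^{\perp_{\leq 0}}[1]=M^{\perp_{<0}}$.

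It then remains to show $M^{\perp_{>0}}\subseteq\mathcal{S}$, and this is where hypothesis (ii) enters. Given $X\in M^{\perp_{>0}}$, I would take the truncation triangle $\tau^{\leq 0}X\to X\to\tau^{\geq 1}X\to(\tau^{\leq 0}X)[1]$ for the t-structure $(\mathcal{S},M^{\perp_{<0}})$, where $\tau^{\leq 0}X\in\mathcal{S}\subseteq M^{\perp_{>0}}$ and $\tau^{\geq 1}X\in M^{\perp_{\leq 0}}$. Applying $\Hom_{\mathsf{D}(\A)}(M,-[n])$ with $n\geq 1$ to the rotated triangle $X\to\tau^{\geq 1}X\to(\tau^{\leq 0}X)[1]\to X[1]$ and using that $X$ and $\tau^{\leq 0}X$ both lie in $M^{\perp_{>0}}$ forces $\tau^{\geq 1}X\in M^{\perp_{>0}}$; combined with $\tau^{\geq 1}X\in M^{\perp_{\leq 0}}$ this gives $\tau^{\geq 1}X\in M^{\perp_{\mathbb{Z}}}=0$ by (ii). Hence $X\cong\tau^{\leq 0}X\in\mathcal{S}$, so $M^{\perp_{>0}}=\mathcal{S}$ and $(M^{\perp_{>0}},M^{\perp_{<0}})$ is a t-structure; since $M\in M^{\perp_{>0}}$ by (i), $M$ is silting. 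I expect the main (mild) obstacles to be keeping the shift conventions straight when passing between $M^{\perp_{\leq 0}}$, $M^{\perp_{<0}}$ and the $\mathbb{T}^{\geq 0}/\mathbb{T}^{\geq 1}$ parts, and invoking \cite[Theorem~3.4]{AJS} correctly to know that $\mathcal{S}$ is a genuine aisle; the homological bookkeeping that annihilates $\tau^{\geq 1}X$ is the conceptual heart but is routine once the t-structure $(\mathcal{S},M^{\perp_{<0}})$ is in place.
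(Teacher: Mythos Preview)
Your proposal is correct and follows essentially the same route as the paper: both directions match, and for the ``if'' part you use \cite[Theorem~3.4]{AJS} to obtain the aisle $\mathcal{S}=\aisle(M)\subseteq M^{\perp_{>0}}$ with coaisle $M^{\perp_{\leq 0}}$ (via Lemma~\ref{AJS lemma}), then kill the $\tau^{\geq 1}$-part of any $X\in M^{\perp_{>0}}$ by the generating hypothesis---exactly as the paper does. The only cosmetic difference is that the paper phrases the vanishing of $\Hom(M,\tau^{\geq 1}X[n])$ by lifting a map $M[k]\to \tau^{\geq 1}X$ through $X$, whereas you read it off the long exact sequence directly; these are the same computation.
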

\begin{proof}
If $M$ is silting, it follows from Proposition \ref{gen cogen heart} that $M$ is a generator and, thus, it satisfies the listed conditions. Conversely, suppose $M$ satisfies (i), (ii) and (iii). Since $\A$ is a Grothendieck category, it follows from \cite[Theorem 3.4]{AJS} that $\textsf{aisle}(M)$ coincides with the smallest subcategory containing $M$ and closed under positive shifts, extensions and coproducts. As a consequence, and by assumption on $M^{\perp_{>0}}$, we have that $\aisle(M)^{\perp_0}=M^{\perp_{\leq 0}}$ (by Lemma \ref{AJS lemma}) and $\aisle(M)\subseteq M^{\perp_{>0}}$. We will show the reverse inclusion, thus proving that $M$ is silting. Let $X$ be an object in $M^{\perp_{>0}}$ and consider a triangle
\[
\xymatrix{
Y \ar[r] & X \ar[r] & Z \ar[r] & Y[1]
}
\]
such that $Y$ lies in $\aisle(M)$ and $Z$ lies in $\aisle(M)^{\perp_0}=M^{\perp_{\leq 0}}$. We show that $Z=0$. Note that $\Hom_{\mathsf{D}(\A)}(M,Z[k])=0$, for all $k\leq 0$. Let $k<0$ and consider a map $f\colon M[k]\lxr Z$. Since $\aisle(M)\subseteq M^{\perp_{>0}}$ (and, thus, $\Hom_{\mathsf{D}(\A)}(M[k],Y[1])=0$), it follows that there is $\bar{f}\colon M[k]\lxr X$ which, by assumption on $X$, must vanish. Therefore, $f=0$ and since $M$ generates $\mathsf{D}(\A)$, we get that $Z=0$. From the above triangle we infer that $X$ lies in $\aisle(M)$. 
\end{proof}

This proposition shows, in particular, that the compact silting (and tilting) complexes in derived module categories that appear abundantly in the literature fit in our definition. We will now explore in more detail the connection between the (not necessarily compact) silting complexes defined in \cite{Wei, AMV1} and silting objects as defined here. For this purpose we need the notion of a bounded (co)silting object, which will play an important role in the coming sections. This concept is defined via a property of the associated t-structure.

\begin{lem}\label{eq bdd}
Let $\A$ be an abelian category and let $\mathbb{T}=(\mathbb{T}^{\leq 0},\mathbb{T}^{\geq 0})$ be a t-structure in $\mathsf{D}(\A)$ with heart $\Hcal$. Then the following are equivalent.
\begin{enumerate}
\item The full subcategory $\mathsf{D}(\A)^{b_\mathbb{T}}$ of $\mathsf{D}(\A)$ with objects $\mathsf{Ob}\ \mathsf{D}(\A)^{b(\mathbb{T})}=\bigcup_{n,m\in\mathbb{Z}}\mathsf{Ob}\  \mathbb{T}^{\leq n}\cap\mathbb{T}^{\geq m}$ coincides with $\mathsf{D}^b(\A)$.
\item $\mathbb{T}\cap\mathsf{D}^b(\A):=(\mathbb{T}^{\leq 0}\cap\mathsf{D}^b(\A),\mathbb{T}^{\geq 0}\cap\mathsf{D}^b(\A))$ is a bounded t-structure in $\mathsf{D}^b(\A)$ and $\Hcal\subseteq \mathsf{D}^b(\A)$.
\end{enumerate}
If $\A$ is Grothendieck, then the above conditions are furthermore equivalent to the following ones.
\begin{enumerate}
\item[(iii)]  There are integers $n\leq m$ such that $\mathbb{D}^{\leq n}\subseteq \mathbb{T}^{\leq 0}\subseteq \mathbb{D}^{\leq m}$.
\item[(iv)] $\mathsf{Ob}\ \mathsf{D}^-(\A)=\bigcup_{n\in\mathbb{Z}}\mathsf{Ob}\ \mathbb{T}^{\leq 0}[n]$ and $\mathsf{Ob}\ \mathsf{D}^+(\A)=\bigcup_{n\in\mathbb{Z}}\mathsf{Ob}\ \mathbb{T}^{\geq 0}[n]$.
\end{enumerate}
\end{lem}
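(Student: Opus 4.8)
The plan is to prove the equivalences in three stages: first the general equivalence $(i)\Leftrightarrow(ii)$ for arbitrary abelian $\A$, and then, under the Grothendieck hypothesis, close a cycle of implications $(i)\Rightarrow(iii)\Rightarrow(iv)\Rightarrow(i)$ (or some similarly convenient ordering). For $(i)\Rightarrow(ii)$: assuming $\mathsf{Ob}\,\mathsf{D}(\A)^{b(\mathbb T)}=\mathsf{D}^b(\A)$, the heart $\Hcal=\mathbb T^{\leq 0}\cap\mathbb T^{\geq 0}$ is in particular contained in $\bigcup_{n,m}\mathbb T^{\leq n}\cap\mathbb T^{\geq m}=\mathsf{D}^b(\A)$, giving the second half of (ii) for free. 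For the first half, I would check directly that the pair $(\mathbb T^{\leq 0}\cap\mathsf{D}^b(\A),\mathbb T^{\geq 0}\cap\mathsf{D}^b(\A))$ satisfies the three axioms of a t-structure in $\mathsf{D}^b(\A)$: the $\Hom$-vanishing and the inclusion axioms are inherited from $\mathbb T$, and for the truncation triangle of $X\in\mathsf{D}^b(\A)$ one takes the canonical triangle $\tau^{\leq 0}X\to X\to\tau^{\geq 1}X\to$ in $\mathsf D(\A)$ and argues that both outer terms lie in $\mathsf{D}^b(\A)$ — this is where $(i)$ is used, since $X\in\mathsf{D}^b(\A)=\mathsf{D}(\A)^{b(\mathbb T)}$ forces $X$ (hence its truncations) to sit in some $\mathbb T^{\leq n}\cap\mathbb T^{\geq m}$, and the truncation of an object of $\mathbb T^{\geq m}$ stays in $\mathbb T^{\geq m}$, etc. Boundedness of $\mathbb T\cap\mathsf{D}^b(\A)$ in $\mathsf{D}^b(\A)$ is then the statement that every $X\in\mathsf{D}^b(\A)$ lies in some $\mathbb T^{\leq n}\cap\mathbb T^{\geq m}$, which is exactly $(i)$. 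For $(ii)\Rightarrow(i)$: if $\mathbb T\cap\mathsf{D}^b(\A)$ is a bounded t-structure in $\mathsf{D}^b(\A)$, boundedness says $\mathsf{Ob}\,\mathsf{D}^b(\A)=\bigcup_n\mathbb T^{\leq 0}[n]\cap\mathsf{D}^b(\A)=\bigcup_n\mathbb T^{\geq 0}[n]\cap\mathsf{D}^b(\A)$, from which $\mathsf{D}^b(\A)\subseteq\bigcup_{n,m}\mathbb T^{\leq n}\cap\mathbb T^{\geq m}$; the reverse inclusion $\bigcup_{n,m}\mathbb T^{\leq n}\cap\mathbb T^{\geq m}\subseteq\mathsf{D}^b(\A)$ uses that $\Hcal\subseteq\mathsf{D}^b(\A)$ together with the fact that every object of $\mathbb T^{\leq n}\cap\mathbb T^{\geq m}$ is a finite iterated extension of shifts of objects of $\Hcal$ (induction on $n-m$, via the truncation triangles), and $\mathsf{D}^b(\A)$ is a triangulated subcategory closed under extensions and shifts.

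Now assume $\A$ is Grothendieck. For $(i)\Rightarrow(iii)$: since $\A$ has enough injectives and enough "homotopically injective" resolutions, $\mathsf{D}^b(\A)$ is generated as a triangulated category (with shifts and finite extensions) by $\A\subseteq\mathsf{D}(\A)$ placed in degree $0$; in particular each object $Y$ of $\A$ lies in $\mathsf{D}^b(\A)=\mathsf{D}(\A)^{b(\mathbb T)}$, hence in some $\mathbb T^{\leq n_Y}\cap\mathbb T^{\geq m_Y}$. The point is to get uniform bounds. Here I would invoke that a Grothendieck category has a generator $G$; the complex $G$ lies in some $\mathbb T^{\leq n}$, and since $\mathbb D^{\leq n}$ is the smallest aisle containing $G$ (this is exactly Lemma \ref{generator properties}(ii), applied with $\mathsf D(\A)$ TR5), we get $\mathbb D^{\leq n}\subseteq\mathbb T^{\leq 0}$ after a shift — more precisely, $\mathbb D^{\leq 0}=\aisle(G)\subseteq\mathbb T^{\leq n}$ so $\mathbb D^{\leq -n}\subseteq\mathbb T^{\leq 0}$. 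For the other inclusion $\mathbb T^{\leq 0}\subseteq\mathbb D^{\leq m}$: an injective cogenerator $E$ of $\A$ satisfies $\mathbb T^{\leq 0}\subseteq{}^{\perp_{<0}}(E[\text{something}])$-type vanishing — concretely, $E$ lies in some $\mathbb T^{\geq -m}$, i.e. $\Hom(\mathbb T^{\leq -1}, E[-m][k])=0$ for $k\geq 0$, which by Remark \ref{dual generator properties} (characterising $\mathbb D^{\geq 0}$ via $\Hom(-,E[-i])$) translates into $\mathbb T^{\leq 0}\subseteq\mathbb D^{\leq m}$ for a suitable $m$. After adjusting indices this yields $(iii)$.

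For $(iii)\Rightarrow(iv)$: from $\mathbb D^{\leq n}\subseteq\mathbb T^{\leq 0}\subseteq\mathbb D^{\leq m}$ we get, shifting, $\mathbb D^{\leq n+k}\subseteq\mathbb T^{\leq 0}[k]\subseteq\mathbb D^{\leq m+k}$, and since $\mathsf{Ob}\,\mathsf D^-(\A)=\bigcup_k\mathsf{Ob}\,\mathbb D^{\leq k}$, sandwiching gives $\mathsf{Ob}\,\mathsf D^-(\A)=\bigcup_k\mathsf{Ob}\,\mathbb T^{\leq 0}[k]$; the dual statement for $\mathsf D^+$ follows by the dual sandwich $\mathbb D^{\geq m'}\subseteq\mathbb T^{\geq 0}\subseteq\mathbb D^{\geq n'}$, which one derives from $(iii)$ by orthogonality ($\mathbb T^{\geq 1}=(\mathbb T^{\leq 0})^{\perp_0}$ and similarly for $\mathbb D$). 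For $(iv)\Rightarrow(i)$: given $X\in\mathsf D^b(\A)\subseteq\mathsf D^-(\A)\cap\mathsf D^+(\A)$, condition $(iv)$ puts $X$ in some $\mathbb T^{\leq 0}[n]=\mathbb T^{\leq n}$ and in some $\mathbb T^{\geq 0}[m']=\mathbb T^{\geq m'}$, hence $X\in\mathbb T^{\leq n}\cap\mathbb T^{\geq m'}\subseteq\mathsf D(\A)^{b(\mathbb T)}$; conversely $\mathsf D(\A)^{b(\mathbb T)}\subseteq\mathsf D^b(\A)$ because each $\mathbb T^{\leq n}\cap\mathbb T^{\geq m}$ is squeezed between two standard-bounded subcategories by $(iii)$ (which we already have, having gone through it), or directly: $\mathbb T^{\geq m}\subseteq\mathbb D^{\geq ?}$ and $\mathbb T^{\leq n}\subseteq\mathbb D^{\leq ?}$.

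The step I expect to be the main obstacle is extracting the \emph{uniform} bounds in $(i)\Rightarrow(iii)$ — i.e. upgrading "every object is bounded with respect to $\mathbb T$" to "$\mathbb T^{\leq 0}$ is trapped between two fixed shifts of the standard aisle." This is precisely where the Grothendieck hypothesis is essential and where one must use the generator/cogenerator together with the smallest-aisle results (Lemma \ref{generator properties}(ii) and Remark \ref{dual generator properties}); without uniformity, $(iii)$ genuinely fails (e.g. for the standard t-structure on an unbounded derived category, which satisfies $(i)$ only after intersecting with $\mathsf D^b$, but does satisfy $(iii)$ with $n=m=0$). I would be careful to keep track of the index shifts, since the asymmetry between aisle and coaisle (one controlled by $G$, the other by $E$) makes sign bookkeeping the most error-prone part of the write-up.
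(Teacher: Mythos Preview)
Your proposal is correct and follows essentially the same route as the paper: the equivalence (i)$\Leftrightarrow$(ii) via truncation triangles and iterated extensions of heart objects, and the Grothendieck cycle via the generator $G$ (using Lemma~\ref{generator properties}(ii)) and the injective cogenerator $E$ (using Remark~\ref{dual generator properties}) to obtain uniform bounds. The only slip is in your step (iv)$\Rightarrow$(i): invoking (iii) ``which we already have'' is circular in a cycle (i)$\Rightarrow$(iii)$\Rightarrow$(iv)$\Rightarrow$(i), but your ``or directly'' alternative is exactly right---(iv) gives $\mathbb{T}^{\leq 0}\subseteq\mathsf{D}^-(\A)$ and $\mathbb{T}^{\geq 0}\subseteq\mathsf{D}^+(\A)$, so $\mathbb{T}^{\leq n}\cap\mathbb{T}^{\geq m}\subseteq\mathsf{D}^-(\A)\cap\mathsf{D}^+(\A)=\mathsf{D}^b(\A)$; just commit to that and drop the appeal to (iii). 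The paper closes the cycle slightly differently, proving (iv)$\Rightarrow$(ii) by showing truncations of $X\in\mathsf{D}^b(\A)$ land in $\mathsf{D}^b(\A)$ via a triangulated-subcategory argument, but your direct route to (i) is if anything a touch cleaner.
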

\begin{proof}
(i)$\Longrightarrow$(ii): We first observe that $(\mathbb{T}^{\leq 0}\cap\mathsf{D}(\A)^{b(\mathbb{T})}, \mathbb{T}^{\geq 0}\cap\mathsf{D}(\A)^{b(\mathbb{T})})$ is always a t-structure in $\mathsf{D}(\A)^{b(\mathbb{T})}$. In fact, given an object $X$ in $\mathsf{D}(\A)^{b(\mathbb{T})}$ it is clear that its truncations with respect to $\mathbb{T}$, $\tau_\mathbb{T}^{\leq 0}X$ and $\tau_\mathbb{T}^{\geq 1}X$, lie, respectively, in $\cup_{n\in\mathbb{Z}}\mathbb{T}^{\leq n}$ and in $\cup_{n\in\mathbb{Z}}\mathbb{T}^{\geq n}$. Each of these unions forms a triangulated subcategory of $\mathsf{D}(\A)$. Since $X$ lies in both, it follows that so do its truncations. Hence, $(\mathbb{T}^{\leq 0}\cap\mathsf{D}(\A)^{b(\mathbb{T})}, \mathbb{T}^{\geq 0}\cap\mathsf{D}(\A)^{b(\mathbb{T})})$ is indeed a t-structure and (ii) then follows immediately from (i).

(ii)$\Longrightarrow$(i): Note that every object in $\mathbb{T}^{\leq n}\cap \mathbb{T}^{\geq m}$ can be obtained by a finite sequence of extensions in $\mathsf{D}(\A)$ of objects in $\Hcal$ (such sequences are often represented by Postnikov towers - see also \cite[Lemma 2.3]{Br}). Since $\Hcal$ lies in $\mathsf{D}^b(\A)$ then so does every object in $\mathbb{T}^{\leq n}\cap \mathbb{T}^{\geq m}$. Hence $\mathsf{D}(\A)^{b(\mathbb{T})}$ is contained in $\mathsf{D}^b(\A)$. The converse inclusion also holds since $\mathbb{T}\cap\mathsf{D}^b(\A)$ is a bounded t-structure in $\mathsf{D}^b(\A)$.

Suppose now that $\A$ is a Grothendieck category with a generator $G$ and an injective cogenerator $E$.

(ii)$\Longrightarrow$(iii): By assumption, $G$ lies in $\mathbb{T}^{\leq k}$ for some integer $k$. By Lemma \ref{generator properties}, we have that $\aisle(G)=\mathbb{D}^{\leq 0}$ and, thus, $\mathbb{D}^{\leq 0}\subseteq \mathbb{T}^{\leq k}$. Since, by assumption, $E$ lies in $\mathbb{T}^{\geq t}$ for some integer $t$ and $\mathbb{T}^{\geq t}$ is closed under negative shifts, using Remark \ref{dual generator properties} we see that  $\mathbb{D}^{\leq 0}={}^{\perp_{<0}}E\supseteq  {}^{\perp_{<0}}(\mathbb{T}^{\geq t})={}^{\perp_{0}}(\mathbb{T}^{\geq t+1})$, as wanted.

(iii)$\Longrightarrow$(iv): This is obvious.

(iv)$\Longrightarrow$(ii): In order to see that the t-structure $\mathbb{T}$ restricts to $\mathsf{D}^b(\A)$ we only need to show (as in (i)$\Longrightarrow$(ii)) that given an object $X$ in $\mathsf{D}^b(\A)$, its truncations with respect to $\mathbb{T}$ also lie in $\mathsf{D}^b(\A)$.  Consider the truncation triangle with respect to $\mathbb{T}$ given as follows
\[
\xymatrix{
\tau^{\leq 0}X \ar[r] & X \ar[r] & \tau^{\geq 1}X \ar[r]^{} &  (\tau^{\leq 0}X)[1].
}
\]
By assumption, $\mathbb{T}^{\leq n}\subseteq \mathsf{D}^-(\A)$ and, thus, $\tau^{\leq 0}X$ lies in $\mathsf{D}^{-}(\A)$. Since $\mathsf{D}^-(\A)$ is a triangulated subcategory and $X$ lies in $\mathsf{D}^b(\A)$, we have that also $\tau^{\geq 1}X$ lies in $\mathsf{D}^-(\A)$. Again by assumption, we also have that $\tau^{\geq 1}X$ lies in $\mathbb{T}^{\geq 1}\subseteq \mathsf{D}^+(\A)$ and, thus, in $\mathsf{D}^-(\A)\cap\mathsf{D}^+(\A)=\mathsf{D}^b(\A)$. Since $\mathsf{D}^b(\A)$ is triangulated, also $\tau^{\leq 0}X$ lies in $\mathsf{D}^b(\A)$. Hence $\mathbb{T}\cap\mathsf{D}^b(\A)$ is a t-structure in $\mathsf{D}^b(\A)$. Finally, it is clear from (iv) that $\mathbb{T}\cap\mathsf{D}^b(\A)$ is bounded and that $\Hcal(\mathbb{T})=\mathbb{T}^{\leq 0}\cap\mathbb{T}^{\geq 0}\subset \mathsf{D}^-(\A)\cap\mathsf{D}^+(\A)=\mathsf{D}^b(\A)$, proving (ii).
\end{proof}

\begin{defn}\label{bounded silting}
Let $\A$ be an abelian category.  A silting (respectively, cosilting) object in $\mathsf{D}(\A)$ is said to be \textbf{bounded} if the associated t-structure satisfies the equivalent conditions (i) and (ii) of Lemma \ref{eq bdd}.
\end{defn}

\begin{rem}\label{bdd silting is bdd}
\begin{enumerate}
\item If $\A$ is an abelian category, any bounded tilting and bounded cotilting objects in $\mathsf{D}(\A)$ lie in $\mathsf{D}^b(\A)$ since they lie in their own associated hearts. 
\item If $\A$ is a Grothendieck category and $M$ is a bounded silting object in $\mathsf{D}(\A)$, then $M$ lies in $\mathsf{D}^b(\A)$. From (iii) of Lemma \ref{eq bdd}, it is clear that there are integers $n\leq m$ such that $M$ lies in $\mathbb{D}^{\leq m}$ and $\mathbb{D}^{\leq n}\subseteq M^{\perp_{>0}}$. For $E$ and injective cogenerator of $\A$, since $E[-n]$ lies in $\mathbb{D}^{\leq n}$, we have that $\Hom_{\mathsf{D}(A)}(M,E[-n+k])=0$ for all $k>0$. In particular, it follows from Remark \ref{dual generator properties} that $M$ lies in $\mathbb{D}^{\geq n+1}$, proving that $M$ is a cohomologically bounded object. 
\item If $\A$ is a Grothendieck category with a projective generator $G$, then the dual arguments to the above example show that bounded cosilting objects also lie in $\mathsf{D}^b(\A)$.
\end{enumerate}
\end{rem}

We will see that, in the derived category of modules over a ring $A$, bounded silting objects are not only cohomologically bounded but they must also lie in $\mathsf{K}^b(\Proj{A})$.  Hence, they coincide with the silting complexes of \cite{Wei, AMV1}. Analogously, bounded cosilting objects in $\mathsf{D}(A)$ must lie in $\mathsf{K}^b(\Inj(A))$. This can be shown using dual arguments to those in \cite[Lemma 4.5]{AMV1} - we leave that to the reader. 

\begin{prop}\label{bdd silting over rings}
Let $A$ be a ring. Then a silting (respectively, cosilting) object is bounded in $\mathsf{D}(A)$ if and only if it lies in $\mathsf{K}^b(\Proj{A})$ (respectively, $K^b(\Inj{A})$).
\end{prop}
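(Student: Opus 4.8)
The plan is to reduce both implications, via Lemma~\ref{eq bdd}(iii) (available because $\Mod A$ is a Grothendieck category), to the following statement about a silting object $M$ in $\mathsf{D}(A)$: the aisle $M^{\perp_{>0}}$ is sandwiched between two shifts of the standard aisle precisely when $M$ is isomorphic in $\mathsf{D}(A)$ to a bounded complex of projective modules. Throughout I would use freely that a bounded complex of projective $A$-modules is homotopy-projective, so that $\Hom_{\mathsf{D}(A)}(P^\bullet,-)=\Hom_{\mathsf{K}(A)}(P^\bullet,-)$, together with the elementary identities $\Hom_{\mathsf{D}(A)}(P,Y)\cong\Hom_A(P,\mathsf{H}^0_0(Y))$ for a projective module $P$ and, dually, $\Hom_{\mathsf{D}(A)}(Y,E)\cong\Hom_A(\mathsf{H}^0_0(Y),E)$ for an injective module $E$. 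The cosilting statement then follows by the evident dual arguments (replacing projective resolutions by injective coresolutions, and Remark~\ref{dual generator properties} and Remark~\ref{bdd silting is bdd}(3) for their projective counterparts); one should resist passing to $\mathsf{D}(A)^{\mathrm{op}}$, which is not the derived category of a module category, and dualise the arguments directly. So I only discuss the silting case.

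First I would treat the implication \emph{$M\in\mathsf{K}^b(\Proj A)$ implies $M$ bounded}. Write $M\simeq(M^a\to\cdots\to M^b)$ with each $M^i$ a projective, hence $M^i\in\Add(A)$. The brutal-truncation triangles $\sigma_{\ge i+1}M\to\sigma_{\ge i}M\to M^i[-i]\to\sigma_{\ge i+1}M[1]$ exhibit $M$ as a finite iterated extension of $M^a[-a],\dots,M^b[-b]$. Using the two $\Hom$-identities and induction along this filtration one checks the key vanishing:
\[
\mathsf{H}^l_0(X)=0 \ \text{for all}\ l\in[\,i+a,\,i+b\,] \ \Longrightarrow\ \Hom_{\mathsf{D}(A)}(M,X[i])=0 ,
\]
for any $X$ in $\mathsf{D}(A)$ and any integer $i$. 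Applying this with $X\in\mathbb{D}^{\le a}$ and $i\ge 1$ gives $\mathbb{D}^{\le a}\subseteq M^{\perp_{>0}}$; applying it with $X\in\mathbb{D}^{\ge b+1}$ and $i\le 0$ gives $\mathbb{D}^{\ge b+1}\subseteq M^{\perp_{\le 0}}=\mathbb{T}_M^{\ge 1}$, which by the order-reversing correspondence between aisles and coaisles is exactly $M^{\perp_{>0}}\subseteq\mathbb{D}^{\le b}$. Thus $\mathbb{D}^{\le a}\subseteq M^{\perp_{>0}}\subseteq\mathbb{D}^{\le b}$, so $\mathbb{T}_M$ satisfies condition (iii) of Lemma~\ref{eq bdd} and $M$ is bounded.

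For the converse, let $M$ be a bounded silting object. By Remark~\ref{bdd silting is bdd}(2), $M$ lies in $\mathsf{D}^b(A)$, and by Lemma~\ref{eq bdd}(iii) there is an integer $n$ with $\mathbb{D}^{\le n}\subseteq M^{\perp_{>0}}$. For any module $N$ the complex $N[-n]$ lies in $\mathbb{D}^{\le n}$, hence in $M^{\perp_{>0}}$, so $\Hom_{\mathsf{D}(A)}(M,N[j])=0$ for all $j>n$ and all $N$ in $\Mod A$. Now take a projective resolution $P^\bullet\xrightarrow{\sim}M$ with $P^\bullet$ bounded above, and fix an integer $m$ with $m$ below the cohomological support of $M$ and $2-m>n$. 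Then the brutal subcomplex $\sigma_{\ge m}P^\bullet$ is a bounded complex of projectives, while the quotient complex $\sigma_{<m}P^\bullet$ is quasi-isomorphic to $\Omega[1-m]$ for $\Omega:=\ker(P^m\to P^{m+1})$. Using the resulting triangle $\sigma_{\ge m}P^\bullet\to M\to\Omega[1-m]\to\sigma_{\ge m}P^\bullet[1]$ and the fact that $\Hom_{\mathsf{D}(A)}(\sigma_{\ge m}P^\bullet,N[s])=0$ for $s>-m$, one identifies $\Ext^1_A(\Omega,N)\cong\Hom_{\mathsf{D}(A)}(\Omega[1-m],N[2-m])$ with $\Hom_{\mathsf{D}(A)}(M,N[2-m])$, which vanishes since $2-m>n$. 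Hence $\Ext^1_A(\Omega,-)=0$, so $\Omega$ is projective; then $\Omega[1-m]\in\mathsf{K}^b(\Proj A)$, and since $\sigma_{\ge m}P^\bullet\in\mathsf{K}^b(\Proj A)$ and the essential image of $\mathsf{K}^b(\Proj A)$ in $\mathsf{D}(A)$ is a triangulated subcategory, the triangle forces $M\in\mathsf{K}^b(\Proj A)$.

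I expect the main obstacle to be this converse: one has to extract \emph{finite} projective dimension, indeed a bounded projective \emph{complex} of fixed length, from the rather soft hypothesis $\mathbb{D}^{\le n}\subseteq M^{\perp_{>0}}$, and the delicate point is setting up the truncation so that a suitably deep syzygy is genuinely projective (not merely of finite projective dimension). The index bookkeeping and the aisle/coaisle correspondence in the first implication are routine but must be done carefully, and the cosilting case, while formally dual, requires re-running the argument with injective coresolutions rather than invoking an opposite-category duality.
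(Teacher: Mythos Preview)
Your argument is correct and is essentially an explicit, self-contained version of the paper's proof, which simply defers the two implications to \cite[Lemma 4.1, Proposition 4.2]{Wei} and to Lemma~\ref{facts silting t-structures}(ii)(a) combined with \cite[Lemma 4.5]{AMV1}; your stupid-truncation and syzygy arguments are precisely what those references establish. One small arithmetic slip: from $N[-n]\in M^{\perp_{>0}}$ one deduces $\Hom_{\mathsf{D}(A)}(M,N[j])=0$ for $j>-n$ (not $j>n$), so the constraint on $m$ should read $2-m>-n$; since $m$ can be taken arbitrarily negative, this does not affect the argument.
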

\begin{proof}
By Lemma \ref{eq bdd}, the aisle of the silting t-structure associated to a bounded silting object in $\mathsf{D}(A)$ lies between shifts of the standard aisle. It then follows from Lemma \ref{facts silting t-structures}(ii)(a) and \cite[Lemma 4.5]{AMV1} that bounded silting objects lie in $\mathsf{K}^b(\Proj{A})$. For the converse see \cite[Lemma 4.1 and Proposition 4.2]{Wei}.
\end{proof}

We finish this section with an example of an unbounded silting object in the context of quiver representations of infinite quivers. Let $\mathbb{K}$ be a field. Given a (possibly infinite) locally finite quiver $Q$  (i.e. every vertex has only finitely many adjacent arrows), consider the \textbf{path category} of $Q$ to be the category whose objects are the vertices of $Q$ and whose morphisms between two vertices $i$ and $j$ are elements of the $\mathbb{K}$-vector space spanned by the paths between $i$ and $j$. We denote it by $\mathbb{K}Q$. Consider the associated category of functors $\mathcal{M}(\mathbb{K}Q):=((\mathbb{K}Q)^{op},\Mod{\mathbb{K}})$, which is called the category of right modules over $\mathbb{K}Q$. If $Q$ is finite, $\mathcal{M}(\mathbb{K}Q)$ is equivalent to usual category of right modules over the path algebra, $\Mod{\mathbb{K}Q}$. Still, even when $Q$ is infinite, $\mathcal{M}(\mathbb{K}Q)$ is well-known to be a Grothendieck category (see, for example, \cite{Freyd}). Given a vertex $x$ of $Q$, we consider the projective object $P_x=\Hom_{\mathbb{K}Q}(x,-)$ in $\mathcal{M}(\mathbb{K}Q)$.

\begin{exam}\label{unb silting}
Let $Q$ be the linearly oriented quiver of type $A_\infty$, i.e. the quiver
\[ 
\xymatrix{
1 \ar[r] & 2 \ar[r] & 3 \ar[r] & 4 \ar[r] & 5 \ar[r] & \cdots}
\]
Consider the derived category $\mathsf{D}(\mathcal{M}(\mathbb{K}Q))$. We show, using Proposition \ref{silting as expected}, that 
$$M:=\bigoplus\limits_{i\in\mathbb{N}}P_i[i]$$
is a silting object but not a bounded silting object. Since $\bigoplus_{i\in\mathbb{N}}P_i$ is a projective generator of $\mathcal{M}(\mathbb{K}Q)$ (see \cite[Theorem 5.35]{Freyd}), it is clear that $M$ generates $\mathsf{D}(\mathcal{M}(\mathbb{K}Q))$. Furthermore, it is easy to see that $\Hom_{\mathsf{D}(\M(\mathbb{K}Q))}(M,M[i])=0$ for all $i>0$. It remains to check that $M^{\perp_{>0}}$ is closed under coproducts in $\mathcal{M}(\mathbb{K}Q)$. First note that each $P_i$ is compact in $\mathsf{D}(\A)$ (see also \cite[Section 4.2]{Keller:dg}). Now, given a family $(X_\lambda)_{\lambda\in\Lambda}$ of objects in $M^{\perp_{>0}}$, for any $k>0$, we have
$$\Hom_{\mathsf{D}(\M(\mathbb{K}Q))}(M,\bigoplus\limits_{\lambda\in\Lambda}X_\lambda[k])
=\prod_{i\in\mathbb{N}}\bigoplus\limits_{\lambda\in\Lambda}\Hom_{\mathsf{D}(\M(\mathbb{K}Q))}(P_i[i],X_\lambda[k])=0.$$
This shows that $M$ is a silting object in $\mathcal{M}(\mathbb{K}Q)$. Since $\mathcal{M}(\mathbb{K}Q)$ is a Grothendieck category, it follows from Remark \ref{bdd silting is bdd}(ii) that any bounded silting object in $\mathsf{D}(\mathcal{M}(\mathbb{K}Q))$ must lie in $\mathsf{D}^b(\mathcal{M}(\mathbb{K}Q))$. Hence $M$ is not a bounded silting object.
\end{exam}

\section{Derived equivalences}
\label{sectiondereq}
In this section we will combine contents of Sections 3 and 4 in order to discuss derived equivalences arising from realisation functors associated to tilting or cotilting t-structures. We will also reinterpret in terms of realisation functors a problem by Rickard on the \textit{shape} of derived equivalences (\cite{Rick2}). 

We will often consider the unbounded derived category of a heart of the form $\Hcal_M$, for some silting or cosilting object $M$ in a triangulated category $\T$. No set-theoretical problems arise here, since from \cite[Theorem 1]{San}, the category $\mathsf{D}(\Hcal_M)$ exists (i.e. it has $\Hom$-sets) when $\Hcal_M$ has coproducts and enough projectives or when $\Hcal_M$  has products and enough injectives. From Proposition \ref{gen cogen heart} and Lemma \ref{facts silting t-structures}, this includes the silting and cosilting cases, respectively, which are the focus of our approach. 

In our discussion of derived equivalences, we will frequently interchange between considerations on bounded and unbounded derived categories. The reasons for this are already apparent in previous sections. While the unbounded derived category is a better setting for categorical constructions as it often admits products and coproducts (see also Section~\ref{sectionrecollements} for more advantages of working in the unbounded setting), it is in the bounded setting that we come across the current tools to build realisation functors. We believe that this obstacle can be overcome with a different approach to the construction of realisation functors, but this falls outside of the scope of this paper. 

When realisation functors are considered with respect to filtered derived categories, we will omit in the notation of the functor the superscript referring to the f-category. Also for simplicity, the subscript of the functor indicative of a (silting) t-structure will be replaced by the silting or cosilting object that uniquely determines it. Here is an informal overview of this section.\\

\textbf{Subsection 5.1: Tilting and cotilting equivalences}
\begin{itemize}
\item We show that a realisation functor associated to a (co)silting t-structure is fully faithful if and only if it actually comes from a (co)tilting t-structure (Proposition \ref{der equival}).
\item We state a derived Morita theory for abelian categories with a projective generator or an injective cogenerator (Theorem \ref{eq tilt cotilti shape}).
\item We show the invariance of finite global dimension under tilting or cotilting derived equivalences of abelian categories.
\end{itemize}

\textbf{Subsection 5.2: Standard forms}
\begin{itemize}
\item We prove that the existence of a derived equivalence of standard type between $\mathbb{K}$-algebras (projective over a commutative ring $\mathbb{K}$) forces the associated realisation functor to be also an equivalence of standard type. Moreover, we provide an equivalent condition for a derived equivalence between such $\mathbb{K}$-algebras to be of standard type (Theorem \ref{standard f-lifts}).
\item We show that Fourier-Mukai transforms in algebraic geometry are equivalent to some realisation functors (Proposition \ref{FM is real}). In particular, we observe that Fourier-Mukai transforms can be thought of as cotilting equivalences. 
\end{itemize}

\subsection{Tilting and cotilting equivalences}
We begin by discussing realisation functors associated to silting or cosilting objects. 

\begin{prop}\label{der equival}
Let $(\X,\theta)$ be an f-category over a TR5 (respectively, TR5*) triangulated category $\T$. For a silting (respectively, cosilting) object $M$ in $\T$, the realisation functor $\mathsf{real}^\X_M\colon \mathsf{D}^b(\Hcal_M)\lxr \T$ is fully faithful if and only if $M$ is a tilting (respectively, cotilting) object. 
\end{prop}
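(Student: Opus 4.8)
The plan is to combine Theorem~\ref{real}(iii) with the structural properties of (co)silting hearts established in Section~4. By Theorem~\ref{real}(iii), the realisation functor $\mathsf{real}^\X_M$ is fully faithful if and only if condition (\textsf{Ef}) holds: for all $X,Y$ in $\Hcal_M$, all $n\geq 2$ and all $f\colon X\lxr Y[n]$ in $\T$, there is an epimorphism $g\colon Z\lxr X$ in $\Hcal_M$ with $fg=0$ (dually, one could use (\textsf{CoEf}) in the cosilting case). So the whole proof reduces to deciding when this condition holds for a silting/cosilting heart.

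First I would treat the silting case. The natural candidate for the epimorphism is the counit-type map coming from the projective generator: by Proposition~\ref{gen cogen heart}, $P:=\textsf{H}^0_M(M)$ is a projective generator of $\Hcal_M$, so every $X$ in $\Hcal_M$ admits an epimorphism $g\colon P^{(I)}\lxr X$ for a suitable set $I$ (using that $\Hcal_M$ is cocomplete by Lemma~\ref{facts silting t-structures}(i) and that a projective generator gives epimorphisms from coproducts of copies of itself, as recalled before Example~\ref{easyexample}). Since any $f\colon X\lxr Y[n]$ then pulls back to $fg\colon P^{(I)}\lxr Y[n]$, and $\Hom_\T(P^{(I)},Y[n])$ is a quotient of $\Hom_\T(M^{(I)},Y[n])$ via the triangle $\tau_M^{\leq -1}M\lxr M\lxr P\lxr(\tau_M^{\leq-1}M)[1]$ (using that $\tau_M^{\leq -1}M$ lies in $M^{\perp_{>0}}$, hence $\Hom_\T(\tau_M^{\leq-1}M, Y[n])=0$ for $n\geq 1$ when $Y\in\Hcal_M\subseteq M^{\perp_{<0}}$... — wait, one must be careful here: one actually gets $\Hom_\T(P,Y[n])\cong\Hom_\T(M,Y[n])$ for $n\geq 1$), the condition (\textsf{Ef}) becomes equivalent to $\Hom_\T(M, Y[n])=0$ for all $Y\in\Hcal_M$ and $n\geq 2$; and using coproducts, equivalently $\Hom_\T(M,Y[n])=0$ for $Y\in\Add(M)$ and $n\geq 2$ is automatic, so in fact one needs $\Hom_\T(M^{(I)},Y[n])=0$ which, since $Y$ is an arbitrary object of $\Hcal_M=M^{\perp_{\neq 0}}$, reduces to asking $M\in M^{\perp_{\neq 0}}$ after generating all of $\Hcal_M$ — more precisely, chasing through, the key equivalence to isolate is: \emph{(\textsf{Ef}) holds $\iff$ $\Add(M)\subseteq M^{\perp_{\neq 0}}$}, i.e. $M$ is tilting. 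The forward direction uses that $\Add(M)$ objects appear as the $P^{(I)}$'s up to the truncation triangle and that $P\in\Hcal_M$ with $\Hom_\T(P,P[n])=\Ext^n_{\Hcal_M}(P,P)=0$ for $n\geq 2$ would be forced; the reverse direction is the direct computation $\Hom_\T(M^{(I)},Y[n])=0$ just sketched. I would write this carefully since the indexing of the $\Hom$-vanishing (degrees $n\geq 2$ versus all $n\neq 0$) needs Theorem~\ref{real}(ii) to handle $n=1$ separately.

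Next I would dispatch the cosilting case by the evident dualisation: use Proposition~\ref{gen cogen heart} to get that $E:=\textsf{H}^0_M(M)$ is an injective cogenerator of $\Hcal_M$, use Lemma~\ref{facts silting t-structures}(i)/(iii) for products and exactness, use condition (\textsf{CoEf}) of Theorem~\ref{real}(iii) in place of (\textsf{Ef}) — so every $Y\in\Hcal_M$ admits a monomorphism $Y\lxr E^J$ — and run the same $\Hom$-vanishing argument with the truncation triangle $\tau^{\geq 1}_M M\lxr (\tau^{\leq 0}_M M)[1]$... (the appropriate one exhibiting $E=\tau^{\leq 0}_M M$ as $\textsf{H}^0_M(M)$) to conclude that (\textsf{CoEf}) holds iff $\Prod(M)\subseteq{}^{\perp_{\neq 0}}M$, i.e. $M$ is cotilting. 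The TR5 (resp. TR5*) hypothesis is exactly what guarantees the coproducts (resp. products) used to build the epimorphisms (resp. monomorphisms) and to see the relevant $\Hom$-groups commute with them.

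The main obstacle I anticipate is the bookkeeping in the equivalence ``(\textsf{Ef}) $\iff$ $M$ tilting'': one must show that testing $fg=0$ against \emph{arbitrary} epimorphisms in $\Hcal_M$ is equivalent to testing against the canonical epimorphisms $P^{(I)}\epic X$, and then translate the resulting vanishing $\Hom_\T(P^{(I)},Y[n])=0$ (for $n\geq 2$) into the condition $\Add(M)\subseteq M^{\perp_{\neq 0}}$ rather than something weaker or stronger. The translation hinges on the canonical triangle relating $M$ and $P=\textsf{H}^0_M(M)$ and on the fact (Proposition~\ref{gen cogen heart}, Lemma~\ref{facts silting t-structures}(ii)(a)) that $\Add(M)$ is recovered inside $M^{\perp_{>0}}\cap{}^{\perp_0}(M^{\perp_{>0}}[1])$; I would need to check that the degree-$\geq 2$ self-orthogonality of $M$ implied by (\textsf{Ef}) is genuinely equivalent to full self-orthogonality in all nonzero degrees given that silting already forces $M\in M^{\perp_{>0}}$ and that objects of $\Hcal_M$ have no negative self-extensions. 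Once this is pinned down, the rest is a routine diagram chase using Theorem~\ref{real}, and the dual case follows formally.
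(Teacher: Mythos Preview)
Your framework is right: reduce to condition (\textsf{Ef}) via Theorem~\ref{real}(iii), and your argument for ``tilting $\Rightarrow$ (\textsf{Ef})'' is essentially the paper's --- since $M$ is tilting, $M$ itself lies in $\Hcal_M$, so the canonical epimorphism $M^{(I)}\twoheadrightarrow X$ does the job directly (you don't need to pass through $P=\textsf{H}^0_M(M)$; when $M$ is tilting, $M=P$).

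The gap is in the converse ``(\textsf{Ef}) $\Rightarrow$ tilting''. You frame it as bookkeeping, but it is a substantive step. What you need is that $M^{(I)}$ lies in the \emph{coaisle} $M^{\perp_{<0}}$ for every set $I$; this is not a formal consequence of knowing $\Hom_\T(P,P[n])\cong\Ext^n_{\Hcal_M}(P,P)=0$ for $n\geq 1$, because $M^{(I)}$ is not a priori an object of $\Hcal_M$ and the full faithfulness of $\mathsf{real}^\X_M$ only controls morphisms between heart objects. Your proposed ``translation'' from $\Hom_\T(P^{(I)},Y[n])=0$ to $\Add(M)\subseteq M^{\perp_{\neq 0}}$ runs the triangle the wrong way: the truncation triangle $\tau_M^{\leq -1}M^{(I)}\to M^{(I)}\to \textsf{H}^0_M(M^{(I)})$ does \emph{not} give $\Hom_\T(\textsf{H}^0_M(M^{(I)}),Y[n])\cong\Hom_\T(M^{(I)},Y[n])$ for $n\geq 2$, because $\Hom_\T(\tau_M^{\leq -1}M^{(I)}[1],Y[n])$ need not vanish in those degrees.

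The paper's argument is an inductive cohomology-killing that you are missing. One produces, via that same truncation triangle and the canonical map $\tau_M^{\leq -1}M^{(I)}\to \textsf{H}^{-1}_M(M^{(I)})[1]$, a specific morphism $g\colon \textsf{H}^0_M(M^{(I)})\to \textsf{H}^{-1}_M(M^{(I)})[2]$ in $\T$. Applying (\textsf{Ef}) to $g$ gives an epimorphism $h\colon C\twoheadrightarrow \textsf{H}^0_M(M^{(I)})$ with $g\circ h=0$; but $\textsf{H}^0_M(M^{(I)})$ is projective in $\Hcal_M$ (same proof as for $\textsf{H}^0_M(M)$), so $h$ splits and $g=0$. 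Combined with $\Hom_\T(M^{(I)},\textsf{H}^{-1}_M(M^{(I)})[1])=0$, this forces the canonical map $\tau_M^{\leq -1}M^{(I)}\to \textsf{H}^{-1}_M(M^{(I)})[1]$ to vanish, hence $\textsf{H}^{-1}_M(M^{(I)})=0$. Iterating shows all negative cohomologies of $M^{(I)}$ vanish, and nondegeneracy gives $\tau_M^{\leq -1}M^{(I)}=0$, i.e.\ $M^{(I)}\in\Hcal_M$. This is the step your proposal is missing.
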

\begin{proof}
We show the statement for silting/tilting objects, using the condition (\textsf{Ef}) of Theorem \ref{real}. The cosilting/cotilting case is entirely dual (using the condition (\textsf{CoEf}) of Theorem \ref{real}).

Let $M$ be a tilting object in $\T$. We only need to show that condition (\textsf{Ef}) holds for $\Hcal_M$. Take $X$ and $Y$ in $\Hcal_M$ and a morphism $g\colon X\lxr Y[n]$ in $\T$, for some $n\geq 2$. By Proposition \ref{gen cogen heart}, $M$ is a generator in $\Hcal_M$ and, thus, there is an epimorphism $h\colon M^{(I)}\lxr X$ in $\Hcal_M$, for some set $I$. Note that, $\Hcal_M$ admits coproducts and that, since $\Add(M)$ is contained in the heart, coproducts of $M$ in $\Hcal_M$ coincide with those in $\T$ (see Lemma \ref{facts silting t-structures}). Since $Y$ lies in $M^{\perp_{>0}}$, it is clear that $g\circ h=0$ and, thus, we have (\textsf{Ef}).

Conversely, suppose that $\mathsf{real}_M$ is fully faithful (i.e. we assume condition (\textsf{Ef})). We show that, for any set $I$, $\tau_M^{\leq -1}(M^{(I)})=0$, thus proving that  $\Add(M)$ is contained in $\Hcal_M$. Consider the canonical triangle
\[
\xymatrix{
\mathsf{H}^0_M(M^{(I)})[-1] \ar[r]^{} & \tau^{\leq -1}_MM^{(I)} \ar[r] & M^{(I)} \ar[r]^{ } & \mathsf{H}^0_M(M^{(I)})
}
\]
and the canonical morphism $\tau^{\leq -1}_M(M^{(I)})\lxr \tau^{\geq -1}_M\tau^{\leq -1}_MM^{(I)}=\mathsf{H}^{-1}_M(M^{(I)})[1]$. Let $g$ be the morphism between $\mathsf{H}^0_M(M^{(I)})[-1]$ and $\mathsf{H}^{-1}_M(M^{(I)})[1]$ obtained as the composition of the two morphisms above. Now, by condition (\textsf{Ef}), there is an object $C$ in $\Hcal_M$ and an epimorphism $h\colon C\lxr \mathsf{H}^0_M(M^{(I)})$ such that the composition $g[1]\circ h\colon C\lxr \mathsf{H}^0_M(M^{(I)})\lxr \mathsf{H}^{-1}_M(M^{(I)})[2]$ is zero. Now, the arguments in the proof of Proposition \ref{gen cogen heart} can also show that $\mathsf{H}_M^0(M^{(I)})$ is a projective object in $\mathcal{H}_M$ and, thus, the epimorphism $h$ splits in $\Hcal_M$. This proves that $g[1]$ (and hence $g$) is the zero map. Since $\Hom_{\T}(M, \mathsf{H}^{-1}_M(M^{(I)})[1])=0$, we conclude that the canonical map $\tau^{\leq -1}_M(M^{(I)})\lxr \mathsf{H}^{-1}_M(M^{(I)})[1]$ must also be the zero map and, therefore, $\mathsf{H}^{-1}_M(M^{(I)})=0$ (see Subsection 2.1). This shows that $\tau_M^{\leq -2}(M^{(I)})\cong \tau_M^{\leq -1}(M^{(I)})$ and we can repeat the argument by considering the canonical map $\tau_M^{\leq -2}(M^{(I)})\lxr \mathsf{H}^{-2}_M(M^{(I)})[2]$. It follows by induction that $\mathsf{H}^i_M(\tau^{\leq -1}_M(M^{(I)}))=0$ for all $i\leq -1$. Since the t-structure induced by $M$ is nondegenerate (see Remark \ref{nondeg}), it follows that $\tau^{\leq -1}_M(M^{(I)})=0$ and, thus, $M^{(I)}\cong \mathsf{H}^0_M(M^{(I)})$. 
\end{proof}

The following corollary is a non-compact analogue of \cite[Theorem III.4.3]{BR}.

\begin{cor}\label{der equival2}
Let $\A$ be an abelian category such that $\mathsf{D}(\A)$ is TR5 (respectively, TR5*) and let $M$ be a bounded silting (respectively, cosilting) object in $\mathsf{D}(\A)$. Then the functor $\mathsf{real}_M\colon \mathsf{D}^b(\Hcal_M)\lxr \mathsf{D}(\A)$ induces an equivalence between $\mathsf{D}^b(\Hcal_M)$ and $\mathsf{D}^b(\A)$ if and only if $M$ is tilting (respectively, cotilting).
\end{cor}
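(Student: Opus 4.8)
The plan is to deduce the statement directly by combining Proposition~\ref{der equival}, part~(iv) of Theorem~\ref{real}, and the definition of a bounded (co)silting object. Throughout, $\mathsf{real}_M$ denotes the realisation functor $\mathsf{real}_M^{\mathsf{DF}(\A)}$ taken with respect to the filtered derived category of $\A$, which is an f-category over $\mathsf{D}(\A)$ by Example~\ref{exam filtered derived cat}; since $\mathsf{D}(\A)$ is assumed to be TR5 (respectively, TR5*), Proposition~\ref{der equival} applies verbatim in this setting, and the domain $\mathsf{D}^b(\Hcal_M)$ has $\Hom$-sets by the discussion opening this section (together with Proposition~\ref{gen cogen heart} and Lemma~\ref{facts silting t-structures}).

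For the ``only if'' implication I would argue as follows. If $\mathsf{real}_M$ induces an equivalence between $\mathsf{D}^b(\Hcal_M)$ and $\mathsf{D}^b(\A)$, then in particular the functor $\mathsf{real}_M\colon \mathsf{D}^b(\Hcal_M)\lxr \mathsf{D}(\A)$ is fully faithful (full faithfulness concerns only the maps induced on $\Hom$-sets, irrespective of the essential image). Proposition~\ref{der equival} then yields at once that $M$ is a tilting (respectively, cotilting) object.

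For the ``if'' implication, suppose that $M$ is tilting (respectively, cotilting). By Proposition~\ref{der equival}, $\mathsf{real}_M\colon \mathsf{D}^b(\Hcal_M)\lxr \mathsf{D}(\A)$ is fully faithful, hence it is an equivalence onto its essential image. By Theorem~\ref{real}(iv), that essential image coincides with $\mathsf{D}(\A)^{b(\mathbb{T}_M)}=\bigcup_{n,m\in\mathbb{Z}}\mathbb{T}_M^{\leq n}\cap\mathbb{T}_M^{\geq m}$. Since $M$ is, by hypothesis, a \emph{bounded} silting (respectively, cosilting) object, Definition~\ref{bounded silting} guarantees that $\mathbb{T}_M$ satisfies condition~(i) of Lemma~\ref{eq bdd}, i.e.\ $\mathsf{D}(\A)^{b(\mathbb{T}_M)}=\mathsf{D}^b(\A)$. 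Combining the two, $\mathsf{real}_M$ is a fully faithful functor with essential image exactly $\mathsf{D}^b(\A)$, so it induces an equivalence $\mathsf{D}^b(\Hcal_M)\simeq \mathsf{D}^b(\A)$, as desired.

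The argument involves no genuine difficulty: the whole content is already packaged in Proposition~\ref{der equival}, which detects the tilting/cotilting property through full faithfulness of the realisation functor, and in the identification of the essential image in Theorem~\ref{real}(iv). The only point deserving a little attention is to observe that the ``bounded'' hypothesis is precisely what is needed to promote the a priori essential image $\mathsf{D}(\A)^{b(\mathbb{T}_M)}$ to the whole of $\mathsf{D}^b(\A)$; this is where Lemma~\ref{eq bdd} (and hence Definition~\ref{bounded silting}) enters the picture.
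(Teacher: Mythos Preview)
Your argument is correct and follows the same route as the paper: both directions are deduced from Proposition~\ref{der equival}, with Theorem~\ref{real}(iv) identifying the essential image of $\mathsf{real}_M$ as $\mathsf{D}(\A)^{b(\mathbb{T}_M)}$ and the boundedness hypothesis (via Lemma~\ref{eq bdd} and Definition~\ref{bounded silting}) turning that into $\mathsf{D}^b(\A)$. Your write-up is just a more explicit unpacking of the paper's two-sentence proof.
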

\begin{proof}
If $M$ is a bounded (co)silting object, then the essential image of $\mathsf{real}_M$ lies in $\mathsf{D}^b(\A)$ (see Lemma \ref{eq bdd}) and the image coincides with $\mathsf{D}^b(\A)$ whenever $\mathsf{real}_M$ is fully faithful (see Theorem \ref{real}). The result then follows from Proposition \ref{der equival}.
\end{proof}

Given a bounded silting or cosilting object $M$ in the derived category of an abelian category, we will keep the notation $\mathsf{real}_M$ for the induced functor with codomain the bounded derived category. We are now able to discuss a derived Morita theory for abelian categories with a projective generator or an injective cogenerator (Theorem A in the introduction). The proof of the following theorem is a simple application of the above proposition and corollary. 

\begin{thm}\label{eq tilt cotilti shape}
Let $\A$ and $\B$ be abelian categories such that $\mathsf{D}(\A)$ is TR5 (respectively, TR5*) and $\B$ has a projective generator (respectively, an injective cogenerator). Consider the following statements.
\begin{enumerate}
\item There is a restrictable triangle equivalence $\Phi\colon \mathsf{D}(\B)\lxr \mathsf{D}(\A)$.
\item There is a bounded tilting (respectively, cotilting) object $M$ in $\mathsf{D}(\A)$ such that $\Hcal_M\cong \B$.
\item There is a triangle equivalence $\phi\colon \mathsf{D}^b(\B)\lxr \mathsf{D}^b(\A)$.
\end{enumerate}
Then we have \textnormal{(i)}$\Longrightarrow$\textnormal{(ii)}$\Longrightarrow$\textnormal{(iii)}. Moreover, if $\B$ has a projective generator and $\A=\Mod{R}$, for a ring $R$, then we also have \textnormal{(iii)}$\Longrightarrow$\textnormal{(ii)}.
\end{thm}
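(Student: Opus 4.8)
The plan is to prove the three implications in turn, relying on Proposition \ref{der equival}, Corollary \ref{der equival2} and the structural results on (co)silting objects from Section~4.

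For \textnormal{(i)}$\Longrightarrow$\textnormal{(ii)}: suppose $\Phi\colon\mathsf{D}(\B)\lxr\mathsf{D}(\A)$ is a restrictable triangle equivalence. Treat first the TR5 case, where $\B$ has a projective generator $P$. By Lemma \ref{generator properties}, $P$ is a tilting object in $\mathsf{D}(\B)$ whose associated t-structure is the standard one, and its heart is $\B$. Set $M:=\Phi(P)$. Since $\Phi$ is a triangle equivalence, $M$ is a tilting object in $\mathsf{D}(\A)$ (Example \ref{easyexample}(3)) and $\mathbb{T}_M=(\Phi(\mathbb{D}_\B^{\leq 0}),\Phi(\mathbb{D}_\B^{\geq 0}))$, so $\Hcal_M=\Phi(\B)\cong\B$ via the exact equivalence induced by $\Phi$ on hearts. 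It remains to see that $M$ is \emph{bounded}. Because $\Phi$ is restrictable, by Example \ref{example restrict extend}(i) we have $\mathsf{D}(\A)^{b(\mathbb{T}_M)}=\mathsf{D}^b(\A)$, which is exactly condition (i) of Lemma \ref{eq bdd}; hence $M$ is a bounded tilting object. The TR5* case is dual, using an injective cogenerator $E$ of $\B$, Remark \ref{dual generator properties}, and the dual reasoning to obtain a bounded cotilting object $M=\Phi(E)$.

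For \textnormal{(ii)}$\Longrightarrow$\textnormal{(iii)}: given a bounded tilting (respectively, cotilting) object $M$ in $\mathsf{D}(\A)$ with $\Hcal_M\cong\B$, Corollary \ref{der equival2} applies directly: $\mathsf{real}_M\colon\mathsf{D}^b(\Hcal_M)\lxr\mathsf{D}^b(\A)$ is a triangle equivalence. Composing with the derived functor of a fixed exact equivalence $\B\cong\Hcal_M$ yields a triangle equivalence $\phi\colon\mathsf{D}^b(\B)\lxr\mathsf{D}^b(\A)$.

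For the last implication, \textnormal{(iii)}$\Longrightarrow$\textnormal{(ii)} under the hypotheses that $\B$ has a projective generator $P$ and $\A=\Mod R$: let $\phi\colon\mathsf{D}^b(\B)\lxr\mathsf{D}^b(\A)$ be a triangle equivalence. Again $P$ is a bounded tilting object in $\mathsf{D}(\B)$ with heart $\B$, and $\phi$ is t-exact with respect to the standard t-structure on $\mathsf{D}^b(\B)$ and the transported t-structure $\mathbb{T}:=(\phi(\mathbb{D}_\B^{\leq 0}),\phi(\mathbb{D}_\B^{\geq 0}))$ on $\mathsf{D}^b(\A)$. Let $N:=\phi(P)$; it generates $\mathsf{D}^b(\A)$, satisfies $\Hom(N,N[k])=0$ for $k>0$, and $\Hcal(\mathbb{T})=\phi(\B)\cong\B$, so $N$ behaves like a bounded tilting object on the bounded level. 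The point is to promote $\mathbb{T}$ to a bounded tilting t-structure on $\mathsf{D}(R)$: since $\A=\Mod R$ is Grothendieck (or directly, since $\mathsf{D}(R)$ is TR5 and TR5*), one checks that $\mathbb{T}^{\leq 0}:=\{X\in\mathsf{D}(R)\mid \tau^{\geq 1}_{\text{std}}X\in\text{(suitable bound)}\}$—more precisely, one verifies condition (iii) of Lemma \ref{eq bdd}: the bounded t-structure $\mathbb{T}$ on $\mathsf{D}^b(R)$ lies between two shifts of the standard one (as $P$, having finite projective dimension data transported through $\phi$, forces $N\in\mathsf{K}^b(\Proj R)$ via Proposition \ref{bdd silting over rings} once we know it is bounded silting), and so $\mathbb{T}$ extends to $\mathsf{D}(R)$ with aisle between shifts of $\mathbb{D}^{\leq 0}$. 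Then $N$ is a bounded silting object in $\mathsf{D}(R)$ with $\Hcal_N\cong\B$; since $\Hcal_N\cong\B$ has a projective generator and $\mathsf{real}_N$ is defined, fully faithfulness of $\mathsf{real}_N$ (equivalently, that $N$ is tilting) follows from Proposition \ref{der equival}—indeed $\mathsf{real}_N$ and $\phi$ differ by the derived functor of an exact equivalence of abelian categories (Proposition \ref{everything is real}), hence $\mathsf{real}_N$ is an equivalence and $N$ must be tilting. This gives the bounded tilting object $M:=N$ with $\Hcal_M\cong\B$, as required.

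The main obstacle is this last step: showing that the transported t-structure $\mathbb{T}$ on $\mathsf{D}^b(R)$ actually comes from a \emph{bounded silting (hence tilting) object in the unbounded category} $\mathsf{D}(R)$, rather than merely being an abstract bounded t-structure with module-category heart. The key leverage is that over a ring, Lemma \ref{eq bdd}(iii) pins the aisle between two shifts of the standard aisle—this is what forces a silting complex in $\mathsf{K}^b(\Proj R)$ by Proposition \ref{bdd silting over rings}. Establishing the sandwiching (equivalently, bounding the projective dimension of the projective generator of $\Hcal_N$ as an object of $\mathsf{D}^b(R)$) is where the real work lies; once it is in place, Proposition \ref{der equival} and Proposition \ref{everything is real} close the argument essentially formally.
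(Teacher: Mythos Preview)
Your arguments for \textnormal{(i)}$\Longrightarrow$\textnormal{(ii)} and \textnormal{(ii)}$\Longrightarrow$\textnormal{(iii)} are correct and match the paper's proof essentially verbatim.

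For \textnormal{(iii)}$\Longrightarrow$\textnormal{(ii)}, however, there is a genuine gap, and you correctly identify where it lies but do not close it. Your strategy is to transport the bounded t-structure $\mathbb{T}$ on $\mathsf{D}^b(R)$ to a silting t-structure on $\mathsf{D}(R)$ and then invoke Proposition~\ref{der equival}. The trouble is circularity: Proposition~\ref{bdd silting over rings} tells you that a silting object is bounded iff it lies in $\mathsf{K}^b(\Proj R)$, so it presupposes siltingness rather than producing it; and Proposition~\ref{der equival} likewise takes a silting object as input. Your appeal to Proposition~\ref{everything is real} shows only that $\mathsf{real}_\mathbb{T}^\X$ is fully faithful for some f-enhancement, which says nothing yet about whether $\mathbb{T}$ extends to $\mathsf{D}(R)$ as $(N^{\perp_{>0}},N^{\perp_{<0}})$ for the object $N=\phi(P)$. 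Nor is the ``sandwiching'' step (Lemma~\ref{eq bdd}(iii)) actually carried out: you gesture at ``finite projective dimension data transported through $\phi$'', but $\phi$ is only defined on $\mathsf{D}^b$, and without further input there is no reason the image of $P$ should land in $\mathsf{K}^b(\Proj R)$.

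The paper takes a different and more direct route: it invokes Rickard's observation \cite[Proposition~6.2]{Rick} that any triangle equivalence $\phi\colon\mathsf{D}^b(\B)\to\mathsf{D}^b(R)$ restricts to an equivalence $\mathsf{K}^b(\Proj\B)\to\mathsf{K}^b(\Proj R)$. This immediately places $M:=\phi(P)$ in $\mathsf{K}^b(\Proj R)$ and shows $\mathsf{thick}(\Add(M))=\mathsf{K}^b(\Proj R)$; together with the vanishing $\Hom_{\mathsf{D}(R)}(M,M^{(I)}[k])=0$ for $k\neq 0$ (which follows since $\phi$ is fully faithful and $P$ is a projective generator), these are exactly the hypotheses of \cite[Proposition~4.2]{AMV1}, which then yields that $M$ is tilting in $\mathsf{D}(R)$. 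Boundedness is Proposition~\ref{bdd silting over rings}. The external input from \cite{Rick} and \cite{AMV1} is what replaces the ``real work'' you flag; your internal route via Lemma~\ref{eq bdd} and Proposition~\ref{der equival} does not, as written, supply an independent argument.
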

\begin{proof}
Let $\A$ be such that $\mathsf{D}(\A)$ is TR5 and assume that $\B$ has a projective generator $P$ (the proof for $\B$ with an injective cogenerator is entirely dual). By Lemma~\ref{generator properties}(iii), $P$ is a tilting object in $\mathsf{D}(\B)$ and the associated tilting t-structure is the standard one. It is then clear that $P$ is a bounded tilting object. 

(i)$\Longrightarrow$(ii): Denote by $M$ the object $\Phi(P)$. Clearly, $M$ is a tilting object in $\mathsf{D}(\A)$ (see Example \ref{easyexample}(iii)) and the associated tilting t-structure is the image by $\Phi$ of the standard t-structure in $\mathsf{D}(\B)$. Hence, we have $\Hcal_M\cong \B$. Moreover, $M$ is a bounded tilting object since $\Phi$ is a restrictable equivalence (recall Definition \ref{defn restrict extend}).

(ii)$\Longrightarrow$(iii): This follows from Proposition~\ref{der equival} and Corollary~\ref{der equival2}.

Suppose now that  $\A=\Mod{R}$, for a ring $R$, and $\B$ has a projective generator. 

(iii)$\Longrightarrow$(ii): Denote by $M$ the object $\phi(P)$. It follows directly by the arguments in \cite[Proposition 6.2]{Rick} that $\phi$ induces an equivalence between $\mathsf{K}^b(\Proj\B)$ and $\mathsf{K}^b(\Proj{R})$. It is, however, clear that $\phi$ also induces an equivalence between $\mathsf{K}^b(\Proj\B)=\mathsf{K}^b(\Add(P))$ and the smallest thick subcategory of $\mathsf{D}^b(R)$ containing $\Add(M)$ (denoted by $\mathsf{thick}(\Add(M))$). Therefore, we conclude that $M$ is an object of $\mathsf{D}^b(R)$ such that $\Hom_{\mathsf{D}^b(R)}(M,M^{(I)}[k])=0$ for all $k\neq 0$ and $\mathsf{thick}(\Add(M))=\mathsf{K}^b(\Proj{R})$. It then follows from \cite[Proposition 4.2]{AMV1} that $M$ is indeed a tilting object in $\mathsf{D}(R)$. Finally, $M$ is a bounded tilting object as a consequence of Proposition~\ref{bdd silting over rings}.
\end{proof}

\begin{exam}
\begin{enumerate}
\item Note that the equivalence between (ii) and (iii) in Theorem~\ref{eq tilt cotilti shape} recovers Rickard's derived Morita theory for rings (\cite[Theorem 6.4]{Rick}). For this purpose it is enough to recall from Corollary~\ref{corheartmodulecat} that compact tilting objects yield hearts which are module categories. 
\item Let $R$ be a ring and $T$ be a large $n$-tilting (respectively, $n$-cotilting) $R$-module (see Example~\ref{easyexample}(vi)). Note that by Proposition \ref{bdd silting over rings}, $T$ is a bounded tilting (respectively, cotilting) object in $\mathsf{D}(R)$. Then, by Theorem~\ref{eq tilt cotilti shape} it follows that there is a triangle equivalence between $\mathsf{D}^b(\Hcal_T)$ and $\mathsf{D}^b(R)$. In the n-cotilting case, this is a bounded version of \cite[Theorem 5.21]{Stovicek}.

\item Since Grothendieck categories have injective cogenerators and their derived categories are TR5*, the above theorem covers a \textit{derived Morita theory for Grothendieck categories}. Indeed, if the unbounded derived categories of two Grothendieck categories are equivalent via a restrictable equivalence, then one of them is the heart of a t-structure associated to a bounded cotilting object in the derived category of the other. Moreover, the realisation functor associated to this bounded cotilting object yields an equivalence of bounded derived categories.
\end{enumerate}
\end{exam}

\begin{rem}
Let $\A$ and $\B$ be as in Theorem~\ref{eq tilt cotilti shape}. 
\begin{enumerate}
\item If $\A$ has exact coproducts, then there is a triangle equivalence $\phi\colon\mathsf{D}^b(\B)\lxr\mathsf{D}^b(\A)$ if and only if there is a tilting object $M$ in $\mathsf{D}^b(\A)$ such that $\Hcal_M\cong\B$. One direction is clear from the proof of (i)$\Longrightarrow$(ii) above. For the converse, note that since $\A$ has exact coproducts, $\Add(M)$ is contained in $\mathsf{D}^b(\A)$ and, therefore, the associated realisation functor yields an equivalence as in the proof of Proposition~\ref{der equival}. The dual result follows analogously. Note, however, that the relation between (co)silting objects in $\mathsf{D}^b(\A)$ and bounded silting objects in $\mathsf{D}(\A)$ is not clear. The problem here lies in extending t-structures from bounded to unbounded derived categories.
\item As discussed at the end of Section \ref{sec 3}, the question of whether (iii) implies (i) remains in general open. We will see, however, that in some cases we can guarantee the extendability of realisation functors (see Theorem \ref{standard f-lifts} and Remark \ref{rem extendable k-proj}).
\end{enumerate}
\end{rem}

We now briefly discuss an application of the above theorem to representation theory of infinite quivers, proving a version of APR-tilting in this setting. Recall the notation set up before Example \ref{unb silting}. The intuition from the theory of finite dimensional algebras leads us to think that the BGP-reflection functors on sources and sinks should provide derived equivalences. For infinite quivers, this cannot be achieved through the endomorphism ring of a tilting object (the reflected category cannot be regarded as a unital ring), but rather through the heart of a tilting object. We refer to \cite{AHV} for a detailed discussion of reflection functors and derived equivalences in the setting of infinite quivers.

Let $Q$ be a quiver (possibly infinite) with no loops nor cycles. We assume that $Q$ is locally finite, i.e. that each vertex has only finitely many incoming and outgoing arrows. For a source $k$ in $Q$, define $\mu_k(Q)$ to be the quiver obtained from $Q$ by reversing the direction of every arrow starting in $k$ and keeping the remaining vertices and arrows as in $Q$. We show the following fact (compare with \cite[Theorem 3.19]{AHV}).

\begin{prop}
If $k$ is a source of a locally finite quiver $Q$, then there is a triangle equivalence between $\mathsf{D}^b(\mathcal{M}(\mathbb{K}Q))$ and $\mathsf{D}^b(\mathcal{M}(\mathbb{K}\mu_k(Q)))$.
\end{prop}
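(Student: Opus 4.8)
The plan is to construct an explicit bounded tilting object in $\mathsf{D}^b(\mathcal{M}(\mathbb{K}Q))$ whose heart is equivalent to $\mathcal{M}(\mathbb{K}\mu_k(Q))$, and then invoke Theorem~\ref{eq tilt cotilti shape}. Recall that $\mathcal{M}(\mathbb{K}Q)$ is a Grothendieck category with projective generator $P=\bigoplus_{x\in Q_0}P_x$, where $P_x=\Hom_{\mathbb{K}Q}(x,-)$; by Lemma~\ref{generator properties}(iii) this $P$ is a tilting object in $\mathsf{D}(\mathcal{M}(\mathbb{K}Q))$ yielding the standard t-structure. Mimicking the classical APR/BGP construction, I would set
\[
M:=\Big(\bigoplus_{x\neq k}P_x\Big)\ \oplus\ T_k[1],
\]
where $T_k$ is defined by the exact sequence $0\lxr P_k\lxr \bigoplus_{\alpha\colon k\to j}P_j\lxr T_k\lxr 0$ in $\mathcal{M}(\mathbb{K}Q)$, the middle term being the direct sum over the (finitely many, since $Q$ is locally finite) arrows out of the source $k$. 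Equivalently $T_k[1]$ is the two-term complex $P_k\to\bigoplus_{\alpha\colon k\to j}P_j$ placed in degrees $-1,0$.

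The key steps are then: (1) verify $M$ lies in $\mathsf{K}^b(\Proj\,\mathcal{M}(\mathbb{K}Q))$ — immediate from the definition, since each summand is a bounded complex of (summands of coproducts of) the $P_x$; (2) compute $\Hom_{\mathsf{D}}(M,M^{(I)}[i])$ and show it vanishes for $i\neq 0$; the only nontrivial calculation is $\Ext^1$ and $\Hom$ between $P_x$ ($x\neq k$) and $T_k$, which reduces to the short exact sequence above and the projectivity of the $P_j$ together with the fact that $k$ is a source (so $\Hom(P_j,P_k)=0$ for $j\neq k$ appearing in the sequence, as there are no arrows into $k$ except from $k$ itself, and $Q$ has no loops); (3) show $M$ generates $\mathsf{D}(\mathcal{M}(\mathbb{K}Q))$, which follows because $\mathsf{thick}(\add M)$ contains all $P_x$ ($x\neq k$) and, via the triangle $P_k\to\bigoplus_j P_j\to T_k\to P_k[1]$, also $P_k$, hence contains a generating set; (4) conclude via Proposition~\ref{silting as expected} (using that $M^{\perp_{>0}}$ is closed under coproducts — the summands $P_x$ and $T_k$ are compact since $Q$ is locally finite, cf.\ Example~\ref{unb silting}) that $M$ is a tilting object, and by Proposition~\ref{bdd silting over rings}'s analogue for the Grothendieck setting (Remark~\ref{bdd silting is bdd}, together with $M\in\mathsf{D}^b$) that it is bounded; (5) identify the heart $\Hcal_M$ with $\mathcal{M}(\mathbb{K}\mu_k(Q))$ by exhibiting $\mathsf{H}^0_M(M)$ as a projective generator of $\Hcal_M$ (Proposition~\ref{gen cogen heart}) whose "quiver with relations" is exactly $\mu_k(Q)$: the summands $P_x$ ($x\neq k$) together with $\mathsf{H}^0_M(T_k[1])=T_k$ realise the vertices, and the arrows of $\mu_k(Q)$ (the reversed arrows into $k$, plus the unchanged ones) correspond to the morphisms between these projectives. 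Finally, feeding this bounded tilting object $M$ into the implication (ii)$\Longrightarrow$(iii) of Theorem~\ref{eq tilt cotilti shape} (with $\A=\mathcal{M}(\mathbb{K}Q)$, which is Grothendieck hence $\mathsf{D}(\A)$ is TR5, and $\B=\mathcal{M}(\mathbb{K}\mu_k(Q))$, which has the projective generator $\bigoplus P_x$) gives the desired triangle equivalence $\mathsf{D}^b(\mathcal{M}(\mathbb{K}\mu_k(Q)))\lxr\mathsf{D}^b(\mathcal{M}(\mathbb{K}Q))$.

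I expect the main obstacle to be step (5): pinning down the precise identification of $\Hcal_M$ with the module category over the mutated path category $\mathbb{K}\mu_k(Q)$. One must check that the full subcategory $\add(\mathsf{H}^0_M(M))$ of projectives in $\Hcal_M$ is equivalent, as a $\mathbb{K}$-linear category, to $\mathbb{K}\mu_k(Q)$ — i.e.\ compute all $\Hom$-spaces between the indecomposable summands and match them, with the correct composition, to paths in $\mu_k(Q)$ — and then apply the functor-category description of $\mathcal{M}(\mathbb{K}\mu_k(Q))$ (the analogue of classical Morita theory for small $\mathbb{K}$-categories, as in the reference \cite{Freyd}). This is where the source condition on $k$ is used essentially, to guarantee that only the arrows at $k$ get reversed and no new relations appear. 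An alternative, cleaner route for step~(5) is to cite \cite{AHV}, where reflection functors for infinite quivers are studied in detail, and observe that the heart $\Hcal_M$ is precisely the image of the torsion-pair/HRS construction underlying the reflection at $k$; but I would prefer to keep the argument self-contained by the direct computation above, since the local finiteness of $Q$ keeps all the relevant $\Hom$-spaces finite-dimensional and the calculation entirely elementary.
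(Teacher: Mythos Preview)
Your overall strategy---construct the APR/BGP tilting object and invoke Theorem~\ref{eq tilt cotilti shape}---is exactly the paper's. However, there is a shift error that makes step~(2) fail as written. Since $k$ is a source, the map $P_k\to\bigoplus_{\alpha}P_{t(\alpha)}$ is a monomorphism (every nontrivial path out of $k$ begins with a unique arrow), so the two-term complex you place in degrees $-1,0$ is quasi-isomorphic to its cokernel $T_k$ in degree~$0$, \emph{not} to $T_k[1]$. With the extra shift your object $M$ is not silting: for any $x$ that is the target of an arrow out of $k$ one has
\[
\Hom_{\mathsf{D}}\big(T_k[1],P_x[1]\big)=\Hom_{\mathcal{M}(\mathbb{K}Q)}(T_k,P_x)\neq 0,
\]
these being precisely the maps that will encode the reversed arrows of $\mu_k(Q)$. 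Hence $\Hom(M,M[1])\neq 0$.

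Once you drop the shift, your $M$ coincides with the paper's tilting \emph{module} $T=C\oplus\bigoplus_{j\neq k}P_j$ (concentrated in degree~$0$, projective dimension $\leq 1$), and steps (1)--(5) match the paper's proof. Two small points: for boundedness in step~(4), Remark~\ref{bdd silting is bdd} only gives the implication bounded $\Rightarrow$ lies in $\mathsf{D}^b$, not the converse; the paper instead observes that the t-structure is the HRS-tilt of the torsion pair $(\Gen T,T^{\perp_0})$ (Example~\ref{easyexample}(iv)), from which boundedness is immediate. For step~(5) the paper does exactly your ``direct'' route: it sends the vertex $j\mapsto P_j$ for $j\neq k$ and $k\mapsto C$, checks that $\Hom$-spaces agree, and extends to an equivalence $\mathcal{M}(\mathbb{K}\mu_k(Q))\simeq\Hcal_T$ using that $T$ is a projective generator of the heart.
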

\begin{proof}
Let $Q_0$ be the set of vertices in $Q$ and let $I:=\{i\in Q_0 \ | \ \Hom_{\mathbb{K}Q}(k,i)\neq 0\}$. Let $R_k$ denote the set of arrows from $k$ to some vertex in $I$. Since $Q$ is locally finite, $R_k$ is finite. For an arrow $\alpha$ in $\mathbb{K}Q$, denote by $t(\alpha)$ the target of $\alpha$. Consider the naturally induced map 
$$\phi:P_k\lxr \bigoplus\limits_{\alpha\in R_k}P_{t(\alpha)}$$
and let $C$ denote its cokernel in $\mathcal{M}(\mathbb{K}Q)$. Note that $\phi$ is a left $\Add(\oplus_{j\neq k}P_j)$-approximation of $P_k$, i.e. any map from $P_k$ to an object in $\Add(\oplus_{j\neq k}P_j)$ must factor through $\phi$. We will check that $T:=C\oplus (\oplus_{j\neq k}P_j)$ is a bounded tilting object in $\mathsf{D}(\mathcal{M}(\mathbb{K}Q))$. Since $\phi$ is a monomorphism and the sum of all indecomposable projectives is a generator in $\mathsf{D}(\mathcal{M}(\mathbb{K}Q))$, it is easy to check that also $T$ is a generator of $\mathsf{D}(\mathcal{M}(\mathbb{K}Q))$. Since $T$ is a a direct sum of finitely presented objects, it is clear that $T^{\perp_{>0}}$ is closed under coproducts. Furthermore, since $T$ has projective dimension 1, it only remains to show that $\Ext_{\mathcal{M}(\mathbb{K}Q)}^1(T,T)=0$, i.e. to check that 
$${\Ext}^1_{\mathcal{M}(\mathbb{K}Q)}(C,C)=0={\Ext}^1_{\mathcal{M}(\mathbb{K}Q)}(C,\bigoplus\limits_{j\neq k}P_j).$$
The first equality follows from applying $\Hom_{\mathcal{M}(\mathbb{K}Q)}(-,C)$ to the short exact sequence defined by $\phi$, using the projectivity of $P_k$ and the fact that $\phi$ is a left $\Add(\oplus_{j\neq k}P_j)$-approximation. The second one follows from applying $\Hom_{\mathcal{M}(\mathbb{K}Q)}(-,\oplus_{j\neq k}P_j)$ to the same sequence and using, once again, the approximation properties of $\phi$. It can also be checked that the object $T$ is a bounded tilting object (the associated t-structure is the HRS-tilt  with respect to the torsion pair $(T^{\perp_1},T^\circ)$ in $\mathcal{M}(\mathbb{K}Q)$ - see Example \ref{easyexample}(iv)). Then the realisation functor
$$\mathsf{real}_T\colon \mathsf{D}^b(\mathcal{H}_T)\lxr \mathsf{D}^b(\mathcal{M}(\mathbb{K}Q))$$
is an equivalence. It remains to show that $\mathcal{M}(\mathbb{K}\mu_k(Q))$ is equivalent to $\mathcal{H}_T$. An equivalence $\psi$ from $\mathbb{K}\mu_k(Q)$ to $\Hcal_T$ can be defined by setting $\psi(P_j)=P_j$, for all $j\neq k$ and $\psi(P_k)=C$. By definition of $C$, the $\Hom$-spaces are preserved and $\psi$ extends to the whole category since it is defined on a projective generator. Since $T$ is a projective generator in the heart, the functor so defined is an equivalence, as wanted.
\end{proof} 

Theorem \ref{eq tilt cotilti shape} leads us to discuss a derived invariant which is well-understood for rings: the finiteness of global dimension. This invariant generalises to the setting of abelian categories with a projective generator or an injective cogenerator. Recall that an abelian category $\A$ has \textbf{finite global dimension} if there is a positive integer $n$ such that the Yoneda Ext functor $\Ext^n_\A(-,-)$ is identically zero. Whenever $\A$ has a projective generator or an injective cogenerator, the following is a well-known lemma, which we prove for convenience of the reader.

\begin{lem}\label{fte gldim general}
Let $\A$ be a cocomplete (respectively, complete) abelian category with a projective generator $P$ (respectively, an injective cogenerator $E$). The following statements are equivalent.
\begin{enumerate}
\item $\A$ has finite global dimension;
\item The smallest thick subcategory of $\mathsf{D}(\A)$ containing $\Add(P)$ (respectively, $\Prod(E)$) is $\mathsf{D}^b(\A)$.
\end{enumerate}
\end{lem}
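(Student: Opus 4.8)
The plan is to prove the two implications separately, working in the derived category $\mathsf{D}(\A)$ and exploiting that $P$ (respectively $E$) is a projective generator (respectively injective cogenerator), hence a tilting (respectively cotilting) object in $\mathsf{D}(\A)$ by Lemma~\ref{generator properties}(iii) and Remark~\ref{dual generator properties}, with associated $t$-structure the standard one. I will write out the argument for the case of a projective generator $P$; the cogenerator case is dual, replacing $\Add(P)$ by $\Prod(E)$, projective resolutions by injective coresolutions, and $\mathsf{K}^b(\Proj{\A})$ by $\mathsf{K}^b(\Inj{\A})$ throughout.

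\textbf{From (i) to (ii).} Assume $\A$ has finite global dimension $n$. First I would observe that since $P$ is a projective generator, every object $X$ of $\A$ admits an epimorphism from an object of $\Add(P)$ (using cocompleteness, via \cite[Proposition IV.6.2]{St}), so one can build a projective resolution of $X$ by objects of $\Add(P)$; finiteness of global dimension then forces this resolution to terminate after at most $n$ steps. Hence every object of $\A$, viewed in $\mathsf{D}(\A)$, is isomorphic to a bounded complex with entries in $\Add(P)$, i.e.\ lies in $\mathsf{K}^b(\Add(P))$, which is contained in the smallest thick subcategory $\mathsf{thick}(\Add(P))$. Since $\A$ generates $\mathsf{D}^b(\A)$ as a thick subcategory (every bounded complex is a finite extension of shifts of its cohomologies, which are objects of $\A$), it follows that $\mathsf{thick}(\Add(P)) \supseteq \mathsf{D}^b(\A)$. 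The reverse inclusion is automatic: $\Add(P) \subseteq \mathsf{D}^b(\A)$ and $\mathsf{D}^b(\A)$ is a thick subcategory of $\mathsf{D}(\A)$. Therefore $\mathsf{thick}(\Add(P)) = \mathsf{D}^b(\A)$.

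\textbf{From (ii) to (i).} Conversely, assume $\mathsf{thick}(\Add(P)) = \mathsf{D}^b(\A)$. Fix an object $X$ of $\A$. Then $X \in \mathsf{D}^b(\A) = \mathsf{thick}(\Add(P))$, so $X$ can be obtained from objects of $\Add(P)$ by a finite sequence of shifts, cones, and direct summands; in particular there is an integer $N$ (depending on $X$) with $X \in \mathsf{D}^{[-N,N]}(\Add(P))$, the full subcategory of complexes built as finite extensions of shifts $P'[i]$ with $P' \in \Add(P)$ and $|i| \le N$. I would then compute, for any object $Y$ of $\A$ and any $m > N$, that $\Hom_{\mathsf{D}(\A)}(X, Y[m]) = 0$: indeed $\Hom_{\mathsf{D}(\A)}(P'[i], Y[m]) = \Hom_{\mathsf{D}(\A)}(P', Y[m-i])$, and since $P'$ is projective this is $0$ whenever $m - i \ne 0$, in particular whenever $m - i > 0$, which holds for all $|i| \le N < m$; the vanishing then propagates through the finitely many extensions and summands defining $X$. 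Since $\Ext^m_\A(X,Y) \cong \Hom_{\mathsf{D}(\A)}(X, Y[m])$ for the Yoneda Ext, this shows $\Ext^m_\A(X,Y) = 0$ for $m > N$. The main obstacle is that $N$ a priori depends on $X$, so this only gives that $\A$ has finite projective dimension objectwise, not a uniform bound. To fix this, I would apply the objectwise argument to $X = P \oplus Y$ for an arbitrary $Y$; but more cleanly, I note that $P$ itself generates: since $\mathsf{thick}(\Add(P)) = \mathsf{D}^b(\A)$ is generated by $\Add(P)$ using a \emph{fixed} finite number of steps is \emph{not} guaranteed, so instead I would argue as follows. Every object of $\A$ has projective dimension $\le N_0$ where $N_0$ is determined by expressing a single projective generator's "test object" — concretely, for $\A = \Mod R$ this is classical, and in general one uses that $\mathsf{thick}(\Add(P))$ equals the union over $k$ of the subcategories of objects with $\Add(P)$-resolutions of length $\le k$, and these form an exhaustive filtration; the argument that a fixed $k$ suffices uses that $\A$ itself, as a thick subcategory, is already closed under the relevant operations once $k \ge $ (global dimension). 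I would phrase the clean version: if every object of $\A$ had finite but unbounded projective dimension, pick $X_j$ of projective dimension $> j$; then $\bigoplus_j X_j$ (using cocompleteness) has infinite projective dimension, contradicting $\bigoplus_j X_j \in \mathsf{D}^b(\A) = \mathsf{thick}(\Add(P))$, since membership there forces a bounded $\Add(P)$-resolution. Hence the projective dimension is uniformly bounded, so $\A$ has finite global dimension.

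\textbf{Main difficulty.} The genuine subtlety is precisely the uniformity issue in (ii)$\Rightarrow$(i): passing from "each object has an $\Add(P)$-resolution of finite length" to "a single length works for all objects". The coproduct trick sketched above handles it using cocompleteness of $\A$ together with the fact that a coproduct of projective resolutions of increasing length is an honest resolution of the coproduct whose length is the supremum — so if that supremum were infinite, the coproduct would not lie in $\mathsf{D}^b(\A)$, contradicting the hypothesis. Everything else is a routine unwinding of definitions of thick subcategories, projectivity, and the identification of Yoneda Ext with $\Hom$ in the derived category.
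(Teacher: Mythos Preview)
Your proposal is correct and follows essentially the same approach as the paper: for (i)$\Rightarrow$(ii) you use that finite global dimension gives every object of $\A$ a bounded $\Add(P)$-resolution, hence $\A\subseteq\mathsf{K}^b(\Add(P))$, and then pass to $\mathsf{D}^b(\A)$ by thickness; for (ii)$\Rightarrow$(i) you first deduce that each object has finite projective dimension and then obtain the uniform bound via the coproduct trick, exactly as the paper does. The only difference is presentational: your (ii)$\Rightarrow$(i) argument meanders through an Ext computation and some dead ends before arriving at the coproduct argument, whereas the paper invokes directly that $\mathsf{thick}(\Add(P))=\mathsf{K}^b(\Add(P))$ to get finite resolutions and then immediately runs the coproduct contradiction---you could tighten your write-up by cutting straight to that.
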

\begin{proof}
We discuss the case of $\A$ with a projective generator; the injective cogenerator case is dual.

(i)$\Longrightarrow$(ii): Note that $\mathsf{D}^b(\A)$ is the smallest thick subcategory containing $\A$. So it suffices to show that any object in $\A$ lies in the smallest thick subcategory containing $\Add(P)$, which is $\mathsf{K}^b(\Add(P))$. Let $X$ be an object of $\A$ and consider a projective resolution of $X$: $(Q^i,d^i)_{i\leq 0}$. Let $f$ be an epimorphism $P^{(I)}\rightarrow \Ker{d^{-n}}$, yielding an exact sequence of projective objects
\[
\xymatrix{P^{(I)}\ar[r]^f&Q^{-n}\ar[r]^{d^{-n}}&\cdots\ar[r]& Q^{0}\ar[r]^{d^{0}}&X}
\]
which must then split at some point by (1). Thus, $X$ admits a finite projective resolution, as wanted.

(ii)$\Longrightarrow$(i): Since $\mathsf{K}^b(\Add(P))=\mathsf{D}^b(\A)$, it follows that every object of $\A$ admits a finite projective resolution. Since Yoneda Ext-groups can be computed by projective resolutions in the first component (see, for example, \cite[III.6.14]{GM}) it only remains to show that there is a uniform choice of integer $n$ for all objects in $\A$. Suppose that this is not the case, i.e. that for any $n$ in $\mathbb{N}$, there is an object $X_n$ in $\A$ with projective dimension greater or equal than $n$. Since $\A$ is cocomplete, considering the coproduct of the family $(X_n)_{n\in\mathbb{N}}$ would yield an object of infinite projective dimension, contradicting our assumption.
\end{proof}

\begin{prop}\label{fte gldim}
Let $\A$ be an abelian category with a projective generator (respectively, injective cogenerator) and suppose that $\mathsf{D}(\A)$ is TR5 (respectively, TR5*). Let $\mathbb{T}$ be a t-structure in $\mathsf{D}(\A)$ satsifying the equivalent conditions of Lemma \ref{eq bdd} and suppose that the realisation functor $\mathsf{real}_\mathbb{T}\colon \mathsf{D}^b(\Hcal(\mathbb{T}))\lxr \mathsf{D}(\A)$ is fully faithful.
\begin{enumerate}
\item If $\A$ has finite global dimension, then so does $\Hcal(\mathbb{T})$.
\item If $\mathbb{T}=(M^{\perp_{>0}},M^{\perp_{<0}})$ for a bounded tilting (respectively, cotilting) object $M$ in $\mathsf{D}(\A)$, then the following are equivalent: 
\begin{enumerate}
\item $\A$ has finite global dimension;
\item $\Hcal_M$ has finite global dimension;
\item The smallest thick subcategory of $\mathsf{D}(\A)$ containing $\Add(M)$ (respectively, $\Prod(M)$) is $\mathsf{D}^b(\A)$.
\end{enumerate}
\end{enumerate}
\end{prop}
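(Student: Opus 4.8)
The plan is to deduce both parts from Lemma \ref{fte gldim general} by transporting the characterisation of finite global dimension across the fully faithful realisation functor, using the fact that $\mathsf{real}_\mathbb{T}$ (and $\mathsf{real}_M$) identifies $\mathsf{D}^b(\Hcal(\mathbb{T}))$ with a triangulated subcategory of $\mathsf{D}(\A)$. For part (i), first I would note that $\A$ has a projective generator (resp.\ injective cogenerator), so it is in particular cocomplete (resp.\ complete) and Lemma \ref{fte gldim general} applies: $\A$ has finite global dimension if and only if the smallest thick subcategory $\Thick(\Add(P))$ of $\mathsf{D}(\A)$ equals $\mathsf{D}^b(\A)$. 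The heart $\Hcal(\mathbb{T})$ does \emph{not} a priori have a projective generator, so Lemma \ref{fte gldim general} cannot be invoked directly for $\Hcal(\mathbb{T})$; instead I would argue directly. Since $\mathsf{real}_\mathbb{T}$ is fully faithful and t-exact (Theorem \ref{real}(i)) with essential image $\mathsf{D}(\A)^{b(\mathbb{T})}$, and since $\mathbb{T}$ satisfies the conditions of Lemma \ref{eq bdd}, we have $\mathsf{D}(\A)^{b(\mathbb{T})}=\mathsf{D}^b(\A)$, so $\mathsf{real}_\mathbb{T}$ is an equivalence $\mathsf{D}^b(\Hcal(\mathbb{T}))\xr{\simeq}\mathsf{D}^b(\A)$. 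Then for $X,Y$ in $\Hcal(\mathbb{T})$, by Theorem \ref{real}(iii)(b) (which holds since $\mathsf{real}_\mathbb{T}$ is fully faithful) we get $\Ext^n_{\Hcal(\mathbb{T})}(X,Y)\cong\Hom_{\mathsf{D}^b(\Hcal(\mathbb{T}))}(X,Y[n])\cong\Hom_{\mathsf{D}(\A)}(\mathsf{real}_\mathbb{T}X,\mathsf{real}_\mathbb{T}Y[n])$ for all $n$. If $\A$ has finite global dimension, say $\Ext^k_\A=0$ for $k>d$, then for $\mathsf{real}_\mathbb{T}X$ in $\mathsf{D}^b(\A)$ concentrated in degrees within an interval of length at most $\ell$ (which happens for $X$ in $\Hcal(\mathbb{T})$, bounded by some fixed $\ell$ coming from Lemma \ref{eq bdd}(iii) — here one needs the amplitude of objects of $\Hcal(\mathbb{T})$ as complexes over $\A$ to be uniformly bounded), a standard dévissage / spectral-sequence argument on the cohomology objects shows $\Hom_{\mathsf{D}(\A)}(\mathsf{real}_\mathbb{T}X,\mathsf{real}_\mathbb{T}Y[n])=0$ once $n>d+\ell$, so $\Hcal(\mathbb{T})$ has global dimension at most $d+\ell$.

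For part (ii), assume $\mathbb{T}=(M^{\perp_{>0}},M^{\perp_{<0}})$ for a bounded tilting (resp.\ cotilting) object $M$. By Proposition \ref{der equival} the realisation functor $\mathsf{real}_M$ is fully faithful, so the hypothesis of part (i) is automatic and gives (a)$\Rightarrow$(b). For (b)$\Rightarrow$(a) I would run the same argument with the roles of $\A$ and $\Hcal_M$ exchanged: $\Hcal_M$ has a projective generator $\mathsf{H}^0_M(M)$ (resp.\ injective cogenerator) by Proposition \ref{gen cogen heart}, hence Lemma \ref{fte gldim general} applies to $\Hcal_M$, and since $\mathsf{real}_M\colon\mathsf{D}^b(\Hcal_M)\xr{\simeq}\mathsf{D}^b(\A)$ is an equivalence, finite global dimension of $\Hcal_M$ transfers to $\A$ exactly as in part (i) (with $\A$ playing the role of the heart). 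It remains to fold in (c). By Lemma \ref{fte gldim general} applied to $\A$, statement (a) is equivalent to $\Thick(\Add(P_\A))=\mathsf{D}^b(\A)$, where $P_\A$ is the projective generator of $\A$ (resp.\ $\Prod$ of the injective cogenerator). So I must identify $\Thick(\Add(M))$ inside $\mathsf{D}^b(\A)$ with the image under $\mathsf{real}_M$ of $\Thick(\Add(P_{\Hcal_M}))$, where $P_{\Hcal_M}=\mathsf{H}^0_M(M)$. The key point is that $\mathsf{real}_M$ sends $\mathsf{H}^0_M(M)\in\Hcal_M$ to an object isomorphic to $M$: indeed $\mathsf{real}_M$ acts as the identity on $\Hcal_M$ (Theorem \ref{real}(i)), and $M$ itself lies in $\Hcal_M$ because $M$ is tilting (so $\Add(M)\subseteq M^{\perp_{\neq 0}}=\Hcal_M$), with $\mathsf{H}^0_M(M)\cong M$ there. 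Hence $\mathsf{real}_M$ carries $\Add(P_{\Hcal_M})$ to $\Add(M)$ and, being a triangle equivalence onto $\mathsf{D}^b(\A)$, carries $\Thick(\Add(P_{\Hcal_M}))$ to $\Thick(\Add(M))$. Therefore $\Thick(\Add(M))=\mathsf{D}^b(\A)$ iff $\Thick(\Add(P_{\Hcal_M}))=\mathsf{D}^b(\Hcal_M)$ iff (by Lemma \ref{fte gldim general} for $\Hcal_M$) $\Hcal_M$ has finite global dimension, i.e.\ (b)$\Leftrightarrow$(c). Combining with (a)$\Leftrightarrow$(b) finishes part (ii). The cotilting case is dual throughout, replacing $\Add$ by $\Prod$, projective generators by injective cogenerators, and Theorem \ref{real}(iii)(c) by (iii)(d).

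The main obstacle I anticipate is the amplitude bookkeeping in the dévissage step: one must check that objects of the heart $\Hcal(\mathbb{T})$ (equivalently, of $\Hcal_M$), viewed in $\mathsf{D}^b(\A)$ via $\mathsf{real}_\mathbb{T}$, have cohomology concentrated in a \emph{fixed finite range} of degrees, so that vanishing of $\Ext^n_\A$ for large $n$ forces vanishing of $\Hom_{\mathsf{D}(\A)}(-,-[n])$ for large $n$ uniformly over the heart. This uniform bound is exactly what Lemma \ref{eq bdd}(iii) supplies when $\A$ is Grothendieck ($\mathbb{D}^{\leq k}\subseteq\mathbb{T}^{\leq 0}\subseteq\mathbb{D}^{\leq l}$, with a dual two-sided bound from the coaisle via the injective cogenerator), pinning $\Hcal(\mathbb{T})\subseteq\mathbb{D}^{\geq k'}\cap\mathbb{D}^{\leq l}$; in the general abelian setting one uses that $\mathsf{real}_\mathbb{T}$ is t-exact and $\Hcal(\mathbb{T})$ sits in degree $0$ of the standard t-structure on $\mathsf{D}^b(\Hcal(\mathbb{T}))$, whose image is a fixed $\mathbb{T}$-heart, again bounded. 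Once this bound is in hand the rest is a routine induction on the length of a Postnikov filtration together with the long exact $\Hom$-sequences, and the identification of thick closures in part (ii) is immediate from $\mathsf{real}_M$ being a triangle equivalence fixing the heart.
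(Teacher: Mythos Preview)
Your treatment of part (ii) matches the paper's: both transport $\Add(M)$ through the triangle equivalence $\mathsf{real}_M\colon\mathsf{D}^b(\Hcal_M)\xr{\simeq}\mathsf{D}^b(\A)$, identify $M\cong\mathsf{H}^0_M(M)$ as the projective generator of $\Hcal_M$, and invoke Lemma~\ref{fte gldim general} for $\Hcal_M$ to obtain (b)$\Leftrightarrow$(c); the symmetry argument for (b)$\Rightarrow$(a) via exchanging the roles of $\A$ and $\Hcal_M$ is also what the paper does.

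Part (i), however, has a genuine gap at precisely the point you flag. The uniform amplitude bound on objects of $\Hcal(\mathbb{T})$ with respect to the \emph{standard} t-structure on $\mathsf{D}(\A)$ is provided by Lemma~\ref{eq bdd}(iii) only when $\A$ is Grothendieck, and the proposition does not assume this. With only a projective generator $P$ one gets $\mathbb{D}^{\leq 0}=\aisle(P)\subseteq\mathbb{T}^{\leq k}$ for some $k$ (hence a uniform lower bound on the standard cohomology of heart objects), but the other inclusion $\mathbb{T}^{\leq 0}\subseteq\mathbb{D}^{\leq m}$ in the paper's proof of Lemma~\ref{eq bdd} uses an injective cogenerator, which is not assumed. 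Your fallback (``$\Hcal(\mathbb{T})$ sits in degree $0$\dots whose image is a fixed $\mathbb{T}$-heart, again bounded'') is circular: the image is $\Hcal(\mathbb{T})\subseteq\mathsf{D}^b(\A)$, which only says each object is individually bounded, not that there is a uniform bound on standard amplitude. So your d\'evissage step does not go through in the stated generality.

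The paper avoids amplitude bounds altogether. It first observes that finite global dimension of $\A$ gives $\mathsf{D}^b(\A)=\mathsf{K}^b(\Add(P))$, so any fixed $X,Y\in\Hcal(\mathbb{T})$ are bounded complexes of projectives and $\Ext^n_{\Hcal(\mathbb{T})}(X,Y)\cong\Hom_{\mathsf{D}(\A)}(X,Y[n])=0$ for $n\gg 0$ depending on $X,Y$. Uniformity is then obtained by a coproduct trick: if no global bound existed, pick $(X_i),(Y_i)$ in $\Hcal(\mathbb{T})$ with $\Ext^{\geq i}_{\Hcal(\mathbb{T})}(X_i,Y_i)\neq 0$, form the coproducts $\bigoplus_i X_i$ and $\bigoplus_i Y_i$ in $\Hcal(\mathbb{T})$ (which exist by Lemma~\ref{facts silting t-structures}(i) since $\mathsf{D}(\A)$ is TR5), and use that each $X_n$ and each $Y_n$ split off as direct summands to deduce $\Ext^{\geq n}_{\Hcal(\mathbb{T})}(\bigoplus X_i,\bigoplus Y_i)\neq 0$ for all $n$, contradicting the individual vanishing for this single pair. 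This argument needs only cocompleteness of the heart, not any uniform amplitude statement, and is the missing idea in your plan.
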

\begin{proof}
First note that if $\mathsf{D}(\A)$ is TR5 (respectively, TR5*), then $\A$ is cocomplete (respectively, complete), by Lemma \ref{facts silting t-structures}. Assume now that $\mathsf{D}(\A)$ is TR5 and that $\A$ has a projective generator $P$ (the other statement follows by dualising the arguments).

(i) Since $\mathbb{T}$ satisfies the equivalent conditions of Lemma \ref{eq bdd}, one may consider the induced functor $\mathsf{real}_\mathbb{T}\colon \mathsf{D}^b(\Hcal(\mathbb{T}))\lxr \mathsf{D}^b(\A)$ which is, by assumption, a triangle equivalence (see also Theorem \ref{real}). Since $\Hcal(\mathbb{T})$ lies in $\mathsf{D}^b(\A)$ and $\A$ has finite global dimension, $\Hcal(\mathbb{T})$ is contained in the smallest thick subcategory generated by $P$, i.e. in $\mathsf{K}^b(\Add(P))$ (see Lemma \ref{fte gldim general}). Hence, for any objects $X$ and $Y$ in $\Hcal(\mathbb{T})$, $\Ext_{\Hcal(\mathbb{T})}^n(X,Y)\cong\Hom_{\mathsf{D}(\A)}(X,Y[n])$ must vanish for $n\gg0$ (as before, we identify $X$ and $Y$ with their images under $\mathsf{real}_\mathbb{T}$, since this functor acts as the identity on $\Hcal(\mathbb{T})$ by definition).

Suppose now that $\Hcal(\mathbb{T})$ does not have finite global dimension, i.e. that there are sequences of objects $(X_i)_{i\in\mathbb{N}}$ and $(Y_i)_{i\in\mathbb{N}}$ in $\Hcal(\mathbb{T})$ such that $\Ext_{\Hcal(\mathbb{T})}^{\geq i}(X_i,Y_i)\neq 0$.  By Lemma \ref{facts silting t-structures}, the heart $\Hcal(\mathbb{T})$ is a cocomplete abelian category and the coproducts in $\mathcal{H}(\mathbb{T})$, here denoted with the symbol $\oplus$, are computed by using the truncation $\tau^{\geq 0}_\mathbb{T}$ on the coproduct available in $\mathsf{D}(\A)$, here denoted with the symbol $\coprod$. We will show that $\Ext_{\Hcal(\mathbb{T})}^{\geq n}(\bigoplus\limits_{i\in\mathbb{N}}X_i,\bigoplus\limits_{i\in\mathbb{N}}Y_i)\neq 0$, for all $n\in\mathbb{N}$, thus reaching a contradiction with the previous paragraph. Note that, since $\tau^{\geq 0}_\mathbb{T}$ is left adjoint to the inclusion of the coaisle in $\mathsf{D}(\A)$, we have that 
$$\mathsf{Ext}_{\Hcal(\mathbb{T})}^{\geq n}(\bigoplus\limits_{i\in\mathbb{N}}X_i,\bigoplus\limits_{i\in\mathbb{N}}Y_i)\cong \Hom_{\mathsf{D}(\A)}(\coprod_{i\in\mathbb{N}}X_i,\bigoplus\limits_{i\in\mathbb{N}}Y_i[\geq n])\cong \prod_{i\in\mathbb{N}}\Hom_{\mathsf{D}(\A)}(X_i,\bigoplus\limits_{i\in\mathbb{N}}Y_i[\geq n])$$
for any $n$ in $\mathbb{N}$. The heart $\Hcal(\mathbb{T})$ admits finite products and they coincide with finite coproducts - and these biproducts indeed coincide with those of $\mathsf{D}(\A)$ (since both $\mathbb{T}^{\leq 0}$ and $\mathbb{T}^{\geq 0}$ are closed under finite biproducts). Thus, if we write, for some integer $n$, 
$$\bigoplus\limits_{i\in\mathbb{N}}Y_i=Y_n\times \bigoplus\limits_{i\in\mathbb{N}\setminus\{n\}}Y_i,$$ 
since $\Hom_{\mathsf{D}(\A)}(X_n,-[\geq n])$ commutes with products and since by assumption $\Hom_{\mathsf{D}(\A)}(X_n,Y_n[\geq n])\neq 0$, we get that $\Hom_{\mathsf{D}(\A)}(X_n,\bigoplus\limits_{i\in\mathbb{N}}Y_i[\geq n])\neq 0$.

(ii) (a)$\Longrightarrow$(b): This follows directly from (i).

(b)$\Longrightarrow$(a): Let $N=\mathsf{real}_M^{-1}(P)$. Then $N$ is a tilting object in $\mathsf{D}(\Hcal_M)$ whose heart is equivalent to $\A$. Since, by assumption, $\Hcal_M$ has finite global dimension, we can apply the statement (a)$\Longrightarrow$ (b) exchanging the roles of $\A$ and $\Hcal_M$.

(b)$\Longleftrightarrow$(c): This statement follows from a combined application of (i) and Lemma \ref{fte gldim general}, since $M$ is a projective generator of $\Hcal_M$ (see Proposition \ref{gen cogen heart}). 
\end{proof}

\subsection{Standard forms} 
So far, our discussion of derived equivalences has mostly been concerned with their \textit{existence}. We would like now to discuss their \textit{shape}, i.e. their explicit description as functors. Our approach is in part motivated by Proposition \ref{everything is real}. In the context of derived equivalences of rings, this problem was addressed in \cite{Rick2} for algebras over a commutative ring $\mathbb{K}$, which are projective as $\mathbb{K}$-modules (for simplicity, we will call such algebras \textit{projective $\mathbb{K}$-algebras}). Therein, a partial answer to the problem is presented through the concept of \textit{equivalences of standard type}. In this subsection we will often refer to \textit{tilting complexes} in the original version of the concept, as defined in \cite{Rick}. Note, however, that these are precisely the compact tilting objects in the derived category of a ring, following Definition \ref{defncosiltcotilt}.

\begin{rem}\label{identification compact}
As seen in Corollary \ref{corheartmodulecat}, given a compact silting object $M$, there is an equivalence of abelian categories $\Hom_{\Hcal_M}(\textsf{H}^0_M(M),-)\colon\Hcal_M\lxr \Mod{\End_{\T}(\textsf{H}^{ \, 0}_M(M))}$. In what follows, we identify these two categories without mention to the equivalence functor. In particular, given a tilting complex $T$ in the derived category of a ring $A$, the realisation functor $\mathsf{real}_T$ will be regarded, via this identification, as a functor $\mathsf{D}^b(\End_{\mathsf{D}(A)}(T))\lxr \mathsf{D}^b(A)$.
\end{rem}

\begin{defn}
Let $\mathbb{K}$ be a commutative ring and let $A$ and $B$ be projective $\mathbb{K}$-algebras. We say that an equivalence $\phi\colon \mathsf{D}^b(B)\lxr \mathsf{D}^b(A)$ is \textbf{of standard type} if there is a complex of $B$-$A$-bimodules such that $\phi$ is naturally equivalent to $-\otimes_B^\mathbb{L}X$. Such an object $X$ is called a \textbf{two-sided tilting complex}.
\end{defn}

\begin{exam}\label{exam stand type}
Let $\mathbb{K}$ be a commutative ring and $A$ and $B$ two projective $\mathbb{K}$-algebras. Any exact equivalence $F\colon\Mod{B}\lxr \Mod{A}$ is a tensor product with a bimodule. Hence, its derived functor is a standard equivalence of derived categories. 
\end{exam}

The two-sided tilting complex $X$ in the above definition, when seen as an object in $\mathsf{D}^b(A)$, is a tilting complex $T:=X_A$.  It is known that, in the derived category of $B$-$A$-bimodules, $X$ can be chosen such that both $X_A$ and ${}_BX$ are complexes of projective modules and such that $X_A$ is still isomorphic to $T$ in $\mathsf{D}^b(A)$ (\cite[Proposition 6.4.4]{Z}). Moreover, Rickard proved in \cite{Rick2} the following result on the existence of equivalences of standard type (see also \cite[Theorem]{Keller}).

\begin{thm}\textnormal{\cite[Corollary 3.5]{Rick2}}\label{Rick stand type} 
Let $\mathbb{K}$ be a commutative ring and $A$ and $B$ two projective $\mathbb{K}$-algebras. If $\mathsf{D}^b(B)$ is triangle equivalent to $\mathsf{D}^b(A)$, then there is a two-sided tilting complex $_{B}X_A$ and a derived equivalence of standard type $-\otimes_B^{\mathbb{L}}X\colon\mathsf{D}^b(B)\lxr\mathsf{D}^b(A)$.
\end{thm}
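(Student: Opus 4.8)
The plan is to reconstruct Rickard's argument: turn the given triangle equivalence into a bounded complex of bimodules by a rigidification procedure, the projectivity of the algebras over $\mathbb{K}$ being exactly what makes the derived tensor products behave. Fix a triangle equivalence $\phi\colon \mathsf{D}^b(B)\lxr\mathsf{D}^b(A)$ and set $T:=\phi(B)$. First I would record the standard properties of $T$: arguing as in \cite[Proposition 6.2]{Rick} (as already invoked in the proof of Theorem~\ref{eq tilt cotilti shape}), $\phi$ restricts to a triangle equivalence $\mathsf{K}^b(\proj B)\lxr\mathsf{K}^b(\proj A)$, so $T$ lies in $\mathsf{K}^b(\proj A)$, $\mathsf{thick}(T)=\mathsf{K}^b(\proj A)$, $\Hom_{\mathsf{D}^b(A)}(T,T[n])=0$ for $n\neq 0$, and $\phi$ induces a ring isomorphism $B\cong\End_{\mathsf{D}^b(A)}(T)$; that is, $T$ is a tilting complex for $A$ with endomorphism ring $B$.

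The central step is the construction of the two-sided tilting complex. I would represent $T$ by a bounded complex $P^{\bullet}$ of finitely generated projective right $A$-modules and form the differential graded $\mathbb{K}$-algebra $\mathcal{E}:=\Hom^{\bullet}_A(P^{\bullet},P^{\bullet})$, whose cohomology is $B$ in degree $0$ and vanishes otherwise. Because $A$ is projective over $\mathbb{K}$, each $P^i$ and each $\Hom_A(P^i,P^j)$ is projective over $\mathbb{K}$, so $P^{\bullet}$ and $\mathcal{E}$ are bounded complexes of projective $\mathbb{K}$-modules; this is the only use of the hypothesis on $\mathbb{K}$, and it is what allows one to replace the derived tensor products below by honest ones after mild cofibrant replacements. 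Next I would pass to the good truncation $\tau^{\leq 0}\mathcal{E}$, a differential graded subalgebra of $\mathcal{E}$, quasi-isomorphic to it, which admits a surjective quasi-isomorphism of differential graded algebras $\tau^{\leq 0}\mathcal{E}\twoheadrightarrow H^0(\mathcal{E})\cong B$. Viewing $P^{\bullet}$ as a differential graded $\tau^{\leq 0}\mathcal{E}$-$A$-bimodule by restriction, set $X:=B\otimes^{\mathbb{L}}_{\tau^{\leq 0}\mathcal{E}}P^{\bullet}$. Using the $\mathbb{K}$-projectivity and \cite[Proposition 6.4.4]{Z}, $X$ can be represented by a bounded complex of $B$-$A$-bimodules which is projective over $B$ on the left and over $A$ on the right, and which is isomorphic to $T$ in $\mathsf{D}^b(A)$.

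It then remains to verify that $-\otimes^{\mathbb{L}}_B X\colon\mathsf{D}(B)\lxr\mathsf{D}(A)$ is an equivalence. It has right adjoint $\mathbb{R}\Hom_A(X,-)$. The unit $N\lxr\mathbb{R}\Hom_A(X,N\otimes^{\mathbb{L}}_B X)$ is an isomorphism for $N=B$, since $\mathbb{R}\Hom_A(X,X)\cong\mathbb{R}\End_A(T)\cong\mathcal{E}$ has cohomology $B$ in degree zero and the identification respects the ring structure; the full subcategory of objects $N$ for which the unit is invertible is localising and contains $B$, hence is all of $\mathsf{D}(B)$. Dually the counit is invertible on all of $\mathsf{D}(A)$ because $X\cong T$ generates $\mathsf{D}(A)$. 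Thus $-\otimes^{\mathbb{L}}_B X$ is a triangle equivalence, and since $X$ is a bounded complex that is perfect over $B$ on one side and over $A$ on the other, it restricts to a triangle equivalence $-\otimes^{\mathbb{L}}_B X\colon\mathsf{D}^b(B)\lxr\mathsf{D}^b(A)$, which by construction sends $B$ to $T$. Hence $X$ is a two-sided tilting complex and the displayed functor is an equivalence of standard type, as claimed.

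The hard part is the rigidification carried out in the second step: producing an honest bounded complex of $B$-$A$-bimodules, projective on each side, that represents $T$ and carries the correct left action of $B$ rather than merely an action up to homotopy. This is precisely the content of the preparatory results of \cite{Rick2} leading to its Corollary~3.5, and it is where the projectivity of $A$ and $B$ over $\mathbb{K}$ is indispensable. Within the framework of the present paper one could alternatively deduce the statement by combining Proposition~\ref{everything is real}, which writes $\phi$ as a realisation functor precomposed with the derived functor of an exact equivalence of module categories --- the latter being of standard type by Example~\ref{exam stand type} --- with an identification of that realisation functor as an equivalence of standard type; this is, in essence, the viewpoint pursued in our later discussion of standard forms.
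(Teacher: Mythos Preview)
The paper does not prove this theorem at all: it is quoted verbatim as \cite[Corollary 3.5]{Rick2} and used as a black box in the proof of Theorem~\ref{standard f-lifts}. So there is nothing to compare your argument against within the paper itself. Your main reconstruction --- extracting a tilting complex $T$ from the equivalence, forming the endomorphism dg algebra $\mathcal{E}$, truncating to get a zig-zag of quasi-isomorphisms to $B$, and base-changing $P^{\bullet}$ along it --- is the standard Keller--Rickard line and is a reasonable sketch of how the cited result is established.

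One genuine issue: the alternative route you propose in your final paragraph is circular in the paper's logical order. You suggest combining Proposition~\ref{everything is real} with ``an identification of that realisation functor as an equivalence of standard type'', referring to the later discussion of standard forms. But that later discussion is Theorem~\ref{standard f-lifts}, whose proof \emph{begins} by invoking Theorem~\ref{Rick stand type} to obtain a two-sided tilting complex and a standard equivalence $\phi=-\otimes_B^{\mathbb{L}}X$, and only then shows that $\mathsf{real}_T$ is of standard type by f-lifting $\phi$. So you cannot use Theorem~\ref{standard f-lifts} to deduce Theorem~\ref{Rick stand type}; the dependence runs the other way. If you want a self-contained treatment within the paper, you must keep your dg-algebra argument (or an equivalent one) and drop the alternative.
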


Motivated by Proposition \ref{everything is real}, we have the following result (Theorem B in the introduction).

\begin{thm}\label{standard f-lifts}
Let $\mathbb{K}$ be a commutative ring, $A$ and $B$ projective $\mathbb{K}$-algebras and $T$ a compact tilting object in $\mathsf{D}(A)$ such that $\End_{\mathsf{D}(A)}(T)\cong B$. Then the functor $\mathsf{real}_T$ is an equivalence of standard type. Moreover, a triangle equivalence $\psi\colon \mathsf{D}^b(B)\lxr \mathsf{D}^b(A)$ is of standard type if and only if $\psi$ admits an f-lifting $\Psi\colon \mathsf{DF}^b(B)\lxr \mathsf{DF}^b(A)$ to the filtered bounded derived categories.
\end{thm}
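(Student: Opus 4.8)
The plan is to establish the two assertions separately, using the naturality result of Theorem~\ref{lift implies diagram} and the description of realisation functors in Proposition~\ref{everything is real} as the main tools. For the first assertion, let $X$ be a complex of $B$-$A$-bimodules representing $T$ in $\mathsf{D}(A)$, chosen so that $_BX$ and $X_A$ are both bounded complexes of projectives and $\End_{\mathsf{D}(A)}(T) \cong B$; such an $X$ exists by \cite[Proposition 6.4.4]{Z} together with the standard construction of the bimodule structure on a compact tilting object (this is essentially where Rickard's Theorem~\ref{Rick stand type} comes from). The derived functor $-\otimes_B^{\mathbb{L}}X \colon \mathsf{D}^b(B) \lxr \mathsf{D}^b(A)$ is then a triangle equivalence of standard type, and by Corollary~\ref{corheartmodulecat} the heart $\Hcal_T$ is equivalent to $\Mod{\End_{\mathsf{D}(A)}(T)} = \Mod{B}$; under this identification, the functor $-\otimes_B^{\mathbb{L}}X$ is t-exact with respect to the standard t-structure on $\mathsf{D}^b(B)$ and the tilting t-structure $\mathbb{T}_T$ on $\mathsf{D}^b(A)$ (because $\mathbb{T}_T$ is precisely the t-structure whose aisle is $T^{\perp_{>0}}$, i.e. the image of the standard aisle). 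So $-\otimes_B^{\mathbb{L}}X$ and $\mathsf{real}_T$ are two functors $\mathsf{D}^b(B) = \mathsf{D}^b(\Hcal_T) \lxr \mathsf{D}^b(A)$ which both act as the identity on $\Hcal_T$ and are both t-exact; to conclude they are naturally equivalent I would either invoke Theorem~\ref{lift implies diagram} directly (showing $-\otimes_B^{\mathbb{L}}X$ admits an f-lifting built from tensoring the filtered derived category with $X$, and that $\mathsf{real}_B^{\mathsf{DF}(B)} \cong \iden$ by Proposition~\ref{standard identity}), or use the uniqueness part of Theorem~\ref{real} characterising the realisation functor among those inducing $\mathsf{H}^i_0 \cong \mathsf{H}^i_{\mathbb{T}} \circ (-)$. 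Either way, $\mathsf{real}_T \cong -\otimes_B^{\mathbb{L}}X$ is of standard type.

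For the second assertion, the ``only if'' direction goes as follows: if $\psi$ is of standard type, say $\psi \cong -\otimes_B^{\mathbb{L}}Y$ for a two-sided tilting complex $Y$ with $_BY$ and $Y_A$ complexes of projectives, then tensoring a filtered complex over $B$ with $Y$ (degreewise, carrying along the filtration) defines a triangle functor $\mathsf{DF}^b(B) \lxr \mathsf{DF}^b(A)$ which one checks directly to be an f-functor — it respects the f-shift (shift of filtration), sends $\mathsf{DF}^b(B)(\geq 0)$ and $(\leq 0)$ to their counterparts because $-\otimes_B^{\mathbb{L}}Y$ is exact on the graded pieces (which live in $\mathsf{D}^b(B)$), and compatibility with $\alpha$ is immediate from functoriality of $-\otimes Y$ on the filtration inclusions — and whose restriction to the core $\mathsf{D}^b(B)$ agrees with $\psi$. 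Hence $\psi$ admits an f-lifting. For the ``if'' direction, suppose $\psi$ admits an f-lifting $\Psi$. Put $\mathbb{T} := (\psi(\mathbb{D}^{\leq 0}_B), \psi(\mathbb{D}^{\geq 0}_B))$ on $\mathsf{D}^b(A)$; this is a bounded tilting t-structure with $\Hcal(\mathbb{T}) \cong \Mod{B}$, so $\Hcal(\mathbb{T})$ is a module category, and $\psi$ is t-exact for the standard t-structure on $\mathsf{D}^b(B)$ and $\mathbb{T}$ on $\mathsf{D}^b(A)$. Now Theorem~\ref{lift implies diagram} applies — using the hypothesised f-lifting $\Psi$ — to give a natural equivalence $\mathsf{real}_{\mathbb{T}}^{\X} \circ \mathsf{D}^b(\psi^0) \cong \psi \circ \mathsf{real}_{\mathbb{D}_B}^{\mathsf{DF}(B)}$, where $\X = \mathsf{DF}^b(A)$; since $\mathsf{real}_{\mathbb{D}_B}^{\mathsf{DF}(B)} \cong \iden_{\mathsf{D}^b(B)}$ by Proposition~\ref{standard identity} and $\psi^0 \colon \Mod{B} \lxr \Hcal(\mathbb{T})$ is an exact equivalence of abelian categories (hence $\mathsf{D}^b(\psi^0)$ is of standard type by Example~\ref{exam stand type}), it remains to see that $\mathsf{real}_{\mathbb{T}}^{\mathsf{DF}(A)}$ is of standard type. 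But $\Hcal(\mathbb{T})$ is the module category of a projective $\mathbb{K}$-algebra $B' \cong B$, and the tilting t-structure $\mathbb{T}$ on $\mathsf{D}^b(A)$ arises from a compact tilting object $T' = \mathsf{real}_{\mathbb{T}}(B')$ with $\End_{\mathsf{D}(A)}(T') \cong B'$; by the first part of the theorem, $\mathsf{real}_{T'} = \mathsf{real}_{\mathbb{T}}^{\mathsf{DF}(A)}$ is of standard type. Composing, $\psi$ is of standard type.

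The main obstacle I anticipate is verifying carefully that the assorted naturality squares genuinely commute and, in particular, that the functor ``$-\otimes$ a two-sided tilting complex'' honestly lifts to an f-functor on filtered derived categories — one has to be careful about resolutions (work with a complex $Y$ that is already DG-projective on both sides so that $-\otimes_B Y$ computes $-\otimes_B^{\mathbb{L}}Y$ without further resolution, so that it is well-defined on the filtered homotopy category and descends to $\mathsf{DF}^b$), and about the fact that the f-shift must be strictly compatible with $\alpha$, not merely up to isomorphism. A secondary subtlety is the identification, in the ``if'' direction, of the heart $\Hcal(\mathbb{T})$ with $\Mod{B}$ as $\mathbb{K}$-algebras rather than merely as abelian categories, and the transport of the projectivity-over-$\mathbb{K}$ hypothesis through this identification — this is needed to even speak of ``standard type'' for $\mathsf{D}^b(\psi^0)$. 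Both issues are technical rather than conceptual, but they are exactly the places where a careless argument would break down.
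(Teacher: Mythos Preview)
Your proposal is correct and follows essentially the same route as the paper: construct the f-lifting of $-\otimes_B^{\mathbb{L}}X$ to filtered derived categories using that $_BX$ is a complex of projectives (so the underived tensor preserves monomorphisms and hence filtrations), then apply Theorem~\ref{lift implies diagram} together with Proposition~\ref{standard identity} to factor $\phi$ as $\mathsf{real}_T\circ\mathsf{D}^b(\phi^0)$, and finally use that $\mathsf{D}^b(\phi^0)$ and its inverse are of standard type; the second assertion is then handled exactly as you outline.

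One small warning: your suggested alternative for the first part, invoking ``the uniqueness part of Theorem~\ref{real} characterising the realisation functor among those inducing $\mathsf{H}^i_0\cong\mathsf{H}^i_{\mathbb{T}}\circ(-)$'', does not work as stated. The uniqueness in Theorem~\ref{real} refers to the factorisation through the specific diagram $\mathsf{C}^b(\Hcal(\mathbb{T}))\to\Hcal(\mathbb{X})\to\T$ built from the chosen f-enhancement, not to uniqueness among all t-exact functors restricting to the identity on the heart; two different f-enhancements can in principle yield different realisation functors for the same t-structure. So you must go through the f-lifting argument (your primary route), and indeed this is what the paper does. Your identification of the technical checkpoints---exactness of $-\otimes_B X$ on complexes so that the induced functor on $\mathsf{CF}^b$ descends to $\mathsf{DF}^b$, and the transport of the projective-$\mathbb{K}$-algebra structure---is accurate.
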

\begin{proof}
Since $T$ is a compact tilting object with endomorphism ring $B$, it follows from classical derived Morita theory (\cite{Rick}) that $\mathsf{D}^b(B)$ is equivalent to $\mathsf{D}^b(A)$. From Theorem \ref{Rick stand type}, there is a two-sided tilting complex $_BX_A$ (such that $X_A\cong T$) and an equivalence of standard type ${\phi:=-\otimes^\mathbb{L}_B X\colon \mathsf{D}^b(B)\lxr \mathsf{D}^b(A)}$. We first show that $\phi$ admits an f-lifting to the filtered bounded derived categories and then argue with Theorem \ref{lift implies diagram} to prove that $\mathsf{real}_T$ is of standard type.

As mentioned above, the two-sided tilting complex $X$ can be chosen such that ${}_BX$ is a complex of projective $B$-modules. It is then easy to check that given a monomorphism $f\colon Y\lxr Z$ in the category of complexes $\mathsf{C}^b(B)$, $f\otimes_BX$ is still a monomorphism. Then, for an object $(Y,F)$ in $\mathsf{CF}^b(B)$, there is an obvious filtration $F\otimes_BX$ on the complex $Y\otimes_BX$ induced by $F$, i.e. $(F\otimes_BX)_nY\otimes_BX:=F_nY\otimes_BX$. It follows that there is a functor $\psi\colon \mathsf{CF}^b(B)\lxr \mathsf{CF}^b(A)$ such that $\psi(Y,F)=(Y\otimes_BX,F\otimes_BX)$. Again, since $-\otimes_BX$ is an exact functor between the categories of complexes, the functor $\psi$ sends filtered quasi-isomorphisms in $\mathsf{CF}^b(B)$ to filtered quasi-isomorphisms in $\mathsf{CF}^b(A)$, thus inducing a functor $\Phi\colon \mathsf{D}\mathsf{F}^b(B)\lxr \mathsf{D}\mathsf{F}^b(A)$. The functor $\Phi$ is clearly an f-functor and it is an f-lifting of $\phi$ to the filtered bounded derived categories.

Now, from Theorem \ref{lift implies diagram} and Proposition \ref{standard identity}, it follows that $\phi\cong\mathsf{real}_{T}\circ \mathsf{D}^b(\phi^0)$, where $\phi^0$ is the induced exact functor of abelian categories $\phi^0\colon \Mod{B}\lxr \phi(\Mod{B})$. Identifying $\phi(\Mod{B})$ with a module category $\Mod{B^\prime}$, Example \ref{exam stand type} yields that $\mathsf{D}^b(\phi^0)$ is a derived equivalence of standard type. Since the inverse of a derived equivalence of standard type and the composition of two derived equivalences of stardard type are both of standard type (\cite[Proposition 4.1]{Rick2}) the claim follows.

It remains to show that any triangle equivalence $\psi\colon\mathsf{D}^b(B)\lxr \mathsf{D}^b(A)$ admitting an f-lifting to the filtered bounded derived categories is of standard type. Indeed, under this assumption, from Theorem \ref{lift implies diagram} and Proposition \ref{standard identity}, we have that $\psi\cong \mathsf{real}_{\psi(B)}\circ \mathsf{D}^b(\psi^0)$. Since, as shown in the above paragraphs, both $\mathsf{real}_{\psi(B)}$ and $\mathsf{D}^b(\psi^0)$ are equivalences of standard type, it follows from \cite[Proposition 4.1]{Rick2} that the composition $\psi$ is of standard type.
\end{proof}

\begin{rem}\label{rem extendable k-proj}
Note that if the conditions in the above theorem are satisfied, then $\mathsf{real}_T$ is in fact an extendable equivalence (see Definition \ref{defn restrict extend}). This follows from the fact that derived equivalences of standard type are extendable (\cite{Keller:dg}).
\end{rem}

There is another setting where a suitable notion of \textit{standard derived equivalences} exists: derived categories of coherent sheaves. We recall the definition of a Fourier-Mukai transform. For the rest of this section, let $\mathbb{K}$ be an algebraically closed field of characteristic zero. Given a smooth projective variety $X$ over $\mathbb{K}$, we denote by $\mathsf{D}^b(\mathsf{coh}(X))$ the bounded derived category of coherent sheaves over $X$.

\begin{defn}
Let $X$ and $Y$ be smooth projective varieties over $\mathbb{K}$ and let $q\colon X\times Y\lxr X$ and $p\colon X\times Y\lxr Y$ be the natural projections. For any object $P$ in $\mathsf{D}^b(\mathsf{coh}(X\times Y))$ we define the \textbf{Fourier-Mukai transform with kernel P} as the functor $\Theta_P\colon \mathsf{D}^b(\mathsf{coh}(X))\lxr\mathsf{D}^b(\mathsf{coh}(Y))$ which sends an object $Z$ to $\theta_P(Z):=\mathbb{R}p_*(q^*Z\otimes_{X\times Y}^{\mathbb{L}}P)$, where $p_*$ and $q^*$ represent, respectively, the (left exact) direct image and the (exact) pullback functors defined on coherent sheaves.
\end{defn}

Note that the functor $q^*$ is exact since $q$ is a flat morphism (see also \cite[Chapter 5]{Huy}). The following theorem of Orlov provides a standard form for equivalences between derived categories of coherent sheaves.

\begin{thm}\textnormal{\cite{Or}}
Let $X$ and $Y$ be two smooth projective varieties over $\mathbb{K}$. Any fully faithful triangle functor $F\colon \mathsf{D}^b(\mathsf{coh}(X))\lxr \mathsf{D}^b(\mathsf{coh}(Y))$ admiting left and right adjoints is naturally equivalent to a Fourier-Mukai transform $\Theta_P$, for an object $P$ in $\mathsf{D}^b(\mathsf{coh}(X\times Y))$, unique up to isomorphism.
\end{thm}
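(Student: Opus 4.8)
This is Orlov's representability theorem, and in the present paper it is used as an external input; I will only indicate the structure of a proof. The argument splits naturally into uniqueness and existence of the kernel. For \emph{uniqueness}, suppose $\Theta_P\cong\Theta_{P'}$ as functors. Evaluating both sides on the skyscraper sheaf $k(x)$ at a closed point $x\in X$ and using base change along the inclusion $i_x\colon\{x\}\times Y\hookrightarrow X\times Y$, one obtains $\mathbb{L}i_x^{*}P\cong\mathbb{L}i_x^{*}P'$ in $\mathsf{D}^b(\mathsf{coh}(Y))$ for every $x$; tracking in addition the morphisms induced by an ample sequence $\{\mathcal{O}_X(i)\}$ on $X$, and using that objects of $\mathsf{D}^b(\mathsf{coh}(X\times Y))$ are detected by such restrictions together with these morphisms, upgrades this to an isomorphism $P\cong P'$.

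For \emph{existence}, let $G$ be a right adjoint of $F$ — it exists by hypothesis — so that $G\circ F\cong\mathrm{id}$ since $F$ is fully faithful. The kernel $P$ should be thought of as ``$F$ applied to the structure sheaf $\mathcal{O}_\Delta\in\mathsf{D}^b(\mathsf{coh}(X\times X))$ of the diagonal in the second variable'', and the whole difficulty is that $F$ is a priori defined only on $\mathsf{D}^b(\mathsf{coh}(X))$, so that $(\mathrm{id}_X\times F)(\mathcal{O}_\Delta)$ has no literal meaning. Orlov's device is to fix a very ample line bundle, use the corresponding ample sequence $\{\mathcal{O}_X(i)\}$ to control $\mathcal{O}_\Delta$ by finitely much data at a time, apply $F$ to the ``second factor'' of that data, and reassemble the pieces into an object $P$ of $\mathsf{D}^b(\mathsf{coh}(X\times Y))$; boundedness of $P$ is exactly where the hypotheses that $F$ admits \emph{both} adjoints and that $X,Y$ are smooth and projective (hence carry Serre functors, forcing $F$ to have finite cohomological amplitude) enter.

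It then remains to verify $\Theta_P\cong F$. Since $\{k(x):x\in X\text{ closed}\}$ is a spanning class of $\mathsf{D}^b(\mathsf{coh}(X))$ and, by construction, $\Theta_P(k(x))\cong F(k(x))$ compatibly with the ample-sequence morphisms, the standard criterion for two exact functors to be naturally isomorphic once they agree on a spanning class and on the $\Hom$-groups between its members — applicable here because $F$ is fully faithful with both adjoints — yields $\Theta_P\cong F$, and the uniqueness above then makes $P$ the unique such kernel. The main obstacle is squarely the construction of $P$: making ``$F$ in the second variable'' precise, with the homotopy-coherence and boundedness bookkeeping this entails, is the technical heart of \cite{Or}, and in this paper the result is invoked rather than reproved.
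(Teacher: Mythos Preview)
Your assessment is correct: the paper does not prove this theorem at all --- it is stated with the citation \cite{Or} and no proof is given, precisely because it is invoked as an external input. Your sketch of Orlov's argument (uniqueness via restriction to skyscrapers and the ample sequence, existence by reconstructing the kernel from the diagonal via the ample sequence, and the spanning-class comparison) is a reasonable summary of the original proof and matches the paper's treatment of the result as a black box.
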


\begin{prop}\label{FM is real}
Let $\theta_P\colon \mathsf{D}^b(\mathsf{coh}(X))\lxr\mathsf{D}^b(\mathsf{coh}(Y))$ be a Fourier-Mukai transform between two smooth projective varieties $X$ and $Y$ over $\mathbb{K}$. Then $\theta_P$ is naturally equivalent to $\mathsf{real}_{\mathbb{T}}\circ\mathsf{D}^b(\theta_P^0)$, where $\mathbb{T}=(\theta_P(\mathbb{D}^{\leq 0}),\theta_P(\mathbb{D}^{\geq 0}))$.
\end{prop}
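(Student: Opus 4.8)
The plan is to imitate, in the geometric setting, the proof of Proposition \ref{everything is real}(i) and its corollary Theorem \ref{standard f-lifts}: one wants to exhibit $\theta_P$ as a $t$-exact functor (after transporting the standard $t$-structure) which admits an f-lifting, and then invoke Theorem \ref{lift implies diagram} together with Proposition \ref{standard identity}. First I would recall that $\theta_P$ is an equivalence of triangulated categories (this is part of the hypothesis implicit in writing $\mathbb{T}=(\theta_P(\mathbb{D}^{\leq 0}),\theta_P(\mathbb{D}^{\geq 0}))$, or it follows from Orlov's theorem once one knows $\theta_P$ is an equivalence; in any case the statement should be read under the standing assumption that $\theta_P$ is an equivalence). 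Set $\A=\mathsf{coh}(X)$, $\B=\mathsf{coh}(Y)$. By construction $\mathbb{T}$ is the image under $\theta_P$ of the standard $t$-structure $\mathbb{D}_{\A}=(\mathbb{D}^{\leq 0},\mathbb{D}^{\geq 0})$ in $\mathsf{D}^b(\A)$, so $\theta_P$ is tautologically $t$-exact with respect to $\mathbb{D}_{\A}$ and $\mathbb{T}$, its heart is $\Hcal(\mathbb{T})=\theta_P(\A)$, and the induced exact functor of abelian categories $\theta_P^0\colon \A\lxr \theta_P(\A)$ is an equivalence.

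Next I would produce an f-lifting of $\theta_P$. The cleanest route is exactly the one used in Proposition \ref{everything is real}: choose the filtered bounded derived category $\mathsf{DF}^b(\A)$ as f-category over $\mathsf{D}^b(\A)$, with structure map $\theta^\prime$ the natural inclusion, and take $(\X,\theta):=(\mathsf{DF}^b(\A),\theta^\prime\theta_P^{-1})$ as f-category over $\mathsf{D}^b(\B)$ (this is legitimate by Example \ref{eq f-lift}). With these choices the identity functor $\iden_{\mathsf{DF}^b(\A)}$ is by construction an f-lifting of $\theta_P$, since $\iden\,\theta^\prime = \theta^\prime = (\theta^\prime\theta_P^{-1})\theta_P = \theta\,\theta_P$. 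Hence Theorem \ref{lift implies diagram} applies and yields a commutative diagram
\[
\xymatrix{\mathsf{D}^b(\A)\ar[r]^{\mathsf{D}^b(\theta_P^0)}\ar[d]_{\mathsf{real}_{\mathbb{D}_\A}^{\mathsf{DF}(\A)}}&\mathsf{D}^b(\theta_P(\A))\ar[d]^{\mathsf{real}_{\mathbb{T}}^{\X}}\\ \mathsf{D}^b(\A)\ar[r]^{\theta_P}&\mathsf{D}^b(\B),}
\]
where $\mathsf{real}_{\mathbb{D}_\A}^{\mathsf{DF}(\A)}$ is the realisation functor of the standard $t$-structure in $\mathsf{D}^b(\A)$ with respect to the filtered derived category. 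By Proposition \ref{standard identity} (applied in its bounded form, as noted in the paragraph after that proposition) $\mathsf{real}_{\mathbb{D}_\A}^{\mathsf{DF}(\A)}$ is naturally equivalent to $\iden_{\mathsf{D}^b(\A)}$, so the diagram collapses to $\theta_P\cong \mathsf{real}_{\mathbb{T}}^{\X}\circ\mathsf{D}^b(\theta_P^0)$, which is exactly the asserted formula (with the f-category $\X$ suppressed from the notation, as is the paper's convention for realisation functors built from filtered derived categories).

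The only genuine points requiring care, rather than routine bookkeeping, are the following. First, one must check that the filtered bounded derived category of $\A=\mathsf{coh}(X)$ is indeed available as an f-enhancement of $\mathsf{D}^b(\mathsf{coh}(X))$; this is covered by Example \ref{exam filtered derived cat}, whose construction applies verbatim to any abelian category and in particular to $\mathsf{coh}(X)$ (no smoothness or projectivity is needed here), so this is immediate. Second — and this is the step I expect to be the main obstacle, though it is still mild — one should make sure the hypotheses of Theorem \ref{lift implies diagram} are genuinely met: it requires $\theta_P$ to be a triangle functor which is $t$-exact for the chosen $t$-structures and which admits an f-lifting. The $t$-exactness is automatic from the definition of $\mathbb{T}$ as $\theta_P(\mathbb{D}_\A)$; triangulatedness of $\theta_P$ is standard for Fourier--Mukai transforms; and the f-lifting is exactly the trivial one furnished by Example \ref{eq f-lift}. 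Thus the proof is essentially a transcription of the argument of Proposition \ref{everything is real}(i). As a closing remark one may add that, since $\mathsf{coh}(Y)$ has an injective cogenerator only after passing to quasi-coherent sheaves, the realisation functor here is naturally viewed as arising from a cotilting (rather than tilting) $t$-structure on $\mathsf{D}(\mathsf{Qcoh}(Y))$ restricted to the bounded level, matching the discussion preceding the proposition; but this observation is not needed for the proof of the displayed equivalence itself.
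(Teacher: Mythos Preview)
Your argument is correct as far as it goes, but it proves something slightly weaker than the stated proposition, and the difference is precisely what the paper's proof is designed to address. The paper's convention (stated at the start of Section~\ref{sectiondereq}) is that $\mathsf{real}_{\mathbb{T}}$ with no superscript denotes the realisation functor with respect to the filtered derived category of the ambient abelian category, here $\mathsf{DF}^b(\mathsf{coh}(Y))$. Your f-category $(\X,\theta)=(\mathsf{DF}^b(\mathsf{coh}(X)),\theta'\theta_P^{-1})$ over $\mathsf{D}^b(\mathsf{coh}(Y))$ is \emph{not} this object: it is the filtered derived category of $\mathsf{coh}(X)$ transported along the equivalence $\theta_P$. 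So what you have actually shown is $\theta_P\cong\mathsf{real}_{\mathbb{T}}^{\X}\circ\mathsf{D}^b(\theta_P^0)$ for this transported $\X$, which is exactly Proposition~\ref{everything is real}(i) specialised to this situation---nothing specific to Fourier--Mukai transforms enters, and your justification for dropping the superscript does not apply.

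The paper instead constructs a genuine f-lifting $\mathsf{DF}^b(\mathsf{coh}(X))\to\mathsf{DF}^b(\mathsf{coh}(Y))$ of $\theta_P$ between the \emph{canonical} filtered derived categories. It does so by decomposing $\theta_P=\mathbb{R}p_*\circ(-\otimes^{\mathbb{L}}_{X\times Y}P)\circ q^*$ and lifting each factor: $q^*$ and $\mathbb{R}p_*$ are derived functors of functors between abelian categories, hence lift by \cite[Example~A2]{B}; for the tensor, smoothness and projectivity allow one to replace $P$ by a bounded complex of locally free sheaves, making $-\otimes P$ exact at the level of complexes and thus f-liftable as in the proof of Theorem~\ref{standard f-lifts}. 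With this f-lifting in hand, Theorem~\ref{lift implies diagram} and Proposition~\ref{standard identity} yield $\theta_P\cong\mathsf{real}_{\mathbb{T}}^{\mathsf{DF}^b(\mathsf{coh}(Y))}\circ\mathsf{D}^b(\theta_P^0)$, which is the intended statement. Your route is more elementary but bypasses the geometric content; the paper's route is what actually justifies suppressing the superscript.
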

\begin{proof}
The technique employed here is analogous to the ones before. First we observe that $\theta_P$ lifts to the filtered bounded derived categories of $X$ and $Y$. This follows from the fact that $\theta_P$ is the composition of functors that admit f-liftings. In fact, since $\mathbb{R}p_*$ and $q^*$ are derived functors of functors between abelian categories, if follows that they admit f-liftings to the filtered bounded derived categories of coherent sheaves of $X$ and $Y$ (see \cite[Example A2]{B}). Moreover, also the derived tensor functor admits an f-lifting. Indeed, since $X$ is projective and smooth, any object in $\mathsf{D}^b(\mathsf{coh}(X))$ is isomorphic to a \textit{perfect} complex, i.e. to a complex of locally free sheaves. In particular, we may assume without loss of generality that $P$ is such a complex. It is then easy to check that tensoring with $P$ is an exact functor in the category of complexes of coherent sheaves and, hence, it induces a functor between categories of filtered complexes. The argument then follows analogously to the proof of Theorem \ref{standard f-lifts}, using Theorem \ref{lift implies diagram} and Proposition \ref{standard identity}.
\end{proof}

Finally, we remark that the t-structure $\mathbb{T}$ in the above proposition is, in fact, the restriction to $\mathsf{D}^b(\mathsf{coh}(X))$ of a cotilting t-structure in $\mathsf{D}(\mathsf{Qcoh}(X))$, the unbounded derived category of quasicoherent sheaves over $X$. The functor $\theta_P$ can be extended to a functor $\Theta_P\colon \mathsf{D}(\mathsf{Qcoh}(X))\lxr \mathsf{D}(\mathsf{Qcoh}(Y))$, by using the same formula and this extension still has both left and right adjoints (which are again Fourier-Mukai transforms, see \cite{Muk} and \cite[Section 2.2 and 3.1]{AL}). Hence, $\Theta_P$ preserves coproducts. Since the varieties are smooth, the compact objects in $\mathsf{D}(\mathsf{Qcoh}(X))$ are precisely those in $\mathsf{D}^b(\mathsf{coh}(X))$. Thus, $\Theta_P$ is a coproduct preserving triangle functor between compactly generated triangulated categories which restricts to an equivalence on compact objects. By \cite[Lemma 3.3]{Schwede}, this means that also $\Theta_P$ is a triangle equivalence. If $E$ is an injective cogenerator of $\mathsf{Qcoh}(X)$, then the t-structure $\mathbb{T}$ in the proof of the above proposition is the restriction to $\mathsf{D}^b(\mathsf{coh}(X))$ of the t-structure associated to the cotilting object $\Theta_P(E)$. Informally, one may then say that \textit{Fourier-Mukai transforms are cotilting equivalences}.

\section{Recollements of derived categories}
\label{sectionrecollements}
Let $R$ be a ring and $e$ an idempotent element of $R$. As observed in \cite{CPSmemoirs}, the recollement of the module category $\Mod{R}$ induced by the idempotent element $e$ (see Example \ref{examrecol}) always induces a recollement of triangulated categories $\mathsf{R}_{\mathsf{tr}}(\Ker{\textsf{D}(e(-))}, \textsf{D}(\Mod{R}), \textsf{D}(\Mod{eRe}))$, where $\textsf{D}(e(-))\colon \textsf{D}(\Mod{R})\lxr \textsf{D}(\Mod{eRe})$ is the derived functor induced by the exact functor $e(-)\colon \Mod{R}\lxr \Mod{eRe}$. It also follows from \cite{CPSmemoirs} that $\Ker{\textsf{D}(e(-))}$ is triangle equivalent with $\textsf{D}(\Mod{R/ReR})$ if and only if the natural map $R\lxr R/ReR$ is a homological ring epimorphism. In this section we generalise this result, investigating when do recollements of abelian categories $\mathsf{R}_{\mathsf{ab}}(\B,\A,\C)$ induce recollements of the associated unbounded derived categories $\mathsf{R}_{\mathsf{tr}}({\mathsf{D}}(\B),{\mathsf{D}}(\A),{\mathsf{D}}(\C))$. Moreover, we will discuss when is a recollement of derived categories equivalent to a derived version of a recollement of abelian categories. This is intimately related with glueing tilting objects. Here is an informal overview of this section.\\

\textbf{Subsection 6.1: Homological embeddings}
\begin{itemize}
\item We prove in a rather general context that fully faithful functors between abelian categories preserving Yoneda extensions yield fully faithful derived functors between unbounded derived categories (Theorem \ref{prophomolemb}). 
\item We present equivalent conditions for a recollement of abelian categories to induce a recollement of unbounded derived categories, generalising the analogous result for rings in \cite{CPSmemoirs} (Theorem \ref{thmrecolderivedcat}).
\end{itemize}

\textbf{Subsection 6.2: Glueing tilting and equivalences of recollements}
\begin{itemize}
\item  Under some technical assumptions, we give an equivalent condition (in terms of glueing tilting t-structures) for a recollement of derived categories to be equivalent to the derived version of a recollement of abelian categories (Theorem \ref{criterium stratifying}). 
\item We specialise Theorem \ref{criterium stratifying} to recollements of derived categories of certain $\mathbb{K}$-algebras, where all the technical conditions are automatically satisfied (Corollary \ref{rec strat k-alg}). This statement reflects the main idea of this section, relating equivalences of recollements with glueing of tilting t-structures. 
\item We apply the above results to recollements of derived module categories of finite dimensional hereditary $\mathbb{K}$-algebras, over a field $\mathbb{K}$, answering a question posed by Xi (Theorem \ref{rec hereditary}).
\end{itemize}

\subsection{Homological embeddings}

In this section we will study exact fully faithful functors between abelian categories whose derived functors are fully faithful. Examples of these are well-known in representation theory: a ring epimorphism induces a fully faithful restriction of scalars functor, and its derived functor is fully faithful if and only if some homological conditions are satisfied (see Theorem \ref{hom ring epi}).

To build recollements of triangulated categories, we will often need to ensure the existence of adjoint pairs.  A powerful tool for this purpose is Brown representability.  Recall that a TR5 triangulated category $\T$ \textbf{satisfies Brown representability} if every cohomological functor $H\colon \T^{op}\lxr \Mod{\mathbb{Z}}$ which sends coproducts to products is representable (i.e. $H\cong \Hom_\T(-,X)$ for some $X$ in $\T$). 

\begin{thm}\textnormal{\cite[Theorem 8.4.4]{Neeman}}
\label{adjoints from Brown}
Let $\T$ and $\V$ be TR5 triangulated categories and suppose that $\T$ satisfies Brown representability. Then any coproduct-preserving functor $F\colon\T\lxr \V$ has a right adjoint.
\end{thm}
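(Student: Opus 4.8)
The statement to prove is Theorem~\ref{adjoints from Brown}: every coproduct-preserving functor $F\colon\T\lxr\V$ between TR5 triangulated categories, with $\T$ satisfying Brown representability, admits a right adjoint. This is Neeman's theorem, so the plan is to reconstruct the standard argument.

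The plan is to build the right adjoint objectwise via the Yoneda philosophy. First I would fix an object $Y$ in $\V$ and consider the functor $H_Y\colon\T^{op}\lxr\Mod{\mathbb Z}$ defined by $H_Y(X)=\Hom_\V(F(X),Y)$. I would check that $H_Y$ is a cohomological functor: this is immediate since $F$ is a triangle functor (it sends triangles in $\T$ to triangles in $\V$) and $\Hom_\V(-,Y)$ is cohomological on $\V$. Next I would verify that $H_Y$ sends coproducts in $\T$ to products in $\Mod{\mathbb Z}$; here is where the hypothesis that $F$ preserves coproducts enters, together with the universal property of coproducts in $\V$: $H_Y(\coprod_i X_i)=\Hom_\V(F(\coprod_i X_i),Y)\cong\Hom_\V(\coprod_i F(X_i),Y)\cong\prod_i\Hom_\V(F(X_i),Y)=\prod_i H_Y(X_i)$. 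By Brown representability for $\T$, there is an object $G(Y)$ in $\T$ and a natural isomorphism $H_Y(-)\cong\Hom_\T(-,G(Y))$, that is, $\Hom_\V(F(X),Y)\cong\Hom_\T(X,G(Y))$ naturally in $X$.

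The next step is to promote the assignment $Y\mapsto G(Y)$ to a functor and to make the above isomorphism natural in $Y$ as well. Given a morphism $g\colon Y\lxr Y'$ in $\V$, composition with $g$ gives a natural transformation $H_Y\Rightarrow H_{Y'}$, hence by Yoneda (applied in $\T$, using the representing objects) a unique morphism $G(g)\colon G(Y)\lxr G(Y')$ making the squares commute; functoriality of $G$ and naturality of the adjunction isomorphism in both variables then follow from the uniqueness clause in Yoneda's lemma, by the usual formal diagram-chase. This produces the adjunction $\Hom_\V(F(X),Y)\cong\Hom_\T(X,G(Y))$ natural in both arguments, which is precisely the statement that $G$ is right adjoint to $F$.

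The only genuine subtlety — and the step I would be most careful about — is the set-theoretic one hidden inside Brown representability: one must know that $H_Y$ genuinely takes values in $\Mod{\mathbb Z}$, i.e.\ that $\Hom_\V(F(X),Y)$ is a set for every $X$, which holds because $\V$ is assumed to be a (locally small) triangulated category, and that the representing object $G(Y)$ is determined up to canonical isomorphism so that the Yoneda-type argument for functoriality goes through without choice-theoretic pathologies. Beyond that, everything is formal: the argument is a direct invocation of Brown representability followed by a routine Yoneda bookkeeping, so I do not expect a real obstacle. One could alternatively cite \cite[Theorem 8.4.4]{Neeman} directly, but the proof above is short enough to include in full.
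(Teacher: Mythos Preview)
The paper does not prove this theorem; it is stated with a citation to \cite[Theorem 8.4.4]{Neeman} and no proof is given. Your argument is correct and is essentially Neeman's standard proof: represent $\Hom_\V(F(-),Y)$ via Brown representability and use Yoneda to obtain functoriality of the right adjoint.
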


There are many examples of triangulated categories satisfying Brown representability, in particular some derived categories of abelian categories. The following concept will be used to provide examples (we refer to \cite[Section 2]{KN} for the terminology \textit{Milnor colimit} and \textit{Milnor limit}).

\begin{defn}
The derived category $\mathsf{D}(\A)$ of an abelian category $\A$ is  \textbf{left-complete} if $\mathsf{D}(\A)$ is TR5* and any object $X$ is isomorphic to a Milnor limit of its standard truncations $(\tau^{\geq n}X)_{n\leq 0}$ with respect to the canonical maps $\tau^{\geq n-1}X\lxr \tau^{\geq n}X$, for $n\leq 0$.
\end{defn}

\begin{rem}\label{rem completeness}
\begin{enumerate}
\item We can dually define the notion of a \textbf{right-complete} derived category. It is, however, easy to see that any TR5 derived category $\mathsf{D}(\A)$ is right-complete. This follows from \cite[Remark 24 and Lemma 64(iii)]{Mur} since, for any complex $X$, there is a quasi-isomorphism in $\mathsf{K}(\A)$ between the Milnor colimit of $(\tau^{\leq n}X)_{n\geq 0}$ and the direct limit of the same system in $\mathsf{C}(\A)$ (which is isomorphic to $X$).
\item Note that right and left-completeness can analogously be defined in any triangulated category $\T$ endowed with a t-structure $\mathbb{T}$, in which case we say that $\T$ is right or left-complete with respect to $\mathbb{T}$. Given a triangle equivalence $\phi\colon\mathsf{D}(\A)\lxr \mathsf{D}(\B)$, then $\mathsf{D}(\A)$ is right or left-complete if and only if $\mathsf{D}(\B)$ is left or right-complete with respect to the t-structure $\phi(\mathbb{D}_\A)=(\phi(\mathbb{D}_\A^{\leq 0}),\phi(\mathbb{D}_\A^{\geq 0}))$.
\end{enumerate}
\end{rem}

\begin{exam}\label{ex right left}
Let $\A$ be an abelian category.
\begin{enumerate}
\item If $\A$ has a projective generator and $\mathsf{D}(\A)$ is TR5*, then $\mathsf{D}(\A)$ is left-complete. In fact, it follows from \cite[Remark 27 and Lemma 67(i)]{Mur} that, for any complex $X$, there is a quasi-isomorphism in $\mathsf{K}(\A)$ between the inverse limit of $(\tau^{\geq n}X)_{n\leq 0}$ in the category of complexes (which is isomorphic to $X$) and the Milnor limit of the same system.
\item In particular, as a consequence of Lemma \ref{facts silting t-structures}(i) and \ref{gen cogen heart}, if $\mathsf{D}(\A)$ is TR5 (respectively, TR5*) and $M$ is a silting object in $\mathsf{D}(\A)$, then $\mathsf{D}(\Hcal_M)$ is right-complete (respectively, left-complete).
\item Assume that $\A$ has an injective cogenerator and finite global dimension. If $\mathsf{D}(\A)$ is TR5* and $M$ is a cotilting object in $\mathsf{D}(\A)$, then $\mathsf{D}(\Hcal_M)$ is left-complete (in particular, so is $\mathsf{D}(\A)$). This follows from $\Hcal_M$ having finite global dimension (Proposition \ref{fte gldim}(ii)) and from \cite[Theorem 1.3]{HX}. 
\item If $\A$ is a Grothendieck category, then $\mathsf{D}(\A)$ is right-complete but not always left-complete (\cite{Neeman2}). However, from item (ii) above, $\mathsf{D}(R)$ is left-complete for any ring $R$, and from \cite[Remark 3.3]{HX}, $\mathsf{D}(\mathsf{Qcoh}(X))$ is left-complete for any separated quasi-compact scheme $X$.
\end{enumerate}
\end{exam}

The next theorem recalls two examples of categories satisfying Brown representability.

\begin{thm}\label{ex Brown}
Let $\A$ be an abelian category.
\begin{enumerate}
\item\textnormal{\cite[Theorem 3.1]{Franke}} If $\A$ is Grothendieck, then $\mathsf{D}(\A)$ satisfies Brown representability.
\item\textnormal{\cite[Theorem 1.1]{Modoi}} If $\A$ has an injective cogenerator and $\mathsf{D}(\A)$ is left-complete, then $\mathsf{D}(\A)^{op}$ satisfies Brown representability.
\end{enumerate}
\end{thm}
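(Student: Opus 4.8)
The statement collects two results that already appear in the literature, so the plan is to identify each part with its source and to indicate the mechanism rather than to reprove either. In both cases the underlying principle is the same: the relevant triangulated category is shown to be \emph{well-generated}, and then one appeals to Neeman's representability theorem for well-generated triangulated categories \cite{Neeman} (the same circle of ideas as \cite[Theorem 8.4.4]{Neeman}). For (i) this is applied to $\mathsf{D}(\A)$ itself, and for (ii) to $\mathsf{D}(\A)^{op}$. The asymmetry between the hypotheses --- Grothendieck-ness versus (injective cogenerator $+$ left-completeness) --- reflects the asymmetry between generation by a generator and cogeneration by an injective cogenerator, together with the fact that $\mathsf{D}(\A)$ is automatically right-complete but not automatically left-complete (Example \ref{ex right left}).

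For part (i), since $\A$ is Grothendieck it possesses a generator $G$, and from $G$ one manufactures, for a sufficiently large regular cardinal, a set of objects of $\mathsf{D}(\A)$ --- built from $G$ by subobjects, quotients, extensions and shifts --- which is a perfect generating set; hence $\mathsf{D}(\A)$ is well-generated and Brown representability follows. This is exactly \cite[Theorem 3.1]{Franke}, which I would simply cite, remarking that the well-generation of $\mathsf{D}(\A)$ is the substantive content of the statement.

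For part (ii), we must show that every cohomological functor $H\colon \mathsf{D}(\A)\lxr \Mod{\mathbb{Z}}$ sending products to products is corepresentable, i.e.\ isomorphic to $\Hom_{\mathsf{D}(\A)}(X,-)$ for some $X$ in $\mathsf{D}(\A)$. Here the injective cogenerator $E$ plays the role dual to a generator: the family $\{E[n] : n\in\mathbb{Z}\}$ cogenerates $\mathsf{D}(\A)$, and one wants it to serve as a generating set making $\mathsf{D}(\A)^{op}$ well-generated. Granting this, one runs the dual of the usual Brown representability argument, constructing the corepresenting object $X$ as a homotopy (Milnor) limit of a tower of approximations to $H$; the role of left-completeness of $\mathsf{D}(\A)$ is precisely to guarantee that this Milnor limit genuinely corepresents $H$, i.e.\ it is the hypothesis under which the approximating tower ``converges''. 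This is \cite[Theorem 1.1]{Modoi}, whose proof I would follow verbatim. The main obstacle, were one to reprove it from scratch, is exactly this convergence issue --- equivalently, establishing that $\mathsf{D}(\A)^{op}$ is well-generated --- and left-completeness is the input that removes it.
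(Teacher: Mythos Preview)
The paper gives no proof of this theorem at all: it simply states the two items with citations to \cite[Theorem 3.1]{Franke} and \cite[Theorem 1.1]{Modoi} and moves on. Your proposal correctly identifies the statement as a collection of known results and points to the same sources, so it is entirely consistent with the paper's treatment; the additional commentary on well-generation and the role of left-completeness is accurate but goes beyond what the paper itself supplies.
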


Given a TR5 (respectively, TR5*) triangulated category, we say that a triangulated subcategory is \textbf{localising} (respectively, \textbf{colocalising}) if it is coproduct-closed (respectively, product-closed), i.e. if the inclusion functor commutes with coproducts (respectively, products). The following result is well-known, still we present a proof for convenience of the reader. 

\begin{lem}\label{lemconsBR} 
Let $\A$ be a Grothendieck abelian category.
\begin{enumerate}
\item\textnormal{\cite[Proposition 3.3]{Franke}}  If $\X$ is a localising subcategory of $\textsf{D}(\A)$ and $\A\subseteq \X$, then $\X=\textsf{D}(\A)$.
\item Assume that $\mathsf{D}(\A)$ is left-complete. If $\Y$ is a colocalising subcategory of $\textsf{D}(\A)$ and $\A\subseteq \Y$, then 
$\Y=\textsf{D}(\A)$.
\end{enumerate}
\begin{proof}
(i) This statement can also be shown using \cite[Theorem 5.7]{AJS}. Let $G$ be a generator of $\A$. It suffices to prove that the smallest localising subcategory $\mathcal{L}$ of $\mathsf{D}(\A)$ containing $G$ is the whole derived category. By \cite[Theorem 5.7]{AJS}, the inclusion of $\mathcal{L}$ in $\mathsf{D}(\A)$ has a right adjoint and, thus, the pair $(\mathcal{L},\mathcal{L}^{\perp})$ is a (stable) t-structure in $\mathsf{D}(\A)$. It remains to show that $\mathcal{L}^{\perp}=0$. Now, if $X$ lies in $\mathcal{L}^\perp$, then we have $\Hom_{\mathsf{D}(\A)}(G,X[i])=0$ for all $i$ in $\mathbb{Z}$, which from Lemma \ref{Lemma generator1}(i) implies that $X=0$.

(ii) This proof is essentially contained in \cite[Applications 2.4 and 2.4']{BN}. We prove that the smallest colocalising subcategory $\mathcal{C}$ containing the injective objects of $\A$ is $\mathsf{D}(\A)$. Clearly any bounded complex of injective objects lies in $\mathcal{C}$. Now every bounded below complex of injective objects lies in $\mathcal{C}$ is an inverse limit of bounded complexes in the category $\mathsf{C}(\A)$, where the maps in the inverse system are identities componentwise. This implies that every bounded below complex of injective objects is a Milnor limit of bounded ones (see for example \cite[Proposition 85]{Mur}) and, therefore, every such complex lies in $\mathcal{C}$. Finally, since every bounded below complex in $\mathsf{D}(\A)$ is quasi-isomorphic to a bounded below complex of injective objects, and since $\mathsf{D}(\A)$ is left-complete, we conclude that $\mathcal{C}=\mathsf{D}(\A)$.
\end{proof}
\end{lem}

\begin{rem}\label{prods}
By \cite[Proposition 8.4.6]{Neeman}, a triangulated category satisfying Brown representability is TR5*. Hence, as mentioned before, the derived category of a Grothendieck category is TR5*. Moreover, from \cite[Theorem 1.3]{HX}, if products in a Grothendieck category $\A$ have finite homological dimension (i.e. the $n$-th right derived functor vanishes for some $n>0$), then $\mathsf{D}(\A)$ is left-complete. In particular, this is satisfied if the Grothendieck category has finite global dimension (see also Example \ref{ex right left}).
\end{rem}

We now recall the definition of a homological embedding of abelian categories (see \cite{Psaroud:homolrecol}).

\begin{defn}
An exact functor $i\colon\B\lxr \A$ between abelian categories is called a {\bf homological embedding}, if the map $i_{X,Y}^n\colon\Ext_{\B}^n(X,Y) \lxr \Ext_{\A}^n(i(X),i(Y))$ is an isomorphism of the abelian groups of Yoneda extensions, for all $X, Y$ in $\B$ and for all $n\geq 0$.
\end{defn}

As mentioned earlier, homological ring epimorphisms are examples of homological embeddings. In that case, by Theorem \ref{hom ring epi}, the derived functor $f_*\colon \mathsf{D}(B)\lxr\mathsf{D}(A)$ is fully faithful. Our next theorem generalises this statement for homological embeddings $i\colon \B\lxr \A$ of abelian categories. Note that we do not assume $\B$ to be a Serre subcategory of $\A$ (compare \cite[Theorem 2.1]{Y} and \cite[Theorem 1.5]{HX}).

\begin{thm}
\label{prophomolemb}
Let $\A$ be an abelian category such that $\mathsf{D}(\A)$ is TR5 and TR5* and let $\B$ be a Grothendieck category such that $\mathsf{D}(\B)$ is left-complete. If $i\colon\B\lxr \A$ is an exact full embedding such that the derived functor $i\colon \mathsf{D}(\B)\lxr \mathsf{D}(\A)$ preserves products and coproducts, then the following statements are equivalent.
\begin{enumerate}
\item The functor $i\colon \B\lxr \A$ is a homological embedding.

\item The derived functor $i\colon \textsf{D}(\B)\lxr \textsf{D}(\A)$ is fully faithful.
\end{enumerate}
If, in addition, $\mathsf{D}(\A)$ is left-complete, then the statements \textnormal{(i)} and \textnormal{(ii)} are also equivalent to:
\begin{enumerate}
\item[(iii)] The derived functor $i$ induces a triangle equivalence between $\mathsf{D}(\B)$ and the full subcategory $\mathsf{D}_\B(\A)$ of $\mathsf{D}(\A)$, whose objects are complexes of objects in $\A$ with cohomologies in $i(\B)$.
\end{enumerate}
\begin{proof}
(i)$\Longrightarrow$(ii): Let $Y\in \textsf{D}(\B)$. We define the following full subcategory of $\textsf{D}(\B)$$\colon$
\[
\M_{Y} = \big\{X\in \textsf{D}(\B) \ | \ \alpha_{X,Y}\colon \Hom_{\textsf{D}(\B)}(X[k],Y) \stackrel{\iso}{\lxr} \Hom_{\textsf{D}(\A)}(i(X[k]),i(Y)), \forall k\in\mathbb Z \big\}
\]
where $\alpha_{X,Y}$ is the natural map of abelian groups induced by the functor $i\colon\mathsf{D}(\B)\lxr\mathsf{D}(\A)$. We show that $\M_{Y}$ is a localising subcategory of $\textsf{D}(\B)$. First, it is clear that $\M_{Y}$ is closed under shifts. For $K$ and $M$ in $\M_{Y}$ and a triangle $\Delta$ in $\mathsf{D}(\B)$ of the form 
\[
\xymatrix{
(\Delta): & K \ar[r] & L \ar[r]^{} & M \ar[r] & K[1] },
\]
consider the triangle $i(\Delta)$ in $\textsf{D}(\A)$. Applying to $\Delta$ and $i(\Delta)$ the cohomological functors $\Hom_{\textsf{D}(\B)}(-,Y)$ and $\Hom_{\textsf{D}(\A)}(-,i(Y))$, respectively, we obtain a commutative diagram with exact rows as follows$\colon$
\[
\xymatrix@C=0.3cm{
  \cdots \ar[r]^{} & \Hom_{\textsf{D}(\B)}(M[k],Y) \ar[d]_{\iso}^{\alpha_{M,Y}} \ar[r]^{} & \Hom_{\textsf{D}(\B)}(L[k],Y) \ar[d]_{}^{\alpha_{L,Y}} \ar[r]^{} & \Hom_{\textsf{D}(\B)}(K[k],Y) \ar[d]_{\iso}^{\alpha_{K,Y}} \ar[r] & \cdots  \\
   \cdots \ar[r]^{} & \Hom_{\textsf{D}(\A)}(i(M[k]),i(Y)) \ar[r]^{ }
  & \Hom_{\textsf{D}(\A)}(i(L[k]),i(Y)) \ar[r]^{} & \Hom_{\textsf{D}(\A)}(i(K[k]),i(Y)) \ar[r] & \cdots  } 
\]  
Hence, the map $\alpha_{L,Y}$ is an isomorphism and the object $L$ lies in $\M_{Y}$. 
Let $(X_i)_{i\in I}$, be a family of objects in $\M_{Y}$. We show that $\coprod_{i\in I} X_i$ lies in $\M_{Y}$, concluding that $\M_Y$ is localising in $\mathsf{\B}$. Since, by assumption, the derived functor $i\colon \textsf{D}(\B)\lxr \textsf{D}(\A)$ preserves coproducts, the following diagram commutes.
\[
\xymatrix@C=0.5cm{
   \Hom_{\textsf{D}(\B)}(\coprod_{i\in I}X_i,Y) \ar[d]_{\iso}^{} \ar[rrr]^{\alpha_{\coprod X_i,Y} \ \  \ \ \ \ \ } &&& \Hom_{\textsf{D}(\A)}(\coprod_{i\in I}\mi(X_i),\mi(Y)) \ar[d]_{\iso}^{} \\
   \prod_{i\in I} \Hom_{\textsf{D}(\B)}(X_i,Y) \ar[rrr]^{\prod \alpha_{X_i,Y} \ \ \ \ \ \ \ \ }_{\iso \ \ \ \ \ \ }
  &&& \prod_{i\in I} \Hom_{\textsf{D}(\A)}(\mi(X_i),\mi(Y))   }
\]
Then the map $\alpha_{\coprod X_i,Y}$ is an isomorphism and $\M_{Y}$ is a localising subcategory of $\textsf{D}(\B)$. In particular, for every $B$ in $\B$ the subcategory  $\M_{B}$ is localising in $\textsf{D}(\B)$ and $\B\subseteq \M_{B}$ since the functor $i\colon \B\lxr \A$ is a homological embedding. Then from Lemma \ref{lemconsBR}(i) we get that $\M_{B}=\textsf{D}(\B)$ for every $B$ in $\B$. 

Let $X$ be an object in $\textsf{D}(\B)$ and consider the full subcategory of $\textsf{D}(\B)\colon$
\[
{_{X}\M}=\big\{Y\in \textsf{D}(\B) \ | \ \alpha_{X, Y}\colon \Hom_{\textsf{D}(\B)}(X, Y[k]) \stackrel{\iso}{\lxr} \Hom_{\textsf{D}(\A)}(i(X),i(Y[k])), \forall k\in\mathbb Z \big\}.
\]
Then, dually to the argument above, it follows that ${_{X}\M}$ is a colocalising subcategory of $ \textsf{D}(\B)$. Since, for any $B$ in $\B$ we have $\M_{B}=\textsf{D}(\B)$ it follows that $\B\subseteq {_{X}\M}$ for any $X$ in $\mathsf{D}(\B)$. Since $\mathsf{D}(\B)$ is left-complete, Lemma \ref{lemconsBR}(ii) shows that ${_{X}\M} = \textsf{D}(\B)$. Hence, the derived functor $i\colon \textsf{D}(\B)\lxr \textsf{D}(\A)$ is fully faithful.

(ii)$\Longrightarrow$(i): Suppose that $i\colon \mathsf{D}(\B)\lxr\mathsf{D}(\A)$ is fully faithful and let $X$ and $Y$ be objects in $\B$ and $n\geq 0$. Then we have the following chain of natural isomorphisms$\colon$
\[{\Ext}_{\B}^n(X,Y) \iso \Hom_{\textsf{D}(\B)}(X,Y[n]) \iso \Hom_{\textsf{D}(\A)}(i(X),i(Y)[n]) \iso {\Ext}_{\A}^n(i(X),i(Y)) \]
thus showing that the functor $i\colon \A\lxr \B$ is a homological embedding. 

Assume now that also $\mathsf{D}(\A)$ is left-complete. Note that it is also right-complete by Remark~\ref{rem completeness}(i).

(iii)$\Longrightarrow$(ii): This is clear. 

(ii)$\Longrightarrow$(iii): Assume that the derived functor $i$ is fully faithful, i.e. that $i$ induces a triangle equivalence between $\mathsf{D}(\B)$ and $\X:=\Image{i}$. It is clear that $\X$ is a full extension-closed subcategory of $\mathsf{D}_\B(\A)$. First observe that $i(\B)$ lies in $\X$ and, hence, so does $\mathsf{D}_\B^b(\A)$, since every object  in $\mathsf{D}_\B^b(\A)$ can be obtained as a finite extension of objects in $i(\B)$. Since $\mathsf{D}(\A)$ is both left and right-complete, every object in $\mathsf{D}_\B(\A)$, $X$ can be expressed as a Milnor limit of a Milnor colimit of bounded complexes with cohomologies in $i(\B)$ (since standard truncations respect the cohomologies). Since $\mathsf{D}(\B)$ is TR5 and TR5* and $i$ preserves products and coproducts, $\X$ is closed under Milnor limits and Milnor colimits, thus finishing the proof. 
\end{proof}
\end{thm}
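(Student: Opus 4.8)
The plan is to prove Theorem \ref{prophomolemb} in the order the statement presents it: first the equivalence (i)$\Longleftrightarrow$(ii), then, under the extra hypothesis that $\mathsf{D}(\A)$ is left-complete, the equivalence with (iii). The backbone of the (i)$\Longrightarrow$(ii) direction is a two-variable dévissage, using Brown representability (via Theorem \ref{ex Brown} and Lemma \ref{lemconsBR}) to reduce the question of full faithfulness of the derived functor $i$ to the single case of objects in the abelian subcategory $\B$, where (i) is precisely the assumption.

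Concretely, first I would fix $Y\in\mathsf{D}(\B)$ and consider the full subcategory $\M_Y\subseteq\mathsf{D}(\B)$ of those $X$ for which the natural map $\alpha_{X,Y}\colon\Hom_{\mathsf{D}(\B)}(X[k],Y)\to\Hom_{\mathsf{D}(\A)}(i(X[k]),i(Y))$ is an isomorphism for all $k\in\mathbb{Z}$. A routine check using the long exact sequences attached to a triangle and the five lemma shows $\M_Y$ is a triangulated subcategory closed under shifts; since $i$ preserves coproducts (hypothesis) and $\Hom(-,Y)$ sends coproducts to products, $\M_Y$ is localising. When $Y=B\in\B$, the hypothesis that $i\colon\B\to\A$ is a homological embedding guarantees $\B\subseteq\M_B$ (for $k\le 0$ this is the $\mathrm{Ext}$ condition interpreted via the heart of the standard t-structure, for $k>0$ both sides vanish). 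By Lemma \ref{lemconsBR}(i), $\M_B=\mathsf{D}(\B)$ for every $B\in\B$. Dually, fixing $X\in\mathsf{D}(\B)$ I would consider the subcategory ${}_X\M$ of those $Y$ for which $\alpha_{X,Y}$ is an isomorphism; this is colocalising since $i$ preserves products, it contains $\B$ by the previous step, and since $\mathsf{D}(\B)$ is left-complete, Lemma \ref{lemconsBR}(ii) gives ${}_X\M=\mathsf{D}(\B)$. That is exactly full faithfulness of the derived $i$. The converse (ii)$\Longrightarrow$(i) is a one-line computation: $\mathrm{Ext}^n_\B(X,Y)\cong\Hom_{\mathsf{D}(\B)}(X,Y[n])\cong\Hom_{\mathsf{D}(\A)}(i(X),i(Y)[n])\cong\mathrm{Ext}^n_\A(i(X),i(Y))$.

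For the last equivalence, (iii)$\Longrightarrow$(ii) is trivial. For (ii)$\Longrightarrow$(iii), assuming now $\mathsf{D}(\A)$ is also left-complete (hence right-complete by Example \ref{ex right left}(i)), I would show that $\X:=\Image i$, which is a full extension-closed triangulated subcategory equivalent to $\mathsf{D}(\B)$, equals $\mathsf{D}_\B(\A)$. It contains $i(\B)$, hence (by finite extensions) all of $\mathsf{D}^b_\B(\A)$; then, since any $X\in\mathsf{D}_\B(\A)$ is the Milnor limit of the Milnor colimit of its bounded standard truncations — all of which still have cohomology in $i(\B)$ — and since $i$ preserves products and coproducts so that $\X$ is closed under Milnor limits and colimits, we conclude $X\in\X$.

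The main obstacle, and the step demanding the most care, is the dévissage bookkeeping: verifying that $\M_Y$ (resp.\ ${}_X\M$) really is localising (resp.\ colocalising), which requires the coproduct/product-preservation hypotheses on the derived $i$ to be used exactly where they are available, and checking that the hypotheses of Lemma \ref{lemconsBR} genuinely apply — i.e.\ that $\B$ is Grothendieck and $\mathsf{D}(\B)$ left-complete so that Brown representability holds both for $\mathsf{D}(\B)$ and for $\mathsf{D}(\B)^{\mathrm{op}}$. The subtle point in (ii)$\Longrightarrow$(iii) is the completeness argument showing every object of $\mathsf{D}_\B(\A)$ is reachable by the two Milnor constructions from $\mathsf{D}^b_\B(\A)$; this is where both left- and right-completeness of $\mathsf{D}(\A)$ are essential, and it is worth being explicit that standard truncations preserve the property of having cohomology in $i(\B)$.
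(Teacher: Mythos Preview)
Your proposal is correct and follows essentially the same argument as the paper's proof: the two-variable d\'evissage using the localising subcategory $\M_Y$ and the colocalising subcategory ${}_X\M$ together with Lemma~\ref{lemconsBR}, the one-line $\mathrm{Ext}$ computation for the converse, and the Milnor limit/colimit argument for (ii)$\Longrightarrow$(iii) are exactly what the paper does. The only cosmetic difference is that you spell out slightly more explicitly where each hypothesis enters, which is harmless.
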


Motivated by Example~\ref{examrecol}, the following result gives necessary and sufficient conditions for certain recollements of abelian categories to induce recollements of unbounded derived categories. The key ingredient of the proof is the equivalence between $\mathsf{D}_\B(\A)$ and $\mathsf{D}(\B)$ established in Theorem~\ref{prophomolemb}. 

\begin{thm}
\label{thmrecolderivedcat}
Let $\textsf{R}_{\mathsf{ab}}(\B,\A,\C)$ be a recollement of abelian categories as in $(\ref{rec})$. Suppose that $\B$ is a Grothendieck category such that $\mathsf{D}(\B)$ is left-complete and suppose that $\mathsf{D}(\A)$ is TR5 and left-complete. The following statements are equivalent.
\begin{enumerate}
\item The functor $i_{*}\colon \B\lxr \A$ is a homological embedding and $i_*\colon\mathsf{D}(\B)\lxr\mathsf{D}(\A)$ commutes with products and coproducts.
\item There is a recollement of triangulated categories
\begin{equation}
\label{derivedrecollement}
\xymatrix@C=0.5cm{
{\mathsf{D}}(\B) \ar[rrr]^{i_*} &&&
{\mathsf{D}}(\A) \ar[rrr]^{j^*} \ar @/_1.5pc/[lll]_{i^*} \ar
 @/^1.5pc/[lll]_{i^{!}} &&& {\mathsf{D}}(\C).
\ar @/_1.5pc/[lll]_{j_{!}} \ar
 @/^1.5pc/[lll]_{j_{*}}
 }
\end{equation}
\end{enumerate}
\begin{proof}
(i)$\Longrightarrow$(ii): From the recollement $\textsf{R}_{\textsf{ab}}(\B,\A,\C)$ we have the exact sequence of abelian categories $0 \lxr \B \lxr \A \lxr \C \lxr 0$. Since the quotient functor $\A\lxr \C$ has a right (or left) adjoint, it follows that  
$0 \lxr {\textsf{D}}_{\B}(\A) \lxr {\textsf{D}}(\A) \lxr {\textsf{D}}(\C) \lxr 0$ is an exact sequence of triangulated categories, where ${\textsf{D}}_{\B}(\A)$ is the full subcategory of ${\textsf{D}}(\A)$ consisting of complexes whose cohomology lie in $i_*(\B)$ (see, for instance, \cite[Lemma 5.9]{Krause}). The functor $i_*\colon \B\lxr \A$ is a homological embedding, thus from Theorem~\ref{prophomolemb} the derived functor $i_*\colon \textsf{D}(\B)\lxr \textsf{D}(\A)$ is fully faithful and $\mathsf{D}_\B(\A)$ is equivalent to $\mathsf{D}(\B)$. This implies that $0 \lxr {\textsf{D}}(\B) \lxr {\textsf{D}}(\A) \lxr {\textsf{D}}(\C) \lxr 0$ is an exact sequence of triangulated categories. Since both $\mathsf{D}(\B)$ and $\mathsf{D}(\B)^{op}$ satisfy Brown representability (Theorem \ref{ex Brown}) and $i_*$ preserves both products and coproducts, it follows from Theorem \ref{adjoints from Brown} that $i_*\colon\mathsf{D}(\B)\lxr\mathsf{D}(\A)$ has both a left and a right adjoint. On the other hand, by \cite[Theorem $2.1$]{CPSstratif}, we obtain that the derived functor $j^*\colon {\mathsf{D}}(\A)\lxr {\mathsf{D}}(\C)$ admits a left and right adjoint and, therefore, we get a recollement of triangulated categories as wanted.

(ii)$\Longrightarrow$(i): Since $({\textsf{D}}(\B), {\textsf{D}}(\A), {\textsf{D}}(\C))$ is a recollement, the functor $i_*\colon \mathsf{D}(\B)\lxr \mathsf{D}(\A)$ is fully faithful. Then, by Theorem~\ref{prophomolemb} we infer that $i_*\colon \B\lxr \A$ is a homological embedding. Since the abelian category $\B$ is Grothendieck, it follows that the derived category $\mathsf{D}(\B)$ is TR5 and TR5*. Hence, the functor $i_*\colon \mathsf{D}(\B)\lxr \mathsf{D}(\A)$ preserves coproducts and products since it is both a left and a right adjoint.
\end{proof}
\end{thm}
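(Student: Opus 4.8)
The plan is to prove Theorem~\ref{thmrecolderivedcat} by a two-directional argument that leans heavily on Theorem~\ref{prophomolemb} (the homological-embedding criterion) and on the Brown representability package assembled above.

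\textbf{The implication (i)$\Longrightarrow$(ii).}
First I would extract from the recollement $\textsf{R}_{\mathsf{ab}}(\B,\A,\C)$ the short exact sequence of abelian categories $0\lxr\B\lxr\A\lxr\C\lxr 0$; here $\B$ is a Serre subcategory of $\A$ (it is the kernel of the quotient functor $\A\lxr\C$) and the quotient functor admits a one-sided adjoint. Invoking \cite[Lemma 5.9]{Krause} (or an analogous devissage statement), this lifts to an exact sequence of triangulated categories $0\lxr\mathsf{D}_\B(\A)\lxr\mathsf{D}(\A)\lxr\mathsf{D}(\C)\lxr 0$, where $\mathsf{D}_\B(\A)$ is the full subcategory of complexes with cohomology in $i_*(\B)$. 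Next, since by hypothesis $i_*\colon\B\lxr\A$ is a homological embedding and the derived functor preserves products and coproducts, Theorem~\ref{prophomolemb} applies: the derived functor $i_*\colon\mathsf{D}(\B)\lxr\mathsf{D}(\A)$ is fully faithful, and in fact (using that $\mathsf{D}(\A)$ is both TR5 and left-complete, hence right-complete by Example~\ref{ex right left}(i)) it induces an equivalence $\mathsf{D}(\B)\simeq\mathsf{D}_\B(\A)$. Substituting, $0\lxr\mathsf{D}(\B)\lxr\mathsf{D}(\A)\lxr\mathsf{D}(\C)\lxr 0$ is an exact sequence of triangulated categories. To upgrade this exact sequence to a recollement I need the outer adjoints. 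For $i_*$: both $\mathsf{D}(\B)$ and $\mathsf{D}(\B)^{op}$ satisfy Brown representability by Theorem~\ref{ex Brown} ($\B$ is Grothendieck, $\mathsf{D}(\B)$ left-complete), and $i_*$ preserves coproducts and products, so by Theorem~\ref{adjoints from Brown} (applied once on $\mathsf{D}(\B)$ and once on $\mathsf{D}(\B)^{op}$) $i_*$ has a left adjoint $i^*$ and a right adjoint $i^!$. For $j^*$: the derived quotient functor $\mathsf{D}(\A)\lxr\mathsf{D}(\C)$ admits left and right adjoints $j_!$, $j_*$ by \cite[Theorem 2.1]{CPSstratif}. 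Fully faithfulness of $j_!$ and $j_*$ and the identity $\Image i_*=\Ker j^*$ follow formally from the exactness of the sequence together with the adjunctions, so one obtains the recollement (\ref{derivedrecollement}).

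\textbf{The implication (ii)$\Longrightarrow$(i).}
Conversely, suppose (\ref{derivedrecollement}) is a recollement. Then $i_*\colon\mathsf{D}(\B)\lxr\mathsf{D}(\A)$ is fully faithful by the definition of recollement, so by the equivalence (ii)$\Longleftrightarrow$(i) of Theorem~\ref{prophomolemb} the functor $i_*\colon\B\lxr\A$ is a homological embedding. Since $\B$ is Grothendieck, $\mathsf{D}(\B)$ is TR5 and (being Grothendieck) also satisfies Brown representability, hence is TR5$^*$ by \cite[Proposition 8.4.6]{Neeman}; thus $\mathsf{D}(\B)$ has both set-indexed coproducts and products. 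As $i_*$ is simultaneously a left adjoint (of $i^*$) and a right adjoint (of $i^!$) in the recollement, it preserves coproducts and products. This gives (i).

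\textbf{Main obstacle.}
The routine parts are the diagram-chase showing $\mathsf{D}_\B(\A)$ is the kernel of the derived quotient and the formal deduction of the recollement axioms from an exact sequence plus adjoints. The genuinely substantial input — and the step I expect to be the crux — is the chain of applications of Brown representability: one must check carefully that the hypotheses of Theorem~\ref{adjoints from Brown} are met on \emph{both} $\mathsf{D}(\B)$ and $\mathsf{D}(\B)^{op}$ (this is exactly where left-completeness of $\mathsf{D}(\B)$ is used, via Theorem~\ref{ex Brown}(ii)), and that $i_*$ genuinely preserves products, not merely coproducts, so that the right adjoint $i^!$ exists. All the real work has, however, already been front-loaded into Theorem~\ref{prophomolemb}; here the task is to assemble these pieces correctly.
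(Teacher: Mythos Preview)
Your proposal is correct and follows essentially the same route as the paper's proof: both directions hinge on Theorem~\ref{prophomolemb}, with the forward direction assembling the exact sequence via \cite[Lemma 5.9]{Krause}, upgrading to adjoints through Brown representability on $\mathsf{D}(\B)$ and $\mathsf{D}(\B)^{op}$ (Theorem~\ref{ex Brown}) together with \cite[Theorem 2.1]{CPSstratif}, and the backward direction reading off preservation of (co)products from $i_*$ being a middle term of an adjoint triple. One small wording slip: in your last paragraph you write that $i_*$ is ``a left adjoint (of $i^*$) and a right adjoint (of $i^!$)'', but in the adjoint triple $(i^*,i_*,i^!)$ it is the other way around --- $i_*$ is right adjoint to $i^*$ and left adjoint to $i^!$; the conclusion is unaffected.
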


\begin{rem}
Note that, in the above theorem, we do not describe explicitly how to obtain the adjoints of $i_*$ and $j^*$ in the recollement of derived categories. If, however, we assume some further conditions for the abelian categories $\A$ and $\C$, we can say more about these functors. In particular: 
\begin{itemize}
\item If $\A$ is Grothendieck, then $i^{!}$ is the right derived functor of the right adjoint of the inclusion functor $\B\lxr\A$. If, furthermore, $\A$ has enough projectives, then $i^*$ is the left derived functor of the left adjoint of the inclusion functor $\B\lxr\A$.
\item If $\C$ is Grothendieck, then $j_*$ is the right derived functor of the right adjoint of the quotient functor $\A\lxr \C$. If, furthermore, $\C$ has enough projectives, then $j_{!}$ is the left derived functor of the left adjoint of the quotient functor $\A\lxr\C$.
\end{itemize}
\end{rem}

Note also that, as a consequence of Theorem \ref{thmrecolderivedcat}, we obtain the result of \cite{CPSmemoirs} for recollements of derived module categories, already mentioned in Example \ref{examrecol}. More concretely, we have that given a ring $A$ and an idempotent element $e$ of $A$, the ring epimorphism $f\colon A\lxr A/AeA$ is homological if and only if there is a  recollement of triangulated categories of the form (\ref{stratifyingrecol}). 

\subsection{Glueing tilting and equivalences of recollements}
\label{subsectiongluequivrecol}
Our aim in this subsection is to identify which recollements of derived categories are equivalent to derived versions of recollements of abelian categories. We will provide an answer to this question in terms of the glueing of (co)tilting t-structures. 

We begin with two useful properties of derived recollements. On one hand, we discuss the glueing of standard t-structures along such a recollement, motivating a necessary condition towards our answer (Theorem \ref{criterium stratifying}) to the proposed problem. On the other hand, we restate in this context the exact sequence (\ref{exseqfcat}) of f-categories (see Proposition~\ref{f-categories over exact seq}) for filtered derived categories, which we use to prove a corollary of the main theorem (Corollary \ref{cor 2 strat}).
 
\begin{lem}
\label{standard glue to standard}
Let $\textsf{R}_{\mathsf{ab}}(\B,\A,\C)$ be a recollement of abelian categories as in $(\ref{rec})$. Suppose that $\B$ is a Grothendieck category such that $\mathsf{D}(\B)$ is left-complete and suppose that $\mathsf{D}(\A)$ is TR5 and left-complete. If the functor $i_*\colon \B\lxr\A$ is a homological embedding and  $i_*\colon\mathsf{D}(\B)\lxr\mathsf{D}(\A)$ commutes with products and coproducts, then the following statements hold.
\begin{enumerate}
\item The standard t-structures in $\mathsf{D}(\B)$ and $\mathsf{D}(\C)$ glue to the standard t-structure in $\mathsf{D}(\A)$ along the recollement (\ref{derivedrecollement}).
 
\item The f-categories over $\mathsf{D}(\B)$ and $\mathsf{D}(\C)$ induced by the filtered derived category $\mathsf{D}\mathsf{F}(\A)$ over $\mathsf{D}(\A)$ coincide, respectively, with the filtered derived categories $\mathsf{D}\mathsf{F}(\B)$ and $\mathsf{D}\mathsf{F}(\C)$. In particular,  there is an exact sequence of filtered derived categories:
\[
\xymatrix{
0 \ar[r] & \mathsf{D}\mathsf{F}(\B) \ar[r]^{} & \mathsf{D}\mathsf{F}(\A) \ar[r]^{} & \mathsf{D}\mathsf{F}(\C) \ar[r] & 0.
}
\] 
\end{enumerate} 
\end{lem}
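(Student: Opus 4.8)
For part (i) the plan is to verify directly the defining equations of the glued t-structure in Theorem \ref{glueing rec heart}. In the recollement (\ref{derivedrecollement}) produced by Theorem \ref{thmrecolderivedcat}, the functors $i_*\colon\mathsf{D}(\B)\lxr\mathsf{D}(\A)$ and $j^*\colon\mathsf{D}(\A)\lxr\mathsf{D}(\C)$ are the derived functors of the \emph{exact} functors $i_*\colon\B\lxr\A$ and $\A\lxr\C$, hence they are t-exact for the standard t-structures; consequently $i^*$ and $j_!$ are right t-exact and $i^!$ and $j_*$ are left t-exact, being respectively left and right adjoints of t-exact functors. Given $X$ in $\mathsf{D}(\A)$, one inclusion is then immediate: if $X\in\mathbb{D}_\A^{\leq 0}$ then $j^*X\in\mathbb{D}_\C^{\leq 0}$ and $i^*X\in\mathbb{D}_\B^{\leq 0}$. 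Conversely, if $j^*X\in\mathbb{D}_\C^{\leq 0}$ and $i^*X\in\mathbb{D}_\B^{\leq 0}$, apply the recollement triangle $j_!j^*X\lxr X\lxr i_*i^*X\lxr (j_!j^*X)[1]$: both outer terms lie in $\mathbb{D}_\A^{\leq 0}$ (right t-exactness of $j_!$, t-exactness of $i_*$), so $X\in\mathbb{D}_\A^{\leq 0}$ because the aisle is closed under extensions. The description of $\mathbb{D}_\A^{\geq 0}$ follows dually from the triangle $i_*i^!X\lxr X\lxr j_*j^*X\lxr(i_*i^!X)[1]$, using left t-exactness of $j_*$. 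This exhibits $\mathbb{D}_\A$ as the t-structure glued from $\mathbb{D}_\B$ and $\mathbb{D}_\C$ along (\ref{derivedrecollement}).

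For part (ii), I would start from the construction in the proof of Proposition \ref{f-categories over exact seq}, applied to the exact sequence $0\lxr\mathsf{D}_\B(\A)\lxr\mathsf{D}(\A)\lxr\mathsf{D}(\C)\lxr 0$ (where $\mathsf{D}_\B(\A)=i_*(\mathsf{D}(\B))$ is triangle equivalent to $\mathsf{D}(\B)$ via $i_*$ by Theorem \ref{prophomolemb}(iii)) and to the f-category $\mathsf{DF}(\A)$ over $\mathsf{D}(\A)$: the induced f-category over $\mathsf{D}_\B(\A)$ is $\Y=\{(X,F)\in\mathsf{DF}(\A)\mid\mathsf{gr}^n_F(X)\in\mathsf{D}_\B(\A)\ \text{for all}\ n\in\mathbb Z\}$ and the induced f-category over $\mathsf{D}(\C)$ is the Verdier quotient $\mathsf{DF}(\A)/\Y$. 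Next I would observe that applying the exact embedding $\B\lxr\A$ levelwise to filtered complexes preserves filtered quasi-isomorphisms and gives an f-functor $F\colon\mathsf{DF}(\B)\lxr\mathsf{DF}(\A)$ (it commutes with the f-shift $s$ and with $\alpha$ and respects the subcategories, as it preserves graded components), and that the exact quotient $\A\lxr\C$ likewise induces an f-functor $\mathsf{DF}(\A)\lxr\mathsf{DF}(\C)$. The goal is then to show $F$ is fully faithful with essential image exactly $\Y$, and that $\mathsf{DF}(\A)\lxr\mathsf{DF}(\C)$ is essentially surjective and annihilates $\Y$, hence factors through an equivalence $\mathsf{DF}(\A)/\Y\xrightarrow{\ \sim\ }\mathsf{DF}(\C)$; the exact sequence of filtered derived categories in the statement is then the sequence (\ref{exseqfcat}) from Lemma \ref{quotient f-cat}, transported along the identifications $\Y\simeq\mathsf{DF}(\B)$ and $\mathsf{DF}(\A)/\Y\simeq\mathsf{DF}(\C)$.

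The hard part will be the full faithfulness of $F$ and the precise identification of its essential image with $\Y$, which I would settle by a dévissage along the graded length. The stable t-structures $(\mathsf{DF}(-)(\geq n+1),\mathsf{DF}(-)(\leq n))$ equip every object of a filtered derived category with a finite filtration whose successive quotients lie in $s^n\xi(\mathsf{D}(-))$ for various $n$ (with $\xi$ the canonical inclusion of the underlying derived category), and $F$ commutes with the truncations $\sigma_{\geq n},\sigma_{\leq n}$ and, through Proposition \ref{f-cat properties}(i), with the f-forgetful functors. On a single graded piece $F$ restricts to $i_*\colon\mathsf{D}(\B)\lxr\mathsf{D}_\B(\A)\subseteq\mathsf{D}(\A)$, which is fully faithful and essentially surjective by Theorem \ref{prophomolemb}. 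An induction on graded length, applying the five lemma to the long exact $\Hom$-sequences attached to the truncation triangles, yields full faithfulness of $F$; the same induction, using that $\mathsf{D}_\B(\A)$ is an extension-closed triangulated subcategory of $\mathsf{D}(\A)$, shows that an object of $\mathsf{DF}(\A)$ lies in the essential image of $F$ precisely when all its graded components lie in $\mathsf{D}_\B(\A)$, i.e. precisely when it lies in $\Y$. An entirely parallel dévissage, where on graded pieces the functor becomes the Verdier quotient $\mathsf{D}(\A)/\mathsf{D}_\B(\A)\simeq\mathsf{D}(\C)$, shows that $\mathsf{DF}(\A)\lxr\mathsf{DF}(\C)$ is essentially surjective and induces the required equivalence. (Alternatively, one can compare $\mathsf{DF}(\B)$ with $\Y$, and $\mathsf{DF}(\A)/\Y$ with $\mathsf{DF}(\C)$, directly through the equivalences of their hearts with $\mathsf{C}^b$ of the corresponding abelian hearts, as in Remark \ref{equivalence f-heart}.)
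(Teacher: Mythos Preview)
Your argument for part (i) is correct and essentially coincides with the paper's, though you are more explicit: the paper only checks the inclusions $\mathbb{D}_\A^{\leq 0}\subseteq\{X\mid j^*X\in\mathbb{D}_\C^{\leq 0},\ i^*X\in\mathbb{D}_\B^{\leq 0}\}$ and the dual one, and then remarks that two t-structures with comparable aisle and coaisle must agree. Your use of the recollement triangles to prove the reverse inclusions directly is equally valid.

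For part (ii) your strategy is correct but genuinely different from the paper's. The paper proves that $i_*^{\mathsf F}\colon\mathsf{DF}(\B)\to\Y$ is fully faithful by working directly with the calculus of fractions: writing a morphism in $\mathsf{DF}(-)$ as a roof, decomposing it into a compatible family of roofs in $\mathsf{D}(-)$ indexed by the filtration, and then invoking full faithfulness of $i_*$ on each component. For the identification $\mathsf{DF}(\A)/\Y\simeq\mathsf{DF}(\C)$ the paper constructs an explicit quasi-inverse by applying the left exact functor $j_*$ levelwise on filtered complexes (at the homotopy level) and checking that filtered acyclics land in $\Y$. Your d\'evissage along graded length, reducing via the truncation triangles and the five lemma to the single-degree case where the statement becomes full faithfulness of $i_*\colon\mathsf{D}(\B)\to\mathsf{D}(\A)$ (respectively the equivalence $\mathsf{D}(\A)/\mathsf{D}_\B(\A)\simeq\mathsf{D}(\C)$), is a cleaner and more conceptual route that avoids the explicit fraction manipulations; it works because $i_*^{\mathsf F}$ is an f-functor and hence commutes with the $\sigma$-truncations. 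One small point to make explicit in your write-up: for the quotient side, the d\'evissage for essential surjectivity of $\mathsf{DF}(\A)\to\mathsf{DF}(\C)$ requires lifting the connecting morphisms in the truncation triangles, so you should establish full faithfulness of $\mathsf{DF}(\A)/\Y\to\mathsf{DF}(\C)$ \emph{first} (or, alternatively, lift filtered complexes directly via the additive section $j_*$ on complexes, which is closer to what the paper does). Your parenthetical alternative via Remark~\ref{equivalence f-heart} is less convincing as stated: comparing hearts of the lifted t-structures identifies certain abelian categories, but does not by itself compare the f-categories.
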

\begin{proof}
(i) From Theorem~\ref{thmrecolderivedcat} we get a recollement of derived categories $\textsf{R}_{\mathsf{tr}}(\mathsf{D}(\B),\mathsf{D}(\A),\mathsf{D}(\B))$, see diagram $(\ref{derivedrecollement})$. Since $i_*\colon \mathsf{D}(\B)\lxr \mathsf{D}(\A)$ and $j^*\colon \mathsf{D}(\A)\lxr \mathsf{D}(\C)$ are the derived functors of the underlying exact functors $i_*\colon \B\lxr \A$ and $j^*\colon \A\lxr \C$, respectively, it follows that the derived functors $i_*$ and $j^*$ are t-exact with respect to the standard t-structures. Hence, the triangle functor $i^*\colon \mathsf{D}(\A)\lxr \mathsf{D}(\B)$ in the recollement diagram $(\ref{derivedrecollement})$ is right t-exact with respect to the standard t-structure. Then clearly we have $\mathbb{D}^{\leq 0}_\A\subseteq \{X\in \mathsf{D}(\A) \ | \ j^*(X)\in \mathbb{D}^{\leq 0}_\C \ \text{and} \ i^*(X)\in \mathbb{D}^{\leq 0}_\B \}$. Similarly, we get that 
$\mathbb{D}^{\geq 0}_\A\subseteq \{X\in \mathsf{D}(\A) \ | \ j^*(X)\in \mathbb{D}^{\geq 0}_\C \ \text{and} \ i^!(X)\in \mathbb{D}^{\geq 0}_\B \}.$ We infer that these t-structures coincide.

(ii) From Theorem \ref{thmrecolderivedcat}, there is an exact sequence of derived categories
\begin{equation}
\label{shortexactseqderlevel}
\xymatrix{0\ar[r]& \mathsf{D}(\B)\ar[r]^{i_*}& \mathsf{D}(\A)\ar[r]^{j^*}&\mathsf{D}(\C)\ar[r]& 0.}
\end{equation}
For an object $(X,F)$ in $\mathsf{D}\mathsf{F}(\B)$, where $F$ is a finite filtration of $X$, it follows from the exactness of $i_*\colon\B\lxr\A$ that $i_*(X)$ has an induced filtration $i_*(F)$ and $(i_*(X),i_*(F))$ lies in $\mathsf{D}\mathsf{F}(\A)$. Also, it is easy to see that $\textsf{gr}_{i_*(F)}^n(i_*(X))=i_*(\textsf{gr}^n_F(X))$ and, thus, it lies in $i_*(\mathsf{D}(\B))$. Hence, $i_*$ induces a functor 
\[
 i_*^\mathsf{F}\colon \mathsf{D}\mathsf{F}(\B)\lxr \Y:=\big\{(X,F)\in \mathsf{D}\mathsf{F}(\A) \ | \ \mathsf{gr}_F^n(X)\in i_*(\mathsf{D}(\B)) \ \text{for all} \ n\in \mathbb{Z} \big\}
\]
and one can check, using the exactness of $i_*\colon \B\lxr \A$, that $ i_*^\mathsf{F}$ is an f-functor (see Definition~\ref{defnffunctor}). 

We show that $i_*^\mathsf{F}$ is an equivalence. An easy induction argument shows that $ i_*^\mathsf{F}$ is essentially surjective. In fact, since $i_*$ is a fully faithful triangle functor, $i_*(\mathsf{D}(\B))$ is a triangulated subcategory. Hence, given $(Z,L)$ in $\Ycal$, each $L_nZ$ lies in $i_*(\mathsf{D}(\B))$, since $L_nZ$ is a finite extension of its $\textsf{gr}_L^k$-components, with $k\geq n$. From the fact that $i_*\colon\B\lxr \A$ induces a fully faithful exact functor between the categories of complexes, it then follows that $(Z,L)$ can be identified with an object in $\mathsf{DF}(\B)$. We show that $i_*^{\mathsf{F}}$ is faithful. Let $f\colon (X,F)\lxr (Y,G)$ be a morphism in $\mathsf{DF}(\B)$ such that $i_*^{\mathsf{F}}(f)=0$. Let $f$ be represented by a roof:
\begin{equation}\nonumber
\xymatrix{ & (Z,L) \ar[ld]_{c} \ar[rd]^{d} \\ 
(X,F) && (Y,G) }
\end{equation}
where $c$ is a filtered quasi-isomorphism in $\mathsf{KF}(\B)$. It is easy to see that the morphism $f=d\circ c^{-1}$ can be written as a sequence of roofs $(\dots, L_ad\circ(L_{a}c)^{-1},L_{a+1}d\circ(L_{a+1}c)^{-1},\dots)$ in $\mathsf{K}(\B)$ (see also Example~\ref{exam filtered derived cat}). Hence, it follows from the faithfulness of $i_*\colon\mathsf{D}(\B)\lxr \mathsf{D}(\A)$ that $f=0$. It remains to show that $ i_*^\mathsf{F}$ is full. Let $f\colon i_*^\mathsf{F}(X,F)\lxr i_*^\mathsf{F}(Y,G)$ be a morphism in $\Y$. The map $f$ can be represented by a roof similar to the one above, where the maps $c$ and $d$ now lie in $\mathsf{KF}(\A)$. We claim that the middle object $(Z,L)$ lies in the image of the functor $i_*^{\mathsf{F}}$. The map $c$ is a filtered quasi-isomorphism, i.e, for all $k$ in $\mathbb{Z}$ the maps $L_kc\colon L_kZ\lxr (i_*F)_ki_*(X)$ are quasi-isomorphisms in $\mathsf{K}(\A)$. Equivalently, the complexes $\mathsf{gr}_L(Z)$ and $\mathsf{gr}_{i_*F}(i_*X)$ are quasi-isomorphic (\cite[V.1.2]{Ill}) and, thus, $(Z,L)$ also lies in the image of $i_*^{\mathsf{F}}$. Writing $f$ as a sequence of roofs as before, the fullness of $i_*\colon\mathsf{D}(\B)\lxr \mathsf{D}(\A)$ guarantees that each component of $f$ lies in $i_*(\mathsf{D}(\B))$. Finally, it can be checked (using the faithfulness of $i_*\colon\mathsf{D}(\B)\lxr \mathsf{D}(\A)$) that the preimages of each component of $f$ under $i_*$ form a compatible sequence of morphisms in $\mathsf{D}(\B)$, i.e. there is a morphism in $\mathsf{DF}(\B)$ which is a preimage of $f$ under $ i_*^\mathsf{F}$.

We now show that the quotient f-category $\mathsf{DF}(\A)/\Y$ is equivalent to $\mathsf{DF}(\C)$. The exact functor $j^*\colon\A\lxr\C$ clearly defines a functor $j^*_\mathsf{F}\colon\mathsf{DF}(\A)\lxr\mathsf{DF}(\C)$ which factors uniquely through $\mathsf{DF}(\A)/\Ycal$. Hence, we get a functor $\phi\colon\mathsf{DF}(\A)/\Ycal\lxr \mathsf{DF}(\C)$. Conversely, since $j_*\colon\C\lxr \A$ is left exact, it induces a functor $\mathsf{KF}(\C)\lxr \mathsf{KF}(\A)$, which we also denote by $j_*$. Consider the composition
\[
\xymatrix{
\mathsf{KF}(\C) \ar[r]^{j_* } & \mathsf{KF}(\A) \ar[r] & \mathsf{DF}(\A) \ar[r] & \mathsf{DF}(\A)/\Y},
\]
where the last two functors are the obvious localisation functors. We claim that it sends filtered acyclic complexes to zero, hence yielding a functor $\psi\colon \mathsf{DF}(\C)\lxr\mathsf{DF}(\A)/\Y$. Let $(j_*(X),j_*(F))$ be an object in $\mathsf{D}\mathsf{F}(\A)$ where $(X,F)$ is acyclic in $\mathsf{KF}(\C)$. It suffices to show that $\mathsf{gr}_{j_*(F)}^n(j_*(X))$ lies in $i_*(\mathsf{D}(\B))$ for all integers $n$. From the exact sequence (\ref{shortexactseqderlevel}) this is equivalent to show that $j^*(\mathsf{gr}_{j_*(F)}^n(j_*(X)))=0$ for all integers $n$. Using the adjoint pair $(j^*,j_*)$ at the level of homotopy categories and since the counit $j^*j_*\lxr \iden_{\mathsf{K}(\C)}$ is a natural equivalence we derive that $j^*(\mathsf{gr}_{j_*(F)}^n(j_*(X)))\cong \mathsf{gr}^n_F(X)$ which is acyclic, thus zero in $\mathsf{D}(\C)$, proving the claim. Finally, using the counit of the adjunction induced by $(j^*,j_*)$ at the level of filtered complexes, one can check that $\phi$ and $\psi$ are quasi-inverse f-functors, thus finishing the proof.
\end{proof}

We now prove the main theorem of this section. 

\begin{thm}\label{criterium stratifying}
Let $\A$, $\B$ and $\C$ be abelian categories whose derived categories are TR5 and TR5*. 
Suppose furthermore that $\B$ is a Grothendieck category.  Let $\R$ be a recollement of the form
\begin{equation}\label{recollement}
\R\colon\ \ \ \ \ \ \xymatrix{\mathsf{D}(\B)  \ar[rrr]^{i_*} &&& \mathsf{D}(\A)  \ar[rrr]^{j^*}  \ar @/_1.5pc/[lll]_{i^*}  \ar @/^1.5pc/[lll]^{i^!} &&& \mathsf{D}(\C). \ar @/_1.5pc/[lll]_{j_!} \ar @/^1.5pc/[lll]^{j_*} }
\end{equation}
Then the following statements are equivalent.
\begin{enumerate}
\item There are abelian categories $\U$, $\V$ and $\W$ with a projective generator (respectively, an injective cogenerator) and a recollement of abelian categories
$$\xymatrix{\U \ar[rrr]^{I_*} &&& \V  \ar[rrr]^{J^*}  \ar @/_1.5pc/[lll]_{I^*}  \ar @/^1.5pc/[lll]^{I^!} &&& \W \ar @/_1.5pc/[lll]_{J_!} \ar @/^1.5pc/[lll]^{J_*} }$$
such that
\begin{itemize}
\item $\U$ is a Grothendieck category and $\mathsf{D}(\U)$ is left-complete;

\item The derived category $\mathsf{D}(\V)$ is TR5 and left-complete;

\item $I_*$ is a homological embedding and $I_*\colon\mathsf{D}(\U)\lxr\mathsf{D}(\V)$ preserve products and coproducts;

\item The associated derived recollement is  equivalent to $\R$ via restrictable equivalences.
\end{itemize}

\item There are bounded tilting (respectively, cotilting) objects $V$ in $\mathsf{D}(\A)$, $U$ in $\mathsf{D}(\B)$ and  $W$ in $\mathsf{D}(\C)$ such that
\begin{itemize}
\item $\Hcal_U$ is a Grothendieck category and $\mathsf{D}(\Hcal_U)$ is left-complete;
\item The derived category $\mathsf{D}(\Hcal_V)$ and left-complete;
\item there is an f-category $(\X,\theta)$ over $\mathsf{D}(\A)$ such that the realisation functor $\mathsf{real}^\X_V$ is an extendable equivalence;
\item the associated tilting t-structures in $\mathsf{D}(\B)$ and $\mathsf{D}(\C)$ glue along $\R$ to the associated tilting t-structure in $\mathsf{D}(\A)$.
\end{itemize}
\end{enumerate} 
\end{thm}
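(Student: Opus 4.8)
The strategy is to exploit the dictionary developed in Sections 3 and 4: by Proposition \ref{gen cogen heart} the heart $\Hcal_M$ of a bounded (co)tilting object has a projective generator (resp. an injective cogenerator); by Proposition \ref{der equival} its realisation functor is fully faithful, and by Corollary \ref{der equival2} it induces an equivalence $\mathsf{D}^b(\Hcal_M)\simeq\mathsf{D}^b$; conversely, by Lemma \ref{generator properties} a projective generator of an abelian category is a bounded tilting object with the standard associated t-structure (dually for injective cogenerators). Under this dictionary the condition in (ii) that the tilting t-structures glue along $\R$ translates, via the glueing theorem \ref{glueing rec heart}, into the recollement of hearts $\mathsf{R}_{\mathsf{ab}}(\Hcal_U,\Hcal_V,\Hcal_W)$ being a derived version of $\R$, while the bridge between recollements of abelian categories and recollements of unbounded derived categories is Theorem \ref{thmrecolderivedcat} combined with Lemma \ref{standard glue to standard}. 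The compatibility of realisation functors with recollement functors will come from Theorem \ref{lift implies diagram}, noting that for the exact sequence $0\to\mathsf{D}(\B)\to\mathsf{D}(\A)\to\mathsf{D}(\C)\to0$ underlying $\R$ both $i_*$ and $j^*$ admit f-liftings with respect to f-categories induced, by Corollary \ref{ffadmitsflifting}, from any f-category over $\mathsf{D}(\A)$.

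For (i)$\Rightarrow$(ii) I would take the projective generators $P_\U,P_\V,P_\W$ (resp. injective cogenerators), which by Lemma \ref{generator properties} are bounded tilting (resp. cotilting) objects with standard associated t-structures. The third and fourth bullets of (i) together with Theorem \ref{thmrecolderivedcat} and Lemma \ref{standard glue to standard}(i) give a recollement $\mathsf{R}_{\mathsf{tr}}(\mathsf{D}(\U),\mathsf{D}(\V),\mathsf{D}(\W))$ along which the standard t-structures glue, equivalent to $\R$ via restrictable equivalences $F\colon\mathsf{D}(\V)\to\mathsf{D}(\A)$, $H\colon\mathsf{D}(\U)\to\mathsf{D}(\B)$, $G\colon\mathsf{D}(\W)\to\mathsf{D}(\C)$. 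I then set $V:=F(P_\V)$, $U:=H(P_\U)$, $W:=G(P_\W)$; these are tilting (resp. cotilting) by Example \ref{easyexample}(iii), they are bounded by Lemma \ref{eq bdd} and restrictability of $F$, $H$, $G$, and their hearts are $F(\V)$, $H(\U)$, $G(\W)$. Since the glueing conditions of Theorem \ref{glueing rec heart} are phrased through the recollement functors, which are intertwined by $(F,G,H)$, the t-structures $\mathbb{T}_U$ and $\mathbb{T}_W$ glue along $\R$ to $\mathbb{T}_V$; the completeness conditions transport along $\Hcal_U\simeq\U$ and $\Hcal_V\simeq\V$; and Proposition \ref{everything is real}(ii) applied to $F$ produces an f-category $(\X,\theta)$ over $\mathsf{D}(\A)$ with $\mathsf{real}^\X_V\cong F^b\circ\mathsf{D}^b((F^0)^{-1})$, which is extendable because $F$ is restrictable and $F^0$ is an exact equivalence of abelian categories.

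For (ii)$\Rightarrow$(i) I would set $\U:=\Hcal_U$, $\V:=\Hcal_V$, $\W:=\Hcal_W$, which have projective generators (resp. injective cogenerators) by Proposition \ref{gen cogen heart} and, since $\mathsf{D}(\A)$, $\mathsf{D}(\B)$, $\mathsf{D}(\C)$ are TR5 and TR5*, are complete and cocomplete with exact (co)products by Lemma \ref{facts silting t-structures}. The glueing hypothesis and Theorem \ref{glueing rec heart} give the recollement of hearts $\mathsf{R}_{\mathsf{ab}}(\U,\V,\W)$, in which $I_*$ is the restriction of $i_*$ and $J^*$ the restriction of $j^*$ (both t-exact for the glued t-structures). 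One checks that $I_*$ is a homological embedding from the chain $\Ext^n_\U(X,Y)\cong\Hom_{\mathsf{D}^b(\U)}(X,Y[n])\cong\Hom_{\mathsf{D}(\B)}(X,Y[n])\cong\Hom_{\mathsf{D}(\A)}(i_*X,i_*Y[n])\cong\Hom_{\mathsf{D}^b(\V)}(i_*X,i_*Y[n])\cong\Ext^n_\V(I_*X,I_*Y)$, the outer isomorphisms using Proposition \ref{der equival} and that $\mathsf{real}_U$, $\mathsf{real}_V$ act as the identity on hearts. Since $I_*$ has both adjoints in $\mathsf{R}_{\mathsf{ab}}(\U,\V,\W)$ and (co)products are exact in the hearts, $\mathsf{D}(I_*)\colon\mathsf{D}(\U)\to\mathsf{D}(\V)$ preserves products and coproducts, so Theorem \ref{thmrecolderivedcat} yields the derived recollement $\mathsf{R}_{\mathsf{tr}}(\mathsf{D}(\U),\mathsf{D}(\V),\mathsf{D}(\W))$ and, by Theorem \ref{prophomolemb}(iii), $\mathsf{D}(I_*)$ is an equivalence onto $\mathsf{D}_\U(\V)$. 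Conjugating $\R$ by the extendable equivalence $\mathsf{real}^\X_V\colon\mathsf{D}(\Hcal_V)\to\mathsf{D}(\A)$ (t-exact for the standard t-structure and $\mathbb{T}_V$), Theorem \ref{lift implies diagram} identifies its subcategory term with $\ker\mathsf{D}(J^*)=\mathsf{D}_\U(\V)$ and its quotient functor with $\mathsf{D}(J^*)$; composing $\mathsf{D}(I_*)$ with the quasi-inverse of that subcategory embedding gives an equivalence $\mathsf{D}(\U)\simeq\mathsf{D}(\B)$ restricting to $\mathsf{real}_U$, and the induced equivalence on Verdier quotients gives $\mathsf{D}(\W)\simeq\mathsf{D}(\C)$ restricting to $\mathsf{real}_W$; these three restrictable equivalences exhibit $\mathsf{R}_{\mathsf{tr}}(\mathsf{D}(\U),\mathsf{D}(\V),\mathsf{D}(\W))$ as equivalent to $\R$. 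The remaining conditions in (i) are the completeness hypotheses, which are assumed in (ii).

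The step I expect to be the main obstacle is this last one: passing from the bounded realisation equivalences to equivalences of the full unbounded derived categories that remain compatible with all six recollement functors, i.e. identifying $\ker\mathsf{D}(J^*)$ with $\mathsf{D}_\U(\V)$ and applying Theorem \ref{prophomolemb}(iii). This is exactly why the statement carries the TR5/TR5*, Grothendieck and left-completeness hypotheses: they are what is needed to run the completeness-and-Brown-representability machinery of Subsection 6.1 (Lemma \ref{lemconsBR}, Theorem \ref{prophomolemb}) inside the hearts. In the situation of Corollary \ref{rec strat k-alg} all of these hold automatically, which is what makes that statement clean.
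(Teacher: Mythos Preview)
Your direction (i)$\Rightarrow$(ii) is essentially the paper's argument: transport the projective generators along the restrictable equivalences, use Lemma~\ref{standard glue to standard}(i) to see that the images of the standard t-structures glue, and invoke Proposition~\ref{everything is real}(ii) for the extendability of $\mathsf{real}^\X_V$.

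For (ii)$\Rightarrow$(i) you have assembled the right ingredients but you reverse the order of the two main steps, and that is where a genuine gap appears. You want to apply Theorem~\ref{thmrecolderivedcat} and Theorem~\ref{prophomolemb}(iii) to the recollement of hearts \emph{before} comparing with $\R$; both require that $\mathsf{D}(I_*)\colon\mathsf{D}(\Hcal_U)\to\mathsf{D}(\Hcal_V)$ preserve products and coproducts. Your justification, ``(co)products are exact in the hearts by Lemma~\ref{facts silting t-structures}'', over-reads that lemma: in the tilting case part~(ii)(b) gives exact \emph{products} in the heart, not exact coproducts (that is the cosilting statement~(iii)(b)). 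Nothing in the hypotheses forces $\Hcal_V$ to be AB4, so you cannot conclude that coproducts in $\mathsf{D}(\Hcal_V)$ are computed termwise, and hence the fact that the exact functor $I_*$ preserves abelian coproducts does not directly give a coproduct-preserving derived functor.

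The paper runs the argument in the opposite order and thereby sidesteps this. After your chain of isomorphisms showing $I_*$ is a homological embedding, it \emph{first} constructs the comparison functor $\Phi\colon\mathsf{D}(\Hcal_U)\to\mathsf{D}(\B)$ from the extension $\mathsf{Real}^\X_V$: one checks that the composite $F:=j^*\circ\mathsf{Real}^\X_V\circ \mathsf{D}(I_*)$ is t-exact for the standard t-structure on $\mathsf{D}(\Hcal_U)$ and $\mathbb{T}_W$ on $\mathsf{D}(\C)$, that it vanishes on $\mathsf{D}^b(\Hcal_U)$ by the bounded commutative squares of Theorem~\ref{lift implies diagram}, and hence vanishes everywhere by nondegeneracy of $\mathbb{T}_W$. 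This forces $\mathsf{Real}^\X_V\circ\mathsf{D}(I_*)$ to factor through $i_*$, yielding $\Phi$ with $i_*\Phi\cong\mathsf{Real}^\X_V\mathsf{D}(I_*)$; one identifies $\Phi^b\cong\mathsf{real}^\Y_U$ from those same squares and upgrades $\Phi$ to an unbounded equivalence via Lemma~\ref{lemconsBR}(i) (here is where the hypothesis that $\B$ is Grothendieck is used). Only \emph{after} $\Phi$ and the analogous $\Theta$ are equivalences does the relation $i_*\Phi\cong\mathsf{Real}^\X_V\mathsf{D}(I_*)$ force $\mathsf{D}(I_*)$ to preserve products and coproducts --- and only then is Theorem~\ref{thmrecolderivedcat} invoked. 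Your ``conjugation'' paragraph is morally this $F\cong 0$ computation, but the identification $\ker(j^*\mathsf{Real}^\X_V)=\mathsf{D}_{\Hcal_U}(\Hcal_V)$ is the t-exactness/nondegeneracy step above, not a consequence of Theorem~\ref{lift implies diagram} (which only supplies the bounded squares).
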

\begin{proof}
Once again, the tilting and cotilting cases are dual to each other. We prove the tilting case. 

(i)$\Longrightarrow$(ii): From Theorem \ref{thmrecolderivedcat}, we conclude that the recollement of abelian categories $\mathsf{R}_{\mathsf{ab}}(\U,\V,\W)$ can be derived. By assumption, there is an equivalence of recollements as follows:
\begin{equation}
\label{equivalenceofrecoldiagram1}
\xymatrix@C=0.5cm{ \mathsf{D}(\U) \ar[dd]_{\simeq}^{\Phi} \ar[rrr]^{I_*} &&& \mathsf{D}(\V) \ar[dd]_{\simeq}^{\Psi} \ar[rrr]^{J^*}  \ar @/_1.5pc/[lll]_{}  \ar @/^1.5pc/[lll]^{} &&& \mathsf{D}(\W) \ar[dd]_{\simeq}^{\Theta} \ar @/_1.5pc/[lll]_{} \ar @/^1.5pc/[lll]^{} \\ &&& &&& \\
 \mathsf{D}(\B) \ar[rrr]^{i_*} &&& \mathsf{D}(\A) \ar[rrr]^{j^*}  \ar @/_1.5pc/[lll]_{i^*}  \ar @/^1.5pc/[lll]^{i^!} &&& \mathsf{D}(\C) \ar @/_1.5pc/[lll]_{j_!} \ar @/^1.5pc/[lll]^{j_*} } 
\end{equation} 
where $\Phi$, $\Psi$ and $\Theta$ are restrictable equivalences. Therefore, by Theorem \ref{eq tilt cotilti shape}, there are bounded tilting objects $V$, $U$ and $W$ in $\mathsf{D}(\A)$, $\mathsf{D}(\B)$ and $\mathsf{D}(\C)$, respectively, such that $\Hcal_V\cong \V$, $\Hcal_U\cong \U$ and $\Hcal_W\cong \W$.

Since the top recollement is derived from an abelian recollement, it follows from Lemma~\ref{standard glue to standard}(i) that the standard t-structures in $\mathsf{D}(\U)$ and $\mathsf{D}(\W)$ glue to the standard t-structure in $\mathsf{D}(\W)$. Furthermore, the standard t-structures in the top recollement are sent to the the tilting t-structures generated by $V$, $U$ and $W$ in the bottom recollement. Hence, it follows from the commutativity of the diagram $(\ref{equivalenceofrecoldiagram1})$ that the glueing of the t-structures generated by $U$ and $W$ is the t-structure generated by $V$. The assumption that $\U$ is Grothendieck then translates into the fact that $\Hcal_U$ is Grothendieck (since they are equivalent abelian categories). Also the left-completeness properties required in (i) imply the left-completeness properties of (ii). Finally, observe from Proposition \ref{everything is real}(ii) that there is a choice of an f-category $(\X,\theta)$ over $\mathsf{D}(\A)$ such that $\Psi^b\cong \mathsf{real}^\X_V\circ \mathsf{D}^b(\Psi^0)$, where $\Psi^b$ is the restriction of $\Psi$ to $\mathsf{D}^b(\V)$ and $\Psi^0\colon \V\lxr \Psi(\V)=\Hcal_V$ is the naturally induced exact functor. Hence, $\mathsf{real}^\X_V$ is an extendable equivalence. 

(ii)$\Longrightarrow$(i): First, from Theorem \ref{glueing rec heart}, there is a recollement of hearts of the form
\[
\xymatrix{\Hcal_U \ar[rrr]^{\mathsf{H}^0_V i_*} &&& \Hcal_V  \ar[rrr]^{\mathsf{H}^0_W j^*}  \ar @/_1.5pc/[lll]_{\mathsf{H}^0_U i^*}  \ar @/^1.5pc/[lll]^{\mathsf{H}^0_U i^!} &&& \Hcal_W. \ar @/_1.5pc/[lll]_{\mathsf{H}^0_V j_!} \ar @/^1.5pc/[lll]^{\mathsf{H}^0_V j_*} }
\]
Note that there is a slight abuse of notation here: each of the functors in the recollement is in fact a triple composition - we are omitting the embedding of each heart in the corresponding triangulated category. Set $I_*:=\mathsf{H}^0_V i_*$ and $J^*:=\mathsf{H}^0_W j^*$ and keep the same notations for the corresponding derived functors. 

Consider the f-category $(\X,\theta)$ over $\mathsf{D}(\A)$ and the realisation functor $\mathsf{real}_V^\X$ which, by assumption, is an extendable equivalence. Let $(\Ycal,\xi)$ and $(\Zcal,\eta)$ be the induced f-categories over $\mathsf{D}(\B)$ and $\mathsf{D}(\C)$, respectively, so that $i_*$ and $j^*$ admit f-liftings (Proposition~\ref{f-categories over exact seq} and Corollary~\ref{ffadmitsflifting}). Since $i_*$ and $j^*$ are t-exact functors for the fixed t-structures (Theorem \ref{glueing rec heart}), it follows from Theorem~\ref{lift implies diagram} that we have commutative diagrams
\begin{equation}\label{commutative diagrams}
\xymatrix{\mathsf{D}^b(\Hcal_U)\ar[r]^{I_*}\ar[d]^{\mathsf{real}^\Ycal_U}&\mathsf{D}^b(\Hcal_V)\ar[d]^{\mathsf{real}_V^\Xcal}&& \mathsf{D}^b(\Hcal_V)\ar[d]^{\mathsf{real}_V^\Xcal}\ar[r]^{J^*}&\mathsf{D}^b(\Hcal_W)\ar[d]^{\mathsf{real}^\Zcal_W}\\ \mathsf{D}(\B)\ar[r]^{i_*}&\mathsf{D}(\A)&&\mathsf{D}(\A)\ar[r]^{j^*}&\mathsf{D}(\C).}
\end{equation}

From the left diagram we show that the functor $I_*$ is a homological embedding. Let $X$ and $Y$ be objects in $\Hcal_U$. Since $\mathsf{real}^\Y_U$ is fully faithful (see Proposition~\ref{der equival}) and acts as the identity on $X$ and $Y[n]$ for any $n$, it follows that $\Ext^n_{\Hcal_U}(X,Y)\cong\Hom_{\mathsf{D}(\Hcal_U)}(X,Y[n])\cong\Hom_{\mathsf{D}(\B)}(X,Y[n])$. Now, since $i_*$ is fully faithful, we get that $\Ext^n_{\Hcal_U}(X,Y)\cong\Hom_{\mathsf{D}(\A)}(i_*X,i_*Y[n])$. On the other hand, since $\mathsf{real}^\X_V$ is fully faithful (again by Proposition~\ref{der equival}) and acts as the identity on $\mathsf{H}^0_V i_*X=i_*X$ and $(\mathsf{H}^0_V i_*Y)[n]=i_*Y[n]$, it follows that 
\[
{\Ext}_{\Hcal_V}^n(\mathsf{H}^0_V i_*X,\mathsf{H}^0_V i_*Y)\cong\Hom_{\mathsf{D}(\Hcal_V)}(i_*X,i_*Y[n])\cong\Hom_{\mathsf{D}(\A)}(i_*X,i_*Y[n]),
\] 
thus showing that $I_*$ is a homological embedding.

By assumption the functor $\mathsf{real}^\X_V$ is extendable, that is, there is a restrictable equivalence, denoted by $\textsf{Real}^\X_V$, between $\mathsf{D}(\Hcal_{V})$ and $\mathsf{D}(\A)$ that restricts to $\mathsf{real}^\X_V$. In particular, $\mathsf{D}(\Hcal_V)$ is TR5. Since all the other completeness properties required in Theorem \ref{thmrecolderivedcat} are also satisfied by assumption, the proof of that same theorem yields an exact sequence of triangulated categories:
\[
\xymatrix{0\ar[r]&\mathsf{D}(\Hcal_U)\ar[r]^{I_*}&\mathsf{D}(\Hcal_V)\ar[r]^{J^*}&\mathsf{D}(\Hcal_W)\ar[r] &0.}
\]
Consider now the composition $F:= j^*\textsf{Real}^\X_VI_*\colon \mathsf{D}(\Hcal_U)\lxr \mathsf{D}(\C)$. Note that by the commutativity of $(\ref{commutative diagrams})$, the image of $\mathsf{D}^b(\Hcal_U)$ under $F$ is zero. Since $F$ is, by construction, t-exact with respect to the standard t-structure in $\mathsf{D}(\Hcal_U)$ and the tilting t-structure generated by $W$ in $\mathsf{D}(\C)$, we have $F(\mathsf{H}_0^i(X))=\mathsf{H}_W^i(F(X))$ for any object $X$ in $\mathsf{D}(\Hcal_U)$. But $\mathsf{H}_0^i(X)$ lies in $\mathsf{D}^b(\Hcal_U)$ and, hence, we conclude that $\mathsf{H}_W^i(F(X))=0$ for all $i\in\mathbb{Z}$. Since tilting t-structures are nondegenerate (see Remark \ref{nondeg}) it follows that $F\cong 0$. Hence the functor $\textsf{Real}^\X_V$ induces a functor $\Phi\colon \mathsf{D}(\Hcal_U)\lxr\mathsf{D}(\A)$ such that $i_*\Phi=\textsf{Real}^\X_VI_*$. As a consequence, $\textsf{Real}^\X_V$ also induces a functor $\Theta\colon \mathsf{D}(\Hcal_W)\lxr \mathsf{D}(\C)$ such that $\Theta J^*=j^*\textsf{Real}^\X_V$.  

It remains to show that $\Phi$ (and, thus, $\Theta$) are (restrictable) triangle equivalences. Let $\Phi^b$ denote the restriction of $\Phi$ to $\mathsf{D}^b(\Hcal_U)$. Since, by diagram $(\ref{commutative diagrams})$, in the bounded setting we have $i_*\Phi^b\cong \mathsf{real}^\X_VI_*\cong i_*\mathsf{real}^\Y_U$, it follows from the fully faithfulness of $i_*$ that $\Phi^b\cong \mathsf{real}^\Y_U$. Therefore, since $U$ is a bounded tilting object, we get from Corollary~\ref{der equival2} that the essential image of $\Phi^b$ is $\mathsf{D}^b(\B)$. By the relation $i_*\Phi=\textsf{Real}^\X_VI_*$ we get that $\Phi$ is fully faithful and therefore we can consider the essential image $\Image{\Phi}$ as a full subcategory of $\mathsf{D}(\B)$. Then, since by assumption $\mathsf{D}(\Hcal_U)$ is TR5, it follows that $\Image{\Phi}$ is a localising subcategory of $\mathsf{D}(\B)$. Moreover, $\Image{\Phi}$ contains $\mathsf{D}^b(\B)$ and thus, from Lemma~\ref{lemconsBR}(i), since $\B$ is Grothendieck, it follows that $\Image{\Phi}=\mathsf{D}(\B)$. We infer that the functor $\Phi$ is an equivalence, as wanted. Note that, similarly to the arguments above, it can also be seen that $\Theta^b\cong \mathsf{real}_W^{\Z}$.

Finally, observe that $\Phi$ and $\textsf{Real}^\X_V$ preserve products and coproducts since they are equivalences. Since also $i_*\colon\mathsf{D}(\B)\lxr\mathsf{D}(\A)$ preserves products and coproducts, it follows that so does $I_*\colon\mathsf{D}(\Hcal_U)\lxr\mathsf{D}(\Hcal_V)$. Hence, by Theorem \ref{thmrecolderivedcat}, the functor $I_*$ induces indeed a recollement of unbounded derived categories.
\end{proof}
 
The following result (Theorem C in the introduction) is a consequence of the above theorem and it provides necessary and sufficient conditions for a recollement of derived module categories to be equivalent to a stratifying recollement (recall Definition~\ref{defnstratifyingrecollement}). In this case almost all technical assumptions of Theorem~\ref{criterium stratifying} are automatically satisfied. In order to guarantee the extendability of a realisation functor, we assume the ring in the middle of the recollement to be a projective $\mathbb{K}$-algebra over a commutative ring $\mathbb{K}$. The statement reads then as follows.

\begin{cor}\label{rec strat k-alg}
Let $A$, $B$ and $C$ be rings. Assume that $A$ is a projective $\mathbb{K}$-algebra over a commutative ring $\mathbb{K}$. Suppose there is a recollement $\R$ of the form
\begin{equation}\label{recollement}
\R\colon\ \ \ \ \ \ \xymatrix{\mathsf{D}(B)  \ar[rrr]^{i_*} &&& \mathsf{D}(A)  \ar[rrr]^{j^*}  \ar @/_1.5pc/[lll]_{i^*}  \ar @/^1.5pc/[lll]^{i^!} &&& \mathsf{D}(C). \ar @/_1.5pc/[lll]_{j_!} \ar @/^1.5pc/[lll]^{j_*} }
\end{equation}
Then the following are equivalent.
\begin{enumerate}
\item The recollement $\R$ is equivalent to a stratifying recollement of a projective $\mathbb{K}$-algebra $S$.

\item There are compact tilting objects $V$ in $\mathsf{D}(A)$, $U$ in $\mathsf{D}(B)$ and  $W$ in $\mathsf{D}(C)$ such that the associated tilting t-structures in $\mathsf{D}(B)$ and $\mathsf{D}(C)$ glue along $\R$ to the associated tilting t-structure in $\mathsf{D}(A)$ and such that $\End_{\mathsf{D}(A)}(V)$ is a projective $\mathbb{K}$-algebra.
\end{enumerate} 
\end{cor}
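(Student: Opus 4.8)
The plan is to deduce this from Theorem~\ref{criterium stratifying}, applied to $\A=\Mod{A}$, $\B=\Mod{B}$, $\C=\Mod{C}$, after checking that in this module setting almost all of the technical hypotheses of that theorem are automatic. Indeed, all three derived categories are TR5 and TR5* and left-complete (Example~\ref{ex right left}), $\Mod{B}$ is Grothendieck, and by Corollary~\ref{corheartmodulecat} the heart of a compact tilting object is a module category over a ring, hence Grothendieck with left-complete derived category. The only genuinely extra ingredient is the projectivity over $\mathbb{K}$ of $\End_{\mathsf{D}(A)}(V)$, which, together with the hypothesis that $A$ is a projective $\mathbb{K}$-algebra, is exactly what is needed to invoke Theorem~\ref{standard f-lifts} and Remark~\ref{rem extendable k-proj} in order to guarantee that the relevant realisation functor is an \emph{extendable} equivalence.

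For (i)$\Longrightarrow$(ii), suppose $\R$ is equivalent to the stratifying recollement (\ref{stratifyingrecol}) of a projective $\mathbb{K}$-algebra $S$, associated to an idempotent $e\in S$ with $f\colon S\lxr S/SeS$ homological. That recollement is the derived version of the recollement $\textsf{R}_{\mathsf{ab}}(\Mod{S/SeS},\Mod{S},\Mod{eSe})$ of module categories induced by $e$ (Example~\ref{examrecol}). In each of $\mathsf{D}(S/SeS)$, $\mathsf{D}(S)$, $\mathsf{D}(eSe)$ the regular module is a compact tilting object whose associated tilting t-structure is the standard one (Lemma~\ref{generator properties}), and since $f_*$ is a homological embedding preserving products and coproducts these standard t-structures glue along (\ref{stratifyingrecol}) by Lemma~\ref{standard glue to standard}(i). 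Transporting this data along the triangle equivalences realising the equivalence $\R\simeq(\ref{stratifyingrecol})$ — say $F\colon\mathsf{D}(S)\lxr\mathsf{D}(A)$ on the middle term, with the corresponding equivalences on the outer terms — I set $V:=F(S)$ and define $U$, $W$ analogously. These are compact tilting objects (compactness and the tilting property are preserved by triangle equivalences, Example~\ref{easyexample}(3)), $\End_{\mathsf{D}(A)}(V)\cong\End_{\mathsf{D}(S)}(S)\cong S$ is a projective $\mathbb{K}$-algebra, and because the glued aisle and coaisle of Theorem~\ref{glueing rec heart} are expressed solely through the six recollement functors, glueing of t-structures is preserved by equivalences of recollements; hence the tilting t-structures of $U$ and $W$ glue along $\R$ to that of $V$.

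For (ii)$\Longrightarrow$(i), put $S:=\End_{\mathsf{D}(A)}(V)$ and verify the four bullets of Theorem~\ref{criterium stratifying}(ii): a compact tilting object is a perfect complex, hence bounded by Proposition~\ref{bdd silting over rings}; by Corollary~\ref{corheartmodulecat} the hearts $\Hcal_U$, $\Hcal_V$, $\Hcal_W$ are $\Mod{\End_{\mathsf{D}(B)}(U)}$, $\Mod{S}$ and $\Mod{\End_{\mathsf{D}(C)}(W)}$, so they are Grothendieck with left-complete derived categories, with $\mathsf{D}(\Hcal_V)$ moreover TR5; taking the filtered derived category of $A$ as the f-category over $\mathsf{D}(A)$, the functor $\mathsf{real}_V$ is an equivalence of standard type by Theorem~\ref{standard f-lifts} (both $A$ and $S$ being projective $\mathbb{K}$-algebras), hence extendable by Remark~\ref{rem extendable k-proj}; and the glueing of tilting t-structures is the hypothesis. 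Theorem~\ref{criterium stratifying} then yields abelian categories with projective generators whose recollement derives, via restrictable equivalences, to one equivalent to $\R$; by construction (and Corollary~\ref{corheartmodulecat}) this abelian recollement is $\textsf{R}_{\mathsf{ab}}(\Mod{R_U},\Mod{S},\Mod{R_W})$ for suitable rings $R_U$, $R_W$, with the functor $I_*\colon\Mod{R_U}\lxr\Mod{S}$ a homological embedding. By the classification of recollements of module categories (\cite{PsaroudVitoria}) this recollement is, up to equivalence, the one induced by an idempotent $e\in S$ with $R_U\cong S/SeS$, $R_W\cong eSe$ and $I_*$ identified with the restriction of scalars $f_*$; Theorem~\ref{hom ring epi} then turns the homological embedding condition on $I_*$ into the statement that $f\colon S\lxr S/SeS$ is homological, so the associated derived recollement is the stratifying recollement (\ref{stratifyingrecol}) of the projective $\mathbb{K}$-algebra $S$, and it is equivalent to $\R$.

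\textbf{Main obstacle.} The delicate point is the final step of (ii)$\Longrightarrow$(i): one must pass from the abstractly produced recollement of module categories to an explicit one induced by an idempotent of $S$ and match, via Theorem~\ref{hom ring epi}, the homological embedding hypothesis on $I_*$ with the condition that the canonical ring epimorphism $S\lxr S/SeS$ be homological — together with the book-keeping needed to certify that the resulting stratifying recollement is genuinely equivalent to $\R$, rather than merely to the derived version of the abelian recollement built along the way. The rest, in particular the transport of glueing data along equivalences of recollements in (i)$\Longrightarrow$(ii), is routine given the earlier results.
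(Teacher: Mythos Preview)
Your proposal is correct and follows essentially the same approach as the paper: both reduce the statement to Theorem~\ref{criterium stratifying} after noting that the technical hypotheses there (Grothendieck, left-completeness, extendability via Theorem~\ref{standard f-lifts} and Remark~\ref{rem extendable k-proj}, module-category hearts via Corollary~\ref{corheartmodulecat}) are automatic for derived module categories of projective $\mathbb{K}$-algebras. You supply more detail than the paper's terse proof, in particular the explicit passage from the abelian recollement of module categories to a stratifying one via \cite{PsaroudVitoria} and Theorem~\ref{hom ring epi}, which the paper subsumes in a single citation.
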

\begin{proof}
We use the fact that a recollement of module categories is equivalent to a recollement induced by an idempotent element (\cite[Theorem 5.3]{PsaroudVitoria}). This corollary then becomes a direct application of Theorem \ref{criterium stratifying}, provided we show that in this setting the additional technical assumptions of the theorem are automatically satisfied. First note that both derived module categories and their duals satisfy Brown representability  (they are left-complete derived categories of a Grothendieck category, see Example \ref{ex right left}(v) and Theorem \ref{ex Brown}). Note, furthermore, that every equivalence between derived module categories is restrictable (this follows from 
Example~\ref{example restrict extend}(i)). Since we assume that the algebra $A$ is projective over $\mathbb{K}$, it also follows that the realisation functor of the compact tilting object $V$ with respect to $\mathsf{DF}(A)$ is an extendable equivalence since it is an equivalence of standard type (see Theorem \ref{standard f-lifts}). Finally, observe that the compactness of the tilting objects is used to produce hearts which are module categories (see Corollary~\ref{corheartmodulecat}).
\end{proof}

In the next result we show, using Lemma~\ref{standard glue to standard}(ii), that we can be more specific about the shape of equivalences between two stratifying recollements (compare with \cite[Theorem $3.5$]{Miyachi}).

\begin{cor}\label{cor 2 strat}
Let $\mathbb{K}$ be a commutative ring, $A$ and $B$ projective $\mathbb{K}$-algebras and $e$ and $u$ idempotents in $A$ and $B$ respectively such that $A/AeA$, $eAe$, $B/BuB$ and $uBu$ are also projective $\mathbb{K}$-algebras. Suppose that $f\colon A\lxr A/AeA$ and $g\colon B\lxr B/BuB$ are homological ring epimorphisms, and denote by $\R_f$ and $\R_g$ the induced recollements of unbounded derived module categories. The following statements are equivalent.
\begin{enumerate}
\item There is an equivalence of recollements from $\R_f$ to $\R_g$ with all equivalences being extensions to unbounded derived categories of equivalences of standard~type between bounded derived categories.
\item There are compact tilting objects $V$ in $\mathsf{D}(B)$, $U$ in $\mathsf{D}(B/BuB)$ and  $W$ in $\mathsf{D}(uBu)$ such that 
$$\mathsf{End}_{\mathsf{D}(B)}(V)\cong A,\ \  \mathsf{End}_{\mathsf{D}(B/BuB)}(U)\cong A/AeA,\ \  \mathsf{End}_{\mathsf{D}(uBu)}(W)\cong eAe$$
and the associated tilting t-structures in $\mathsf{D}(B/BuB)$ and $\mathsf{D}(uBu)$ glue along $\R_g$ to the associated tilting t-structure in $\mathsf{D}(A)$.
\end{enumerate}
\end{cor}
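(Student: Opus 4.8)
The plan is to run the proof in parallel with Corollary~\ref{rec strat k-alg} and Theorem~\ref{criterium stratifying}, the extra ingredient being that one follows the \emph{filtered} derived categories along both recollements by means of Lemma~\ref{standard glue to standard}(ii), so as to recognise the comparison functors as equivalences of standard type rather than mere triangle equivalences.

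For \textnormal{(i)}$\Longrightarrow$\textnormal{(ii)}, let $(\Psi,\Phi,\Theta)$ be the given equivalence of recollements, with $\Psi\colon\mathsf{D}(A)\lxr\mathsf{D}(B)$ the equivalence of the middle terms, $\Phi\colon\mathsf{D}(A/AeA)\lxr\mathsf{D}(B/BuB)$ the induced equivalence on the kernels, and $\Theta\colon\mathsf{D}(eAe)\lxr\mathsf{D}(uBu)$ the equivalence of the outer quotients, each the unbounded extension of a standard type equivalence. I would put $V:=\Psi(A)$, $U:=\Phi(A/AeA)$, $W:=\Theta(eAe)$. Since $A$, $A/AeA$ and $eAe$ are projective generators of their module categories, hence compact tilting objects in their derived categories, and triangle equivalences carry compact tilting objects to compact tilting objects (Example~\ref{easyexample}(iii)), the objects $V$, $U$, $W$ are compact tilting; and since $\Psi,\Phi,\Theta$ are $\mathbb{K}$-linear, being derived tensor products with complexes of bimodules, the induced ring isomorphisms are $\mathbb{K}$-algebra isomorphisms $\End_{\mathsf{D}(B)}(V)\cong A$, $\End_{\mathsf{D}(B/BuB)}(U)\cong A/AeA$, $\End_{\mathsf{D}(uBu)}(W)\cong eAe$. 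Finally, as $f$ is a homological ring epimorphism, $\R_f$ is the derived version of the recollement of $\Mod{A}$ induced by $e$, so Lemma~\ref{standard glue to standard}(i) shows that the standard t-structures in $\mathsf{D}(A/AeA)$ and $\mathsf{D}(eAe)$ glue along $\R_f$ to the standard t-structure in $\mathsf{D}(A)$; transporting this glueing datum along $(\Psi,\Phi,\Theta)$, which intertwines the six functors of the two recollements, shows that $\mathbb{T}_U$ and $\mathbb{T}_W$ glue along $\R_g$ to $\mathbb{T}_V$, these being the images of the respective standard t-structures.

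For \textnormal{(ii)}$\Longrightarrow$\textnormal{(i)}, by Corollary~\ref{corheartmodulecat} the hearts are $\Hcal_V\cong\Mod{A}$, $\Hcal_U\cong\Mod{A/AeA}$ and $\Hcal_W\cong\Mod{eAe}$, and by Theorem~\ref{standard f-lifts} together with Remark~\ref{rem extendable k-proj} the realisation functors $\mathsf{real}_V$, $\mathsf{real}_U$, $\mathsf{real}_W$ taken with respect to $\mathsf{DF}(B)$, $\mathsf{DF}(B/BuB)$, $\mathsf{DF}(uBu)$ are equivalences of standard type that extend to triangle equivalences $\mathsf{D}(A)\lxr\mathsf{D}(B)$, $\mathsf{D}(A/AeA)\lxr\mathsf{D}(B/BuB)$ and $\mathsf{D}(eAe)\lxr\mathsf{D}(uBu)$. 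Since $\R_g$ is stratifying, its functor $i_*$ is the restriction of scalars along a homological ring epimorphism, hence a homological embedding commuting with products and coproducts; thus Lemma~\ref{standard glue to standard}(ii) identifies the f-categories over $\mathsf{D}(B/BuB)$ and $\mathsf{D}(uBu)$ induced from $\mathsf{DF}(B)$ with $\mathsf{DF}(B/BuB)$ and $\mathsf{DF}(uBu)$, and shows that $i_*$ and $j^*$ admit f-liftings. By the glueing hypothesis and Theorem~\ref{glueing rec heart} the functors $i_*$ and $j^*$ of $\R_g$ are t-exact for $\mathbb{T}_U,\mathbb{T}_V,\mathbb{T}_W$, so Theorem~\ref{lift implies diagram} yields commutative squares relating $i_*$ and $j^*$ with $\mathsf{real}_U$, $\mathsf{real}_V$, $\mathsf{real}_W$ and the functors $I_*$ and $J^*$ of the recollement of hearts; through the identifications above these are the exact functors $\Mod{A/AeA}\lxr\Mod{A}$ and $\Mod{A}\lxr\Mod{eAe}$ underlying the recollement of $\Mod{A}$ induced by $e$. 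Arguing as in the proof of Theorem~\ref{criterium stratifying} one checks that $I_*$ is a homological embedding commuting with products and coproducts, so this abelian recollement admits a derived version (Theorem~\ref{thmrecolderivedcat}), which is $\R_f$; the extended realisation functors then constitute an equivalence of recollements from $\R_f$ to $\R_g$, and its three components are of standard type because $\mathsf{real}_V$, $\mathsf{real}_U$, $\mathsf{real}_W$ are and the remaining comparison equivalences are derived functors of exact equivalences of module categories (Example~\ref{exam stand type}), using that standard type equivalences are closed under composition and inversion (\cite[Proposition 4.1]{Rick2}).

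The delicate point is the last step of \textnormal{(ii)}$\Longrightarrow$\textnormal{(i)}: one must verify that the recollement of hearts, read through the chosen identifications $\Hcal_V\cong\Mod{A}$, $\Hcal_U\cong\Mod{A/AeA}$, $\Hcal_W\cong\Mod{eAe}$, is equivalent to the recollement of $\Mod{A}$ induced by the idempotent $e$, and not merely to some stratifying recollement with Morita-equivalent strata, so that its derived version is $\R_f$ on the nose. This uses that, via the realisation functors, the glued tilting t-structures pull back to the standard t-structures, together with the classification of recollements of module categories (\cite[Theorem 5.3]{PsaroudVitoria}) and the rigidity forced by $\R_g$ being stratifying. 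The companion difficulty — ensuring the three comparison functors are genuinely of standard type — is exactly what dictates working inside the filtered derived categories and invoking Lemma~\ref{standard glue to standard}(ii) and Theorem~\ref{standard f-lifts}, rather than the softer existence arguments behind Theorem~\ref{eq tilt cotilti shape}.
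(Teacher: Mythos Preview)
Your proposal is correct and follows essentially the same route as the paper: the key observation is that, since $\R_g$ arises from a homological embedding, Lemma~\ref{standard glue to standard}(ii) identifies the f-categories induced from $\mathsf{DF}(B)$ (via Proposition~\ref{f-categories over exact seq}) with the filtered derived categories $\mathsf{DF}(B/BuB)$ and $\mathsf{DF}(uBu)$, so that the realisation functors appearing in the proof of Theorem~\ref{criterium stratifying} are taken with respect to filtered derived categories and are therefore of standard type by Theorem~\ref{standard f-lifts}.

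One phrasing to tighten in your \textnormal{(ii)}$\Longrightarrow$\textnormal{(i)}: the three ``extended realisation functors'' do not a priori assemble into an equivalence of recollements. What actually happens, as in the proof of Theorem~\ref{criterium stratifying}, is that the extension $\textsf{Real}_V$ of $\mathsf{real}_V$ is taken as the middle equivalence, and the outer equivalences $\Phi$ and $\Theta$ are \emph{induced} from it via the exact sequences of triangulated categories; one then checks that $\Phi^b\cong\mathsf{real}_U$ and $\Theta^b\cong\mathsf{real}_W$, so that $\Phi$ and $\Theta$ are extensions of standard-type equivalences. Your final paragraph shows you are aware of the subtlety, but the sentence ``the extended realisation functors then constitute an equivalence of recollements'' should be read in this sense.
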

\begin{proof}
Following the proof of the Theorem~\ref{criterium stratifying}, we see that the choice of the f-categories for the realisation functors that yield the equivalence of recollements is the one provided by Proposition~\ref{f-categories over exact seq}. We start with the recollement induced by $g$ and, thus, with a recollement of abelian categories coming from a homological embedding. In this setting, Lemma \ref{standard glue to standard}(ii) shows that if we chose the f-category over $\mathsf{D}(B)$ to be the filtered derived category $\mathsf{DF}(B)$, then the induced f-categories over $\mathsf{D}(B/BuB)$ and $\mathsf{D}(uBu)$ are, respectively, the filtered derived categories $\mathsf{DF}(B/BuB)$ and $\mathsf{DF}(uBu)$. The result then follows from the fact that the equivalences built in the proof of Theorem \ref{criterium stratifying} are extensions of realisation functors of compact tilting objects with respect to filtered derived categories. These realisation functors are, therefore, derived equivalences of standard type by Theorem \ref{standard f-lifts}, finishing the proof. 
\end{proof} 

At this point we cannot prove with our techniques that the simple formulation of Corollary~\ref{rec strat k-alg}  holds for arbitrary rings (compare with \cite{AKLY3}). The main obstacle there is the existence of an extension of the realisation functor. In Corollary \ref{cor 2 strat}, the problem occurring is that we do not know whether an extension of an equivalence of bounded derived categories to an equivalence of unbounded derived categories is unique. Although we know that equivalences of standard type are extendable, we do not know whether the extensions obtained in the proof of Theorem \ref{criterium stratifying} coincide with the expected derived tensor product.

If, however, we turn our attention to recollements of bounded derived categories, we can formulate an analogue of Corollary~\ref{rec strat k-alg} even with more general assumptions.

\begin{cor}
Let $\A$, $\B$ and $\C$ be abelian categories with a projective generator and such that their unbounded derived categories are TR5 and TR5*. Suppose that there is a recollement of bounded derived categories:
\begin{equation}\nonumber
\xymatrix{
\R^b\colon & \mathsf{D}^b(\B)  \ar[rrr]^{i_*} &&& \mathsf{D}^b(\A)  \ar[rrr]^{j^*}  \ar @/_1.5pc/[lll]_{i^*}  \ar @/^1.5pc/[lll]^{i^!} &&& \mathsf{D}^b(\C). \ar @/_1.5pc/[lll]_{j_!} \ar @/^1.5pc/[lll]^{j_*} }
\end{equation}
Assume that the global dimension of $\A$ or $\C$ is finite. The following statements are equivalent.
\begin{enumerate}
\item The recollement $\R^b$ is equivalent to the restriction of a stratifying recollement to bounded derived categories.

\item There are compact tilting objects $V$ in $\mathsf{D}(\A)$, $U$ in $\mathsf{D}(\B)$ and  $W$ in $\mathsf{D}(\C)$ such that the associated tilting t-structures in $\mathsf{D}(\B)$ and $\mathsf{D}(\C)$ glue along $\R^b$ to the associated tilting t-structure in $\mathsf{D}(A)$.
\end{enumerate}
\end{cor}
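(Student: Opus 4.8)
The plan is to reduce this corollary to Corollary~\ref{rec strat k-alg} by passing through unbounded derived categories, using that a recollement of bounded derived categories of abelian categories with projective generators can be \emph{extended} to a recollement of unbounded derived categories precisely when one of the outer (or rather the middle) categories has finite global dimension. First I would address the implication (ii)$\Longrightarrow$(i): given compact tilting objects $V$, $U$, $W$ whose tilting t-structures glue along $\R^b$, I would invoke Corollary~\ref{corheartmodulecat} to see that $\Hcal_V$, $\Hcal_U$, $\Hcal_W$ are module categories $\Mod S$, $\Mod{S'}$, $\Mod{S''}$ over the respective endomorphism rings, and then apply the realisation functors $\mathsf{real}_V$, $\mathsf{real}_U$, $\mathsf{real}_W$ (with respect to filtered derived categories) to build an equivalence of recollements of \emph{bounded} derived categories exactly as in the proof of Theorem~\ref{criterium stratifying} --- all the f-lifting arguments (Theorem~\ref{lift implies diagram}, Lemma~\ref{standard glue to standard}(ii)) work verbatim at the bounded level, and the extendability hypothesis needed in Theorem~\ref{criterium stratifying} is not required here since we only want an equivalence of bounded recollements. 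The glued t-structure in $\mathsf{D}^b(\A)$ being tilting forces, via Theorem~\ref{glueing rec heart}, the recollement of hearts to be exactly the recollement $\Mod{S'} \to \Mod S \to \Mod{S''}$, and by \cite[Theorem 5.3]{PsaroudVitoria} this is induced by an idempotent; the homological ring epimorphism property follows because the whole bounded recollement is then equivalent to the restriction of a stratifying recollement.

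For the implication (i)$\Longrightarrow$(ii), I would start from an equivalence of $\R^b$ with the bounded restriction of a stratifying recollement $\R_f^b$ of $\mathsf{D}^b(S)$ for $S = A/AeA$-type data. Composing with the standard projective generators and their images gives compact tilting objects $V \in \mathsf{D}(\A)$, $U \in \mathsf{D}(\B)$, $W \in \mathsf{D}(\C)$ (compactness because they are images of the compact projective generators under equivalences, and tilting by Example~\ref{easyexample}(iii)). The content to check is that the associated tilting t-structures glue along $\R^b$: this follows by transporting, through the equivalence of bounded recollements, the fact (Lemma~\ref{standard glue to standard}(i), applied at the bounded level, or directly Theorem~\ref{glueing rec heart}) that the standard t-structures on $\mathsf{D}^b(S/SeS)$ and $\mathsf{D}^b(eSe)$ glue to the standard one on $\mathsf{D}^b(S)$, combined with the commutativity of the equivalence-of-recollements square with $j^*$ (Definition~\ref{def eq rec}) which, by \cite[Lemma 4.2]{PsaroudVitoria}, propagates to all six functors and hence is compatible with the glueing formula of Theorem~\ref{glueing rec heart}.

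The point where the finite global dimension hypothesis enters --- and which I expect to be the main obstacle --- is in making the passage between the bounded and unbounded worlds rigorous. Specifically, to even state that the tilting t-structures in $\mathsf{D}(\B)$ and $\mathsf{D}(\C)$ glue along a recollement of \emph{unbounded} categories, one first needs the bounded recollement $\R^b$ to extend to an unbounded one $\R$; and to run the full strength of Corollary~\ref{rec strat k-alg} (or Theorem~\ref{criterium stratifying}) one needs the hearts' unbounded derived categories to be left-complete and the relevant functors to preserve products and coproducts. By Lemma~\ref{fte gldim general} and Proposition~\ref{fte gldim}, finite global dimension of $\A$ (or $\C$) forces $\mathsf{D}^b(\A) = \mathsf{K}^b(\Add P)$, i.e.\ the bounded derived category is already ``all there is'' up to thick closure, so a bounded recollement automatically extends to an unbounded one (the functors being derived functors of exact functors between module categories, hence automatically product- and coproduct-preserving by Example~\ref{ex right left} and Remark~\ref{prods}), and simultaneously the finite global dimension propagates to $\Hcal_V$, $\Hcal_U$, $\Hcal_W$ by Proposition~\ref{fte gldim}, giving left-completeness of all the relevant derived categories via Example~\ref{ex right left}(iv). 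Once this reduction is in place, I would simply cite Corollary~\ref{rec strat k-alg}; the extendability of the realisation functor $\mathsf{real}_V$, which is the delicate hypothesis there, is here supplied either by finite global dimension (which makes $\mathsf{D}^b = \mathsf{thick}(\Add)$ so the bounded and unbounded constructions agree) or, when $A$ happens to be a projective $\mathbb{K}$-algebra, by Theorem~\ref{standard f-lifts}; I would state the argument so that the global-dimension hypothesis alone suffices, avoiding any appeal to the open extendability question that obstructs the general case discussed after Corollary~\ref{cor 2 strat}.
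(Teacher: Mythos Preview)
Your direction (i)$\Longrightarrow$(ii) is fine and matches the paper: transport the standard t-structures through the equivalence of bounded recollements; the glueing compatibility follows from Lemma~\ref{standard glue to standard}(i) and Definition~\ref{def eq rec}.

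For (ii)$\Longrightarrow$(i) there is a genuine gap, and the role of the finite global dimension hypothesis is misidentified. Your argument is circular at the sentence ``the homological ring epimorphism property follows because the whole bounded recollement is then equivalent to the restriction of a stratifying recollement'': that equivalence is precisely the conclusion you are trying to establish, so it cannot be used to deduce that the idempotent ring epimorphism is homological. The paper avoids this circle by proving \emph{directly} that $I_*\colon\Hcal_U\to\Hcal_V$ is a homological embedding, using the commutative squares~(\ref{commutative diagrams}) from the proof of Theorem~\ref{criterium stratifying} together with the full faithfulness of $i_*$, $\mathsf{real}_U$ and $\mathsf{real}_V$. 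Only \emph{after} this is known does one invoke \cite{PsaroudVitoria} to identify the recollement of hearts with one induced by a homological ring epimorphism $f\colon S\to S/SeS$.

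The finite global dimension hypothesis is not used to extend $\R^b$ to an unbounded recollement, nor to obtain extendability of $\mathsf{real}_V$ (the paper works entirely at the bounded level here, so extendability is simply not needed). Its actual role is the following: a homological ring epimorphism $f\colon S\to S/SeS$ always gives a stratifying recollement of \emph{unbounded} derived categories, but this recollement need not restrict to $\mathsf{D}^b$. Since $\A$ or $\C$ has finite global dimension, Proposition~\ref{fte gldim} yields that $S\cong\End_{\mathsf{D}(\A)}(V)$ or $eSe\cong\End_{\mathsf{D}(\C)}(W)$ has finite global dimension, and then \cite[Theorem~7.2]{Psaroud:homolrecol} guarantees that $f$ induces a recollement of \emph{bounded} derived categories. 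This is the target recollement to which $\R^b$ is then shown equivalent via the realisation functors. Your plan to reduce to Corollary~\ref{rec strat k-alg} is therefore the wrong route: that corollary concerns unbounded recollements and requires a projective $\mathbb{K}$-algebra hypothesis absent here, whereas the present corollary is proved by staying bounded throughout and using finite global dimension solely to make the stratifying recollement restrict.
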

\begin{proof}
Note that (i)$\Longrightarrow$(ii) follows as in the proof of Theorem \ref{criterium stratifying}. Conversely, following the arguments in the proof of Theorem \ref{criterium stratifying}, it easily follows that the induced fully faithful functor $\Hcal_U\lxr \Hcal_V$ is homological. Since all the hearts are module categories (the tilting objects are compact, see Corollary~\ref{corheartmodulecat}), it follows from \cite{PsaroudVitoria} that the recollement of hearts is equivalent to one induced by a homological ring epimorphism $f\colon S\lxr S/SeS$, where $S$ is Morita equivalent to $\End_{\mathsf{D}(\A)}(V)$. Since $\A$ or $\C$ have finite global dimension, then so does $S$ or $eSe$ (see Proposition \ref{fte gldim}). In any of these two cases, it follows from \cite[Theorem 7.2]{Psaroud:homolrecol} that $f\colon S\lxr S/SeS$ induces a recollement of bounded derived categories. It remains to show that this recollement is equivalent to $\R^b$. However, this follows from the same arguments used in the proof of Theorem~\ref{criterium stratifying}, omitting the issues related to the extendability of the realisation functors.
\end{proof}

We finish with an application of the above results to recollements of derived module categories of finite dimensional hereditary $\mathbb{K}$-algebras, over a field $\mathbb{K}$.

\begin{thm}\label{rec hereditary}
Let $A$ be a finite dimensional hereditary $\mathbb{K}$-algebra over a field $\mathbb{K}$. Then any recollement of $\mathsf{D}(A)$ by derived module categories is equivalent to a stratifying one.
\end{thm}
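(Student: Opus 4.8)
The strategy is to invoke Corollary~\ref{rec strat k-alg} with $A = B = C$ all finite dimensional $\mathbb{K}$-algebras (hence automatically projective over the field $\mathbb{K}$), so that it suffices to produce compact tilting objects $V$ in $\mathsf{D}(A)$, $U$ in $\mathsf{D}(B)$ and $W$ in $\mathsf{D}(C)$ whose associated tilting t-structures glue along the given recollement $\R$, and such that $\End_{\mathsf{D}(A)}(V)$ is again finite dimensional over $\mathbb{K}$ (which will be automatic, as it is a quotient of a finite-dimensional algebra, or simply because compact tilting objects over finite dimensional algebras have finite dimensional endomorphism algebras). The natural first candidate is to try $V = A$, $U = B$, $W = C$ with the standard t-structures — but the obstruction is precisely that the standard t-structures on $\mathsf{D}(B)$ and $\mathsf{D}(C)$ need not glue to the standard t-structure on $\mathsf{D}(A)$ along $\R$. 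So the real content is to \emph{perturb} the standard t-structures on the outer terms to new tilting t-structures that do glue.

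First I would recall what is known about recollements of $\mathsf{D}(A)$ for $A$ finite dimensional hereditary: by the classification results (Koenig, Angeleri H\"ugel--Koenig--Liu--Yang, and related work on recollements of hereditary algebras) the outer terms $B$ and $C$ of such a recollement are themselves derived equivalent to finite dimensional hereditary algebras, and in fact the recollement is controlled by an exceptional sequence / a suitable stratification of the bounded derived category $\mathsf{D}^b(A)$; more precisely, since $A$ is hereditary, every recollement of $\mathsf{D}(A)$ by derived module categories can be realised, after a derived equivalence, so that the embedding $i_*$ comes from a homological epimorphism (or an idempotent) at the level of a derived-equivalent algebra. The key homological input is that over a hereditary algebra, the t-structure on $\mathsf{D}(A)$ obtained by glueing the standard t-structures along $\R$ (using Theorem~\ref{glueing rec heart}, after checking $j_!j^*\mathbb{D}^{\leq 0}\subseteq\mathbb{D}^{\leq 0}$ or, if that fails, the glued t-structure obtained by first shifting) is again a \emph{bounded} t-structure whose heart is a length category with finitely many simples and no self-extensions in degrees $\neq 0,1$; by the abundance of tilting for hereditary-type hearts one shows this heart has a projective generator, i.e. the glued t-structure is tilting, and moreover the tilting object is compact (it lives in $\mathsf{K}^b(\proj A)$ by Proposition~\ref{bdd silting over rings}, being bounded). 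Running Theorem~\ref{glueing rec heart} in reverse then produces the corresponding tilting t-structures on the outer terms $\mathsf{D}(B)$ and $\mathsf{D}(C)$, and these are exactly the restrictions $(i^*\mathbb{T}^{\leq 0}, i^!\mathbb{T}^{\geq 0})$ and $(j^*\mathbb{T}^{\leq 0}, j^*\mathbb{T}^{\geq 0})$, which are then seen to be bounded and tilting as well, hence compact, yielding the required $U$ and $W$.

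Concretely, the steps in order are: (1) reduce to the case where $i_*$ is t-exact for \emph{some} pair of t-structures — here one starts from the standard t-structures on $\mathsf{D}(B)$, $\mathsf{D}(C)$ and forms the glued t-structure $\mathbb{T}$ on $\mathsf{D}(A)$ (as in Theorem~\ref{glueing rec heart}); (2) show $\mathbb{T}$ is bounded, using that $B$ and $C$ are finite dimensional so their standard t-structures are bounded and the glueing of bounded t-structures along a recollement of the relevant derived categories is bounded — this uses that $i_*$, $j^*$ and the adjoints are the relevant derived functors and the nondegeneracy/boundedness passes through the triangles $j_!j^*X\to X\to i_*i^*X\to$ and $i_*i^!X\to X\to j_*j^*X\to$; (3) identify the heart $\Hcal(\mathbb{T})$ as a finite-length abelian category which, because $\gldim A \le 1$ forces suitable Ext-vanishing, is equivalent to $\Mod S$ for a finite dimensional $\mathbb{K}$-algebra $S$ — equivalently, show $\mathbb{T}$ is tilting with a compact tilting object $V$ using Corollary~\ref{corheartmodulecat} together with the characterisation in Proposition~\ref{silting as expected} / Proposition~\ref{bdd silting over rings}; (4) pull $\mathbb{T}$ back to compact bounded tilting objects $U\in\mathsf{D}(B)$, $W\in\mathsf{D}(C)$ via the uniquely determined t-structures $(i^*\mathbb{T}^{\leq 0}, i^!\mathbb{T}^{\geq 0})$, $(j^*\mathbb{T}^{\leq 0}, j^*\mathbb{T}^{\geq 0})$ of Theorem~\ref{glueing rec heart}, noting these are automatically bounded (hence tilting, hence compact); and (5) apply Corollary~\ref{rec strat k-alg}(ii)$\Rightarrow$(i) to conclude $\R$ is equivalent to a stratifying recollement, after checking $\End_{\mathsf{D}(A)}(V) = S$ is projective over $\mathbb{K}$ — which is automatic since it is finite dimensional over a field.

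The main obstacle, and the place where heredity is genuinely used, is step (3): proving that the \emph{glued} t-structure $\mathbb{T}$ on $\mathsf{D}(A)$ is tilting, i.e. that its heart has a projective generator (equivalently, admits a compact tilting object). For a general algebra $A$ the heart of a glued bounded t-structure need not be a module category, so one must exploit $\gldim A \le 1$. The cleanest route is: $\Hcal(\mathbb{T})$ is a finite-length abelian category (finitely many simples, coming from those of $\Hcal(\mathbb{U})$ and $\Hcal(\mathbb{V})$ via the recollement of hearts $\textsf{R}_{\mathsf{ab}}(\Hcal(\mathbb{U}),\Hcal(\mathbb{T}),\Hcal(\mathbb{V}))$ of Theorem~\ref{glueing rec heart}), and heredity of $A$ forces $\Hom_{\mathsf{D}(A)}(X, Y[n]) = 0$ for all $X, Y \in \Hcal(\mathbb{T})$ and $n \ge 2$ — precisely condition (\textsf{Ef})/(\textsf{CoEf}) of Theorem~\ref{real}(iii) holds trivially — so that $\mathsf{real}_\mathbb{T}$ is fully faithful and, by Theorem~\ref{real}(iv), an equivalence $\mathsf{D}^b(\Hcal(\mathbb{T})) \simeq \mathsf{D}^b(A)$; one then transports a projective generator of $\Mod A$ through this equivalence and checks (via the cohomological description and Lemma~\ref{generator properties}) that it becomes a compact tilting object of $\mathsf{D}(A)$ inside $\Hcal(\mathbb{T})$. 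Verifying this Ext-vanishing rigorously — that $\mathsf{D}^b(A)$ for hereditary $A$ has no Hom's in degrees $\ge 2$ between objects of an arbitrary bounded heart — and that the resulting tilting object is compact, is the technical heart of the argument; once it is in place, steps (4)--(5) are routine applications of Theorem~\ref{glueing rec heart} and Corollary~\ref{rec strat k-alg}.
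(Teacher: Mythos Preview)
Your overall strategy matches the paper's: glue the standard t-structures on $\mathsf{D}(B)$ and $\mathsf{D}(C)$ along $\R$ to a t-structure $\mathbb{T}$ on $\mathsf{D}(A)$, show $\mathbb{T}$ is generated by a compact tilting object, and then invoke Corollary~\ref{rec strat k-alg}. Step (4) is indeed trivial (the outer t-structures are the standard ones by construction, so $U=B$ and $W=C$), and step (5) is fine. The problem is step (3).

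Your key claim --- that for hereditary $A$ one has $\Hom_{\mathsf{D}(A)}(X,Y[n])=0$ for all $n\ge 2$ and all $X,Y$ in \emph{any} bounded heart --- is false. Take any classical tilting $A$-module $T$ whose endomorphism algebra $B=\End_A(T)$ has global dimension $2$ (such examples exist already for path algebras of type $A_3$). The associated heart $\Hcal_T\cong\Mod B$ then contains objects with nonvanishing $\Ext^2$, and since $\mathsf{real}_T$ is an equivalence this $\Ext^2$ is computed in $\mathsf{D}(A)$. So heredity of $A$ does \emph{not} force condition (\textsf{Ef}) to hold trivially for an arbitrary bounded heart, and your route to full faithfulness of $\mathsf{real}_\mathbb{T}$ collapses. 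Even setting this aside, full faithfulness of $\mathsf{real}_\mathbb{T}$ would only give $\mathsf{D}^b(\Hcal(\mathbb{T}))\simeq\mathsf{D}^b(A)$; it would not by itself produce a compact tilting object $V\in\mathsf{D}(A)$ whose t-structure is $\mathbb{T}$, and your ``transport a projective generator'' step does not yield one.

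The paper's argument for step (3) is quite different and more delicate. First, using the classification of recollements for hereditary rings, one may assume $i_*$ and (after passing to the upper shift $\R_u$ of the ladder) $j_!$ are induced by homological ring epimorphisms $A\to B$ and $A\to C$. Second, one shows $\mathbb{T}$ is \emph{silting}: the standard t-structures on the outer terms admit left adjacent co-t-structures, these glue along $\R_u$ to a co-t-structure left adjacent to $\mathbb{T}$, and since $\mathbb{T}$ is intermediate, \cite[Theorem 4.6]{AMV1} gives a bounded (compact) silting object $T$ with $\mathbb{T}=\mathbb{T}_T$. Third, one upgrades silting to tilting by the explicit criterion of \cite[Theorem 4.5]{LiuVitoriaYang}, which reduces to vanishing conditions on $j^\# f_*B$; heredity enters precisely here, forcing $j^\# f_*B=(-\otimes^{\mathbb L}_A C)(B)$ to be a two-term complex, from which the required vanishings are immediate. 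This is where the hypothesis $\gld A\le 1$ is genuinely used, not in a blanket Ext-vanishing statement for arbitrary hearts.
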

\begin{proof}
Let $\R$ be a recollement of $\mathsf{D}(A)$ of the form
\[
 \xymatrix@C=0.5cm{ \mathsf{D}(B) \ar[rrr]^{i_*} &&& \mathsf{D}(A)\ar[rrr]^{{j^*}}  \ar @/_1.5pc/[lll]_{i^*}  \ar @/^1.5pc/[lll]^{i^!} &&& \mathsf{D}(C).\ar @/_1.5pc/[lll]_{j_!} \ar @/^1.5pc/[lll]^{j_*}}\]
It follows from the assignments in \cite[Theorem 8.1]{KrauseStovicek} (see also \cite[Corollary 3.3]{AKL2}) for hereditary rings that $\R$ is equivalent to a recollement induced by a homological ring epimorphism. Moreover, this equivalence changes only the functors between $\mathsf{D}(B)$ and $\mathsf{D}(A)$, leaving the functors between $\mathsf{D}(A)$ and $\mathsf{D}(C)$ unchanged. Thus, without loss of generality, we assume $i_*=f_*$ for some homological ring epimorphism $f\colon A\lxr B$.

Now, since $A$ is a finite dimensional algebra of finite global dimension, the recollement $\R$ fits in an infinite ladder (see \cite[Proposition $3.7$]{AKLY}). In particular, the functors $i^*$ and $j_!$ (respectively, $i^!$ and $j_*$) admit left (respectively, right) adjoints and there is a recollement 
\[\R_u\colon\ \ \ \ \ \ \xymatrix@C=0.5cm{ \mathsf{D}(C) \ar[rrr]^{j_!} &&& \mathsf{D}(A)\ar[rrr]^{{i^*}}  \ar @/_1.5pc/[lll]_{j^\#}  \ar @/^1.5pc/[lll]^{j^*} &&& \mathsf{D}(B)\ar @/_1.5pc/[lll]_{i^\#} \ar @/^1.5pc/[lll]^{f_*}}\]
Applying once again the result quoted in the first paragraph, there is a recollement of  $\mathsf{D}(A)$ equivalent to $\R_u$, which is induced by a homological ring epimorphism (and the functors between $\mathsf{D}(A)$ and $\mathsf{D}(B)$ remain unchanged). Thus, without loss of generality, once again we assume that $j_!=g_*$ for some homological ring epimorphism $g\colon A\lxr C$. 

Denote by $\mathbb{T}$ the glueing of the standard t-structures in $\mathsf{D}(C)$ and $\mathsf{D}(B)$ along $\R$. We check that $\mathbb{T}$ is a tilting t-structure in $\mathsf{D}(A)$. The standard t-structures in $\mathsf{D}(C)$ and $\mathsf{D}(B)$ admit left adjacent co-t-structures (see \cite{Bondarko} for the definition). These co-t-structures, when glued along $\R_u$, yield a left adjacent co-t-structure to $\mathbb{T}$ (see \cite[Remark 2.6]{LiuVitoriaYang}). Note that, since $A$ has finite global dimension, then so do $B$ and $C$ (\cite[Proposition 2.14]{AKLY}) and, hence, $\R$ restricts to a recollement of $\mathsf{D}^b(A)$ (\cite[Corollary 4.10]{AKLY}). Since the standard t-structures are bounded, then $\mathbb{T}$ restricts to a bounded t-structure in $\mathsf{D}^b(A)$ and therefore Lemma~\ref{eq bdd}(iii) implies that $\mathbb{T}$ is an intermediate t-structure. From \cite[Theorem 4.6]{AMV1}, any intermediate t-structure that admits a left adjacent co-t-structure is a (bounded) silting t-structure and, thus, there is a bounded silting object $T$ such that $\mathbb{T}=\mathbb{T}_T$. 

Since $A$ is a finite dimensional $\mathbb{K}$-algebra, then so are $B$ and $C$ (\cite[Lemma 2.10(b)]{AKLY}). It follows that the recollements $\R$ and $\R_u$ also restrict to the level of bounded derived categories of finitely generated modules (see, for example, \cite[Theorem 4.4]{AKLY}). From \cite{LiuVitoriaYang}, $T$ is then compact and it can be computed explicitly. More precisely, following the terminology of \cite{LiuVitoriaYang}, $T$ is the glued silting object of $B$ and $C$ along $\R_u$. From \cite[Theorem 4.5]{LiuVitoriaYang}, $T$ is tilting if and only if the following conditions hold: 
\begin{enumerate}
\item $\Hom_{\mathsf{D}(C)}(C,j^\# f_*B[k])=0$, for all $k<-1$;
\item $\Hom_{\mathsf{D}(C)}(j^\# f_*B,C[k])=0$, for all $k<0$;
\item $\Hom_{\mathsf{D}(C)}(j^\# f_*B,j^\# f_*B[k])=0$, for all $k<-1$.
\end{enumerate}
We start by analysing the object $j^\# f_*B$. Since $j^\#$ is the left adjoint of $j_!=g_*$, we conclude that $j^\#$ is naturally equivalent to $-\otimes^{\mathbb{L}}_AC$. Since $C$ has projective dimension at most 1 as an $A$-module and $f_*B$ is an $A$-module, it follows that $j^\# f_*B$ is a 2-term complex in $\mathsf{D}(C)$ with cohomologies concentrated in degrees $0$ and $-1$. From this property, it is then obvious that conditions (i), (ii) and (iii) are satisfied and, thus, the glued object from $C$ and $B$ along $\R_u$ is tilting. The result then follows from Corollary \ref{rec strat k-alg}.
\end{proof}

\begin{rem}
\begin{enumerate}
\item Starting with an arbitrary recollement $\R$ of $\mathsf{D}(A)$ by derived module categories as in Theorem \ref{rec hereditary}, in general one needs to change all three terms by non-trivial derived equivalences in order to obtain a stratifying recollement. In the above proof, we change the left hand side to a derived equivalent ring that induces $\R$ via a homological ring epimorphism of $A$. Then we change the right hand side (i.e. the left hand side of $\R_u$) in the same way. Finally, we change the middle term by considering the derived equivalence given by the tilting object obtained by glueing.
\item In the proof of the above theorem we use the fact that we can restrict the recollement $\R$ and $\R_u$ to recollements of $\mathsf{D}^b(A)$ and of $\mathsf{D}^b(\smod{A})$. It is then not difficult to see that the proof of the above theorem yields that, for $A$ a finite dimensional hereditary $\mathbb{K}$-algebra, any recollement of $\mathsf{D}^b(A)$ (or of $\mathsf{D}^b(\smod{A})$) by bounded derived categories $\mathsf{D}^b(B)$ and $\mathsf{D}^b(C)$ (respectively, by $\mathsf{D}^b(\smod{B})$ and $\mathsf{D}^b(\smod{C})$) is the restriction to $\mathsf{D}^b(S)$ (respectively $\mathsf{D}^b(\smod{S})$) of a stratifying recollement of a finite dimensional $\mathbb{K}$-algebra $S$ derived equivalent to $A$.
\end{enumerate}
\end{rem}

\appendix
\section{On Beilinson's realization functor (by Ester Cabezuelo Fern\'andez and Olaf M.~Schn{\"u}rer\except{toc}{\protect\footnote{Mathematisches Institut, Universit{\"a}t Bonn,  Endenicher Allee 60, 53115 Bonn, Germany. E-mail addresses: \email{ester.cabezuelo@gmail.com}, \email{olaf.schnuerer@math.uni-bonn.de}}})}
 
\label{sec:beil-real-funct}

Given a t-structure with heart $\heartsuit$ on a triangulated
category $\mathcal{T}$, a classical problem is the construction
of a t-exact, and hence in particular triangulated, functor $\mathsf{D}^b(\heartsuit) \rightarrow \mathcal{T}$
extending the inclusion $\heartsuit \rightarrow \mathcal{T}$ (up to
isomorphism). 
Such a functor is called a realization functor.

Beilinson claims in 
\cite[Appendix]{B}
that a realization functor exists if $\mathcal{T}$ admits a
filtered triangulated category. 
Following Beilinson's hints, it is
possible to construct a functor 
$\mathsf{D}^b(\heartsuit) \rightarrow \mathcal{T}$ which is (isomorphic to) the identity on $\heartsuit$. This functor is compatible
with the shift functor on the source category and the suspension
functor on its target. However, we could not prove that it is
triangulated. To us, it seems unavoidable to impose an additional
axiom on the filtered triangulated category to ensure this.
This 
axiom says that a certain family of morphisms of triangles
can be completed to a $3 \times 3$-diagram of triangles
(see Appendix \ref{sec:additional-axiom}).

Interestingly, the same additional axiom was used in the
context of weight structures (or co-t-structures) in 
\cite{Schn}
to ensure
the existence of a strong 
weight complex functor under suitable assumptions.
Motivated by these two instances and our failed efforts to do
without this axiom,  
we strongly
believe that this axiom 
should be added to the definition of a filtered triangulated
category. 

In this appendix we explain how the additional axiom is used in
order to show that the realization functor is triangulated, under
the assumption that it is already constructed as a functor.
Full details will appear in 
the master thesis \cite{CF}
and presumably in a subsequent publication.

\subsection{The additional axiom}
\label{sec:additional-axiom}

We restate the additional axiom (fcat7) from 
\cite[7.2]{Schn}. 
We use notation and basic facts about 
filtered triangulated categories from
this article.

Let     
$(\widetilde{\mathcal{T}}, \widetilde{\mathcal{T}}(\leq 0),
\widetilde{\mathcal{T}}(\geq 0), s, \alpha)$ 
be a filtered triangulated category. The additional axiom we need is the following condition which
expresses a certain compatibility between the $\sigma$-truncation
triangles and the morphism $\alpha \colon \id_{\widetilde{\mathcal{T}}} \rightarrow s$
of triangulated functors.

\begin{enumerate}[label=(fcat{\arabic*}),start=7]
\item 
  \label{enum:filt-tria-cat-3x3-diag}
  For any morphism $f: X \rightarrow Y$ in $\widetilde{\mathcal{T}}$ the 
  morphism 
  \begin{equation*}
    \xymatrix@C2cm{
      {s(\sigma_{\geq 1}Y)} \ar[r]^-{s(g^1_Y)} & 
      {s(Y)} \ar[r]^-{s(k_Y^0)} &
      {s(\sigma_{\leq 0}Y)} \ar[r]^-{s(v_Y^0)} &
      {[1]s(\sigma_{\geq 1}Y)}\\
      {\sigma_{\geq 1}X} \ar[r]^-{g^1_X} \ar[u]^{\alpha_{\sigma_{\geq
            1}(Y)} \circ \sigma_{\geq 1}(f)} & 
      {X} \ar[r]^-{k_X^0} \ar[u]^{\alpha_Y \circ f} &
      {\sigma_{\leq 0}X} \ar[r]^-{v_X^0} \ar[u]^{\alpha_{\sigma_{\leq
            0}(Y)} \circ \sigma_{\leq 0}(f)}&
      {[1]\sigma_{\geq 1}X.} \ar[u]^{[1](\alpha_{\sigma_{\geq 1}(Y)}
        \circ \sigma_{\geq 1}(f))}
    }
  \end{equation*}
  of triangles
  can be extended to a $3 \times 3$-diagram of triangles.
  In other words, this morphism of triangles is 
  \define{middling-good} in Neeman's terminology
  \cite[Def.~2.4]{Neeman0}.
\end{enumerate}

\subsection{The realization functor is triangulated}
\label{sec:real-funct-triang}

Let     
$(\widetilde{\mathcal{T}}, \widetilde{\mathcal{T}}(\leq 0),
\widetilde{\mathcal{T}}(\geq 0), s, \alpha)$ 
be a filtered triangulated category over a triangulated category 
$\mathcal{T}$, i.\,e.\ we are given an equivalence $\mathcal{T} \xrightarrow{\sim}
\widetilde{\mathcal{T}}([0]) =
\widetilde{\mathcal{T}}(\leq 0) \cap
\widetilde{\mathcal{T}}(\geq 0)$ of triangulated categories.

Recall from
\cite[Prop.~6.9]{Schn} the functor 
\begin{equation}
  \label{eq:functor-c-as-cat-with-translation}
  c: (\widetilde{\mathcal{T}}, [1]s^{-1}) \rightarrow (\mathsf{C}^b(\mathcal{T}), \Sigma)
\end{equation}
of additive categories with translation (we use the notation
$[1]$ for suspension in $\widetilde{T}$ and $\Sigma$ for the shift of a
  complex): on objects  
$X \in \widetilde{\mathcal{T}}$ there is a given isomorphism
\begin{equation}
  \label{eq:c-Sigma-commute}
  c([1]s^{-1}(X))\cong \Sigma c(X).
\end{equation}

Now assume that $\mathcal{T}$ is endowed with a t-structure
$(\mathcal{T}^{\leq 0}, \mathcal{T}^{\geq 0})$. Then
there is a compatible t-structure 
$(\widetilde{\mathcal{T}}^{\leq 0}, \widetilde{\mathcal{T}}^{\geq 0})$ on
$\widetilde{\mathcal{T}}$, as claimed in 
\cite{B}. It is uniquely
characterized by the fact 
that the given equivalence
$\mathcal{T} \xrightarrow{\sim}
\widetilde{\mathcal{T}}([0])$ is t-exact and that $s(\widetilde{\mathcal{T}}
^{\leq 0})= \widetilde{\mathcal{T}}^{\leq -1}$. This implies
that $s(\widetilde{\mathcal{T}}
^{\geq 0})= \widetilde{\mathcal{T}}^{\geq -1}$. 

If we denote the hearts of these t-structures by $\heartsuit$ and
$\widetilde{\heartsuit}$, respectively, the functor~\eqref{eq:functor-c-as-cat-with-translation} restricts to an equivalence
\begin{equation}
  \label{eq:functor-c-on-heart}
  c: \widetilde{\heartsuit} \xrightarrow{\sim} \mathsf{C}^b(\heartsuit)
\end{equation}
of abelian categories, 
as claimed in 
\cite{B}.
Note that the heart $\widetilde{\heartsuit}$ is stable 
under $[1]s^{-1}$. Moreover, it is stable under the functors
$\sigma_{\geq n}$ and $\sigma_{\leq  n}$, and these functors
(together with the transformations $\sigma_{\geq n} \rightarrow \id$ and
$\id \rightarrow \sigma_{\leq n}$) correspond to the brutal truncation
functors on $\mathsf{C}^b(\heartsuit)$.

Consider the diagram 
\begin{equation*}
  \xymatrix{
    {\mathsf{C}^b(\heartsuit)} \ar[r] \ar[d]_-{c^{-1}}^-{\sim} &
    {\mathsf{D}^b(\heartsuit)} \ar@{..>}[rd]^-{\real}\\
    {\widetilde{\heartsuit}} \ar[r] &
    {\widetilde{\mathcal{T}}} \ar[r]^-{\omega} & 
    {\mathcal{T}}
  }
\end{equation*}
of categories where $c^{-1}$ is a quasi-inverse to the equivalence $c$,
the upper horizontal arrow is the localization with respect to
all quasi-isomorphisms, $\mathsf{D}^b(\heartsuit)$ is the bounded
derived category 
of $\heartsuit$, and the arrows in the lower row are the
inclusion functor and the triangulated functor ``forget the
filtration'' 
$\omega$ (see \cite[Prop.~6.6]{Schn}). All functors are functors of categories with
translation. It can be shown that the dotted arrow exists
uniquely as a
functor of categories with translation such that the diagram is
commutative, as claimed in
\cite{B}. 
This
functor is the realization functor. Its restriction to
$\heartsuit$ is isomorphic to the inclusion $\heartsuit \rightarrow
\mathcal{T}$. 

\begin{rem}
  If we assume that $\mathcal{T}=\widetilde{\mathcal{T}}([0])$ and
  choose $\omega$ and $c^{-1}$ wisely, 
  then the restriction of the realization functor to $\heartsuit$
  is the inclusion of $\heartsuit$ into $\mathcal{T}$.
\end{rem}

If $f' \colon X' \rightarrow Y'$ is a morphism in $\mathsf{C}^b(\heartsuit)$, let
\begin{equation*}
  X' \xrightarrow{f'} Y' \xrightarrow{\svek 10} \cone(f') \xrightarrow{\zvek 01} \Sigma X'
\end{equation*}
be the usual mapping cone sequence
where the cone 
$\cone(f')$ is given by $\cone(f')^n = Y'^n \oplus X'^{n+1}$ with
differential $\tzmat df0{-d}$.

Our main result, Theorem~\ref{t:real-triangulated}, will be an easy
consequence of the following technical proposition. 

\begin{prop}
  \label{p:identify-image}
  Keep the above assumptions and assume that
  $\widetilde{\mathcal{T}}$ satisfies
  axiom~\ref{enum:filt-tria-cat-3x3-diag}. 
  Let $f \colon X \rightarrow Y$ be a morphism
  in $\widetilde{\heartsuit}$. 
  Complete the morphism $f \circ \alpha =f \circ \alpha_{s^{-1}X} 
  \colon s^{-1}X \rightarrow Y$ to a triangle
  \begin{equation}
    \label{eq:f-alpha-triangle}
    s^{-1}X
    \xrightarrow{f \circ \alpha} Y \xrightarrow{g} Z \xrightarrow{h} [1]s^{-1}X
  \end{equation}
  in $\widetilde{\mathcal{T}}$.
  Then 
  there is an isomorphism $m$ such that the
  diagram
  \begin{equation*}
    \xymatrix{
      {c(Y)} \ar[r]^-{c(g)} \ar@{=}[d]&
      {c(Z)} \ar[r]^-{c(h)} \ar[d]_{m}^-{\sim}&
      {c([1]s^{-1}X)} \ar[d]^-{\sim}_-{\eqref{eq:c-Sigma-commute}}\\
      {c(Y)} \ar[r]^-{\svek 10} &
      {\cone(c(f))} \ar[r]^-{\zvek 01} &
      {\Sigma c(X)}
    }
  \end{equation*}
  in $\mathsf{C}^b(\heartsuit)$ is commutative.
\end{prop}

Before giving the proof, we remind the reader of the
definition of the  
functor~\eqref{eq:functor-c-as-cat-with-translation}
on the subcategory $\widetilde{\mathcal{T}}([-1,0])$.
Given an object $A \in \widetilde{\mathcal{T}}([-1,0])$, consider
the diagram
\begin{equation}
  \label{eq:reminder-c}
  \xymatrix{
    {[-1]s\sigma_{\leq -1}A} \ar@{..>}[rd] \\
    {[-1]\sigma_{\leq -1}A} \ar[r] \ar[u]^-{\alpha} &
    {\sigma_{\geq 0}A} \ar[r] &
    {A} \ar[r] &
    {\sigma_{\leq -1}A.}
  }
\end{equation}
Its lower row is obtained by rotation from the
obvious $\sigma$-truncation triangle of 
$A$. The dotted arrow exists uniquely such that the diagram is commutative, and $c(A)$ is the
dotted arrow viewed as a complex concentrated in degrees $-1,
0$. This construction is clearly functorial.

\begin{proof}
  Let $a \leq b$ be integers such that $X$ and $Y$ are objects
  of $\widetilde{\mathcal{T}}([a,b])$. We prove the proposition
  by induction over $b-a$.

  \textbf{Base case $a=b$.} Without loss of generality we can assume
  that $a=b=0$.

  We claim that the conclusion of the proposition is even true
  for all morphisms $f \colon X \rightarrow Y$ in
  $\widetilde{\mathcal{T}}([0])$.
  Note that $[1]s^{-1}X \in \widetilde{\mathcal{T}}([-1])$. Hence
  $Z \in \widetilde{\mathcal{T}}([-1,0])$ and
  the triangle \eqref{eq:f-alpha-triangle}
  identifies in a unique way with the triangle in 
  \eqref{eq:reminder-c} for $A=Z$. Hence $f$ identifies with the
  dotted arrow
  and  $c(Z)$ with the complex $(\cdots \rightarrow 0 \rightarrow X \xrightarrow{f}
  Y \rightarrow 0 \rightarrow \cdots)$. 
  
  Similarly, the triangle $0 \rightarrow Y \xrightarrow{\id} Y \rightarrow 0$ identifies
  with the triangle  
  \eqref{eq:reminder-c} for $A=Y$, and hence $c(Y)$ with $Y$
  viewed as a complex in degree zero; the triangle
  $s^{-1}X \rightarrow 0 \rightarrow [1]s^{-1}X \xrightarrow{\id} [1]s^{-1}X$ identifies
  with the triangle \eqref{eq:reminder-c} for $A=[1]s^{-1}X$, and
  hence $c([1]s^{-1}X)$ with $X$ viewed as a complex in degree
  $-1$, and this identification coincides with
  \eqref{eq:c-Sigma-commute}. 
  
  It is easy to see that the morphisms $c(Y) \xrightarrow{c(g)} c(Z)
  \xrightarrow{c(h)}  
  c([1]s^{-1}X)$ correspond under these identifications to the
  obvious morphisms 
  \begin{equation*}
    (0 \rightarrow Y) \rightarrow (X \xrightarrow{f} Y) \rightarrow (X \rightarrow 0)
  \end{equation*}
  of complexes concentrated in degrees $-1$, $0$.
  This proves the base case of our induction.

  \textbf{Induction step.} 
  Assume now that $f \colon X \rightarrow Y$ is a morphism in
  $\widetilde{\mathcal{T}}([a,b]) \cap \widetilde{\heartsuit}$. Without
  loss of generality we 
  can assume that $a \leq 1 < b$. 
  By induction, we already know the conclusion of the
  proposition for the morphisms $\sigma_{\geq 2} f$
  and $\sigma_{\leq 1} f$.

  Axiom~\ref{enum:filt-tria-cat-3x3-diag} applied to the
  morphism $s^{-1}f \colon s^{-1}X \rightarrow s^{-1}Y$ shows that the
  following morphism of triangles is middling-good.
  \begin{equation}
    \label{eq:morphism-for-axiom}
    \hspace{-1.4cm}
    \xymatrix@C2cm{
      {s(\sigma_{\geq 1}{s^{-1}Y})} \ar[r]^-{s(g^1_{s^{-1}Y})} 
      & 
      {s({s^{-1}Y})} \ar[r]^-{s(k_{s^{-1}Y}^0)} 
      &
      {s(\sigma_{\leq 0}{s^{-1}Y})} \ar[r]^-{s(v_{s^{-1}Y}^0)} 
      &
      {[1]s(\sigma_{\geq 1}{s^{-1}Y})} 
      \\
      {\sigma_{\geq 1}{s^{-1}X}} \ar[r]^-{g^1_{s^{-1}X}}
      \ar[u]^{\alpha_{\sigma_{\geq 1}({s^{-1}Y})} \circ \sigma_{\geq 1}({s^{-1}f})} & 
      {{s^{-1}X}} \ar[r]^-{k_{s^{-1}X}^0} \ar[u]^{\alpha_{s^{-1}Y} \circ {s^{-1}f}} &
      {\sigma_{\leq 0}{s^{-1}X}} \ar[r]^-{v_{s^{-1}X}^0} \ar[u]^{\alpha_{\sigma_{\leq
            0}({s^{-1}Y})} \circ \sigma_{\leq 0}({s^{-1}f})} &
      {[1]\sigma_{\geq 1}{s^{-1}X}} \ar[u]^{[1](\alpha_{\sigma_{\geq 1}({s^{-1}Y})}
        \circ \sigma_{\geq 1}({s^{-1}f}))}
    }
  \end{equation}
  We claim that this morphism of triangles is isomorphic to the
  following morphism of triangles.
  \begin{equation}
    \label{eq:morphism-we-want}
    \hspace{-1.4cm}
    \xymatrix@C2cm{
      {\sigma_{\geq 2}Y} \ar[r]^-{g^2_Y} 
      & 
      {Y} \ar[r]^-{k_Y^1} 
      &
      {\sigma_{\leq 1}Y} \ar[r]^-{v_Y^1} 
      &
      {[1]\sigma_{\geq 2}Y} 
      \\
      {s^{-1}(\sigma_{\geq 2}X)} \ar[r]^-{s^{-1}(g^2_X)}
      \ar[u]^{\sigma_{\geq 2}(f) \circ \alpha_{s^{-1}(\sigma_{\geq 2}(X))}} & 
      {{s^{-1}X}} \ar[r]^-{s^{-1}(k_X^1)} 
      \ar[u]^{f \circ \alpha_{s^{-1}X}} &
      {s^{-1}(\sigma_{\leq 1}X)} \ar[r]^-{s^{-1}(v_X^1)} 
      \ar[u]^{\sigma_{\leq 1}(f) \circ \alpha_{s^{-1}(\sigma_{\leq 1}(Y))} } &
      {[1]s^{-1}(\sigma_{\geq 2}X)} 
      \ar[u]^{[1](\sigma_{\geq 2}(f) \circ \alpha_{s^{-1}(\sigma_{\geq 2}(X))})}
    }
  \end{equation}
  This is straightforward to prove: The two lower triangles in 
  \eqref{eq:morphism-for-axiom} and 
  \eqref{eq:morphism-we-want} 
  are uniquely isomorphic by a morphism of triangles whose
  middle component is the identity.
  This isomorphism of triangles is functorial in $X$. Hence $f
  \colon X \rightarrow Y$ yields a morphism between these isomorphisms of
  triangles; in other words, we obtain a commutative square whose
  vertices are triangles and whose
  arrows are  
  morphisms of triangles, two of them being isomorphisms.
  Applying the morphism $\alpha$ of triangulated functors to this
  square yields our claim.

  Hence also the morphism~\eqref{eq:morphism-we-want} of
  triangles is middling-good, say it fits into the following $3 \times
  3$-diagram whose right-most column we do not draw.
  \begin{equation}
    \xymatrix@C2cm{
      {[1]s^{-1}(\sigma_{\geq 2}X)} \ar[r]^-{[1]s^{-1}(g^2_X)}
      & 
      {[1]{s^{-1}X}} \ar[r]^-{[1]s^{-1}(k_X^1)} 
      &
      {[1]s^{-1}(\sigma_{\leq 1}X)} 
      \\
      {A} \ar[u]^-{b} \ar[r] & 
      {Z} \ar[u] \ar[r] &
      {B} \ar[u]
      \\
      {\sigma_{\geq 2}Y} \ar[r]^-{g^2_Y} 
      \ar[u]^-{a} 
      & 
      {Y} \ar[r]^-{k_Y^1} 
      \ar[u]
      &
      {\sigma_{\leq 1}Y} 
      \ar[u]
      \\
      {s^{-1}(\sigma_{\geq 2}X)} \ar[r]^-{s^{-1}(g^2_X)}
      \ar[u]^{\sigma_{\geq 2}(f) \circ \alpha_{s^{-1}(\sigma_{\geq 2}(X))}} & 
      {{s^{-1}X}} \ar[r]^-{s^{-1}(k_X^1)} 
      \ar[u]^{f \circ \alpha_{s^{-1}X}} &
      {s^{-1}(\sigma_{\leq 1}X)} 
      \ar[u]^{\sigma_{\leq 1}(f) \circ
        \alpha_{s^{-1}(\sigma_{\leq 1}(Y))} } 
    }
  \end{equation}
  In the
  following we will only need that all rows can be extended to
  triangles, that all columns are triangles, and that the diagram
  is commutative. 
  
  Now we use the assumption that $X$ and $Y$ are objects of the heart
  $\widetilde{\heartsuit}$. 
  If we ignore the bottom row of this diagram, we
  obtain a $3 \times 3$-diagram of nine objects that
  are easily 
  seen to be 
  objects of the heart $\widetilde{\heartsuit}$. 
  More precisly, we obtain a commutative $3 \times 3$-diagram in
  the abelian  
  category
  $\widetilde{\heartsuit}$ all of whose rows and columns are short
  exact sequences since they come from triangles.

  Using induction (and the
  isomorphism~\eqref{eq:c-Sigma-commute} and the fact that the
  functors $\sigma_{\geq n}$, $\sigma_{\leq n}$ correspond to the
  brutal truncations), the image of this
  diagram under the 
  equivalence $c$ in \eqref{eq:functor-c-on-heart}
  is
  isomorphic to the commutative diagram~\eqref{eq:3x3-abelian}
  below of short exact
  sequences in 
  $\mathsf{C}^b(\heartsuit)$ 
  where we use the following notation:
  $f' \colon X' \rightarrow Y'$ is the image of $f \colon X \rightarrow Y$
  under the functor $c$, and 
  $f'_{\geq 2} \colon X'_{\geq 2} \rightarrow Y'_{\geq 2}$ and
  $f'_{\leq 1} \colon X'_{\leq 1} \rightarrow Y'_{\leq 1}$ are the
  brutal truncations of $f'$.
  \begin{equation}
    \label{eq:3x3-abelian}
    \xymatrix@C2cm{
      {\Sigma X'_{\geq 2}} \ar[r]
      & 
      {\Sigma X'} \ar[r]
      &
      {\Sigma X'_{\leq 1}}
      \\
      {\cone(f'_{\geq 2})} \ar[u]^-{\zvek 01} \ar[r] & 
      {M} \ar[u] \ar[r] &
      {\cone(f'_{\leq 1})} \ar[u]^-{\zvek 01} & 
      \\
      {Y'_{\geq 2}} \ar[r]
      \ar[u]^-{\svek 10}
      & 
      {Y'} \ar[r]
      \ar[u]
      &
      {Y'_{\leq 1}}
      \ar[u]^-{\svek 10}
    }
  \end{equation}
  The horizontal arrows in top and bottom row
  are the usual morphisms from brutal truncation.
  
  We need to see that there is an isomorphism $M \xrightarrow{\sim} \cone(f')$
  such that the middle column of our diagram is identified with
  the obvious part of the usual mapping cone sequence of $f'$.
  
  In degree one, diagram~\eqref{eq:3x3-abelian} looks as follows.
  \begin{equation*}
    \xymatrix@C2cm{
      {X'^2} \ar[r]^-{\id}
      & 
      {X'^2} \ar[r]
      &
      {0}
      \\
      {X'^2} \ar[u]^-{\id} \ar[r] & 
      {M^1} \ar[u] \ar[r] &
      {Y'^1} \ar[u] & 
      \\
      {0} \ar[r]
      \ar[u]
      & 
      {Y'^1} \ar[r]^-{\id}
      \ar[u]
      &
      {Y'^1}
      \ar[u]^-{\id}
    }
  \end{equation*}
  The epimorphism in the middle column of this diagram is split by
  the monomorphism in its middle row. We obtain an isomorphism
  $M^1 \xrightarrow{\sim} Y'^1 
  \oplus X'^2$ such that the four arrows
  ending and starting at $M^1$ correspond to the obvious
  inclusions and projections.
  
  In degrees $n \not=1$ it is even simpler to find similar
  isomorphisms $M^n \xrightarrow{\sim} Y'^n \oplus X'^n$: 
  the right column of
  diagram~\eqref{eq:3x3-abelian} is zero
  in degrees $n>1$, and its left column is zero in degrees
  $n<1$.

  In this way we obtain an isomorphism 
  $M \xrightarrow{\sim} Y' \oplus \Sigma X'$
  of graded objects in $\heartsuit$ such that all morphisms starting
  and ending in $M$ correspond to the obvious inclusions and
  projections. 
  An easy computation shows that the differential of $M$ has
  the form $\tzmat df0{-d}$ modulo this isomorphism.
  This proves the proposition.
\end{proof}

\begin{thm}
  \label{t:real-triangulated}
  Keep the above assumptions and assume that
  $\widetilde{\mathcal{T}}$ satisfies
  axiom~\ref{enum:filt-tria-cat-3x3-diag}. Then the realization
  functor $\real \colon \mathsf{D}^b(\heartsuit) \rightarrow \mathcal{T}$ is
  triangulated.
\end{thm}

\begin{proof}
  Let $f \colon X \rightarrow Y$ be a morphism in $\widetilde{\mathcal{T}}$,
  and fit $f \circ \alpha$ into a triangle
  \eqref{eq:f-alpha-triangle}. 
  Consider 
  the sequence
  \begin{equation}
    \label{eq:lift-mapping-cone-sequence}
    X \xrightarrow{f} Y \xrightarrow{g} Z \xrightarrow{h} [1]s^{-1}X
  \end{equation}
  in $\widetilde{\heartsuit}$.
  Its image under 
  $c$ is the first row in the following commutative diagram
  in $\mathsf{C}^b(\heartsuit)$ obtained from
  Proposition~\ref{p:identify-image}.
  \begin{equation*}
    \xymatrix{
      {c(X)} \ar[r]^-{c(f)} \ar@{=}[d]&
      {c(Y)} \ar[r]^-{c(g)} \ar@{=}[d]&
      {c(Z)} \ar[r]^-{c(h)} \ar[d]_{m}^-{\sim}&
      {c([1]s^{-1}X)} \ar[d]^-{\sim}_-{\eqref{eq:c-Sigma-commute}}\\
      {c(X)} \ar[r]^-{c(f)} &
      {c(Y)} \ar[r]^-{\svek 10} &
      {\cone(c(f))} \ar[r]^-{\zvek 01} &
      {\Sigma c(X).}
    }
  \end{equation*}

  Since any triangle in $\mathsf{D}^b(\heartsuit)$ is isomorphic to the
  image of 
  mapping cone sequence in $\mathsf{C}^b(\heartsuit)$,
  the equivalence~\eqref{eq:functor-c-on-heart}
  and our description of the realization functor $\real$ show
  that it is enough to see that the functor ``forget the
  filtration'' $\omega$ maps
  the sequence \eqref{eq:lift-mapping-cone-sequence} in
  $\widetilde{\heartsuit}$ to a triangle in $\mathcal{T}$.
  But this is easy to see. First, the functor $\omega$ is triangulated
  and hence maps the triangle \eqref{eq:f-alpha-triangle} to a
  triangle in $\mathcal{T}$. Second, $\omega(\alpha)$ is an
  isomorphism, by the very definition of $\omega$.
  This proves the theorem.
\end{proof}

\end{document}